\documentclass[1 [leqno, 11pt]{amsart}
\usepackage{amssymb,  amsmath, latexsym, amssymb, amsfonts, amsbsy, amsthm,mathtools, graphicx, CJKutf8, CJKnumb, CJKulem, extarrows, color, dsfont,}
\usepackage{subcaption}
\usepackage{graphics}
\usepackage{epsfig}
\usepackage{graphicx}

\graphicspath{{./EPS/}}


\setlength{\oddsidemargin}{0mm}
\setlength{\evensidemargin}{0mm} \setlength{\topmargin}{0mm}
\setlength{\textheight}{220mm} \setlength{\textwidth}{155mm}

\numberwithin{equation}{section}

\allowdisplaybreaks

\let\al=\alpha
\let\b=\beta

\let\f=\frac

\let\ep=\epsilon

\let\na=\nabla

\let\pa=\partial



\newcommand{\beq}{\begin{equation}}
\newcommand{\eeq}{\end{equation}}
\newcommand{\ben}{\begin{eqnarray}}
\newcommand{\een}{\end{eqnarray}}
\newcommand{\beno}{\begin{eqnarray*}}
\newcommand{\eeno}{\end{eqnarray*}}
\newcommand{\bagn}{\begin{align}}
\newcommand{\eagn}{\end{align}}

\newtheorem{theorem}{Theorem}[section]
\newtheorem{definition}[theorem]{Definition}
\newtheorem{lemma}[theorem]{Lemma}
\newtheorem{proposition}[theorem]{Proposition}
\newtheorem{Corollary}[theorem]{Corollary}
\newtheorem{remark}[theorem]{Remark}
\newtheorem{Theorem}{Theorem}[section]

\newtheorem{Proposition}[Theorem]{Proposition}



\begin{document}

\title[Linear inviscid damping]{Linear inviscid damping in the presence of an embedding eigenvalue}

\author[S. Ren]{Siqi Ren}

\date{\today}

\address{Department of Applied Mathematics, Zhejiang University of Technology,  310032, Hangzhou, P. R. China}
\email{sirrenmath@zjut.edu.cn}

\author[Z. Zhang]{Zhifei Zhang}
\address{School of Mathematical Sciences, Peking University, 100871, Beijing, P. R. China}
\email{zfzhang@math.pku.edu.cn}

\maketitle

\begin{abstract}
In this paper, we investigate the long-time dynamics of the linearized 2-D Euler equations  around a hyperbolic tangent  flow $(\tanh y,0)$.  A key difference compared to previous results is that the linearized operator has an embedding eigenvalue, which has a significant impact on the dynamics of the linearized system.  For the first mode, the dynamics consists of there parts: non-decay part related to the eigenspace associated with the embedding eigenvalue,  slow decay part due to the resolvent singularity,
and fast decay part related to the inviscid damping. For higher modes, the dynamics is similar to the inviscid damping phenomena in the case without embedding eigenvalues.
\end{abstract}

\section{Introduction}
In this paper, we consider the 2-D Euler equations in a domain $\mathbb{T}\times  \mathbb{R}$:
\begin{align}\label{eq:Euler}
{\pa_t\textbf{U}}+{\textbf{U}}\cdot\nabla {\textbf{U}}+\nabla P=0,\quad \nabla\cdot {\textbf{U}}=0.
\end{align}
Here $\textbf{U}(t,x,y)=(U^1, U^2)$ is the velocity and $P(t,x,y)$ is the pressure.
In terms of the vorticity $W=\pa_xU^2-\pa_yU^1$, the 2-D Euler equations take as follows
\beno
\pa_tW+\textbf{U}\cdot \na W=0.
\eeno

The shear flow $(u(y),0)$ is a steady solution of \eqref{eq:Euler}. In this paper, we are concerned with the long-time dynamics of the linearized Euler equations around this flow, which takes in terms of the vorticity as follows
\begin{align}
\begin{split}
\label{eq:Euler-linearize}
&\pa_t\omega+u\pa_x\omega+u''\pa_x\psi=0,\quad -\Delta \psi=\omega,\\
&\omega|_{t=0}=\omega_0(x,y),
\end{split}
\end{align}
where $\psi$ denotes the stream function, and the velocity $\textbf{V}=(V^1,V^2)=(\pa_y\psi,-\pa_x\psi)$.
Taking the Fourier transform to \eqref{eq:Euler-linearize} in $x$($x\in\mathbb{T}\to\al\in\mathbb{Z})$, we obtain
\begin{align}\label{eq:Euler-linearize-omega}
\pa_t\hat{{\omega}}+i\al \mathcal{R}_{\al}' \hat{\omega}=0,
\end{align}
where $\mathcal{R}_{\al}'\hat{\omega}=u\hat{\omega}-u''(\pa_y^2-\al^2)^{-1}\hat{\omega}$,\;
$\left((\pa_y^2-\al^2)^{-1}f\right)(y)=-(2\al)^{-1}\int_{\mathbb{R}}e^{-\al|y-z|}f(z)dz.$  In terms of the stream function $\hat{\psi}=-(\pa_y^2-\al^2)^{-1}\hat{\omega}$,  \eqref{eq:Euler-linearize-omega} is equivalent to
\begin{align}\label{eq:Euler-linearize-psi}
\pa_t\hat{\psi}+i\al\mathcal{R}_{\al}\hat{\psi}=0,\quad
\hat{\psi}|_{t=0}=\hat{\psi}_0(\al,y)=-(\pa_y^2-\al^2)^{-1}
\hat{\omega}_0,
\end{align}
where the Rayleigh operator $\mathcal{R}_\al$ is defined by
\begin{align}\label{Op:Ray}
\mathcal{R}_{\al}\hat{\psi}
=(\pa_y^2-\al^2)^{-1}\left(u(\pa_y^2-\al^2)-u''\right)\hat{\psi}.
\end{align}
 Without loss of generality, we will focus on $\al\in \mathbb{Z}^+$ due to the following fact
\begin{align*}
\hat{{\omega}}(t,0,y)&\equiv\hat{\omega}_0(0,y), \quad \text{and}\quad\hat{{\omega}}(t,\al,y)=\bar{\hat{{\omega}}}(t,-\al,y)\quad\text{for}\,\,\al\in\mathbb{Z}^-.
\end{align*}

For the Couette flow $u(y)=y$,  Orr \cite{Orr}  observed that the velocity could decay to zero as $t\to\infty$, although the vorticity is conserved. This phenomenon is the so-called inviscid damping,  an analogue of the Landau damping in Plasma physics. Lin and Zeng \cite{LZ} proved the optimal decay rate of the velocity for the linearized Euler equation, i.e., $\|\textbf{V}(t)\|_{L^2}\lesssim t^{-1}$, $\|V^2(t)\|_{L^2}\lesssim t^{-2}$, and they also
proved that nonlinear inviscid damping does not hold for the perturbation of vorticity in $H^{<\f32}$. In a breakthrough work \cite{BM}, Bedrossian and
Masmoudi proved nonlinear inviscid damping for the perturbation in the Gevrey class $2-$. Later on, Ionescu and Jia \cite{IJ-cmp} relaxed  the regularity of the perturbation to the Gevrey class 2, which should be optimal according to the instability result in \cite{DM}.

In a series of works, Wei, the second author and Zhao proved the linear inviscid damping for general shear flows in a finite channel. The work \cite{WZZ-cpam}  proved  the linear inviscid damping for a class of monotone shear flows, for which the linearized operator has no embedding eigenvalues. The work \cite{WZZ-apde}  considered  non-monotone shear flows  including the Poiseuille flow $(1-y^2,0)$, still under the spectral assumption of no embedding eigenvalues. The work \cite{WZZ-am} dealt with the Kolmogorov flow $(\sin y, 0)$.
All these works are based on solving the Rayleigh equation in a direct way
\begin{align}\nonumber
(u-\textbf{c})(\varphi''-\al^2\varphi)-u''\varphi=f.
\end{align}
One of key ingredients is to establish the limiting absorption principle(LAP). For monotone flows, this is relatively easy. For non-monotone flows, \cite{WZZ-apde} developed a compactness method relying on the blow-up analysis near the critical point to prove the LAP. In the sprit of wave equation, Wei, Zhang and Zhu \cite{WZZ-cmp} also developed the vector field method for monotone shear flows. In two breakthrough works \cite{IJ-acta, MZ-ann},  Ionescu-Jia and Masmoudi-Zhao proved independently nonlinear inviscid damping for stable monotone shear flows in a finite channel when the initial vorticity is supported in the interior of the channel and has the Gevrey class regularity. Let us mention many important works \cite{BCV, CZ, IIJ, GNRS, RWWZ, Z1, Z2} and references therein on the linear inviscid damping.

In this paper, we consider a hyperbolic tangent flow
\begin{align*}
u(y)=\tanh y.
\end{align*}
This flow is of physical interest \cite{DH, schade-phyfluid, Stuart-jfm, TG, TGA}. The works \cite{schade-phyfluid, Stuart-jfm} studied the nonlinear stability of 2-D inviscid fluid around $(\tanh y,0)$. In the works \cite{TG, TGA}, the hyperbolic tangent function was taken as a 
laminar flow profile that provides the closest approximation to viscous flow in unbounded domains, especially at high Reynolds numbers.

From the perspective of mathematic, our main motivation stems from the fact that the linearized operator around $u(y)=\tanh y$ admits an embedding eigenvalue. The eigenvalue gives rise to novel dynamic phenomena and brings new mathematical challenges. To the best of our knowledge, there are few results in fluid mechanics considering the case when the linearized operator has embedding eigenvalues. It is worth mentioning  that \cite{Stepin} 
provided an abstract expansion of the Rayleigh operator in terms of eigenfunctions.

\subsection{Main results}

Let us first introduce the weighted norms
\begin{align*}
&\|f\|_{L^2_{w}(I)}:=\Big(\int_{I}|f(y)|^2w(y)dy\Big)^{\f12},\\
&\|f\|_{H^k_{w}(I)}:=\Big(\int_{I}\big(
\sum_{i=0}^k|f^{(i)}(y)|^2\big)w(y)dy\Big)^{\f12},
\end{align*}
 where $I\subset \mathbb{R}$, $k\in\mathbb{N}$ and $w(y)$ is a weight function.\smallskip

 Our main results are stated as follows.

 \begin{Theorem}\label{Thm:main}
 We have the following long-time dynamics for the solution of \eqref{eq:Euler-linearize-psi}:

 \begin{itemize}
\item[(1)] {\bf Mode $\al=1$}. $\mathcal{R}_{1}$ has a single embedding eigenvalue with one dimensional eigenspace
$\{\mathrm{sech}\, y\}$. Define
\begin{align}\label{def:a0} \quad a_{0}=p.v.\int_{\mathbb{R}}\f{\hat{\omega}_0(1,y)}{\sinh y}dy,\quad
b_{0}=\hat{\omega}_0(1,0).
\end{align}
If $\hat{\omega}_0(1,\cdot)\in H^3_{1/(u')^4}(\mathbb{R})$, then there exists a function $f_1(t,y)$, which is independent of $\omega_0$, satisfying
 \begin{align} \label{lim:lambda1}\lim_{t\to\infty}\|f_1(t,\cdot)\|_{H^1}=0,\;
\lim_{t\to\infty}t\|f_1(t,\cdot)\|_{L^2}=0,
\end{align}
such that for any $t\in\mathbb{R}$,
\begin{align}
\label{Thm:main-al-1}&\|\hat{\psi}(t,1,\cdot)
-\f{a_{0}+i\pi b_0}{2\pi i}\mathrm{sech}\,y -a_{0}f_1(t,\cdot)\|_{H^1}\leq C \langle t\rangle^{-1}\|\hat{\omega}_0(1,\cdot)\|_{H^2_{1/(u')^2}},\\
\label{Thm:main-al-12}&\|\hat{\psi}(t,1,\cdot)-\f{a_{0}+i\pi b_0}{2\pi i}\mathrm{sech}\,y
-a_{0}f_1(t,\cdot)\|_{L^2}\leq C \langle t\rangle^{-2}\|\hat{\omega}_0(1,\cdot)\|_{H^3_{1/(u')^4}},
\end{align}
where $C$ is an universal constant.

\item[(2)] {\bf Modes $\al\ge 2$}. $\mathcal{R}_{\al}$ 
has no embedding eigenvalues. If  $\hat{\omega}_0(\al,\cdot)\in H^2_{1/(u')^4}(\mathbb{R})$, then  it holds that for any $t\in\mathbb{R}$,
 \begin{align}
\label{Thm:main-al-geq2}&\|\pa_y\hat{\psi}(t,\al,\cdot)\|_{L^2}
+\al\|\hat{\psi}(t,\al,\cdot)\|_{L^2}\leq C\al^{-1} \langle t\rangle^{-1}\|\hat{\omega}_0(\al,\cdot)\|_{H^1_{1/(u')^2}},\\
\label{Thm:main-al-geq22}&\|\hat{\psi}(t,\al,\cdot)\|_{L^2}\leq C\al^{-2} \langle t\rangle^{-2}\|\hat{\omega}_0(\al,\cdot)\|_{H^2_{1/(u')^4}},
\end{align}
where $C$ is an universal constant.

\end{itemize}
\end{Theorem}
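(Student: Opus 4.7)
The plan is to use a Dunford contour integral representation of the semigroup generated by $-i\alpha \mathcal{R}_\alpha$, and then to extract decay in $t$ by integration by parts in the spectral variable $c$, after first isolating the contribution of the embedded eigenvalue when $\alpha = 1$. Concretely, if $\phi_\alpha(c,\cdot)$ denotes the decaying solution of the inhomogeneous Rayleigh equation
\begin{align*}
(u-c)(\phi'' - \alpha^2 \phi) - u'' \phi = \hat{\omega}_0(\alpha, \cdot),\qquad c \in \mathbb{C}\setminus[-1,1],
\end{align*}
Stone's formula produces
\begin{align*}
\hat{\psi}(t,\alpha,y) = \frac{1}{2\pi i}\int_{-1}^{1} e^{-i\alpha c t}\bigl[\phi_\alpha(c+i0,y) - \phi_\alpha(c-i0,y)\bigr]\,dc.
\end{align*}
I would construct $\phi_\alpha$ via a Green function built from two homogeneous solutions $\varphi_\pm(c,y)$ decaying at $\pm\infty$, and then establish a limiting absorption principle giving sharp bounds on $\phi_\alpha(c\pm i0, y)$ and its $c$-derivatives in the weighted spaces $H^{k}_{1/(u')^{2k}}$. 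Since $u'(y) = \mathrm{sech}^2 y$, each $\partial_c$ applied to the resolvent costs a factor of $1/(u-c)$, i.e.\ of $1/(u')$ after resolving the critical layer, which explains the weights $1/(u')^2$ and $1/(u')^4$ in the hypotheses.

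For $\alpha \ge 2$, the operator $\mathcal{R}_\alpha$ has no embedded eigenvalue, so the bracketed density is regular enough that integration by parts in $c$ is immediate: writing $e^{-i\alpha c t} = (-i\alpha t)^{-1}\partial_c e^{-i\alpha c t}$ and integrating once or twice, then invoking the LAP bounds on $\partial_c^k[\phi_\alpha(c+i0) - \phi_\alpha(c-i0)]$, delivers the $\alpha^{-1}\langle t\rangle^{-1}$ and $\alpha^{-2}\langle t\rangle^{-2}$ rates of \eqref{Thm:main-al-geq2}--\eqref{Thm:main-al-geq22}; the boundary terms at $c = \pm 1$ vanish because the weighted data forces sufficiently fast decay of $\varphi_\pm$ at $y = \pm\infty$.

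For $\alpha = 1$, one first checks directly that $(\partial_y^2 - 1)\mathrm{sech}\,y = (u''/u)\mathrm{sech}\,y$, so $\mathrm{sech}\,y$ is an $L^2$ eigenfunction of $\mathcal{R}_1$ with eigenvalue $c = 0$ embedded in the continuous spectrum $[-1,1]$. At $c=0$ both $\varphi_\pm$ reduce to multiples of $\mathrm{sech}\,y$, the Wronskian vanishes, and $\phi_1(c,\cdot)$ acquires a singularity which must be expanded explicitly. I would Taylor-expand $\varphi_\pm$ and their Wronskian near $c=0$ and perform a Plemelj-type analysis of the jump $\phi_1(c+i0)-\phi_1(c-i0)$, producing a three-term decomposition of the density: a distributional part at $c = 0$ which, after combining the principal-value integral against $1/\sinh y$ (giving $a_0$) with the $i\pi$ delta-term at the critical point $y_c=0$ (giving $i\pi b_0 = i\pi\hat\omega_0(1,0)$), equals $\tfrac{a_0 + i\pi b_0}{2\pi i}\mathrm{sech}\,y$; a subleading singularity in $c$ (logarithmic or fractional) whose inverse Fourier transform in $c$ is universal in $y$ and defines the slow-decay profile $f_1(t,y)$, multiplied by the data-dependent constant $a_0$; and a smooth remainder to which the integration-by-parts argument of the $\alpha\ge 2$ case applies, producing the error bounds $\langle t\rangle^{-1}$ and $\langle t\rangle^{-2}$ of \eqref{Thm:main-al-1}--\eqref{Thm:main-al-12}.

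The main obstacle is precisely the analysis at the embedded eigenvalue: determining the exact order of vanishing of the Wronskian of $\varphi_\pm$ at $c = 0$, carrying out the expansion of $\phi_1(c,y)$ to sufficient order, and verifying that the residue combines into exactly the Plemelj coefficient $(a_0 + i\pi b_0)/(2\pi i)$. One must also extract the slow-decay profile $f_1$ in a form that is manifestly independent of $\hat\omega_0$ and satisfies the sharp rate $\lim_t t\|f_1(t,\cdot)\|_{L^2} = 0$ while being nontrivial, which is what forces the decomposition \eqref{Thm:main-al-1}--\eqref{Thm:main-al-12} rather than a cleaner two-term expansion. The Pöschl--Teller algebraic structure of the homogeneous Rayleigh equation for the $\tanh$ profile should make these expansions computable in closed form, but the matching of universal constants with the Plemelj jumps, and the verification that the slow decay is strictly worse than pure inviscid damping, is what genuinely distinguishes this regime from the monotone cases treated in \cite{WZZ-cpam,WZZ-cmp}.
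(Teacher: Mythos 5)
Your proposal follows essentially the same route as the paper: a Dunford/Stone contour representation of the semigroup, a Green's-function construction of the spectral density with a quantitative limiting-absorption principle, integration by parts in $c$ for $\alpha\ge 2$, and, for $\alpha=1$, a local Wronskian expansion near the embedded eigenvalue $c=0$ together with a Plemelj-style three-term decomposition into eigenfunction projection, a rank-one slow-decay profile $a_0 f_1$, and a smooth remainder handled as in the higher modes. Your only substantive inaccuracy is describing the subleading singularity of the density as ``logarithmic or fractional'': the paper establishes that the Wronskian vanishes exactly linearly, $W(\textbf{c},1)/\textbf{c}\to\pm2\pi i$ as $\textbf{c}\to 0$ from the upper/lower half-planes, so the jump density carries a genuine simple pole $\sim a_0/(\pi i c)$, and $f_1$ arises from pairing this $1/c$ pole with $e^{-ict}\big(\Gamma(y,c)-\Gamma(y,0)\big)$ in a principal-value sense rather than from any milder singularity --- but since you identify ``the exact order of vanishing of the Wronskian'' as the principal obstacle, the blueprint is structurally sound.
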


The following theorem is a direct consequence of Theorem \ref{Thm:main} under more assumptions on the initial vorticity.

 \begin{Theorem}\label{Thm:main0}
 If the initial vorticity satisfies the following conditions
  \begin{itemize}
 \item[(1)]  $\hat{\omega}_0(0,y)=0$ for any $y\in\mathbb{R}$,

\item[(2)] $\omega_0\in H^{-1}_xH^3_{y,1/(u')^4}(\mathbb{T}\times \mathbb{R})$,

\item[(3)]  $a_{0}=b_0=0$,
 \end{itemize}
 then it holds that for any $t\in\mathbb{R}$,
 \begin{align}\label{Thm:main-gernal}
 \begin{split}
&\|\textbf{V}(t)\|_{L^2}\leq C \langle t\rangle^{-1}\|\omega_0\|_{H^{-1}_xH^2_{y,1/(u')^2}},\\
&\;\|V^2(t)\|_{L^2}\leq C \langle t\rangle^{-2}\|\omega_0\|_{H^{-1}_xH^3_{y,1/(u')^4}},
\end{split}
\end{align}
where $C$ is an universal constant.
\end{Theorem}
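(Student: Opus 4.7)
The plan is to deduce Theorem~\ref{Thm:main0} directly from Theorem~\ref{Thm:main} via Fourier decomposition in $x$ and summation in $\al$. Since $\textbf{V}=(\pa_y\psi,-\pa_x\psi)$, Plancherel gives
\begin{align*}
\|\textbf{V}(t)\|_{L^2}^2&=\sum_{\al\in\mathbb{Z}}\bigl(\|\pa_y\hat{\psi}(t,\al,\cdot)\|_{L^2}^2+\al^2\|\hat{\psi}(t,\al,\cdot)\|_{L^2}^2\bigr),\\
\|V^2(t)\|_{L^2}^2&=\sum_{\al\in\mathbb{Z}}\al^2\|\hat{\psi}(t,\al,\cdot)\|_{L^2}^2,
\end{align*}
so it suffices to control each Fourier mode. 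The zero mode will be killed first: setting $\al=0$ in \eqref{eq:Euler-linearize-omega} gives $\pa_t\hat{\omega}(t,0,y)=0$, so assumption~(1) propagates as $\hat{\omega}(t,0,\cdot)\equiv 0$, whence $\hat{\psi}(t,0,\cdot)\equiv 0$. The negative modes are complex conjugates of the positive ones, so the series reduces to $\al\ge 1$.

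Next, for $\al=1$, assumption~(3) will wipe out the non-decaying pieces in Theorem~\ref{Thm:main}(1): with $a_0=b_0=0$, \eqref{Thm:main-al-1}--\eqref{Thm:main-al-12} collapse to the clean bounds
\[
\|\hat{\psi}(t,1,\cdot)\|_{H^1}\le C\langle t\rangle^{-1}\|\hat{\omega}_0(1,\cdot)\|_{H^2_{1/(u')^2}},\qquad
\|\hat{\psi}(t,1,\cdot)\|_{L^2}\le C\langle t\rangle^{-2}\|\hat{\omega}_0(1,\cdot)\|_{H^3_{1/(u')^4}},
\]
which, via the $H^{-1}_x$ weight $(1+\al^2)^{-1}\big|_{\al=1}=1/2$, already fit neatly into the desired right-hand side. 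For $\al\ge 2$ the estimates \eqref{Thm:main-al-geq2}--\eqref{Thm:main-al-geq22} will be applied directly; the crucial feature is the built-in $\al^{-1}$ and $\al^{-2}$ prefactors.

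Finally, one squares and sums. For $\textbf{V}$, the $\al^{-2}$ gain from \eqref{Thm:main-al-geq2} pairs with $(1+\al^2)^{-1}$, delivering
\[
\|\textbf{V}(t)\|_{L^2}^2\le C\langle t\rangle^{-2}\sum_{\al\ge 1}(1+\al^2)^{-1}\|\hat{\omega}_0(\al,\cdot)\|_{H^2_{1/(u')^2}}^2\le C\langle t\rangle^{-2}\|\omega_0\|_{H^{-1}_xH^2_{y,1/(u')^2}}^2.
\]
For $V^2$, the $\al^2$ coming from $\pa_x$ is absorbed by the $\al^{-4}$ produced by squaring \eqref{Thm:main-al-geq22}, leaving a residual $\al^{-2}$ that again pairs with the $H^{-1}_x$ weight to yield the $\langle t\rangle^{-4}$ bound. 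The main (and essentially only) obstacle is careful bookkeeping of $\al$-weights so that the series converges into the $H^{-1}_x$ norm; no further analytic input beyond Theorem~\ref{Thm:main} is needed, and Theorem~\ref{Thm:main0} is in that sense a direct corollary of the main theorem.
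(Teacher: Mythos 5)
Your proposal is correct and follows essentially the same route as the paper: annihilate the zero mode via assumption~(1), exploit Hermitian symmetry to restrict to $\al\ge 1$, apply Theorem~\ref{Thm:main} mode-by-mode with $a_0=b_0=0$ collapsing the $\al=1$ bounds, and sum using the built-in $\al^{-1}$, $\al^{-2}$ prefactors against the $H^{-1}_x$ weight.
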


Let us give some remarks on our results.

\begin{itemize}

\item For the first mode, compared with previous results, we find that in the presence of an embedding eigenvalue, the optimal decay rates hold when certain appropriate functions are removed from the solution.

   We rewrite the new dynamic phenomenon \eqref{Thm:main-al-1}-\eqref{Thm:main-al-12} in the form of operator as
    \begin{align}\label{decay:operator}
    \|e^{-i\mathcal{R}_1t}\left(I-\mathcal{P}_{0}\right)-F_t\|_{H^{5-k}_{1/(u')^{4-2k}}\to H^{1-k}}\leq C\langle t\rangle ^{-2+k},\quad k=0,1,
    \end{align}
    where $\mathcal{P}_{0}(\psi_0):=\f{a_0\left(-(\pa_y^2-1)\psi_0\right)+i\pi b_0\left(-(\pa_y^2-1)\psi_0\right)}{2\pi i}\mathrm{sech}\;y$ is the projection of $e^{-i\mathcal{R}_1t}$ onto the eigenspace, $F_t(\psi_0):=a_0\left(-(\pa_y^2-1)\psi_0\right)f_{1}(t,y)$ is a rank one operator.
    A natural expectation is that removing the projection $\mathcal{P}_{0}$ from $e^{-i\mathcal{R}_{1}t}$ would result in the optimal inviscid damping.  However, unexpectedly, an additional rank one operator $F_t$ emerges, which leads to a loss of decay compared to the optimal decay rate. 
     From the perspective of technical level, the emergence of rank one operator $F_t$ can be attributed to  the $\f{1}{c}$ singularity in the resolvent resulting from the embedding eigenvalue, see the decomposition \eqref{def:Psi1} in  Lemma \ref{lem:dec-psi} and Remark \ref{rmk:K}. 

    On the other hand, the  phenomenon  is similar to the  dispersive estimates for Schr\"odinger group $e^{itH}$ with $H=-\Delta+V$, in the presence of a resonance and/or an eigenvalue at zero energy, see  \cite{ES-DPDE, ES-JAM, Y-CMP}. For a more detailed history, see the survey paper \cite{schlag2007}.

\item From  \eqref{eq:Euler-linearize-omega}, it is easy to  see that the following quantities are  conserved
\begin{align*}
a(t):=p.v.\int_{\mathbb{R}}\f{\hat{\omega}_0(t,1,y)}{\sinh y}dy\equiv a_0\quad\text{and}\quad
b(t):=\hat{\omega}_0(t,1,0)\equiv b_0.
\end{align*}
Moreover, if $a_0=b_0=0$ for the initial vorticity,  then linear inviscid damping holds, i.e., Theorem \ref{Thm:main0}.

\item Consider a family of hyperbolic tangent flows $\tanh \f{y}{L}$,\;$L>0$. It was known from \cite{Lin03} or \cite{LLZ} that
    \begin{itemize}
    \item
    Spectral instability holds for $\mathcal{R}_{\al}\left(1\leq \al<\f{1}{L}\right)$,  if $L<1$.
     \item
     Spectral stability holds for $\mathcal{R}_{\al}(\al\geq1)$,  if $L\geq 1$.
    \end{itemize}
The first author has proved the linear inviscid  damping for $L>1$ and $\al\geq 1$ in \cite{R}. This paper deals with  more difficult critical case $L=1$.
\item Taylor \cite[(2.22)\;\text{with}\;$\sigma=0$]{Taylor}
    provides anothor family of shear flows of hyperbolic tangent type with the form
    \begin{align}\label{def:u-gamma}
    u_{\gamma}(y)=\f{\f{\gamma}{2}e^y-\f{1}{2\gamma}e^{-y}}
    {\f{\gamma}{2}e^y+\f{1}{2\gamma}e^{-y}},\;\gamma>0.
    \end{align}
Similar result as in Theorem \ref{Thm:main} holds for \eqref{def:u-gamma}, which follows from the $y-$translation invariance of equation \eqref{eq:Euler-linearize-psi} and the observation $u_{\gamma}(y)=\tanh (y+\ln \gamma)$.

\item We believe that the approach developed here could be applied to general $\tanh$-type flows.

\end{itemize}

\subsection{Framework, difficulties, and  ideas}
 We adopt the framework established in previous works  \cite{WZZ-cpam, WZZ-apde, WZZ-am}. However, there will be two types of essential difficulties inherent in this problem.
\begin{itemize}
\item[(1)] The singularity of the resolvent/spectal density function at embedding eigenvalue $c=0$.

\item[(2)]$u'(y)$ is positive everywhere but decays exponentially at infinity. Moreover, the physical domain $\mathbb{T}\times \mathbb{R}$ lacks compactness.

    We note that  the previous works \cite{WZZ-cpam, WZZ-apde, WZZ-am} have primarily focused on bounded domains, where the strict monotonicity of the flow has played a crucial role. Regarding the aspect of monotonicity, we believe that the difficulty (2) of our problem falls between that of \cite{WZZ-cpam} and \cite{WZZ-apde}/\cite{IIJ}($u'(y)$ has a true zero point).
\end{itemize}

  The framework itself is straightforward, aiming to deduce the explicit expression of the stream function through the following approach:
\begin{align}\label{eq:stream formula'}
\notag\hat{\psi}(t,\al,y)&=\f{1}{2\pi i}\int_{\pa\Omega_{\ep}}e^{-i\al \textbf{c}t}(\textbf{c}-\mathcal{R}_{\al})^{-1}\hat{\psi}_0(\al,y)d\textbf{c}\\
&\xlongequal{\textbf{Step 1}}\lim_{\ep\to0}\f{1}{2\pi i}\int_{\pa\Omega_{\ep}}e^{-i\al \textbf{c}t}\Phi(\al,y,\textbf{c})d\textbf{c}\notag\\
&\xlongequal{\textbf{Step 2}}\f{1}{2\pi i}\int_{\mathrm{Ran}\;u}e^{-i\al ct}\tilde{\Phi}(\al,y,c)dc,
\end{align}
where $\Omega_{\ep}\supset \mathrm{Ran}\;u$ denotes a complex domain with a thickness of $\ep$,  and the spectral density function $\Phi(\al,y,\textbf{c})$ solves the Rayleigh equation
\begin{align}\label{eq:inhomou}
(u-\textbf{c})(\Phi''-\al^2\Phi)-u''\Phi=\hat{\omega}_0.
\end{align}
Roughly speaking, the framework can be divided into three key steps.\smallskip

\begin{itemize}

\item[{\bf Step 1}.] Solving the Rayleigh equation quantitatively.

We first construct a smooth solution  $\phi_1(y,c)$ in an {open region} $(y,\textbf{c})\in\mathbb{R}\times \tilde{D}_{\ep_0}$ satisfying the following equation
    \begin{align*}
   (u-\textbf{c})(\phi_1''-\al^2\phi_1)+2u'\phi_1'=0,\;
   \phi_1(y_c,\textbf{c})=1,\;\phi_1'(y_c,\textbf{c})=0,
    \end{align*}
    where $\phi=(u-\textbf{c})\phi_1$ satisfies the homogenous Rayleigh equation, see Proposition \ref{prop:Rayleigh-Hom}. Then we use $\phi_1$ to  construct the unique solution for inhomogeneous Rayleigh equation \eqref{eq:inhomou}, see Proposition \ref{pro:solve-Phi}.
   The key of this step is to estimate $\phi_1$ uniformly in $\textbf{c}$, $y$, $\al$, where  the main  difficulty lies in (2).

   Indeed, the previous works \cite{WZZ-cpam, WZZ-apde, WZZ-am} constructed $\phi_1(y,\textbf{c})$  in a {compact region} such as $[u(0),u(1)]\times \Omega_{\ep_0}$, due to the bounded physical domain. Consequently, the uniform continuity on a compact region allows for inheriting the estimates for real variable $c$ to complex variable $\textbf{c}$.  To overcome the lacking of compactness in our problem, we introduce a new region
\begin{align*}
O_{\ep_0}&:=\big\{c+i\ep|c\in(-1,1),\; 0<|\ep|< \min\{C_o^{-1}(1-c^2), \ep_0\}\big\},
\end{align*}
where the real properties can be inherited, see Proposition \ref{cor:phi_1-infty}. Here, the explicit vanishing rate of $u'(y)$ at infinity plays an important role.
The uniform real estimates in $c$ can be deduced from integral equations, see Lemmas \ref{prop:phi1} and \ref{prop:goodphi1}.

\item[{\bf Step 2}.] Establishing the limiting absorption principle(LAP).

For $c\in(-1,1)/\{0\}$, the difficulty (2) is a crucial obstacle in establishing LAP. On one hand, following a similar approach as in \cite[Proposition 6.7]{WZZ-cpam}, we prove the spectral density function has the limits $\Phi_{\pm}$:
\begin{align*}
\lim_{\ep\to 0\pm}\Phi(\al,y,c_{\ep})=\Phi_{\pm}(\al,y,c).
\end{align*}
On the other hand, in order to apply the dominated convergence theorem in \eqref{eq:stream formula'},  we establish the {resolvent bound which is uniform in $\ep$} and {weighted in $c$}:
 \begin{align}\label{bd:yn}
|\Phi(\al,y,c_{\ep})|\leq C(1-c^2)^{-\f12}e^{-C^{-1}|y-y_c|}.
\end{align}
In contrast to the resolvent bounds in previous works, i.e., \cite[Proposition 6.2]{WZZ-apde} and \cite[Lemma 2.4]{WZZ-cmp},  which were proved using  the contradiction argument and blow-up analysis,  our bound \eqref{bd:yn} is derived directly from the {explicit expression of the inhomogeneous solution} constructed in the step 1, cf. \eqref{solution-Phi-1}. The weight $(1-c^2)^{-\f12}$ comes from the exponential degeneracy of $u'$ at infinity, see Proposition \ref{lem:limitfunction}.

For the embedding eigenvalue $c=0$, the difficulty (1) plays a crucial role in establishing LAP. The singularity of spectral density function $\Phi(\al,y,\textbf{c})$ poses an obstacle at $c=0$ while using conventional approaches. We establish \textbf{a new type of LAP}:
\begin{align*}
\lim_{\pm\mathrm{Im}(\textbf{c})>0,\;|\textbf{c}|\to 0}\textbf{c}
\Phi(1,y,\textbf{c})=\f{\mathcal{T}(\hat{\omega}_0)(0)\pm i\pi \hat{\omega}_0(0)}{\pm2\pi i}\mathrm{sech}\,y ,
\end{align*}
where $\mathcal{T}$ is some integral operator. The new resolvent bound takes
 \begin{align*}
\Big|\textbf{c}\Phi(1,y,\textbf{c})\Big|\leq C,\quad \text{for}\;\textbf{c}\in B_{\delta}(0)/(-1,1)\subset O_{\ep_0},
\end{align*}
see Proposition \ref{lem:B-delta1}. The key is to conduct a {refined analysis of the Wronskian} of the Rayleigh equation near $c=0$, which gives
 \begin{align*}\lim_{\textbf{c}\to 0}\f{W(\textbf{c},1)}{\textbf{c}}=\mathrm{sgn}(\mathrm{Im}(\textbf{c}))2\pi i,
\end{align*}
see Section 5. As soon as the LAP is established, the expression of stream function for $\al=1$ can be obtained by {a carefully chosen  contour with two parameters $\ep$ and $\delta$},  which separates the singularity from the other continuous spectrums. See Proposition \ref{Pro:yn}, Figure 2 and Figure 3.

\item[{\bf Step 3}.] Estimating the integral operators.

We give the boundedness of  linear/bilnear integral operators, which are emerged in the expression of stream function.
The main difficulty in this step is (2). To overcome (2), we develop {weighted estimates} for both spatial variable $y\in\mathbb{R}$ with exponential weight $1/u'(y)$ and spectral variable $c\in(-1,1)$ with degenerate weight $1-c^2$, see Lemma \ref{lem:pa_cA} and Lemma \ref{lem:key-T}. We note that $u'(y_c)\sim 1-c^2$ and  bound \eqref{u-key} plays a crucial role in establishing the desired estimates. 
\end{itemize}

\subsection{Notations}

\begin{itemize}
\item To distinguish a complex number from a real number, we sometimes use the notation $\textbf{c}$ or $c_{\ep}=c+i\ep$.
\item 
We define the following complex domains:
\begin{align}\label{domain:D}
D_{\ep_0}:=\{c+i\ep| c\in(-1,1),\; 0<|\ep|<\ep_0\},\quad
\widetilde{D}_{\ep_0}:=D_{\ep_0}\cup(-1,1).
\end{align}
 \begin{align}
\label{domain:O-ep}O_{\ep_0}&:=\big\{c+i\ep|c\in(-1,1),\; 0<|\ep|< \min\{C_o^{-1}(1-c^2), \ep_0\}\big\},\quad \widetilde{O}_{\ep_0}:=O_{\ep_0}\cup(-1,1),
\end{align}
which satisfies
$O_{\ep_0}\subset D_{\ep_0},\quad
\widetilde{O}_{\ep_0}\subset \widetilde{D}_{\ep_0}.$ Here, $\ep_0\in(0,\f12)$, $C_o>1$ are fixed constants determined later.
The open ball are denoted as
\begin{align*}
B_{\delta}(0):=\{\textbf{c}||\textbf{c}|< \delta\}.
\end{align*}


\item For $c\in \widetilde{D}_{\ep_0}$,  we denote $y_c:=u^{-1}(\mathrm{Re}(c))$. In particular, for $c\in(-1,1)$, we denote $y_c:=u^{-1}(c)$.

\item Good derivative is defined as \begin{align}\label{def:goodG}
    \pa_G:=\f{\pa_y}{u'(y_c)}+\pa_c.
    \end{align}
\item $f(y)\sim g(y)$ denotes
$$C^{-1}g(y)\leq f(y)\leq C g(y),$$
where the constant $C$ is univeral.
\item The parameter $\al$ is always a positive integer. Occasionally,  we may omit the explicit dependence on $\al$ for brevity. 
\item  For simplicity,  sometimes we use the notation $'$ to represent the derivative with respect to $y$, and $''$ to represent the second order  derivative with respect to $y$, etc.
\end{itemize}

\section{Homogenous Rayleigh equation}

 In this section, we take $\ep_0\in(0,\f12]$ in \eqref{domain:D}-\eqref{domain:O-ep}. The goal of this section is to solve the homogenous Rayleigh equation for $c\in \widetilde{D}_{\ep_0}$:
\begin{align}\label{eq:phi}
&(u-c)(\phi''-\al^2\phi)-u''\phi=0.
\end{align}
We follow the framework introduced in \cite{WZZ-cpam,WZZ-apde}. The degeneracy  of $u'(y)$ at infinity leads to some essential difficulties.

To proceed, let's present  some basic properties of $u(y)=\tanh y$, which will be used frequently. The following lemma can be proved by direct calculations.
\begin{lemma}\label{lem:simple-useful-equ}
It holds that
\begin{itemize}
\item
$u'(y)=1-u(y)^2,\; u''(y)=-2u(y)u'(y).$
\item
$u(y)$, $u''(y)$ are odd in $y$.  $u'(y)$, $u'''(y)$ are even in $y$.
\item
For $c\in \mathrm{Ran}u=(-1,1)$, it holds that
\begin{align}
\notag u'(y_c)&=1-c^2,\; u''(y_c)=-2c(1-c^2),\;u'(y_c)-u'(y)=(u+c)(u-c).
\end{align}
\item It holds that
\begin{align}
\notag\pa_G\left(u(y)-c\right)&=\f{-(u+c)(u-c)}{u'(y_c)},\;
\pa_G\left(\f{u'(y)}{u'(y_c)}\right)=\f{-2(u-c)u'(y)}{u'(y_c)^2}.\\
\notag\pa_G\left(\f{1-c^2}{u(y)-c}\right)&=1,\;
\pa_G\left(\f{u'(y)}{u(y)-c}\right)
=-\f{u'(y)}{u'(y_c)},\;\pa_G\left(\f{(1-c^2)u'(y)}{(u(y)-c)^2}\right)=0.
\end{align}

\end{itemize}
\end{lemma}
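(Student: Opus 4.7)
The lemma is a compendium of elementary identities for $u(y)=\tanh y$, so the strategy is pure direct computation rather than any structural argument. I would dispatch the four bullets in order, treating each as an exercise in the chain rule; only the last few identities need a small amount of care.

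For the first bullet, the identity $u'=1-u^2$ is the standard formula $\mathrm{sech}^2 y=1-\tanh^2 y$, and then $u''=-2uu'$ follows by differentiating. The second bullet is immediate from the parity of $\tanh$: odd smooth functions have even derivatives and vice versa. The third bullet is obtained by specializing the formulas from the first bullet at $y=y_c$, using $u(y_c)=c$, together with
\begin{align*}
u'(y_c)-u'(y)=(1-c^2)-(1-u^2)=u^2-c^2=(u+c)(u-c).
\end{align*}

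For the fourth bullet, I would invoke the Leibniz rule for $\pa_G=\pa_y/u'(y_c)+\pa_c$ (valid because $\pa_G$ is a linear combination of two commuting derivations) and reduce each expression to a short calculation. The computation $\pa_G(u-c)=u'(y)/u'(y_c)-1$, rewritten via the third bullet, gives the first identity. For $\pa_G\bigl(u'(y)/u'(y_c)\bigr)$, one uses $\pa_c u'(y_c)=-2c$ to collect $u''(y)+2cu'(y)=-2(u-c)u'(y)$. The identity $\pa_G\bigl((1-c^2)/(u-c)\bigr)=1$ is the key computation: putting the $\pa_y/u'(y_c)$ and $\pa_c$ pieces over a common denominator yields a numerator $u^2-2cu+c^2=(u-c)^2$, which cancels against $(u-c)^2$; this is essentially the algebraic miracle that makes the good derivative $\pa_G$ a useful object throughout the paper. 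The remaining two identities then follow from Leibniz rule applied to the factorizations
\begin{align*}
\f{u'(y)}{u-c}=\f{1-c^2}{u-c}\cdot\f{u'(y)}{1-c^2},\qquad \f{(1-c^2)u'(y)}{(u-c)^2}=\f{1-c^2}{u-c}\cdot\f{u'(y)}{u-c},
\end{align*}
combined with the already established pieces; in particular, in the last case the two contributions cancel exactly, explaining why $\pa_G$ annihilates that quantity.

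The main obstacle, if one can call it that, is only bookkeeping: keeping the two independent variables $y$ and $c$ straight, remembering that $y_c$ depends on $c$ with $\pa_c y_c=1/u'(y_c)$, and not confusing $u'(y)$ with $u'(y_c)$. No genuine analytic difficulty arises.
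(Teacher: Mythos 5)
Your proof is correct and consists of exactly the direct calculations the paper alludes to (the paper simply states ``The following lemma can be proved by direct calculations'' and omits details). All the identities check out, and the factorizations you use for the last two $\pa_G$ identities are a clean way to organize the bookkeeping. One minor remark: the Leibniz rule for $\pa_G$ holds because $\pa_G$ is a first-order differential operator (a derivation), not because $\pa_y$ and $\pa_c$ commute — the $c$-dependent coefficient $1/u'(y_c)$ does not affect the Leibniz property, though it does matter for iterated derivatives $\pa_G^k$ later in the paper.
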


\begin{lemma}\label{lem:simple-useful-inequ}
It holds that
\begin{align}
\label{sinhy} \cosh y\sim e^{|y|},\; |\sinh y|&\sim e^{|y|}\min\{|y|,1\},\;
 |\tanh y|\sim \min\{|y|,1\}.\\
\label{coshy1}\cosh y&\leq C\cosh z\cosh (y-z).\\
\label{coshy} \cosh z&\sim\cosh y,\;\text{if}\;\;\;|y-z|\leq C.
\end{align}
For $(y,c)\in\mathbb{R}\times (-1,1)$, it holds that
\begin{align}
\label{fm:u(y)-c}|u(y)-c|
&=\f{\sinh |y-y_c|}{\cosh y\cosh y_c},
\end{align}
and
\begin{align}
\label{u-key}\f{u'(y)+u'(y_c)+(1-c^2)}{|u(y)-c|}&\leq \f{C}{\tanh |y-y_c|}\leq \f{C}{\min\{|y-y_c|,1\}},
\end{align}
where the constant $C$ is  independent of $y,c,\al$. For  $c\in \widetilde{D}_{\ep_0}$, $y_c\leq z\leq y$ or $y\leq z\leq y_c$, it holds that
\begin{align}\label{u(z)-c<u(y)-c}
|u(z)-c|\leq |u(y)-c|.
\end{align}
\end{lemma}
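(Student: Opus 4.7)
The proof consists of elementary identities and inequalities for hyperbolic functions; the only task is careful bookkeeping so that all constants are independent of $y$, $c$, and $\al$. For \eqref{sinhy} I would use that $\cosh$ is even while $\sinh$ and $\tanh$ are odd, reducing to $y\geq 0$, and then split into $y\in[0,1]$ and $y\geq 1$. On $[0,1]$ Taylor expansion gives $\cosh y\sim 1\sim e^{|y|}$, $\sinh y\sim y$, $\tanh y\sim y$; on $[1,\infty)$ the exponential terms dominate and $\cosh y\sim|\sinh y|\sim e^y/2$, while $|\tanh y|\sim 1$. Gluing at $y=1$ yields the stated equivalences with $\min\{|y|,1\}$. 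For \eqref{coshy1} the addition formula $\cosh y=\cosh z\cosh(y-z)+\sinh z\sinh(y-z)$ combined with $|\sinh a\,\sinh b|\leq \cosh a\cosh b$ gives $\cosh y\leq 2\cosh z\cosh(y-z)$, and \eqref{coshy} follows by applying \eqref{coshy1} to both $(y,z)$ and $(z,y)$ while noting $\cosh(y-z)\leq\cosh C$.

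For \eqref{fm:u(y)-c} I would invoke the subtraction identity
\[
\tanh y-\tanh y_c=\f{\sinh y\cosh y_c-\cosh y\sinh y_c}{\cosh y\cosh y_c}=\f{\sinh(y-y_c)}{\cosh y\cosh y_c},
\]
take absolute values, and recall that $c=\tanh y_c$ by the very definition of $y_c$. For the key bound \eqref{u-key}, I would substitute $u'(y)=1/\cosh^2 y$ and $u'(y_c)=1-c^2=1/\cosh^2 y_c$ so that the numerator equals $1/\cosh^2 y+2/\cosh^2 y_c$. Dividing by \eqref{fm:u(y)-c} gives the ratio
\[
\f{\cosh y_c/\cosh y+2\cosh y/\cosh y_c}{\sinh|y-y_c|},
\]
and two applications of \eqref{coshy1} bound the numerator by $C\cosh(y-y_c)$. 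The resulting factor $\cosh(y-y_c)/\sinh|y-y_c|=1/\tanh|y-y_c|$, together with the $\tanh$ equivalence in \eqref{sinhy}, yields both inequalities.

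Finally, for \eqref{u(z)-c<u(y)-c} with complex $c=c_r+i\e$, $c_r\in(-1,1)$, and $y_c=u^{-1}(c_r)$, I would write $|u(z)-c|^2=(u(z)-c_r)^2+\e^2$. Since $u$ is strictly increasing with $u(y_c)=c_r$, on either of the intervals $[y_c,y]$ or $[y,y_c]$ the real quantity $u(z)-c_r$ keeps a fixed sign and $|u(z)-c_r|$ is monotone in $z$, so $|u(z)-c|^2$ is monotone in $z$ and the desired inequality follows. I do not anticipate any genuine obstacle; the only mild subtlety is recognizing that the summands $u'(y_c)$ and $(1-c^2)$ in the numerator of \eqref{u-key} are the same quantity, which is what keeps the bookkeeping clean, and that the claim is trivially $\al$-independent because $\al$ never appears in any of the expressions.
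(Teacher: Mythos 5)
Your proposal is correct and follows essentially the same route as the paper's, which simply declares \eqref{sinhy} and \eqref{fm:u(y)-c} "obvious," derives \eqref{coshy1} from the exponential bound $\cosh y\sim e^{|y|}\leq e^{|z|}e^{|y-z|}$ rather than your addition-formula argument (both are valid, yours gives the sharper constant $2$), and proves \eqref{u-key} by exactly the same bookkeeping with $u'(y_c)=1-c^2$ and two applications of \eqref{coshy1}. Your explicit treatment of \eqref{u(z)-c<u(y)-c} via $|u(z)-c|^2=(u(z)-\mathrm{Re}\,c)^2+(\mathrm{Im}\,c)^2$ and monotonicity is in fact cleaner than the paper's one-line remark there.
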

\begin{proof}
The estimate \eqref{sinhy} is obvious. The inequality \eqref{coshy1} follows from
$$\cosh y\sim e^{|y|}\leq e^{|z|}e^{|y-z|}\leq C \cosh z\cosh |y-z|,$$
 which also implies \eqref{coshy}. The identity \eqref{fm:u(y)-c} is obvious. The inequality \eqref{u-key} follows from \eqref{coshy1} and \eqref{sinhy} that
\begin{align}
\notag|u(y)-c|&=\f{\sinh |y-y_c|}{\cosh y\cosh y_c}\geq C^{-1}\f{\sinh |y-y_c|}{\cosh|y-y_c|}\max\{\cosh^{-2} y_c,\cosh^{-2} y\}\\
&= C^{-1}\tanh |y-y_c|\Big(u'(y_c)+u'(y)\Big).\notag
\end{align}
Thanks  to the monotonicity of $u(y)$, together with $1-c^2=u'(y_c)$, we have \eqref{u(z)-c<u(y)-c}.
\end{proof}

\subsection{Existence of the solution}

 Let    $\phi_1(y,c)=\f{\phi(y,c)}{u(y)-c}$ solve
\begin{align}
\label{eq:Rayleigh-hom-S1}& \phi_1''-\al^2\phi_1+\f{2u'}{u-c}\phi_1'=0,\;\;\phi_1(y_c,c)=1,\,\;\phi_1'(y_c,c)=0.
\end{align}

\begin{definition}\label{def:T_kj}
We define the following integral operators
\begin{align*}
Tf(y,c)&:= T_0\circ T_{2,2}f=\int_{y_c}^y\f{\int_{y_c}^{z}f(w,c)(u(w)-c)^2dw} {(u(z)-c)^2}dz,
\end{align*}
with
\begin{align*}
T_0f(y,c)&:=\int_{y_c}^yf(z,c)dz,\\
T_{k,j}f(y,c)&:=\frac{\int_{y_c}^yf(z,c)(u(z)-c)^kdz}{(u(y)-c)^j}(y\neq y_c),\;\;j\leq k+1, \;\;T_{k,k}f(y_c,c)=0.
\end{align*}
\end{definition}

Equation \eqref{eq:Rayleigh-hom-S1} is equivalent to the following integral formulation
\begin{align*}
\phi_1&=1+\al^2T\phi_1.
\end{align*}

\begin{proposition}\label{prop:Rayleigh-Hom}
There exists a unique solution $\phi_1(y,c)\in  C\big(\mathbb{R}\times \widetilde{D}_{\ep_0}\big)\cap
C^2\big(\mathbb{R}\times (-1,1)\big)$ to \eqref{eq:Rayleigh-hom-S1}, which holds
\begin{align}
\label{phi_1-1}\phi_1(y,c)&=1+\al^2\int_{y_c}^y\f{\int_{y_c}^z(u(w)-c)^2\phi_1(w,c)dw}{(u(z)-c)^2}dz.
 \end{align}
Consequently, $\phi(y,c)=(u(y)-c)\phi_1(y,c)\in  C\big(\mathbb{R}\times \widetilde{D}_{\ep_0}\big)\cap
C^2\big(\mathbb{R}\times (-1,1)\big)$)  is a solution to the homogenous Rayleigh equation \eqref{eq:phi}. Moreover, it holds that

(1) $\pa_y\phi_1(y,c),\f{\phi_1(y,c)-1}{(u(y)-c)^2}\in  C\big(\mathbb{R}\times \widetilde{D}_{\ep_0}\big)$.

  (2)  for any $y\in\mathbb{R}$, $\phi_1(y,c)$, $\f{\phi_1(y,c)-1}{(u(y)-c)^2}$ is analytic in $\widetilde{D}_{\ep_0}$.

  (3) $\phi_1(y,0)$ is even in $y$, $\pa_y\phi_1(y,0)$ is odd in $y$.

  (4) for $(y,c)\in\mathbb{R}\times (-1,1)$, it holds that
 \begin{align}
 &\phi_1(y,c)\geq \phi_1(z,c)\geq 1,\quad \text{for}\;\;y\leq z\leq y_c\; \text{or}\;\; y_c\leq z\leq y\label{phi_1(z)leq phi_1(y)},\\
  &0\leq (y-y_c)\pa_y\phi_1(y,c)\leq \al^2(y-y_c)^2\phi_1(y,c),\label{phi_1-1leq''}\\
 &0\leq \phi_1(y,c)-1\leq \f12\al^2(y-y_c)^2\phi_1(y,c).\label{phi_1-1leq}
 \end{align}

\end{proposition}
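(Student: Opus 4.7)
My plan is to solve the integral equation \eqref{phi_1-1} by Picard iteration. Set $\phi_1^{(0)}\equiv 1$ and $\phi_1^{(n+1)}:=1+\alpha^2 T\phi_1^{(n)}$. The quantitative input is that for every $c\in\widetilde{D}_{\ep_0}$ the function $w\mapsto|u(w)-c|$ is monotone in $|w-y_c|$, since
\[
|u(w)-c|^2=(u(w)-\mathrm{Re}\,c)^2+(\mathrm{Im}\,c)^2
\]
and $u$ is strictly monotone. Hence $\int_{y_c}^{z}|u(w)-c|^2\,dw\leq (z-y_c)|u(z)-c|^2$, which gives $|T\mathbf{1}(y,c)|\leq|y-y_c|^2/2$ and, by induction, $|T^n\mathbf{1}(y,c)|\leq|y-y_c|^{2n}/(2n)!$. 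Summing the Neumann series yields a solution $\phi_1$ of \eqref{phi_1-1} with $|\phi_1(y,c)|\leq\cosh(\alpha|y-y_c|)$; uniqueness within locally bounded solutions is a consequence of the same contraction estimate.

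Continuity of $\phi_1$ on $\mathbb{R}\times\widetilde{D}_{\ep_0}$ is inherited from the iterates by locally uniform convergence, since $c\mapsto y_c=u^{-1}(\mathrm{Re}\,c)$ is continuous. Analyticity in $c$ on the open set $D_{\ep_0}$ follows because each iterate is the integral of an analytic integrand with analytic $y_c$-dependent limits; the apparent singularity of the outer denominator in $T$ at $z=y_c$ is absorbed by the order-two vanishing of the inner integral, so no boundary terms arise under $\pa_{\bar c}$. The $C^2$-regularity on $\mathbb{R}\times(-1,1)$ is then immediate by reverting to \eqref{eq:Rayleigh-hom-S1}. For item (1), continuity of $\pa_y\phi_1$ comes from differentiating the iteration in $y$ and applying the same monotonicity bound; continuity of $(\phi_1-1)/(u(y)-c)^2$ follows from writing each iterate as $\phi_1^{(n)}(y,c)=1+(u(y)-c)^2\Psi_n(y,c)$ with $\Psi_n$ continuous on $\mathbb{R}\times\widetilde{D}_{\ep_0}$ and checking that $\sum_n\Psi_n$ converges locally uniformly.

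For (3), at $c=0$ we have $y_c=0$; since $u$ is odd, the coefficient $2u'/u$ in \eqref{eq:Rayleigh-hom-S1} is even, hence the ODE together with the initial data $\phi_1(0,0)=1$, $\phi_1'(0,0)=0$ is invariant under $y\mapsto-y$, and uniqueness forces $\phi_1(y,0)=\phi_1(-y,0)$. For (4), I would proceed by induction on the Picard iterates with $c$ real: starting from $\phi_1^{(0)}\equiv 1$, each $T\phi_1^{(n)}$ is nonnegative and nondecreasing in $|z-y_c|$ directly from its integral representation, and these properties pass to the limit, giving \eqref{phi_1(z)leq phi_1(y)}. Differentiating \eqref{phi_1-1} in $y$ gives
\[
\pa_y\phi_1(y,c)=\alpha^2\,\frac{\int_{y_c}^{y}(u(w)-c)^2\phi_1(w,c)\,dw}{(u(y)-c)^2},
\]
and on $[y_c,y]$ combining \eqref{phi_1(z)leq phi_1(y)} with the monotonicity \eqref{u(z)-c<u(y)-c} of $|u(w)-c|$ yields \eqref{phi_1-1leq''}; inserting this estimate back into \eqref{phi_1-1} under the same comparisons gives \eqref{phi_1-1leq}.

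I expect the main obstacle to be the continuity and analyticity claims uniformly as $c$ approaches the real axis $(-1,1)$, where the denominators $(u(z)-c)^2$ acquire a genuine double zero at $z=y_c$. The remedy is to track the precise structure of each iterate via the decomposition $\phi_1^{(n)}=1+(u(y)-c)^2\Psi_n$ described above, so that both $\phi_1-1$ and $(\phi_1-1)/(u-c)^2$ extend continuously from $D_{\ep_0}$ to $\widetilde{D}_{\ep_0}$.
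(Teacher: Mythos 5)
Your Picard iteration with the elementary factorial bound $|T^n\mathbf{1}(y,c)|\le |y-y_c|^{2n}/(2n)!$ (so that $\phi_1\le\cosh(\alpha|y-y_c|)$) is a clean variant of the paper's Neumann series; the paper instead works in the weighted space $X$ with $\cosh(A(y-y_c))$ weight and the contraction bound $\|T\|_X\lesssim A^{-2}$. Both deliver the same existence, uniqueness, and the positivity/monotonicity estimates of item~(4), and your treatment of items~(1), (3), (4) runs parallel to the paper's (the paper writes the kernel of $T$ explicitly as $K_2$, $K$ to get continuity of $(\phi_1-1)/(u-c)^2$, whereas you phrase it through the decomposition $\phi_1^{(n)}=1+(u(y)-c)^2\Psi_n$; these amount to the same thing).

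The argument you give for item~(2) has a genuine gap, and it is also not the paper's argument. You assert that under $\pa_{\bar c}$ ``no boundary terms arise.'' This handles only the outer $T_0$-integral, where $T_{2,2}\phi_1$ vanishes at $z=y_c$. It misses the boundary term of the inner integral $\int_{y_c}^z(u(w)-c)^2\phi_1(w,c)\,dw$: since $y_c=u^{-1}(\mathrm{Re}\,c)$ depends only on $\mathrm{Re}\,c$ (so $\pa_{\bar c}y_c=\tfrac{1}{2u'(y_c)}\neq 0$) and $(u(y_c)-c)^2=-(\mathrm{Im}\,c)^2\neq 0$ for $c\notin\mathbb{R}$, the moving endpoint contributes
\[
-\pa_{\bar c}(y_c)\,(u(y_c)-c)^2\,\phi_1(y_c,c)=\frac{(\mathrm{Im}\,c)^2}{2u'(y_c)}\neq 0,
\]
so that, e.g., $\pa_{\bar c}T\mathbf{1}(y,c)=\int_{y_c}^y\frac{(\mathrm{Im}\,c)^2}{2u'(y_c)(u(z)-c)^2}\,dz$ is generally nonzero. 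Thus the iterates are not $\pa_{\bar c}$-closed in the way you claim, and the phrase ``analytic $y_c$-dependent limits'' is not supportable, since $c\mapsto y_c$ is manifestly not holomorphic. The paper proves (2) by a different route: it asserts analyticity of $\phi_1(y,\cdot)$ off the real interval directly from the ODE, and then extends through $(-1,1)$ using the established continuity on $\widetilde D_{\ep_0}$ together with the Schwarz reflection / symmetry principle. You should either follow that route or exhibit the cancellation that would make the $\pa_{\bar c}$-computation close up; as written, the analyticity step is not established.
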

To prove the existence in Proposition \ref{prop:Rayleigh-Hom}, we will introduce some norms.
\begin{definition}\label{def:X-space}
Let $A> \al\geq 1$, $\gamma\in\mathbb{N}$. We define
\beno
&&\|f\|_{{X}}:=\sup_{(y,c)\in \mathbb{R}\times \tilde{D}_{\epsilon_0}}\bigg|\frac{f(y,c)}{\cosh(A(y-y_c))}\bigg|.\\
&& \|f\|_{Y^{\gamma}}:=\sup_{(y,c)\in \mathbb{R}\times (-1,1)}\bigg|\frac{\left((u'(y_c)\right)^{\gamma}f(y,c)}{\cosh(A(y-y_c))}\bigg|.\\
&&
\|f\|_{Y}:= \sum_{k=0}^2\sum_{\b+\gamma=k}A^{-k}\|\partial_y^{\b}\partial_c^{\gamma}f\|_{Y^{\gamma}}.
\eeno

\end{definition}
We have the following lemma of contraction mapping.

\begin{lemma}\label{lem:T-bound}

 There exists a constant $C$ independent of $A$ such that
\beno
&&\quad\|Tf\|_{Y^0}\leq \frac{C}{A^2}\|f\|_{Y^0},\quad\|Tf\|_{X}\leq \frac{C}{A^2}\|f\|_X,\quad\|Tf\|_{Y}\leq \frac{C}{A^2}\|f\|_{Y}.
\eeno
 Moreover, it holds that
 \begin{align}
 \label{cont1}\text{if}\quad  &f\in C\big(\mathbb{R}\times \widetilde{D}_{\ep_0}\big),\quad \text{then}\quad  T_0f,\;T_{2,2}f,\;Tf,\; \f{Tf}{(u-c)^2}\in C\big(\mathbb{R}\times\tilde{ D}_{\ep_0}\big),\\
  \text{if}\quad  &f\in C^k\big(\mathbb{R}\times (-1,1)\big),\quad  \text{then}\quad Tf\in C^k\big(\mathbb{R}\times (-1,1)\big),\quad k=1,2. \label{cont2}
\end{align}

\end{lemma}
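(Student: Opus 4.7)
The plan is to exploit the monotonicity bound $|u(w)-c| \leq |u(z)-c|$ for $w$ between $y_c$ and $z$ (from \eqref{u(z)-c<u(y)-c}, valid on all of $\widetilde{D}_{\ep_0}$), which makes the kernel $(u(w)-c)^2/(u(z)-c)^2$ of $T_{2,2}$ pointwise bounded by $1$. Together with the iterated estimates $\left|\int_{y_c}^y \cosh(A(z-y_c))\,dz\right| \leq A^{-1}\sinh(A|y-y_c|)$ and its second iterate $\leq A^{-2}\cosh(A(y-y_c))$, this immediately yields, for $|f(w,c)|\leq \|f\|_X\cosh(A(w-y_c))$,
\begin{align*}
|Tf(y,c)|\leq \|f\|_X\left|\int_{y_c}^y\int_{y_c}^z\cosh(A(w-y_c))\,dw\,dz\right|\leq A^{-2}\|f\|_X\cosh(A(y-y_c)),
\end{align*}
which gives both the $X$-bound and, restricted to real $c$, the $Y^0$-bound.

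For the $Y$-bound I must control $\pa_y^\b\pa_c^\gamma Tf$ for $\b+\gamma\leq 2$ in the weight $(u'(y_c))^\gamma$. The $y$-derivatives are easy: $\pa_y Tf = T_{2,2}f$ and $\pa_y^2 Tf = f-(2u'(y)/(u(y)-c))T_{2,2}f$, with \eqref{u-key} providing an integrable bound on $2u'/(u-c)$ applied to $T_{2,2}f$. For $c$-derivatives, the essential observation is $T_{2,2}f(y_c,c)=0$, so the boundary contribution from differentiating $Tf=T_0 T_{2,2}f$ in $c$ vanishes at the outer endpoint and $\pa_c Tf = T_0(\pa_c T_{2,2}f)$. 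A direct computation yields
\begin{align*}
\pa_c T_{2,2}f = T_{2,2}(\pa_c f) - 2T_{1,2}f + 2T_{2,3}f,
\end{align*}
and all three pieces enjoy the same kernel-monotonicity, hence (via \eqref{u-key} to handle the singular $1/(u-c)$ factor in $T_{2,3}$) are bounded in $Y^1$ by $C\|f\|_Y$; the outer $T_0$ then supplies one factor of $A^{-1}$, which combined with the $A^{-1}$ prefactor in $\|\cdot\|_Y$ delivers the full $A^{-2}$ smoothing. Whenever $\pa_c$ subsequently acts on a nonzero endpoint it produces the Jacobian $\pa_c y_c = 1/u'(y_c)$, and the weight $(u'(y_c))^\gamma$ is precisely calibrated to absorb these factors.

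For the continuity assertions, $T_0 f$ is continuous on $\mathbb{R}\times\widetilde{D}_{\ep_0}$ because $y_c = u^{-1}(\mathrm{Re}\,c)$ depends continuously on $c$ there; $T_{2,2}f$ is continuous because the apparent singularity at the diagonal $y=y_c$, $\mathrm{Im}\,c=0$ is removable via $|T_{2,2}f(y,c)|\leq \sup|f|\cdot|y-y_c|$, consistent with the convention $T_{2,2}f(y_c,c)=0$; and $Tf$, $Tf/(u-c)^2$ are continuous because $Tf(y,c)\sim \tfrac16 f(y_c,c)u'(y_c)(y-y_c)^2$ near the diagonal, so $Tf$ vanishes to order $(u-c)^2$ there. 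The $C^k$-regularity for $c\in(-1,1)$ is then routine differentiation under the integral sign, using $y_c\in C^\infty((-1,1))$. The main obstacle is the second $c$-derivative bookkeeping for the $Y$-norm: auxiliary operators like $T_{1,2}$, $T_{2,3}$ have finite but nonzero limits at $y=y_c$ of order $1/u'(y_c)$, and when combined with another $1/u'(y_c)$ from $\pa_c y_c$ they produce $(u'(y_c))^{-2}$ singularities that only cancel in the right combination and are then absorbed by the $\gamma=2$ weight (recall $u'(y_c)=1-c^2$ from Lemma \ref{lem:simple-useful-equ}, degenerating only at $c=\pm 1$). Organizing this systematically is most cleanly done through the good derivative $\pa_G=\pa_y/u'(y_c)+\pa_c$ of \eqref{def:goodG}, whose action on powers of $u-c$ is transparent from Lemma \ref{lem:simple-useful-equ}.
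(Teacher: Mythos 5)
Your argument for the $X$- and $Y^0$-bounds mirrors the paper exactly (monotonicity of $|u(z)-c|$ along the interval $[y_c,y]$ making the $T_{2,2}$-kernel $\leq 1$, then iterated $\cosh$ integrals). For the $Y$-bound you correctly identify $\pa_y Tf=T_{2,2}f$, $\pa_c T_{2,2}f=T_{2,2}\pa_c f-2T_{1,2}f+2T_{2,3}f$ and the role of \eqref{u-key} in absorbing the extra $1/|u(y)-c|$ in $T_{k,k+1}$-type operators at the cost of one power of $u'(y_c)$, which is exactly the paper's bound (its eq.\ $\|T_{k,k+1}f\|_{Y^{\gamma+1}}\leq C\|f\|_{Y^{\gamma}}$). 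One small computational slip: near the diagonal $Tf(y,c)\sim\tfrac16 f(y_c,c)(y-y_c)^2$, not $\tfrac16 f(y_c,c)u'(y_c)(y-y_c)^2$ — the two factors of $u'(y_c)^2$ from the numerator and denominator of $T_{2,2}$ cancel exactly. This does not affect your conclusion that $Tf/(u-c)^2$ extends continuously (one gets $\approx f(y_c,c)/(6u'(y_c)^2)$). Your remark that the good derivative $\pa_G$ would be the cleanest bookkeeping tool is a slight red herring: the $Y$-norm is defined in pure $\pa_y^\beta\pa_c^\gamma$ coordinates, and the paper simply carries out the $\pa_c$-calculus directly (its formulas (2.21)--(2.23) and (DerT1)), exhibiting the boundary term $f(y_c,c)/(3u'(y_c)^2)$ in $\pa_c^2 Tf$ which the $\gamma=2$ weight absorbs.

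The genuine difference is in the continuity argument. The paper rewrites $T_0,T_{2,2},T$ and $Tf/(u-c)^2$ after rescaling $z=y_c+t(y-y_c)$ as integrals over $[0,1]$ (resp.\ $[0,1]^2$) against explicit kernels $K_2,K$ that are manifestly continuous on $[0,1]\times\mathbb{R}\times\widetilde{D}_{\ep_0}$, and reads off continuity from that. You instead argue via removable singularity: $|T_{2,2}f(y,c)|\leq(\sup_{\mathrm{loc}}|f|)\,|y-y_c|\to 0$ as $(y,c)\to(y_0,c_0)$ with $y_0=y_{c_0}$, matching the convention $T_{2,2}f(y_c,c)=0$; and $Tf=O((y-y_c)^2)$ so $Tf/(u-c)^2$ extends. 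Both routes are correct; the paper's parametrized-kernel representation has the advantage that continuity and later $C^k$-regularity on $\mathbb{R}\times(-1,1)$ drop out of the same formula (one just differentiates the explicit kernels), whereas your approach is more elementary near the diagonal but then needs a separate statement that away from the diagonal $T_{2,2}f$ is a ratio of continuous functions with nonvanishing denominator, plus a separate justification for the $C^k$ assertion (you wave at ``differentiation under the integral sign''). The proposal is substantively correct as a plan; to make it a complete proof you would need to spell out the full list of second-order derivative formulas (in particular $\pa_c^2 Tf$ with its $f(y_c,c)/u'(y_c)^2$ boundary term) and the $C^2$-regularity statement \eqref{cont2}.
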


\begin{proof}
 First, we show for $\gamma\in \mathbb{N}$,
\begin{align}\label{1YT}
\|Tf\|_{Y^{\gamma}}\leq A^{-2}\|f\|_{Y^{\gamma}}.
\end{align}
In particular, $\gamma=0$ gives the first estimate of the lemma.

Direct calculation gives
\begin{align}
\label{T0}&\|T_0f\|_{Y^{\gamma}}=\sup_{(y,c)\in \mathbb{R}\times (-1,1)}\bigg|\f{(u'(y_c))^{\gamma}\int_{y_c}^yf(z,c)dz}{\cosh A(y-y_c)}\bigg|\\
&\leq
\notag\sup_{(y,c)\in \mathbb{R}\times (-1,1)}\bigg|\f{\int_{y_c}^y\cosh A(z-y_c)dz}{\cosh A(y-y_c)}\bigg|\sup_{(z,c)\in \mathbb{R}\times (-1,1)}\bigg|\f{(u'(y_c))^{\gamma}f(z,c)}{\cosh A(z-y_c)}\bigg|\leq A^{-1}\|f\|_{Y^{\gamma}}.
\end{align}
Thanks to \eqref{u(z)-c<u(y)-c},   we have
\begin{align}
\label{T22}\|T_{2,2}f\|_{Y^{\gamma}}=&\sup_{(y,c)\in \mathbb{R}\times (-1,1)}\bigg|\f{(u'(y_c))^{\gamma}\int_{y_c}^y(u(z)-c)^{2}f(z,c)}{(u(y)-c)^{2}\cosh A(y-y_c)}\bigg|\\
\leq& \|T_{0}f\|_{Y^{\gamma}}\leq A^{-1}\|f\|_{Y^{\gamma}}.\nonumber
\end{align}
Therefore, \eqref{1YT} follows from \eqref{T0} and \eqref{T22}.
In the same way, we can obtain
\begin{align*}
\|T_0f\|_{X}\leq A^{-1}\|f\|_{X},\quad \|T_{2,2}f\|_{X}\leq A^{-1}\|f\|_{X},
 \end{align*}
which show the second estimate of the lemma.

Next we prove the third estimate. Thanks to \eqref{u-key}, 
we have
\begin{align*}
\Bigg|\f{\Big(u'(y_c)+u'(y)\Big)\int_{y_c}^y\cosh A(z-y_c)dz}{(u(y)-c)\cosh A(y-y_c)}\Bigg|
&\leq CA^{-1}\f{\tanh A|y-y_c|}{\tanh |y-y_c|}\leq C,
\end{align*}
which along with \eqref{u(z)-c<u(y)-c} shows that for $k,\gamma\in \mathbb{N}$,
\begin{align}
\|T_{k,k+1}f\|_{Y^{\gamma+1}}=&\sup_{(y,c)\in \mathbb{R}\times (-1,1)}\bigg|\f{(u'(y_c))^{\gamma+1}\int_{y_c}^y(u(z)-c)^{k}f(z,c)}{(u(y)-c)^{k+1}\cosh A(y-y_c)}\bigg|\notag\\
&\leq
\sup_{(y,c)\in \mathbb{R}\times (-1,1)}\bigg|\f{u'(y_c)\int_{y_c}^y\cosh A(z-y_c)dz}{(u(y)-c)\cosh A(y-y_c)}\bigg|\nonumber\\
&\qquad\times\sup_{(z,c)\in \mathbb{R}\times (-1,1)}\bigg|\f{(u'(y_c))^{\gamma}f(z,c)}{\cosh A(z-y_c)}\bigg|\leq C\|f\|_{Y^{\gamma}}\label{T_k,k+1}.
\end{align}
In the same way as in \eqref{T_k,k+1},  for $k,\gamma\in \mathbb{N}$, we have
\begin{align}
&\quad\|u'(y)T_{k,k+1}f\|_{Y^{\gamma}}\leq C\|f\|_{Y^{\gamma}}\label{T_k,k+1'}.
\end{align}
For $T_{k,j}$, we rewrite it as
\begin{align}\label{Tkj}
&T_{k,j}f(y,c)=\int_0^1f(y_c+t(y-y_c),c)(y-y_c)^{k+1-j}\f{(\int_0^1u'(y_c+st(y-y_c)ds)^k}
{(\int_0^1u'(y_c+s(y-y_c)ds)^{j}}t^kdt.
\end{align}
Therefore, we have
\begin{equation}
\label{cTk,k+1yc}
\begin{split}
\pa_cT_{k,k+1}f(y,c)&=
\f{\pa_{y_c}\bigg(\int_0^1f(y_c+t(y-y_c),c)\f{t^k(\int_0^1u'(y_c+st(y-y_c)ds)^k}
{(\int_0^1u'(y_c+s(y-y_c)ds)^{k+1}}dt\bigg)}{u'(y_c)},\\
\pa_cT_{k,k+1}f(y_c,c)
&=\f{\pa_{y}f(y_c,c)}{(k+1)(k+2)u'(y_c)^3}
+\f{\pa_cf(y_c,c)}{(k+1)u'(y_c)^2}-\f{u''(y_c)f(y_c,c)}{(k+1)(k+2)u'(y_c)^3}.
\end{split}
\end{equation}
We can deduce from \eqref{cTk,k+1yc} that for $k,\gamma\in \mathbb{N}$
\begin{align}\label{pacT_k,k+1}
&\quad\|\pa_cT_{k,k+1}f\|_{Y^{\gamma+2}}\leq C\|f\|_{Y^{\gamma}}+CA^{-1}\Big(\|\pa_yf\|_{Y^{\gamma}}
+\|\pa_cf\|_{Y^{\gamma+1}}\Big).
\end{align}
Using  the facts  $T_0f(y_c,c)=0$, $T_{k,k}f(y_c,c)=0$, $\partial_cT_0f(y,c)=\f{f(y_c,c)}{u'(y_c)}$, $T_{k,k+1}f(y_c,c)=\f{f(y_c,c)}{(k+1)u'(y_c)}$ and \eqref{cTk,k+1yc}, a tedious  calculation gives
\begin{align}
\notag&\partial_yTf(y,c)=T_{2,2}f(y,c),\\
\notag&\partial_cTf(y,c)=2T_0\circ T_{2,3}f(y,c)-2T_0\circ T_{1,2}f(y,c)+T\pa_cf(y,c),\\
\label{DerT1}&\partial_y^2Tf(y,c)=-2u'(y)T_{2,3}f(y,c)+f(y,c),\\
\notag&\partial_y\partial_cTf(y,c)
=2T_{2,3}f(y,c)-2T_{1,2}f(y,c)+T_{2,2}\partial_cf(y,c),\\
&\partial_c^2Tf(y,c)
\notag=2T_0\partial_cT_{2,3}f(y,c)-2T_0\partial_cT_{1,2}f(y,c)+\frac{f(y_c,c)}{3u'(y_c)^2}\\
\notag&\quad\quad\quad\quad\quad+2T_0T_{2,3}\partial_cf(y,c)-2T_0T_{1,2}\partial_cf(y,c)+T\partial_c^2f(y,c).
\end{align}
Summing up \eqref{DerT1}, \eqref{T0}, \eqref{T22}, \eqref{T_k,k+1}, \eqref{T_k,k+1'}, \eqref{pacT_k,k+1} and using Lemma \ref{lem:simple-useful-equ}, we can conclude $\|Tf\|_{Y}\leq CA^{-2}\|f\|_{Y}$.

Finally, we prove the continuity of the solution. For $(y,c)\in \mathbb{R}\times \widetilde{D}_{\ep_0}$, using \eqref{Tkj}, we rewrite the integral operators in Definitions \ref{def:T_kj} as
\begin{align}
\notag &T_0f(y,c)=(y-y_c)\int_0^1f(y_c+t(y-y_c),c)dt,\\
\notag &T_{2,2}f(y,c)=\int_0^1(y-y_c)K_2(t,y,c)f(y_c+t(y-y_c),c)dt,\\
\label{fm:Tf}&Tf(y,c)=\int_0^1\int_0^1(y-y_c)^2K(t,s,y,c)f(y_c+ts(y-y_c),c)dtds,
\end{align}
where
\begin{align*}
K_2(t,y,c)&=\f{(t\int_0^1u'(y_c+st(y-y_c)ds)^2}
{(\int_0^1u'(y_c+s(y-y_c)ds)^{2}},\quad
K(t,s,y,c)&=\f{s(ts\int_0^1u'(y_c+st(y-y_c)ds)^2}
{(\int_0^1u'(y_c+s(y-y_c)ds)^{2}}.
\end{align*}
 It is obvious that
 \begin{align*}
 &K_2\in C\big([0,1]\times\mathbb{R}\times \widetilde{D}_{\ep_0}\big),\;\; K\in C\big([0,1]^2\times\mathbb{R}\times \widetilde{D}_{\ep_0}\big),\\ &\f{(y-y_c)^2K(t,s,y,c)}{(u(y)-c)^2}
 =\f{K(t,s,y,c)}{\big(\int_{0}^1u'(y_c+t(y-y_c)dt\big)^2}\in C\big([0,1]^2\times\mathbb{R}\times \widetilde{D}_{\ep_0}\big),
 \end{align*}
 Therefore, \eqref{cont1} follows. Moreover, if  $f\in C\big(\mathbb{R}\times (-1,1)\big)$, using \eqref{Tkj},  we can extent $T_{k,k+1}f$ as a continuous function on $\mathbb{R}\times(-1,1)$.

 Thanks to \eqref{cont1}  and the formulas of 1st order derivative in \eqref{DerT1}, we obtain \eqref{cont2} for $k=1$. Similarly, thanks to \eqref{cont1}  and the formulas of 2nd order derivative in \eqref{DerT1}, we can obtain \eqref{cont2} for $k=2$.
 \end{proof}

Now we prove Proposition \ref{prop:Rayleigh-Hom}.

\begin{proof}

By Lemma \ref{lem:T-bound}, we know that  $1-\al^2T$ is invertible in  the spaces
\begin{align*}
X\cap C(\mathbb{R}\times \Tilde{D}_{\ep_0})\quad and \quad Y.
\end{align*}
 More precisely,   we write \eqref{phi_1-1} as follows
\begin{align*}
\phi_1(y,c)=(1-\al^2T)^{-1}1=\sum_{k=0}^{+\infty}\al^{2k}T^k1,
\end{align*}
which converges in $X\cap C(\mathbb{R}\times \Tilde{D}_{\ep_0})$ and  $Y$. Together with the $C^2$ continuity of $T$,
 we conclude that
 \begin{align*}
 \phi_1(y,c)\in C(\mathbb{R}\times \Tilde{D}_{\ep_0})\cap C^2(\mathbb{R}\times (-1,1))\;\; \text{is}\; \text{a}\; \text{solution}\;  \text{to}\;
  \eqref{phi_1-1}.
 \end{align*}
  To verify  \eqref{eq:Rayleigh-hom-S1}, it suffices to take derivatives on  \eqref{phi_1-1}.  The uniqueness is obvious.

Now we are in a position to prove (1)-(4). To prove (1), we have  by  Lemma \ref{lem:T-bound} that  $\pa_y\phi_1(y,c)=\al^2T_{2,2}\phi_1 \in C\big(\mathbb{R}\times \widetilde{D}_{\ep_0}\big)$,  and $\f{\phi_1(y,c)-1}{(u(y)-c)^2}=\f{\al^2T\phi_1(y,c)}{(u(y)-c)^2}\in C\big(\mathbb{R}\times \widetilde{D}_{\ep_0}\big)$.

 For (2), since $\phi_1(y,c)$ satisfies \eqref{eq:Rayleigh-hom-S1}, we have that $\phi_1(y,c)$ is analytic in $\widetilde{D}_{\ep_0}/(-1,1)$ with fixed $y\in\mathbb{R}$. Then along  with $\phi_1(y,c)$ continuous in $\widetilde{D}_{\ep_0}$ and symmetry principle in complex analysis,  we deduce that  $\phi_1(y,c)$ is analytic in $\widetilde{D}_{\ep_0}$. In a similar way, we obtain that  $\f{\phi_1(y,c)}{(u(y)-c)^2}$ is analytic in $\widetilde{D}_{\ep_0}$.

  To address (3), we take $c=0$ in \eqref{eq:Rayleigh-hom-S1} to get $\phi_1''-\al^2\phi_1+\f{2u'}{u}\phi_1'=0,\;\phi_1(0,0)=1,\,\phi_1'(0,0)=0$. Since $\f{2u'(y)}{u(y)}$ is odd, we can deduce that if $\phi_1(y,0)$ is a solution of \eqref{eq:Rayleigh-hom-S1} in $C\big(\mathbb{R}\times \widetilde{D}_{\ep_0}\big)\cap
C^2\big(\mathbb{R}\times (-1,1)\big),$ then $\phi_1(-y,0)$ is also a solution with the same regularity. Due to the uniqueness, we conclude that $\phi_1(y,0)$ is even in $y$. Moreover, $\pa_y\phi_1(y,0)=\f{\int_0^yu(z)^2\phi_1(z,0)dz}{u(y)^2}$ is odd in $y$.

For (4), noticing that $T$ is a positive operator in $Y$,  we have
\begin{align*}
 \left(T^k1\right)(y,c)\geq 0\quad \text{for}\; c\in(-1,1),
\end{align*}
 which implies
 \begin{align*}
 \phi_1(y,c)=1+\sum_{k=1}^{+\infty} \left(T^k1\right)(y,c)\geq 1.
 \end{align*}
Moreover, we have
 \begin{align} \label{phi_1'}
\pa_y\phi_1(y,c)=\f{\al^2\int_{y_c}^y(u(z)-c)^2\phi_1(z,c)dz}{(u(y)-c)^2}>0(<0),\;\;\;y>y_c(y<y_c).
\end{align}
Therefore, we can conclude \eqref{phi_1(z)leq phi_1(y)} and \eqref{phi_1-1leq''}. To prove \eqref{phi_1-1leq}, by \eqref{u(z)-c<u(y)-c} and \eqref{phi_1(z)leq phi_1(y)}, we have
  \begin{align*}
0\leq\phi_1(y,c)-1 \leq \al^2\phi_1(y,c)\Big|\int_{y_c}^y\int_{y_c}^zdwdz\Big|=\f12\al^2(y-y_c)^2\phi_1(y,c).
 \end{align*}
\end{proof}

\subsection{Uniform estimates in real variable}
\begin{lemma}\label{prop:phi1}
Let $\phi_1(y,c)$ be the solution constructed in Proposition \ref{prop:Rayleigh-Hom}.
For $(y,c)\in\mathbb{R}\times (-1,1)$, it holds that
\begin{align}
\label{est:calF'} e^{-\al|z-y|}&\leq \f{\phi_1(y,c)}{\phi_1(z,c)}\leq  e^{\al|z-y|}\quad \text{for}\,\, y,z\in\mathbb{R},\\
\label{est:calF}C^{-1} \al\min\{\al|y-y_c|,1\}&\leq \Big|\f{\pa_y\phi_1(y,c)}{\phi_1(y,c)}\Big|\leq \al\min\{\al|y-y_c|,1\},\\
\label{est:phi_1-ul}C^{-1}e^{C^{-1}\al|y-y_c|}&\leq \phi_1(y,c)\leq e^{\al|y-y_c|},\\
\label{est:phi_1(y)/phi_1(z)}e^{-\al|z-y|}&\leq \f{\phi_1(y,c)}{\phi_1(z,c)}\leq Ce^{-C^{-1}\al|z-y|}\quad \text{for}\;\; y_c\leq y\leq z\;\text{or}\;\;z\leq y\leq y_c,\\
\label{est:phi_1-1}C^{-1}\min\{\al^2|y-y_c|^2,1\}&\leq
\f{\phi_1(y,c)-1}{\phi_1(y,c)}\leq C\min\{\al^2|y-y_c|^2,1\},
\end{align}
where the constants $C$ is independent of $c$, $\al$, $y$, $z$.
\end{lemma}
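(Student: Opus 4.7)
My plan rests on two tools: the integral representation $\pa_y\phi_1(y,c)=\al^2(u(y)-c)^{-2}\int_{y_c}^y(u(z)-c)^2\phi_1(z,c)\,dz$ from \eqref{phi_1'}, and the Riccati equation
$$g'=\al^2-g^2-\frac{2u'(y)}{u(y)-c}\,g,\qquad g:=\frac{\pa_y\phi_1}{\phi_1},$$
obtained by dividing \eqref{eq:Rayleigh-hom-S1} by $\phi_1$. By \eqref{phi_1'} the sign of $g(y)$ matches that of $y-y_c$, so it suffices to work on $y\ge y_c$ (where $g\ge 0$) and conclude the rest by symmetry. For the upper bounds, the term $\tfrac{2u'}{u-c}g$ is non-negative on $(y_c,\infty)$, so $g'\le\al^2-g^2$ with $g(y_c)=0$; ODE comparison then yields $g(y)\le\al\tanh(\al(y-y_c))\le\al\min\{\al(y-y_c),1\}$, i.e.\ the upper half of \eqref{est:calF}. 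Integrating $|g|\le\al$ between any two $y,z$ gives $|\log(\phi_1(y)/\phi_1(z))|\le\al|y-z|$, which is \eqref{est:calF'}, and setting $z=y_c$ produces the upper bound in \eqref{est:phi_1-ul}; the upper bound in \eqref{est:phi_1-1} is immediate from \eqref{phi_1-1leq} combined with the trivial $(\phi_1-1)/\phi_1\le 1$.

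For the lower bounds I split at $|y-y_c|=1/\al$. In the short range $\al(y-y_c)\le 1$, I insert $\phi_1\ge 1$ into the integral representation and use the mean-value theorem: since $\cosh\xi/\cosh\eta$ is bounded above and below by universal constants whenever $\xi,\eta\in[y_c,y_c+1]$, one has $u'(\xi)\asymp u'(\eta)$ on that interval, hence $u(z)-c\ge C^{-1}\tfrac{z-y_c}{y-y_c}(u(y)-c)$; integrating in $z$ and using $\phi_1\le e$ then gives $g\ge C^{-1}\al^2(y-y_c)$. In the long range $\al(y-y_c)\ge 1$, I insert the already-proved ratio bound $\phi_1(z)/\phi_1(y)\ge e^{-\al(y-z)}$ into the integral representation and restrict the $z$-integration to the window $[y-\delta/\al,y]\subset[y_c,y]$. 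A mean-value argument combined with the sharp bound \eqref{u-key} shows that $u'(\xi)/|u(y)-c|\le C\al$ on that window uniformly in $c$ (using $|u(\xi)-c|\le|u(y)-c|$ from \eqref{u(z)-c<u(y)-c}), so choosing $\delta$ small enough (universally) gives $u(y-w/\al)-c\ge\tfrac12(u(y)-c)$ for all $w\in[0,\delta]$; the substitution $w=\al(y-z)$ then produces $g(y)\ge\al(1-e^{-\delta})/4\ge C^{-1}\al$. Together the two regimes complete the lower half of \eqref{est:calF}.

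Integrating the lower bound $g\ge C^{-1}\al\min\{\al(y-y_c),1\}$ from $y_c$ to $y$ yields the lower half of \eqref{est:phi_1-ul}, and integrating between $y$ and $z$ on the same side of $y_c$ upgrades the trivial ratio bound to the sharp decay \eqref{est:phi_1(y)/phi_1(z)}. For the lower half of \eqref{est:phi_1-1}: in the short range, integrating $\pa_y\phi_1\ge C^{-1}\al^2(y-y_c)$ and dividing by $\phi_1\le e$ gives $(\phi_1-1)/\phi_1\ge C^{-1}\al^2(y-y_c)^2$; in the long range, applying the short-range bound at $y_c+1/\al$ together with monotonicity \eqref{phi_1(z)leq phi_1(y)} gives $\phi_1(y)\ge 1+C^{-1}$, hence $(\phi_1-1)/\phi_1\ge C^{-1}$. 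The main technical obstacle is the long-range lower bound on $g$: the constants must be uniform in $c\in(-1,1)$ even as $|c|\to 1$, where $u'(y_c)=1-c^2$ degenerates and $|y_c|\to\infty$. The sharp universal bound \eqref{u-key} from Lemma \ref{lem:simple-useful-inequ} is precisely what keeps the constants independent of $c$—a naive bound $u'(\xi)\le 1$ would produce constants blowing up like $(1-c^2)^{-1}$.
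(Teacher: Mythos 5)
Your proposal is correct and follows the same overall framework as the paper: the Riccati equation for $\mathcal{F}=\pa_y\phi_1/\phi_1$ together with the integral representation \eqref{phi_1'} carry the whole argument, with the sign of $\mathcal{F}$ matching $\mathrm{sgn}(y-y_c)$ and the upper/lower regimes split at $|y-y_c|\sim 1/\al$. Where you deviate is in the execution of two sub-steps. For the upper bound $|\mathcal{F}|\le\al$, you drop the non-negative term $\tfrac{2u'}{u-c}\mathcal{F}$ from the Riccati equation on $\{y>y_c\}$ and compare with the explicit solution $\al\tanh(\al(y-y_c))$ of $h'=\al^2-h^2$, $h(y_c)=0$; this is cleaner than the paper's route, which restricts to $[-M,M]\times[u(-M),u(M)]$ and argues via the extremal point of $\mathcal{F}$, and it gives the sharp $\mathcal{F}\le\al^2(y-y_c)$ and $\mathcal{F}\le\al$ in one stroke. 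For the long-range lower bound, the paper uses the closed-form identity $\tfrac{u(y-y_*)-c}{u(y)-c}=\tfrac{\sinh(y-y_c-y_*)\cosh y}{\sinh(y-y_c)\cosh(y-y_*)}\ge C^{-1}$ with $y_*=\min\{\tfrac{|y-y_c|}{2},\tfrac1\al\}$, while you instead combine the mean value theorem with \eqref{u-key} and \eqref{u(z)-c<u(y)-c} to contract the window of integration so that $u(\cdot)-c\ge\tfrac12(u(y)-c)$ there; both exploit the same $\tanh$-structure and both keep the constants uniform as $|c|\to 1$, which is the crux. Your handling of the lower bound in \eqref{est:phi_1-1} (direct integration in the short range, monotonicity from $y_c+1/\al$ in the long range) is also a valid alternative to the paper's one-line inequality $1-e^{-x}\ge C^{-1}\min\{x,1\}$ applied to the lower bound \eqref{est:phi_1-ul}. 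In short: same skeleton, with a more transparent ODE-comparison step and an \eqref{u-key}-based contraction replacing the paper's explicit $\sinh/\cosh$ computation.
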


\begin{proof}
  We define
$\mathcal{F}(y,c)=\f{\pa_y\phi_1(y,c)}{\phi_1(y,c)}$, which satisfies
\begin{align}
\label{eq:mathcalF}&\mathcal{F}'+\mathcal{F}^2+\f{2u'}{u-c}\mathcal{F}=\al^2,\;\mathcal{F}(y_c,c)=0.
\end{align}
We claim that
\begin{align}\label{bd:calF}
\mathcal{F}(y,c)\; \text{has}\; \text{the}\; \text{same}\; \text{sign}\; \text{with}\; y-y_c,\quad\text{and}\quad |\mathcal{F}(y,c)| \leq \al\quad \text{for}\,\, (y,c)\in\mathbb{R}\times(-1,1).
\end{align}
Indeed,  the first assertion of \eqref{bd:calF} follows from \eqref{phi_1-1leq''}, which 
along with \eqref{eq:mathcalF} gives \begin{align*}
\mathcal{F}'+\mathcal{F}^2\leq \al^2.
 \end{align*}
 To address the second assertion  in \eqref{bd:calF}, it suffices to consider
  \begin{align*}
 (y,c)\in [-M,M]\times [u(-M),u(M)]\; \;\text{with}\; \text{arbitrary}\; M>0.
 \end{align*}
 In fact, for $c\in[u(-M),u(M)]$, $y\in[y_c,M]$, we take $y_0$ such that $\mathcal{F}(y_0,c)=\sup_{y\in[y_c,M]}\mathcal{F}(y,c)$. Therefore, it holds that
 \begin{align*}
 \mathcal{F}'(y_0,c)\geq 0,\; \;\text{and}\;\; \al^2\geq \mathcal{F}'(y_0,c)+\mathcal{F}^2(y_0,c)\geq \mathcal{F}^2(y_0,c),
 \end{align*}
 which gives  $0\leq \mathcal{F}(y,c)\leq \al$ for $y\in[y_c,M]$. Similarly, we take the minimum point on $[-M,y_c]$ to get   $-\al\leq \mathcal{F}(y,c)\leq 0$ for $y\in[-M,y_c]$.
 This proves  the claim.\smallskip

Now we are in a position to prove the estimates \eqref{est:calF'}-\eqref{est:phi_1-1}. The estimate \eqref{est:calF'} follows by  integrating $\mathcal{F}(w,c)=(\ln \phi_1(w,c))'$
over $[z,y]$ or $[y,z]$ and \eqref{bd:calF}.

 To prove \eqref{est:calF},  we have by  \eqref{phi_1-1leq''} that $|\mathcal{F}(y,c)|\leq \al^2|y-y_c|$.  Together with \eqref{bd:calF}, we obtain the upper bound. It will take us more effort to  prove the lower bound. It suffices to prove \begin{align*}
 |\mathcal{F}(y,c)|\geq C^{-1}\al^2y_*\quad \text{with}\quad y_*=\min\{\f{|y-y_c|}{2},\f{1}{\al}\}.
  \end{align*}
  Indeed, for $y\geq y_c$, 
 we obtain by \eqref{phi_1-1leq''}, \eqref{est:calF'} that
\begin{align*}
|\mathcal{F}(y,c)|=\mathcal{F}(y,c)&= \f{\al^2}{(u(y)-c)^2}\int_{y_c}^y(u(z)-c)^2\f{\phi_1(z,c)}{\phi_1(y,c)}dz\\
&\geq \f{C^{-1}\al^2}{(u(y)-c)^2}\int_{y-y_*}^y(u(z)-c)^2e^{-\al(y-z)}dz\\
&\geq C^{-1}\al^2y_*e^{-\al y_*}\big(\f{u(y-y_*)-c}{u(y)-c}\big)^2\geq C^{-1}\al^2y_*,
\end{align*}
here we have used
\begin{align*}
\f{u(y-y_*)-c}{u(y)-c}=\f{\sinh(y-y_c-y_*)\cosh y}{\sinh(y-y_c)\cosh(y-y_*)}\geq C^{-1}.
\end{align*}
For $y\leq y_c$, we can similarly prove  $|\mathcal{F}(y,c)|=-\mathcal{F}(y,c)\geq C^{-1}\al^2y_*$.

  For \eqref{est:phi_1-ul}, the upper bound  can be obtained by taking $z=y_c$ in \eqref{est:calF'}. Now we prove the lower bound. Notice that for $y\geq y_c$,
\begin{align*}
\ln\phi_1(y,c)&=\int_{y_c}^y\mathcal{F}(z,c)dz\geq  C^{-1}\min\{\al^2|y-y_c|^2,\al|y-y_c|\}.
\end{align*}
This shows that for $y\geq y_c$,
\begin{align*}
\phi_1(y,c)\geq e^{C^{-1}\min\{\al^2|y-y_c|^2,\al|y-y_c|\}}\geq C^{-1}e^{C^{-1}\al|y-y_c|}.
\end{align*}
The same argument holds for $y\leq y_c$. Thus, we conclude the lower bound in \eqref{est:phi_1-ul}.

To prove \eqref{est:phi_1(y)/phi_1(z)}, it suffices to show the upper bound. Indeed, for $y_c\leq y\leq z$, we have by the lower bound of \eqref{est:calF} that
\begin{align*}
\ln\f{\phi_1(y,c)}{\phi_1(z,c)}=-\int_{y}^z\mathcal{F}(w,c)dw\leq
-\int_{y}^zC^{-1}\al dw=-C^{-1}\al|z-y|.
 \end{align*}
The same estimate holds  for $y_c\leq y\leq z$.

For \eqref{est:phi_1-1},  the upper bound follows from \eqref{phi_1-1leq}. To obtain the lower bound, thanks to \eqref{est:phi_1-ul} and  $1-e^{-x}\geq C^{-1}\min\{x,1\}(x>0)$, we infer that
\begin{align*}
1-\f{1}{\phi_1(y,c)}&\geq 1-e^{-C^{-1}\min\{\al^2|y-y_c|^2,\al|y-y_c|\}}\\
&\geq C^{-1}\min\Big\{C^{-1}\min\{\al^2|y-y_c|^2,\al|y-y_c|\},1\Big\} \\
&\geq C^{-1}\min\{\al^2|y-y_c|^2,1\}.
\end{align*}

\end{proof}

Let's investigate the estimates for good derivatives, see \eqref{def:goodG}. Direct calculation gives
\begin{align}\label{good:phi1}
&\pa_G^k(fg)=\sum_{i=0}^kC_k^i\cdot\pa_G^ig\cdot\pa_G^{k-i}f,\quad\pa_{G}\int_{y_c}^yf(z,c)dz=\int_{y_c}^y\pa_{G}f(z,c)dz.
\end{align}

\begin{lemma}\label{prop:goodphi1}
 Let $(y,c)\in\mathbb{R}\times (-1,1)$, $k=1,2,3,4$. Then it holds that
 \begin{itemize}
 \item
 $\pa_G^k\phi_1(y,c)\in C\big(\mathbb{R}\times (-1,1)\big).$
 \item  There exists a constant $C$ independent of $c$, $y$, $\al$ such that
\begin{align}
\label{est:goodk}&\Big|\f{\pa_G^k\phi_1(y,c)}{\phi_1(y,c)}\Big|
\leq C\left(\f{|u(y)-c|}{1-c^2}\right)^{k}
\cdot\al|y-y_c|\cdot\langle\al|y-y_c|\rangle^{k-1}
\end{align}

\item $\pa_G\phi_1(y,0)$ is an odd function.
\end{itemize}

\end{lemma}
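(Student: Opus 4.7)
The plan is an induction on $k\in\{1,2,3,4\}$ built on the integral equation $\phi_1=1+\al^2T\phi_1$ from Proposition \ref{prop:Rayleigh-Hom}. Since $[\pa_G,T_0]=0$ by \eqref{good:phi1}, the only nontrivial commutators come from $T_{2,2}$. Using the identities of Lemma \ref{lem:simple-useful-equ} for $\pa_G\big((u(z)-c)^2\big)$ and $\pa_G\big(1/(u(y)-c)^2\big)$, one finds, after a telescoping cancellation of the resulting $(u(y)+c)$ and $(u(z)+c)$ terms, the key commutator identity
\begin{align*}
[\pa_G,T_{2,2}]f=\f{2}{u'(y_c)(u(y)-c)^2}\int_{y_c}^y\big(u(y)-u(z)\big)(u(z)-c)^2f(z,c)\,dz.
\end{align*}
Iterating with the Leibniz rule \eqref{good:phi1} yields a recursion of the form
\begin{align*}
(I-\al^2T)(\pa_G^k\phi_1)=g_k\big[\phi_1,\pa_G\phi_1,\dots,\pa_G^{k-1}\phi_1\big],
\end{align*}
where $g_k$ is an explicit sum of $T_{i,j}$-type integrals of lower-order good derivatives with coefficients built from powers of $(u(y)-c)/u'(y_c)$. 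Inverting by the Neumann series $\sum\al^{2\ell}T^\ell$, convergent by Lemma \ref{lem:T-bound}, determines $\pa_G^k\phi_1$.

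The continuity $\pa_G^k\phi_1\in C\big(\mathbb{R}\times(-1,1)\big)$ then follows by induction: the hypothesis gives continuity of $\pa_G^j\phi_1$ for $j<k$, so the integrand of $g_k$ is continuous on $\mathbb{R}\times(-1,1)$ (since $u,u',y_c$ depend smoothly on $c$), and \eqref{cont2} of Lemma \ref{lem:T-bound} shows that $T$, and hence the Neumann series, preserves continuity. For the quantitative bound \eqref{est:goodk}, I would combine the pointwise estimates from Lemma \ref{prop:phi1}---in particular the exponential envelope $\phi_1\sim e^{\sim\al|y-y_c|}$ and the monotonicity $\phi_1(z,c)\leq \phi_1(y,c)$ for $z$ between $y_c$ and $y$---with the elementary inequality $|u(y)-u(z)|\leq|u(y)-c|$ on the same interval. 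Each application of $\pa_G$ in the commutator contributes precisely one factor of $(u(y)-c)/(1-c^2)$ (via $1/u'(y_c)$ combined with the telescoped $u(y)-u(z)$) and at most one additional power of $\al|y-y_c|$, matching the stated scaling.

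The odd-symmetry assertion comes from the invariance of \eqref{phi_1-1} under $(y,c,z,w)\mapsto(-y,-c,-z,-w)$: since $u$ is odd, $(u(w)-c)^2$ and $(u(z)-c)^2$ are invariant under $(w,c)\mapsto(-w,-c)$ and $(z,c)\mapsto(-z,-c)$, and $y_c\mapsto y_{-c}=-y_c$; substituting in the inner and outer integrals converts the equation for $\phi_1(y,c)$ into the same equation for $\phi_1(-y,-c)$. Uniqueness in Proposition \ref{prop:Rayleigh-Hom} then forces $\phi_1(y,c)=\phi_1(-y,-c)$. Differentiating in $y$ and in $c$ separately and specializing to $c=0$---where $y_c=0$, $u'(0)=1$ and hence $\pa_G=\pa_y+\pa_c$---shows that both $\pa_y\phi_1(\cdot,0)$ and $\pa_c\phi_1(\cdot,c)|_{c=0}$ are odd in $y$, so their sum $\pa_G\phi_1(y,0)$ is odd as well. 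The main obstacle is the bookkeeping in the inductive step for \eqref{est:goodk}: verifying that the Leibniz expansion of $\pa_G^k(T\phi_1)$, after inversion by $(I-\al^2T)^{-1}$, reproduces exactly the power $\big((u(y)-c)/(1-c^2)\big)^k$ and the polynomial factor $\al|y-y_c|\langle\al|y-y_c|\rangle^{k-1}$, without accumulating spurious growth in $\al|y-y_c|$ or losing the precise degeneration of $\pa_G^k\phi_1$ at the critical layer $y=y_c$.
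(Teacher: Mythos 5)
Your commutator identity $[\pa_G,T_{2,2}]f=\tfrac{2}{u'(y_c)(u(y)-c)^2}\int_{y_c}^y(u(y)-u(z))(u(z)-c)^2f(z,c)\,dz$ is correct, as is the reduction of the oddness claim to the symmetry $\phi_1(y,c)=\phi_1(-y,-c)$ (which follows cleanly from uniqueness, since $u$ odd sends the IVP to itself and $y_{-c}=-y_c$). That part of your argument is a valid alternative to the paper's, which instead uses the parity of $\mathcal{F}(y,0)=\phi_1'/\phi_1$ together with the explicit formula for $\pa_G\mathcal{F}$ at $c=0$. The continuity claim via Neumann series plus \eqref{cont2} is also fine — this is essentially the route the authors themselves sketch (in commented-out form) for continuity alone.

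The genuine gap is exactly the ``main obstacle'' you name and do not resolve: the quantitative bound \eqref{est:goodk} does \emph{not} pass through the Neumann-series inversion $(I-\al^2 T)^{-1}=\sum_\ell\al^{2\ell}T^\ell$ without loss. Concretely, even if you show $|g_k|\le H_k(y)\phi_1(y)$ with the correct $H_k(y)\sim\big(\tfrac{|u(y)-c|}{1-c^2}\big)^k\al|y-y_c|\langle\al|y-y_c|\rangle^{k-1}$ (and one can, using $|u(y)-u(z)|\le|u(y)-c|$, $\phi_1(z)\le\phi_1(y)$, and \eqref{est:phi_1(y)/phi_1(z)}), the monotone comparison $T^\ell(H\phi_1)\le H\cdot T^\ell\phi_1$ only yields $|\pa_G^k\phi_1|\le H_k\cdot(I-\al^2T)^{-1}\phi_1$. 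But $(I-\al^2T)^{-1}\phi_1=(I-\al^2T)^{-2}1=\sum_\ell(\ell+1)\al^{2\ell}T^\ell 1$, which for $\al|y-y_c|\gg1$ behaves like $\al|y-y_c|\,\phi_1(y,c)$ (the forcing $\al^2\phi_1$ resonates with the growing homogeneous mode). So each inversion gains an extra factor $\langle\al|y-y_c|\rangle$, and after $k$ steps the bound is one polynomial order worse than \eqref{est:goodk}. The paper avoids this entirely by passing to the Riccati variable $\mathcal{F}=\phi_1'/\phi_1$: differentiating the Riccati equation gives \eqref{CAlF}, an \emph{explicit} integral formula for $\pa_G\mathcal{F}$ (variation of parameters for the first-order linear equation $(\phi^2\pa_c\mathcal{F})'=-2u'\phi_1^2\mathcal{F}$), with no Neumann inversion and hence no resonance loss; then \eqref{1phi1}--\eqref{4phi1} convert bounds on $\pa_G^j\mathcal{F}$ into \eqref{est:goodk}. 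To make your approach work you would, in effect, have to reconstruct this explicit representation (e.g.\ by variation of parameters for the second-order ODE satisfied by $\pa_G^k\phi_1$), at which point it becomes the paper's proof in disguise; the Neumann-series route alone does not close the argument.
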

\begin{proof}

 Recall   $\mathcal{F}(y,c)=\f{\pa_y\phi_1(y,c)}{\phi_1(y,c)}$ satisfies  \eqref{eq:mathcalF}. Then it follows from  Proposition \ref{prop:Rayleigh-Hom} (1), (3) that
\begin{align}\label{ODD:F}
\mathcal{F}(y,c)\in C(\mathbb{R}\times (-1,1)),\quad \text{and}\quad \mathcal{F}(-y,0)=-\mathcal{F}(y,0).
\end{align}
Define $\widetilde{\mathcal{F}}(y,c):=\min\{\al^2|y-y_c|,\al\}$.  For $y_c\leq z\leq y$ or $y\leq z\leq y_c$, we have by  \eqref{est:calF} that
\begin{align}\label{est:min{al|z-y_c|,1}}
\mathcal{F}(z,c)\sim \widetilde{\mathcal{F}}(z,c)\leq \widetilde{\mathcal{F}}(y,c),
\end{align}
and
\begin{align}\label{est:min{al|z-y_c|,1'}}
\Big|\int_{y_c}^y\mathcal{F}(z,c)dz
\Big|\sim\Big|\int_{y_c}^y\widetilde{\mathcal{F}}(z,c)dz
\Big|\sim\min\{\al^2|y-y_c|^2,\al|y-y_c|\}\leq C\al|y-y_c|.
\end{align}
 We also define
\begin{align}\label{eq:mathcalG} \mathcal{G}(y,c):=\f{\pa_c\phi_1(y,c)}{\phi_1(y,c)},\;
\mathcal{G}_1(y,c):=\f{\mathcal{F}}{u'(y_c)}+\mathcal{G}
=\f{\pa_G\phi_1(y,c)}{\phi_1(y,c)}.
\end{align}
 Let $k\in\mathbb{Z}^+$. Taking $\pa_G^k$ on  \eqref{phi_1-1},  we get by \eqref{good:phi1} that
\begin{align}\label{paGkphi1}
\pa_G^k\phi_1(y,c)&=\al^2T_0\pa_G^kT_{2,2}\phi_1
=\al^2\int_{y_c}^y\pa_{G}^k\Big(\f{u'(y_c)^2}{(u(z)-c)^2}
\cdot\f{\int_{y_c}^z(u(w)-c)^2\phi_1(w,c)dw}{u'(y_c)^2}\Big)dz,
\end{align}
which together with \eqref{good:phi1} and Lemma \ref{lem:simple-useful-equ} gives
\begin{align}\label{paGphi1(yc,c)}
\pa_{G}^k\phi_1(y_c,c)=0,\quad \pa_y\pa_{G}^k\phi_1(y_c,c)=0.
\end{align}
Thanks to the definitions and \eqref{paGphi1(yc,c)}, we have
\begin{align}
\label{eq:mathcalG-F}
&\pa_c\mathcal{F}=\pa_y\mathcal{G},\; \pa_G^{k-1}\mathcal{G}_1=\int_{y_c}^y\pa_G^{k}\mathcal{F}dz,\;\mathcal{F}(y_c,c)=\mathcal{G}(y_c,c)=0,\;\pa_G^{k}\mathcal{G}_1(y_c,c)=0.
\end{align}
Using \eqref{eq:mathcalG}-\eqref{eq:mathcalG-F}, we have
\begin{align}
\label{1phi1}&\f{\pa_G\phi_1}{\phi_1}
=\mathcal{G}_1=\int_{y_c}^y\pa_G\mathcal{F}(z,c)dz,\\
\label{2phi1}&\f{\pa_G^2\phi_1}{\phi_1}
=\int_{y_c}^y\pa_G^2\mathcal{F}(z,c)dz+\f{(\pa_G\phi_1)^2}{\phi_1^2},\\
\label{3phi1}&\f{\pa_G^3\phi_1}{\phi_1}
=\int_{y_c}^y\pa_G^3\mathcal{F}(z,c)dz+\f{3\pa_G^2\phi_1\pa_G\phi_1}{\phi_1^2}
-\f{2(\pa_G\phi_1)^3}{\phi_1^3},\\
\label{4phi1}&\f{\pa_G^4\phi_1}{\phi_1}
=\int_{y_c}^y\pa_G^4\mathcal{F}(z,c)dz+\f{4\pa_G^2\phi_1\pa_G\phi_1}{\phi_1^2}
+\f{3(\pa_G^2\phi_1)^2}{\phi_1^2}-\f{12\pa_G^2\phi_1(\pa_G\phi_1)^2}{\phi_1^3}
+\f{6(\pa_G\phi_1)^4}{\phi_1^4}.
\end{align}
By \eqref{eq:mathcalF}, we have
\begin{align}\label{calF:deri}
&(\phi^2\pa_c\mathcal{F})'=-2u'\phi_1^2\mathcal{F},\;(\phi^2\mathcal{F}')'=2\big((u')^2-u''(u-c)\big)\phi_1^2\mathcal{F}
\end{align}
with  $\phi=(u-c)\phi_1$. We get by Lemma \ref{lem:simple-useful-equ} that
\begin{align*}
u''(u-c)+u'(u'(y_c)-u')=-u'(u-c)^2.
\end{align*}
Therefore, we have by \eqref{calF:deri} that
\begin{align}\label{CAlF}
\pa_G\mathcal{F}(z,c)
&=\f{2\int_{y_c}^zu'(w)(u(w)-c)^2\phi_1(w,c)^2\mathcal{F}(w,c)dw}{u'(y_c)(u(z)-c)^2\phi_1(z,c)^2}.
\end{align}
Taking $\pa_G^{k-1}$ on \eqref{CAlF}, we  get
\begin{align}\label{eq:paGF}
\notag\quad\pa_G^{k}\mathcal{F}(z,c)&=2\pa_{G}^{k-1}\Big(\f{(1-c^2)^2}{(u-c)^2}\cdot\phi^{-2}_1\Big)\cdot\int_{y_c}^z\f{u'(u-c)^2}{(1-c^2)^3}\cdot \phi_1^2\cdot\mathcal{F}dw\\
&\quad+(\f{1-c^2}{u-c})^2\cdot \phi^{-2}_1\cdot\int_{y_c}^z\pa_{G}^{k-1}\Big(\f{u'(u-c)^2}{(1-c^2)^3}\cdot \phi_1^2\cdot\mathcal{F}\Big)dw.
\end{align}

\vspace{\baselineskip}

Now we are in a position to prove the lemma. Using \eqref{ODD:F} and \eqref{est:min{al|z-y_c|,1}}-\eqref{est:min{al|z-y_c|,1'}},  \eqref{eq:paGF}, we can prove by the induction that
\begin{align*}
\pa_G^k\mathcal{F}(y,c)\in C\big(\mathbb{R}\times (-1,1)\big),\quad \text{and}\quad \pa_G^k\phi_1(y,c)\in C\big(\mathbb{R}\times (-1,1)\big),\quad \text{for}\; k\in \mathbb{Z}^+,
\end{align*}
which gives the first assertion of the lemma.

To prove \eqref{est:goodk}, we observe by Lemma \ref{lem:simple-useful-equ} for
the terms in \eqref{eq:paGF}  that
\begin{align*} &\pa_G^i\left(\f{u'(u-c)^2}{(1-c^2)^3}\right)
=\f{(-1)^i(i+3)!}{3!}\cdot\f{u'}{(1-c^2)}\cdot
\left(\f{u-c}{1-c^2}\right)^{i+2},\quad i\in \mathbb{Z}^+,
\end{align*}
which gives
\begin{align}
\label{est:u'/u-c1}&\bigg|\f{\pa_G^i\left(\f{(1-c^2)^2}{(u-c)^2}\right)}{\f{(1-c^2)^2}{(u-c)^2}}\bigg|\leq C\Big(\f{|u-c|}{1-c^2}\Big)^{i},\;\bigg|\f{\pa_G^i\left(\f{u'(u-c)^2}
{(1-c^2)^3}\right)}{\f{u'(u-c)^2}{(1-c^2)^3}}\bigg|\leq C\left(\f{|u-c|}{1-c^2}\right)^{i},\;i\in \mathbb{Z}^+.
\end{align}
We claim that
\begin{align}\label{est:paGkF}
|\pa_G^k\mathcal{F}(z,c)|
\leq C\left(\f{|u(z)-c|}{1-c^2}\right)^{k}\cdot\tilde{\mathcal{F}}(z,c)\cdot\langle \al|z-y_c|\rangle ^{k-1},\;\; k=1,2,3,4.
\end{align}
We observe in the estimates \eqref{est:goodk}, \eqref{est:u'/u-c1}, \eqref{est:paGkF} that \textbf{each  good derivative $\pa_G$ gains a factor $\f{|u-c|}{1-c^2}$}. To establish the bounds \eqref{est:paGkF} and  \eqref{est:goodk}  through the induction on $k$, we will use the estimates \eqref{est:min{al|z-y_c|,1}}-\eqref{est:min{al|z-y_c|,1'}}, \eqref{est:u'/u-c1} and equations \eqref{1phi1}-\eqref{eq:paGF}. The order of the induction is as follows:
\eqref{est:paGkF}($k=1$), \eqref{est:goodk}($k=1$), \eqref{est:paGkF}($k=2$), \eqref{est:goodk}($k=2$), \eqref{est:paGkF}($k=3$), \eqref{est:goodk}($k=3$), \eqref{est:paGkF}($k=4$), \eqref{est:goodk}($k=4$). Let us just show the first several steps. The proof of the rest steps are similar and left to readers.

 Thanks to \eqref{eq:paGF}($k=1$), \eqref{u(z)-c<u(y)-c}, \eqref{phi_1(z)leq phi_1(y)} and \eqref{est:min{al|z-y_c|,1}}, we have
\begin{align*}
|\pa_G\mathcal{F}(z,c)|
&\leq \f{C\Big|\int_{y_c}^zu'(w)dw\Big|\widetilde{\mathcal{F}}(z,c)}{1-c^2}
\leq \f{C|u(z)-c|\widetilde{\mathcal{F}}(z,c)}{1-c^2}.
\end{align*}
This shows \eqref{est:paGkF} for $k=1$.
 Thanks to \eqref{1phi1}, \eqref{u(z)-c<u(y)-c}, \eqref{est:paGkF}($k=1$), \eqref{est:min{al|z-y_c|,1'}}, we have
\begin{align*}
&\quad\Big|\f{\pa_G\phi_1(y,c)}{\phi_1(y,c)}\Big|\leq\int_{y_c}^y|\pa_G\mathcal{F}(z,c)|dz\leq \f{|u(y)-c|\cdot|\int_{y_c}^y\tilde{\mathcal{F}}(z,c)dz|}{1-c^2}\leq \f{C|u(y)-c|\cdot\al|y-y_c|}{1-c^2}.
\end{align*}
This shows  \eqref{est:goodk} for $k=1$.
 Thanks to \eqref{eq:paGF}($k=2$), \eqref{good:phi1}, \eqref{est:u'/u-c1}($i=1$), \eqref{u(z)-c<u(y)-c}, \eqref{phi_1(z)leq phi_1(y)}, \eqref{est:min{al|z-y_c|,1}}, \eqref{est:goodk}($k=1$) and  \eqref{est:paGkF}($k=1$),  we have
\begin{align*}
|\pa_G^2\mathcal{F}(z,c)|
&\leq
C\Big(\f{|u(y)-c|}{1-c^2}\Big)^{2}\tilde{\mathcal{F}}(z,c)\cdot\langle\al|z-y_c|\rangle.
\end{align*}
This shows \eqref{est:paGkF} for $k=2$.
Thanks to \eqref{eq:mathcalG-F}, we have
\begin{align}
\pa_G\mathcal{G}_1=\f{\pa_G^2\phi_1}{\phi_1}
-\f{(\pa_G\phi_1)^2}{\phi_1^2},\; \pa_G^2\mathcal{F}=\pa_z\pa_G\mathcal{G},\; \pa_G\mathcal{G}_1(y_c,c)=0.
\end{align}
Therefore,  we have  by  \eqref{est:goodk}($k=1$) and \eqref{est:paGkF}($k=2$) that
\begin{align*}
\notag\Big|\f{\pa_G^2\phi_1}{\phi_1}\Big|
&\leq (\f{\pa_G\phi_1}{\phi_1})^2+\pa_G\mathcal{G}_1
\leq (\f{\pa_G\phi_1}{\phi_1})^2+\int_{y_c}^y|\pa_G^2\mathcal{F}(z,c)|dz\\
&\leq C(1-c^2)^{-2}|u(y)-c|^2\al|y-y_c|\cdot\langle\al|y-y_c|\rangle.
\end{align*}
This shows \eqref{est:goodk} for $k=2$.

To prove the last assertion in the lemma, we take $c=0$ in \eqref{CAlF}. Since  $\mathcal{F}(w,0)$ is odd (by \eqref{ODD:F}), it holds that $\phi_1(y,0)$, $u'(y)$ and $u(y)^2$ are even. The oddness of $\pa_G\mathcal{F}(y,0)$ follows.  Combining with \eqref{1phi1}, we obtain the oddness of  $\pa_G\phi_1(y,0)$.
\end{proof}

\subsection{Uniform estimates in complex variable}
 Let $c_{\ep}=c+i\ep$. Recall the complex domains  defined in \eqref{domain:O-ep}
 \begin{align}
\label{def:O-ep}
O_{\ep_0}&:=\big\{c_{\ep}|\;c\in(-1,1),\; 0<|\ep|< \min\{C_o^{-1}(1-c^2), \ep_0\}\big\},\;\widetilde{O}_{\ep_0}:=O_{\ep_0}\cup(-1,1),
\end{align}
see Figure 2.
\begin{figure}[!h]
\centering
\includegraphics[width=0.9\textwidth]{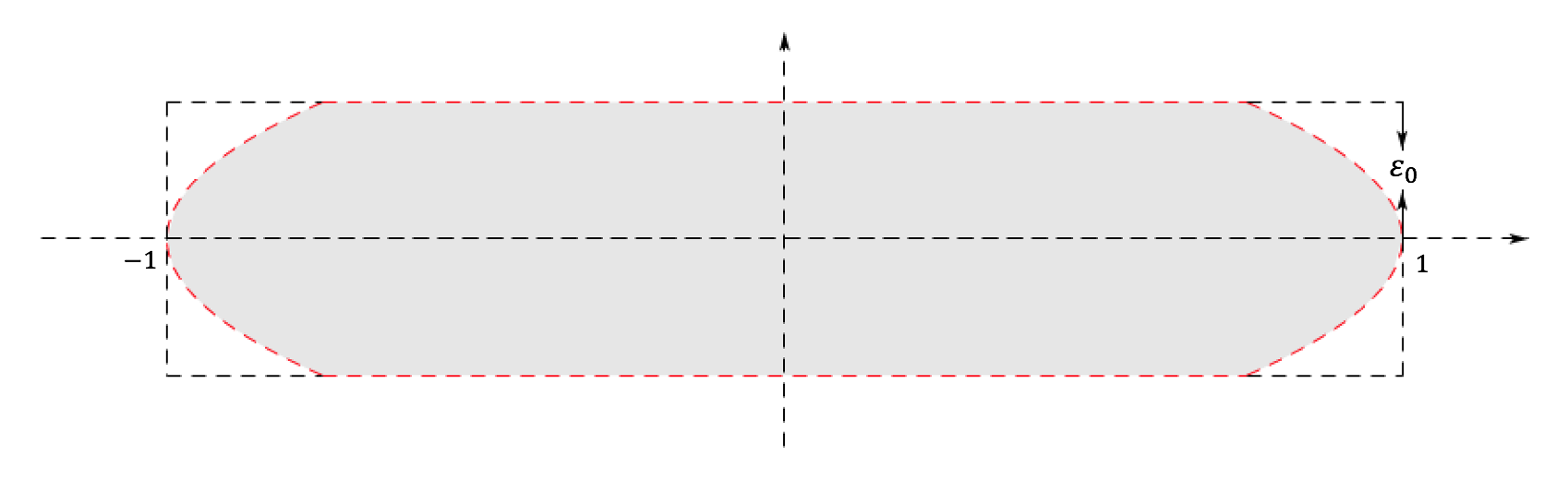}
\caption{ $O_{\varepsilon_0}$\;\text{and}\; $ {\tilde{O}}_{\varepsilon_0}$.}
\label{fig2}
\end{figure}
The constant $C_o$ is chosen in the following proposition.
 \begin{Proposition}\label{cor:phi_1-infty}
Let $\phi_1$ be the solution constructed  in Proposition \ref{prop:Rayleigh-Hom}. There exists universal constants $C_o$ and $\ep_1$ such that for  any $\ep_0\in(0,\ep_1]$ and $(y,c_{\ep})\in\mathbb{R}\times O_{\ep_0}$, it holds that
  \begin{align}
\label{est:up-lower-phi-1}&\f12\leq \Big|\f{\phi_1(y,c_{\ep})}{\phi_1(y,c)} \Big|\leq \f32,\\
\label{est:phi_1-ul-c}&C^{-1}e^{C^{-1}\al|y-y_c|}\leq \phi_1(y,c_{\ep})\leq e^{\al|y-y_c|},\\
&\Big|\f{\phi_1(y,c_{\ep})-1}{\phi_1(y,c_{\ep})}\Big|
+\Big|\f{\phi_1(y,c_{\ep})^2-1}{\phi_1(y,c_{\ep})^2}\Big|\leq C\min\{\al^2|y-y_c|^2,1\},\label{est:up-phi-1}
\end{align}
 where the  constants $C$ is independent of $\ep$, $c$, $y$.
\end{Proposition}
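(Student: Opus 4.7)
The plan is to prove \eqref{est:up-lower-phi-1} first, and then deduce \eqref{est:phi_1-ul-c} and \eqref{est:up-phi-1} as algebraic consequences of \eqref{est:up-lower-phi-1} combined with the real-variable bounds \eqref{est:phi_1-ul} and \eqref{est:phi_1-1} from Lemma \ref{prop:phi1}. The key observation is that the domain $O_{\ep_0}$ is defined so that $|\ep|$ is small compared to $u'(y_c)=1-c^2$, the natural scale of the Rayleigh operator near the critical point; this smallness lets $\phi_1(y,c_\ep)$ be viewed as a controlled perturbation of $\phi_1(y,c)$ uniformly in $y$.

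Setting $R(y):=\phi_1(y,c_\ep)-\phi_1(y,c)$ and using that the integral identity \eqref{phi_1-1} extends analytically to $c_\ep\in\widetilde{D}_{\ep_0}$ by Proposition \ref{prop:Rayleigh-Hom}(2) (and that $y_c=u^{-1}(c)$ does not depend on $\ep$), one derives
\begin{align*}
R = \al^2\, T_{c_\ep} R \;+\; \al^2\, (T_{c_\ep}-T_c)\phi_1(\cdot,c),
\end{align*}
where $T_{\textbf{c}}$ is the natural complex analog of the operator $T$ in Definition \ref{def:T_kj}. Working in a weighted space measuring functions relative to $\phi_1(y,c)$ (adapting the $X$-space of Definition \ref{def:X-space} with $A$ large), one shows that $\al^2 T_{c_\ep}$ is a contraction (a complex counterpart of Lemma \ref{lem:T-bound}), exploiting the lower bound $|u(z)-c_\ep|^2\ge (u(z)-c)^2$ in the denominator together with the comparability $|u(w)-c_\ep|\sim |u(w)-c|$ outside the thin critical layer $|w-y_c|\lesssim |\ep|/(1-c^2)$.

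For the driving term $\al^2(T_{c_\ep}-T_c)\phi_1(\cdot,c)(y)$, one expands $(u-c_\ep)^2=(u-c)^2-2i\ep(u-c)-\ep^2$ and the analogous difference of $(u-c_\ep)^{-2}$, then splits the $z$-integral into the critical layer $|z-y_c|\le C_o^{-1}|\ep|/(1-c^2)$ and its complement. On the complement the kernel difference is $O(|\ep|/(1-c^2))$ times the real kernel; in the thin critical layer both kernels are bounded and the measure is $O(|\ep|/(1-c^2))$. Combining these with the real estimates of Lemma \ref{prop:phi1} applied to $\phi_1(\cdot,c)$ yields a bound of the form $|\al^2(T_{c_\ep}-T_c)\phi_1(\cdot,c)(y)|\le CC_o^{-1}\phi_1(y,c)$. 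Choosing $C_o$ and $A$ large enough and applying the Neumann series for $(I-\al^2 T_{c_\ep})^{-1}$ in the weighted space yields $|R(y)|\le \tfrac12\phi_1(y,c)$, which is \eqref{est:up-lower-phi-1}. The remaining estimates are algebraic: \eqref{est:phi_1-ul-c} is immediate from \eqref{est:up-lower-phi-1} combined with \eqref{est:phi_1-ul}; for \eqref{est:up-phi-1}, write $\phi_1(y,c_\ep)-1=R(y)+(\phi_1(y,c)-1)$ and $\phi_1(y,c_\ep)^2-1=(\phi_1(y,c_\ep)-1)(\phi_1(y,c_\ep)+1)$, and combine \eqref{est:up-lower-phi-1} with \eqref{est:phi_1-1} and the trivial bound $|\phi_1(y,c_\ep)+1|\le C\phi_1(y,c)$.

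The hard part will be the uniform-in-$(y,\al)$ control of the driving term through the critical layer near $z=y_c$, where both $(u(z)-c)^2$ and $(u(z)-c_\ep)^2$ nearly vanish. The condition $|\ep|\le C_o^{-1}(1-c^2)$ captures the explicit vanishing rate $u'(y_c)=1-c^2$ of the shear, which is vital as $c\to\pm 1$ (where $u'$ degenerates exponentially in $y$); it produces a critical-layer thickness $|\ep|/(1-c^2)\le C_o^{-1}$ that is decoupled from the degeneration in $c$, so the estimates close uniformly on all of $\mathbb{R}\times O_{\ep_0}$ and fix the choice of $C_o$ and $\ep_1$.
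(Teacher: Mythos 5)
Your strategy (perturbation of $\phi_1(\cdot,c_\ep)$ around $\phi_1(\cdot,c)$, using $|\ep|\le C_o^{-1}(1-c^2)$ to make the perturbation small) is in the right spirit, but the fixed-point argument you sketch has a gap at its core: you cannot work in a single weighted space that simultaneously makes $\al^2 T_{c_\ep}$ a strict contraction \emph{and} measures $R$ against the target weight $\phi_1(y,c)$. If you use the $X$-type weight $\cosh(A(y-y_c))$ with $A>\al$, the contraction constant $\al^2/A^2$ is indeed small, but the resulting conclusion $|R(y)|\lesssim C_o^{-1}\cosh(A(y-y_c))$ is strictly weaker than $|R(y)|\le\tfrac12\phi_1(y,c)$ once $|y-y_c|$ is large, since $\phi_1(y,c)\le e^{\al|y-y_c|}$ grows slower than $\cosh(A(y-y_c))$; moreover the hidden constants in that step depend on $\al$. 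Conversely, if you weight by $\phi_1(\cdot,c)$ itself, then $\al^2 T_{c_\ep}$ is only non-expansive, not contractive: $\al^2 T\phi_1=\phi_1-1$ with $(\phi_1-1)/\phi_1\to 1$ as $|y-y_c|\to\infty$, so the Neumann series does not close. This is precisely the technical obstruction the paper's proof is designed to circumvent. It proceeds in two stages: first a crude, $\al$-dependent bound $\|f_\ep\|_{Y_A}+\|f_\ep'\|_{Y_A}\le C(1-c^2)^{-1}|\ep|$ (good on $|y-y_c|\le 2$ where $\cosh A(y-y_c)$ is bounded); then, on $J_1^c=\{|y-y_c|\ge 1\}$, a cutoff $\chi$ and the exact ODE identity
\begin{align*}
\left((u-c)^2\phi_1^2\left(\frac{(u-c_{\ep})f_{\ep}\chi}{(u-c)\phi_1}\right)'\right)'
=i\ep u''\phi_1 f_{\ep}\chi-2i\ep u'\phi_1'\phi_1\chi
+(u-c_{\ep})(u-c)\phi_1 g_{\chi,\ep}
\end{align*}
are used to produce a self-improving estimate for $\sup_{J_1^c}|f_\ep\chi/\phi_1|$ with a constant that is genuinely independent of $\al$. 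This Wronskian-type identity, not a contraction estimate, is what delivers the ratio bound \eqref{est:up-lower-phi-1} uniformly in $y$, $c$ and $\al$; without an analogue of it your Neumann-series scheme does not close.

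A second, smaller issue: your algebraic deduction of \eqref{est:up-phi-1} from \eqref{est:up-lower-phi-1} and \eqref{est:phi_1-1} via $\phi_1(y,c_\ep)-1=R(y)+(\phi_1(y,c)-1)$ is not tight enough. The bound $|R(y)|\le\tfrac12\phi_1(y,c)$ only gives an $O(1)$ contribution near $y=y_c$, whereas \eqref{est:up-phi-1} needs $O(\min\{\al^2|y-y_c|^2,1\})$, which vanishes as $y\to y_c$. The paper instead plugs \eqref{est:up-lower-phi-1} directly into the integral identity \eqref{phi_1-1} and estimates the resulting double integral using \eqref{est:phi_1(y)/phi_1(z)}; you should do the same.
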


\begin{proof}
Fix $c\in(-1,1)$. Let $\ep\in(0,\ep_0)$.  Denote
\begin{align*}
f_{\ep}(y,c):=\phi_1(y,c_{\ep})-\phi_1(y,c),\quad
\text{and}\quad
\|f\|_{Y_A}:=\|\f{f(y,c)}{\cosh A(y-y_c)}\|_{L^{\infty}_y(\mathbb{R})}\;\text{with}\;\;A>\al.
\end{align*}
Let us first prove that
\begin{align}\label{est:fepX}
\|f_{\ep}\|_{Y_A}+\|f_{\ep}'\|_{Y_A}\leq C(1-c^2)^{-1}|\ep|.
\end{align}
Here the constant $C$ is independent of  $\ep$, $c$, but it may depend on $\al$. It is easy to verify  that
\begin{align}\label{u-u}
|(u-c_{\ep})^2-(u-c)^2|\leq 2|\ep|\cdot|u-c_{\ep}|,\;
\Big|\f{1}{(u-c_{\ep})^2}-\f{1}{(u-c)^2}\Big|\leq \f{2|\ep|}{|u-c|^3}.
\end{align}
Using \eqref{phi_1-1},  \eqref{u-u} and  \eqref{u(z)-c<u(y)-c}, we have
\begin{align}
\notag|f_{\ep}(y,c)|&\leq \al^2\Bigg|\int_{y_c}^y\f{\int_{y_c}^z|u(w)-c_{\ep}|^2
f_{\ep}(w,c)dw}{|u(z)-c_{\ep}|^2}dz\Bigg|
+2|\ep|\al^2\Bigg|\int_{y_c}^y\f{\int_{y_c}^z|u(w)-c_{\ep}
|\phi_1(w,c)dw}{|u(z)-c_{\ep}|^2}dz\Bigg|\\
\notag&\quad+2|\ep|\al^2\Bigg|\int_{y_c}^y
\f{\int_{y_c}^z|u(w)-c|^2\phi_1(w,c)dw}{|u(z)-c|^3}dz\Bigg|\\
\label{fep}&\leq \al^2\Bigg|\int_{y_c}^y\int_{y_c}^zf_{\ep}(w,c)dwdz\Bigg|
+4|\ep|\al^2\bigg|\int_{y_c}^y\f{\int_{y_c}^z\phi_1(w,c)dw}
{|u(z)-c|}dz\bigg|.\\
\notag|f_{\ep}'(y,c)|&\leq \al^2\Bigg|\f{\int_{y_c}^y|u(w)-c_{\ep}|^2f_{\ep}(w)dw}
{|u(y)-c_{\ep}|^2}\Bigg|
+2|\ep|\al^2\Bigg|\f{\int_{y_c}^y|u(w)-c_{\ep}
|\phi_1(w,c)dw}{|u(y)-c_{\ep}|^2}\Bigg|\\
\notag&\quad+2|\ep|\al^2\Bigg|\f{\int_{y_c}^y
|u(w)-c|^2\phi_1(w,c)dw}{|u(y)-c|^3}\Bigg|\\
\label{f'ep}&\leq \al^2\bigg|\int_{y_c}^yf_{\ep}(w)dw\bigg|
+\f{4|\ep|\al^2\Big|\int_{y_c}^y\phi_1(w,c)dw\Big|}{|u(y)-c|}.
\end{align}
Therefore, thanks to \eqref{fep}, \eqref{est:phi_1-ul}($\phi_1(w,c)\lesssim\cosh \al(w-y_c)$) and \eqref{u-key}, we infer that
\begin{align*}
\|f_{\ep}\|_{Y_A}&\leq \f{\al^2}{A^2}\|f_{\ep}\|_{Y_A}
+C|\ep|\al^2\Big\|\f{\int_{y_c}^y\f{\al\int_{y_c}^z\cosh \al(w-y_c)dw}{(1-c^2)\tanh \al|z-y_c|}
dz}{\cosh A(y-y_c)}\Big\|_{L^{\infty}_y}\leq
\f{\al^2}{A^2}\|f_{\ep}\|_{Y_A}+C\al(1-c^2)^{-1}|\ep|,
\end{align*}
  which gives $\|f_{\ep}\|_{Y_A}\leq C(1-c^2)^{-1}|\ep|.$ This along with  \eqref{f'ep} shows
\begin{align*}
\|f_{\ep}'\|_{Y_A}&\leq \f{\al^2}{A}\|f_{\ep}\|_{Y_A}
+C|\ep|\al^2\Big\|\f{\al\int_{y_c}^z\cosh \al(w-y_c)dw}{(1-c^2)\tanh \al|y-y_c|\cosh A(y-y_c)}\Big\|_{L^{\infty}_y}\\
&\leq C(1-c^2)^{-1}|\ep|.
\end{align*}
Thus, we finish the proof of \eqref{est:fepX}.\smallskip

Now we are in a position to prove \eqref{est:up-lower-phi-1}-\eqref{est:up-phi-1}. For $|y-y_c|\leq 2$, we use \eqref{est:fepX}, \eqref{phi_1(z)leq phi_1(y)} and $\cosh A|y-y_c|\leq C$ to obtain
\begin{align}\label{est:phi1-phi12}
 &\quad\Big|\f{\phi_1(y,c+i\ep)-\phi_1(y,c)}{\phi_1(y,c)} \Big|\leq \Big\|f_{\ep}\Big\|_{Y_A}\cdot
 \Big|\f{\cosh A(y-y_c)}{\phi_1(y,c)} \Big|\leq C(1-c^2)^{-1}|\ep|.
 \end{align}
We denote
\begin{align*}
J_1=[y_c-1,y_c+1],\; J_2=[y_c-2,y_c+2],
\end{align*}
from which, it follows that
\begin{align*}
J_{1}^c=(-\infty,y_c-1)\cup(y_c+1,+\infty),\;\;\text{and}\; \; J_2/J_1=[y_c-2,y_c-1)\cup(y_c+1,y_c+2].
\end{align*}
Moreover, we introduce a smooth cut-off function $\chi(y)$ which takes $0$ on $J_1$ and $1$ on $J_2^c$, with $|\chi''(y)|+|\chi''(y)|\leq C$ on $J_2/J_1$. For $|y-y_c|\geq 1$, we claim that
\begin{align}\label{est:ferJ}
\notag&\Big\|\f{f_{\ep}(y,c)\chi(y)}{\phi_1(y,c)}\Big\|_{L^{\infty}_y(J_1^c)}\\
&\leq C(1-c^2)^{-1}|\ep| \Big\|\f{f_{\ep}(y,c)\chi(y)}{\phi_1(y,c)}\Big\|_{L^{\infty}_y(J_1^c)}
+\f12C(1-c^2)^{-1}|\ep|+C^2(1-c^2)^{-2}|\ep|^2,
\end{align}
with constant $C$ independent of $\ep$, $c$, $y$, $\ep_0$.

Therefore, the bound \eqref{est:up-lower-phi-1} can be derived from \eqref{est:phi1-phi12} and  \eqref{est:ferJ} by choosing $C_o=4C$. The bound \eqref{est:phi_1-ul-c} follows directly from \eqref{est:up-lower-phi-1} and \eqref{est:phi_1-ul}. To obtain the bound \eqref{est:up-phi-1}, we use \eqref{phi_1-1}, \eqref{u(z)-c<u(y)-c}, \eqref{est:phi_1(y)/phi_1(z)} and \eqref{est:up-lower-phi-1} to get for   $(y,c_{\ep})\in \mathbb{R}\times O_{\ep_0}$,
\begin{align*}
\Big|\f{\phi_1(y,c_{\ep})-1}{\phi_1(y,c_{\ep})}\Big|
&\leq  \bigg|\f{\al^2\int_{y_c}^y\f{\int_{y_c}^z(u(w)-c_{\ep})^2
\phi_1(w,c_{\ep})dw}{(u(z)-c_{\ep})^2}dz}{\phi_1(y,c_{\ep})}\bigg|\leq 3\al^2\Big|
\int_{y_c}^y\int_{y_c}^z\f{\phi_1(w,c)}{\phi_1(y,c)}dwdz
\Big|\\
&\leq C\al^2\Big|
\int_{y_c}^y\int_{y_c}^ze^{-C^{-1}\al|y-w|}dwdz
\Big|= C\al^2\Big|
\int_{y_c}^y\int^{y}_we^{-C^{-1}\al|y-w|}dzdw\Big|\\
&\leq C
\int_0^{\al|y-y_c|}xe^{-C^{-1}x}dx\leq C\min\{\al^2|y-y_c|^2,1\}.
\end{align*}
\vspace{\baselineskip}

It remains to prove the claim \eqref{est:ferJ}. Thanks to \eqref{fm:u(y)-c}, we have
\begin{align}\label{est:u'/u-c}
&\f{\big(1-c^2+u'(z)\big)\textbf{1}_{J_1^c}(z)}{|u(z)-c|}\leq \f{C\textbf{1}_{J_1^c}(z)}{\tanh|z-y_c|}\leq C\textbf{1}_{J_1^c}(z).
\end{align}
For $y_c\leq w\leq z$ or
$z\leq w\leq y_c$, we get by  \eqref{sinhy} that
\begin{align}\label{est:u'/|u-c|^2}
&\f{u'(w)\textbf{1}_{J_1^c}(z)}{|u(z)-c|^2}=
\f{\cosh^2z\cosh^2y_c\textbf{1}_{J_1^c}(z)}{\cosh^2 w\sinh^2|z-y_c|}\leq
\f{e^{2|z-w|}\textbf{1}_{J_1^c}(z)}{(1-c^2)e^{2|z-y_c|}}
\leq (1-c^2)^{-1}e^{-2|w-y_c|}.
\end{align}
Using \eqref{eq:Rayleigh-hom-S1}, we have
\begin{align*}
f_{\ep}''-\al^2 f_{\ep}+\f{2u'f'_{\ep}}{u-c_{\ep}}=-\f{2i\ep u'\phi_1'(y,c)}{(u-c_{\ep})(u-c)},\;f_{\ep}(y_c,c)=f'_{\ep}(y_c,c)=0,
\end{align*}
which gives
\begin{align}\label{eq:fepchi}
(f_{\ep}\chi)''-\al^2 (f_{\ep}\chi)+\f{2u'(f_{\ep}\chi)'}{u-c_{\ep}}=-\f{2i\ep u'\phi_1'(y,c)\chi(y)}{(u-c_{\ep})(u-c)}+g_{\chi,\ep}(y,c),
\end{align}
with
\begin{align*}
g_{\chi,\ep}:=2f_{\ep}'\chi'+f_{\ep}\chi''+\f{2u'}{u-c_{\ep}}f_{\ep}\chi'.
\end{align*}
Thanks to \eqref{est:u'/u-c} and  $J_2/J_1\subset J_1^c$, we have
\begin{align}\label{est:gchi}
|g_{\chi,\ep}(w,c)|\leq C(|f_{\ep}(w,c)|+|f_{\ep}'(w,c)|)\textbf{1}_{J_2/J_1}(w).
\end{align}
By multiplying $(u-c_{\ep})(u-c)\phi_1$ to equation \eqref{eq:fepchi} and
 $(u-c_{\ep})(u-c)f_{\ep}\chi$ to \eqref{eq:Rayleigh-hom-S1}, and subsequently taking the difference, we obtain the following equation
\begin{align*}
\left((u-c)^2\phi_1^2\left(\f{(u-c_{\ep})f_{\ep}\chi}{(u-c)\phi_1}\right)'\right)'
=i\ep u''\phi_1 f_{\ep}\chi-2i\ep u'\phi_1'\phi_1\chi
+(u-c_{\ep})(u-c)\phi_1 g_{\chi,\ep}.
\end{align*}
The integral form takes
\begin{align}\label{eq:integal-fep}
\notag\f{f_{\ep}(y,c)\chi(y)}{\phi_1(y,c)}&=
i\ep\f{u(y)-c}{u(y)-c_{\ep}}\int_{y_c}^y\f{\int_{y_c}^zu''(w)\phi_1(w,c)f_{\ep}(w,c)\chi(w)dw}{(u(z)-c)^2\phi_1(z,c)^2}dz\\
\notag&\quad-2i\ep \f{u(y)-c}{u(y)-c_{\ep}}\int_{y_c}^y\f{\int_{y_c}^zu'(w)\phi_1'(w,c)\phi_1(w,c)\chi(w)dw}{(u(z)-c)^2\phi_1(z,c)^2}dz\\
&\quad+\f{u(y)-c}{u(y)-c_{\ep}}\int_{y_c}^y\f{\int_{y_c}^z(u(w)-c_{\ep})(u(w)-c)\phi_1(w,c)g_{\chi,\ep}(w,c)dw}{(u(z)-c)^2\phi_1(z,c)^2}dz
=I_1+I_2+I_3.
\end{align}

Thanks to \eqref{u(z)-c<u(y)-c}, \eqref{est:u'/|u-c|^2} and  \eqref{est:phi_1(y)/phi_1(z)}, 
there exists constant $C$ independent of $c$, $y$, $\ep$ such that
\begin{align}
\notag F(y,c)&:=\Big|\int_{y_c}^y\int_{y_c}^z\f{u'(w)}{|u(z)-c|^2}\cdot \f{\phi_1(w,c)^2}{\phi_1(z,c)^2}\cdot \textbf{1}_{J_1^c}(z)dwdz\Big|\\
\notag&\lesssim (1-c^2)^{-1}\Big|\int_{y_c}^y\int_{y_c}^ze^{-2|w-y_c|}\cdot e^{-2C^{-1}\al|z-w|}dwdz\Big|\\
\label{est:F(y,c)}&\lesssim (1-c^2)^{-1}\Big|\int_{y_c}^ye^{-2|w-y_c|}\int_{w}^y e^{-2C^{-1}\al|z-w|}dzdw\Big|\leq C(1-c^2)^{-1},
\end{align}
which along with
$\Big|\f{u(y)-c}{u(y)-c_{\ep}}\Big|\leq 1$, $|\f{u''}{u'}|\leq 2$ and \eqref{est:F(y,c)} implies
\begin{align*}
|I_1|&\leq |\ep|\cdot F(y,c)\cdot \Big\|\f{f_{\ep}\chi}{\phi_1}\Big\|_{L^{\infty}_y(\mathbb{R})}\leq C(1-c^2)^{-1}|\ep| \Big\|\f{f_{\ep}\chi}{\phi_1}\Big\|_{L^{\infty}_y(\mathbb{R})},\\
|I_2|&\leq 2|\ep|\cdot F(y,c)\cdot \Big\|\f{\phi_1'}{\phi_1}\Big\|_{L^{\infty}_y(\mathbb{R})}\leq C(1-c^2)^{-1}|\ep|\al.\notag
\end{align*}
Thanks to $|u(w)-c_{\ep}|\leq |u(w)-c|+|\ep|$, \eqref{u(z)-c<u(y)-c}, \eqref{est:gchi}, $J_2/J_1\subset J_1^c$, \eqref{est:fepX} and \eqref{u-key}, we have
\begin{align*}
|I_3|&\lesssim \Big|\int_{y_c}^y\int_{y_c}^z\big(1+\f{|\ep|}{|u(z)-c|}\big)
\textbf{1}_{J_1^c}(z)\cdot\f{(|f_{\ep}(w,c)|+|f_{\ep}'(w,c)|)
\textbf{1}_{J_2/J_1}(w)}{\phi_1(z,c)}dwdz\Big|\\
&\lesssim \Big|\int_{y_c}^y \int_{y_c}^z (1+\f{(1-c^2)^{-1}|\ep|}{\tanh|z-y_c|}\big)\textbf{1}_{J_1^c}(z)\cdot\f{\cosh A|w-y_c|
\cdot\textbf{1}_{J_2/J_1}(w)}{e^{C^{-1}\al|z-y_c|}}dwdz\Big|\cdot\Big(\|f_{\ep}\|_{Y_A}
+\|f_{\ep}'\|_{Y_A}\Big)\\
&\lesssim(1+(1-c^2)^{-1}|\ep|) \cdot\Big|\int_{y_c}^ye^{-C^{-1}\al|z-y_c|} dz\Big|\cdot(1-c^2)^{-1}|\ep|\\
&\leq  \f12 C(1-c^2)^{-1}|\ep|+ C(1-c^2)^{-2}|\ep|^2.
\end{align*}
 Summing up the estimates of $I_1$, $I_2$, $I_3$, we  obtain \eqref{est:ferJ}.\end{proof}

The following corollaries will be useful in both constructing the solution to inhomogeneous Rayleigh equation and establishing the limiting absorption principle, in Sections 3,4 and 6.

\begin{Corollary}\label{Cor:basic-limit}
Let $\al\geq 1$, $c_{\ep}\in O_{\ep_0}$ and $z\leq y\leq y_c$ or $y_c\leq y\leq  z$. It holds that
\begin{align}
 \label{Oep-1}&\Big|\f{\phi(y,c_{\ep})}{\phi(z,c_{\ep})^2}\Big|
 \leq \f{|u(y)-c_{\ep}|}{|u(z)-c_{\ep}|^2\cdot \phi_1(z,c)}\leq \f{Ce^{-C^{-1}\al|y-y_c|}}{|u(y)-c_{\ep}|},\\
 \label{Oep-2}&\bigg|\f{\phi(y,c_{\ep})\int_{y_c}^z\phi_1(w,c_{\ep}
 )f(w)dw}{\phi(z,c_{\ep})^2}\bigg|\leq \f{C(1-c^2)^{-\f12}e^{-C^{-1}|z-y_c|}}{|u(z)-c_{\ep}|^{\f12}},\quad \text{if}\;f\in L^2_{1/(u')^2}(\mathbb{R}),\\
 \label{Oep-3}&\bigg|\f{\phi(y,c_{\ep})\int_{y_c}^z\phi_1(w,c_{\ep}
 )f(w)dw}{\phi(z,c_{\ep})^2}\bigg|\leq Ce^{-C^{-1}\al|z-y|},\quad \text{if}\;f\in H^1_{1/(u')^2}(\mathbb{R}),
 \end{align}
 where the constant $C$ is independent of $\ep$, $c$, $y$, $z$, $\al$. 
\end{Corollary}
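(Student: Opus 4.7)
The plan is to reduce all three bounds to estimates on the real-variable solution $\phi_1(\cdot,c)$ from Proposition \ref{prop:Rayleigh-Hom}, by exploiting the two-sided comparability $|\phi_1(y,c_\ep)|\sim\phi_1(y,c)$ from \eqref{est:up-lower-phi-1}. After factoring $\phi(y,c_\ep)=(u(y)-c_\ep)\phi_1(y,c_\ep)$, the quotient in \eqref{Oep-1} splits as a ratio of $u-c_\ep$ terms times a ratio of $\phi_1$ terms. The first inequality of \eqref{Oep-1} then follows from the monotonicity \eqref{phi_1(z)leq phi_1(y)}, which yields $\phi_1(y,c)\le\phi_1(z,c)$ since $y$ lies between $y_c$ and $z$. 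The second inequality combines \eqref{u(z)-c<u(y)-c} (extended from real $c$ to $c_\ep\in O_{\ep_0}$ by absorbing the $O(|\ep|)$ perturbation into a universal constant) with the exponential lower bound $\phi_1(z,c)\ge C^{-1}e^{C^{-1}\al|z-y_c|}$ from \eqref{est:phi_1-ul} and the trivial fact $|y-y_c|\le|z-y_c|$.

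For \eqref{Oep-2}, I would apply the first form of \eqref{Oep-1} to the external ratio (thereby keeping an extra factor of $|u(z)-c_\ep|^{-1}$ outside the integral, which the inner estimate will partially absorb) and treat the inner integral by Cauchy--Schwarz against the natural weight $u'(w)^{-2}$:
\[
\Bigl|\int_{y_c}^z \phi_1(w,c_\ep)f(w)\,dw\Bigr|\le \|f\|_{L^2_{1/(u')^2}}\Bigl(\int_{y_c}^z u'(w)^2|\phi_1(w,c_\ep)|^2\,dw\Bigr)^{1/2}.
\]
Inserting the upper bound $|\phi_1(w,c_\ep)|\le e^{\al|w-y_c|}$ from \eqref{est:phi_1-ul-c} and using the explicit form $u'(w)=\mathrm{sech}^2 w$ with $u'(y_c)=1-c^2$, the weighted integral reduces to an elementary computation in which the $\mathrm{sech}^4(w)$ decay dominates the $e^{2\al|w-y_c|}$ growth; the factor $(1-c^2)^{-1/2}$ together with the decay $e^{-C^{-1}|z-y_c|}$ then emerges naturally from the structure at the interior point $y_c$ via the identity \eqref{fm:u(y)-c}.

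For \eqref{Oep-3}, the sharper exponent $\al|z-y|$ (rather than $|z-y_c|$) and the cancellation of the $|u(z)-c_\ep|^{-1/2}$ singularity must come from the extra derivative on $f$. My approach would be to write $f(w)=f(z)-\int_w^z f'(s)\,ds$ inside the integral, producing a boundary contribution (involving $f(z)$, which is controlled by the weighted $H^1$ norm via Sobolev embedding) plus a double integral of $f'$. Each piece is then combined with the ratio bound \eqref{est:phi_1(y)/phi_1(z)}, which in our monotone regime gives $\phi_1(w,c)/\phi_1(z,c)\le Ce^{-C^{-1}\al|z-w|}$. This extra exponential weight produces precisely the $e^{-C^{-1}\al|z-y|}$ decay, while the potential singularity $|u(z)-c_\ep|^{-1}$ from \eqref{Oep-1} is absorbed by a further Cauchy--Schwarz against the $1/(u')^2$-weighted $H^1$ norm of $f$.

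The main obstacle is the fine matching of degeneracies in \eqref{Oep-2}: the weight $1/(u')^2$ on the norm of $f$ is precisely tuned to compensate the $\mathrm{sech}^2$ decay of $u'$ in Cauchy--Schwarz, and the residual $(1-c^2)^{-1/2}$ must be extracted without being swamped either by the exponential growth of $\phi_1$ at large $|w-y_c|$ or by the smallness of $|u(z)-c_\ep|$ when $z$ is close to $y_c$. Retaining the full algebraic structure $u'(y_c)=1-c^2$ together with \eqref{fm:u(y)-c} will be essential here; a crude envelope such as $u'\le 1$ would irretrievably lose the $(1-c^2)^{-1/2}$ weight that the subsequent application of this corollary, in the limiting absorption principle, crucially depends on.
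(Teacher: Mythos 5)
Your treatment of \eqref{Oep-1} is essentially the paper's: the two-sided comparison \eqref{est:up-lower-phi-1}, the monotonicity \eqref{phi_1(z)leq phi_1(y)}, and the lower bound in \eqref{est:phi_1-ul} give both inequalities. For \eqref{Oep-3} you propose a boundary decomposition $f(w)=f(z)-\int_w^zf'(s)\,ds$; the paper instead goes directly through $\big|\int_{y_c}^z f\,dw\big|\le|u(z)-c|\,\|f/u'\|_{L^\infty}$ combined with $\phi_1(y,c)/\phi_1(z,c)\le Ce^{-C^{-1}\al|z-y|}$ from \eqref{est:phi_1(y)/phi_1(z)}, which is shorter and cleaner, but your route is a plausible (if more laborious) alternative.

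The real gap is in \eqref{Oep-2}. After your Cauchy--Schwarz you must control $\int_{y_c}^zu'(w)^2|\phi_1(w,c_\ep)|^2\,dw$, and you assert that the $\mathrm{sech}^4 w$ decay ``dominates'' the $e^{2\al|w-y_c|}$ growth from $|\phi_1(w,c_\ep)|\le e^{\al|w-y_c|}$. That is false once $\al$ is large: in the worst case $y_c=0$ the integrand behaves like $e^{(2\al-4)|w-y_c|}$, which grows exponentially for $\al>2$. The external factor $\phi_1(z,c)^{-1}$ coming from the first form of \eqref{Oep-1} cannot repair this, because the lower bound in \eqref{est:phi_1-ul} only yields $\phi_1(z,c)\gtrsim e^{C^{-1}\al|z-y_c|}$ with $C^{-1}<1$; the net exponent $(\al-2)-C^{-1}\al$ remains positive for $\al$ large, so the bound blows up in $\al$ while the desired right-hand side $C(1-c^2)^{-1/2}e^{-C^{-1}|z-y_c|}|u(z)-c_\ep|^{-1/2}$ is uniform. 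The missing ingredient is the $1/\al$--power split used in the paper: write $\phi_1(w,c)=\phi_1(w,c)^{1/\al}\phi_1(w,c)^{1-1/\al}$, dominate $\phi_1(w,c)^{1-1/\al}\le\phi_1(z,c)^{1-1/\al}$ by monotonicity, and keep only $\phi_1(w,c)^{1/\al}\le e^{|w-y_c|}$ inside Cauchy--Schwarz. This $\al$-independent exponential \emph{is} killed by $u'(w)$, producing exactly \eqref{est:phi_1-u'}, namely $\phi_1(w,c)^{1/\al}u'(w)\le C(1-c^2)^{-1/2}(u'(w))^{1/2}$, hence $\big|\int_{y_c}^z\phi_1^{1/\al}f\,dw\big|\lesssim(1-c^2)^{-1/2}|u(z)-c|^{1/2}\|f/u'\|_{L^2}$; the remaining $\phi_1(z,c)^{-1/\al}$ outside then supplies the uniform $e^{-C^{-1}|z-y_c|}$ decay. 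Without this splitting your outlined computation does not close uniformly in $\al$.
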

\begin{proof}
We will only prove the estimates for $z\leq y\leq y_c$, since the proof for $y_c\leq y\leq z$ is similar.

 The estimate \eqref{Oep-1} follows by
 Corollary \ref{cor:phi_1-infty}, \eqref{phi_1(z)leq phi_1(y)} and \eqref{est:phi_1-ul}.

To prove \eqref{Oep-2} and \eqref{Oep-3}, we consider $c\in(-1,1)$ and deduce by $u'(w)\sim e^{-2|w|}$ that
\begin{align}
 u'(w)\leq \big(u'(w)\big)^{\f12}e^{-|w|}\leq C\big(u'(w)\big)^{\f12}e^{|y_c|} e^{-|w-y_c|}\leq C(1-c^2)^{-\f12}e^{-|w-y_c|}\big(u'(w)\big)^{\f12}\label{est:u'-c},
 \end{align}
 which together with \eqref{est:phi_1-ul} gives
 \begin{align}
 \phi_1^{\f{1}{\al}}(w,c)u'(w)\leq  Ce^{|y-y_c|}\cdot(1-c^2)^{-\f12}e^{-|w-y_c|}\big(u'(w)\big)^{\f12}
 \leq C(1-c^2)^{-\f12}\big(u'(w)\big)^{\f12}\label{est:phi_1-u'}.
 \end{align}
Therefore, thanks to  \eqref{est:phi_1-ul}, we  get by Cauchy-Schwarz inequality that
 \begin{align}
\notag\Big|\int_{y_c}^z\phi_1(w,c)^{\f{1}{\al}}f(w)dw\Big|
&=\Big|\int_{y_c}^z\big(\phi_1(w,c)^{\f{1}{\al}}u'(w)\big)\cdot (f/u')(w)dw\Big|
 \\
 &\leq C(1-c^2)^{-\f12}|u(z)-c|^{\f12}\|f/u'\|_{L^2}. \label{est:yc-z-C}
 \end{align}
 Thus, the estimate \eqref{Oep-2} follows by \eqref{est:up-lower-phi-1},
\eqref{phi_1(z)leq phi_1(y)}, \eqref{est:phi_1-ul} and \eqref{est:phi_1-u'}
 \begin{align}
 \notag\bigg|\f{\phi(y,c_{\ep})\cdot\big|\int_{y_c}^z\phi_1(w,c
 _{\ep})f(w)dw\big|}{\phi(z,c_{\ep})^2}\bigg|
 &\leq \f{
 \Big|\int_{y_c}^z\phi_1(w,c)^{\f{1}{\al}}f(w)dw\Big|\cdot \phi_1(y,c)}{|u(z)-c_{\ep}|\cdot\phi_1(z,c)^{1+\f{1}{\al}}}\\ \notag
 &\leq \f{C(1-c^2)^{-\f12}}{|u(z)-c_{\ep}|^{\f12}
 \cdot\phi_1(z,c)^{\f{1}{\al}}}\leq \f{C(1-c^2)^{-\f12}e^{-C^{-1}|z-y_c|}}{|u(y)-c_{\ep}|^{\f12}}.
 \end{align}
The estimate \eqref{Oep-3} follows by
\eqref{est:phi_1(y)/phi_1(z)}, \eqref{phi_1(z)leq phi_1(y)} and \eqref{est:phi_1-ul}
 \begin{align}
 \notag\bigg|\f{\phi(y,c_{\ep})\cdot\big|\int_{y_c}^z\phi_1(w,c
 _{\ep})f(w)dw\big|}{\phi(z,c_{\ep})^2}\bigg|
 &\leq \f{
 \Big|\int_{y_c}^zf(w)dw\Big|}{|u(z)-c_{\ep}|}
 \cdot \f{\phi_1(y,c)}{ \phi_1(z,c)}\\
& \leq C\|f/u'\|_{L^{\infty}}\cdot e^{-C^{-1}\al|z-y|}\notag\\
&\leq C\|f\|_{H^{1}_{1/(u')^2}}\cdot e^{-C^{-1}\al|z-y|}.\notag
 \end{align}
The above constants  $C$ are independent of $y$, $z$, $\ep$, $c$, $\al$.
  \end{proof}


\section{Inhomogeneous Rayleigh equation }
 The goal of this section is to solve the inhomogenous Rayleigh equation for $\textbf{c}\in O_{\ep_0}$:
\begin{align}\label{eq:phi-inh}
(u-\textbf{c})(\Phi''-\al^2\Phi)-u''\Phi=f
\end{align}
Let  $\phi$, $\phi_1$ be the solution of homogenous solution constructed  in Proposition \ref{prop:Rayleigh-Hom}.
\begin{definition}\label{def:W}
 We define the Wronskian of the Rayleigh equation as
\begin{align}
\label{def:W-1}W(\textbf{c},\al)&:=\int_{\mathbb{R}}\f{1}{\phi(y,\textbf{c})^2}dy,\quad
\textbf{c}
 \in O_{\ep_0}.
\end{align}

\end{definition}
Thanks to \eqref{est:phi_1-ul-c}, we have for $\textbf{c}\in O_{\ep_0}$,
\begin{align}
\Big|\f{1}{\phi(y,\textbf{c})^2}\Big|&\lesssim \f{1}{|\mathrm{Im}(\textbf{c})|^2\phi_1(y,\textbf{c})^2}\leq
C|\mathrm{Im}(\textbf{c})|^{-2}e^{-2C^{-1}\al|y-y_c|}
\in L^1_y(\mathbb{R}).\label{est:W1-f0.5}.
\end{align}
Consequently, \eqref{def:W-1} is well-defined.

\begin{lemma}\label{cor:Wneq0}
Let $\textbf{c}\in O_{\ep_0}$, $\al\geq 1$. Then  it holds that
\begin{align}\label{Wneq0}
W(\textbf{c},\al)\neq 0.\end{align}
\end{lemma}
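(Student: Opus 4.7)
The plan is to argue by contradiction: if $W(\textbf{c}, \al) = 0$, I will extract a nontrivial $L^2(\mathbb{R})$ solution to the homogeneous Rayleigh equation, which then furnishes an eigenfunction of $\mathcal{R}_\al$ with eigenvalue $\textbf{c}$. Since $\textbf{c} \in O_{\ep_0}$ satisfies $\mathrm{Im}(\textbf{c}) \neq 0$, this would contradict the spectral stability of $\mathcal{R}_\al$ for $\al \geq 1$ recalled in the introduction (cf.\ \cite{Lin03,LLZ}). The key input beyond that citation is the two-sided control on $\phi_1$ from Proposition \ref{cor:phi_1-infty}.

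The first step is to build, via reduction of order from $\phi(y, \textbf{c})$, the two distinguished solutions
\begin{align*}
\psi^+(y) := \phi(y, \textbf{c}) \int_y^{+\infty} \frac{dz}{\phi(z, \textbf{c})^2}, \qquad \psi^-(y) := \phi(y, \textbf{c}) \int_{-\infty}^{y} \frac{dz}{\phi(z, \textbf{c})^2}.
\end{align*}
The absolute convergence of both improper integrals is already contained in \eqref{est:W1-f0.5}. Using the lower bound $\phi_1(y, \textbf{c}) \geq C^{-1} e^{C^{-1} \al |y - y_c|}$ from Proposition \ref{cor:phi_1-infty}, together with the fact that $u(y) - \textbf{c}$ is bounded away from $0$ and tends to a finite nonzero limit at $\pm\infty$, a standard estimate shows that $\psi^\pm$ decay exponentially at their corresponding ends, so $\psi^\pm \in L^2(\mathbb{R})$. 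Neither is identically zero: if $\psi^+ \equiv 0$, then since $\phi$ never vanishes (because $|u - \textbf{c}| \geq |\mathrm{Im}(\textbf{c})| > 0$), one would have $\int_y^{+\infty} \phi^{-2}\,dz \equiv 0$, whose $y$-derivative $-\phi(y)^{-2}$ is nowhere zero, a contradiction.

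The second step is the identity
\begin{align*}
\psi^+(y) + \psi^-(y) = \phi(y, \textbf{c}) \int_{\mathbb{R}} \frac{dz}{\phi(z, \textbf{c})^2} = W(\textbf{c}, \al) \, \phi(y, \textbf{c}).
\end{align*}
Thus the assumption $W(\textbf{c}, \al) = 0$ forces $\psi^+ \equiv -\psi^-$, so this common function is a nontrivial solution of the homogeneous Rayleigh equation that decays at both $\pm\infty$ and lies in $L^2(\mathbb{R})$. It is therefore an $L^2$-eigenfunction of $\mathcal{R}_\al$ with nonreal eigenvalue $\textbf{c}$, contradicting spectral stability and completing the argument.

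The main obstacle I anticipate is making the $L^2$-decay of $\psi^\pm$ sharp enough to genuinely land inside the hypotheses of the cited spectral-stability result; but this is essentially a bookkeeping exercise with the exponential bounds on $\phi_1$ already furnished by Proposition \ref{cor:phi_1-infty}, with no new analytic ingredient required.
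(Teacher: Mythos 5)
Your proof is essentially the paper's: under the assumption $W(\mathbf{c},\al)=0$, one constructs (via reduction of order from $\phi$) the two one-sided second solutions $\phi\int_{\pm\infty}^y\phi^{-2}\,dz$ and glues them into a single nontrivial $L^2(\mathbb{R})$ solution of the homogeneous Rayleigh equation, yielding an eigenfunction of $\mathcal{R}_\al$ with nonreal eigenvalue $\mathbf{c}\in O_{\ep_0}$ and contradicting spectral stability from \cite[Theorem 1.5 (i)]{Lin03}. Two small points to tighten: the intermediate claim that each $\psi^\pm$ lies in $L^2(\mathbb{R})$ is not true when $W\neq 0$ (each decays at its own end $\pm\infty$ but, since $\int_y^{+\infty}\phi^{-2}\to W$ as $y\to-\infty$, grows exponentially at the opposite end; $L^2$ on all of $\mathbb{R}$ only emerges after you set $W=0$ and use $\psi^+=-\psi^-$, which is indeed where your argument lands), and you should bootstrap the $L^2$ solution to $H^2$ through the equation, as the paper does, since the notion of eigenvalue here and the cited stability statement are formulated for $H^2$ eigenfunctions.
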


\begin{proof}

In what follows, we prove it by contradiction. Assuming  $W(\textbf{c},\al)=0$,   we  shall construct a non-zero function with two expressions
\begin{align*}
\Gamma(\al,y,\textbf{c}):=\phi(y,\textbf{c})\int_{\pm\infty}^y\f{1}{\phi(z,\textbf{c})^2}dz,
\end{align*}
which satisfies $\Gamma''-\al^2\Gamma-\f{u''}{u-\textbf{c}}\Gamma=0$.
Let $y_c=u^{-1}(\mathrm{Re}(\textbf{c}))$. For $y\leq y_c$, thanks to \eqref{Oep-1},  we obtain
\begin{align}\label{est:tildephi}
|\Gamma(y,\textbf{c})|=\Big|\int_{-\infty}^y\f{\phi(y,\textbf{c})}{\phi(z,\textbf{c})^2}dz\Big|\lesssim \f{e^{-C^{-1}\al|y-y_c|}}{|u(y)-\textbf{c}|}\lesssim|\textrm{Im}(\textbf{c})|^{-1}e^{-C^{-1}\al|y-y_c|}\in L^2_y(-\infty,y_c).
 \end{align}
Similarly, for  $y_c\leq y<+\infty$, we obtain $\Gamma(y,\textbf{c})\in L^2_y(y_c,+\infty)$. Therefore, $\Gamma(y,\textbf{c})\in L^2_y(\mathbb{R})$.
 Moreover, we can deduce from the equation that $\Gamma\in H^2_y(\mathbb{R})$. Thus, $\Gamma(\al,y,\textbf{c})$ is an eigenfunction of $\mathcal{R}_{\al}$, cf. \eqref{Op:Ray}, which belongs to  $H^2$, with eigenvalue $\textbf{c}\in O_{\ep_0}\subset \mathbb{C}/\mathbb{R}$.
This contradicts with the linear stability of  $\mathcal{R}_{\al}$($u(y)=\tanh y$), see  \cite[Theorem 1.5 (i)]{Lin03}.
\end{proof}

\begin{proposition}\label{pro:solve-Phi}
Let $f\in L^2_{1/(u')^2}(\mathbb{R})$, 
 $\al\geq 1$, $\textbf{c}\in O_{\ep_0}$. The inhomogeneous equation \eqref{eq:phi-inh} admits a unique solution in  $H^2(\mathbb{R})$, which can be expressed as follows
\begin{align}
\Phi(\al,y,\textbf{c})\label{solution-Phi-1}&:=\phi(y,\textbf{c})\int_{-\infty}^y\f{\int_{y_c}^z\phi_1(w,\textbf{c})f(w)dw}{\phi(z,\textbf{c})^2}dz
+\mu(\textbf{c},\al)\phi(y,\textbf{c})\int_{-\infty}^y\f{1}{\phi(z,\textbf{c})^2}dz\\
\label{solution-Phi-2}&=\phi(y,\textbf{c})\int_{+\infty}^y\f{\int_{y_c}^z\phi_1(w,\textbf{c})f(w)dw}{\phi(z,\textbf{c})^2}dz
+\mu(\textbf{c},\al)\phi(y,\textbf{c})\int_{+\infty}^y\f{1}{\phi(z,\textbf{c})^2}dz,
\end{align}
where
\begin{align}\label{def:W-mu}
\mu(\textbf{c},\al)=-\f{T(f)(\textbf{c})}{W(\textbf{c},\al)},
\end{align}
 with $W(\textbf{c},\al)$ defined in \eqref{def:W-1} and
\begin{align}\label{def:T(h)} T(f)(\textbf{c})&:=\int_{\mathbb{R}}\f{\int_{y_c}^z\phi_1(w,\textbf{c})f(w)dw}{(u(z)-\textbf{c})^2\phi_1(z,\textbf{c})^2}dz,\quad \textbf{c}\in O_{\ep_0}.
\end{align}
\end{proposition}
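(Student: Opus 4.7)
The plan is to build $\Phi$ by variation of parameters using the homogeneous solution $\phi(y,\textbf{c})$ from Proposition \ref{prop:Rayleigh-Hom}, and then fix the free constant $\mu$ so that the resulting function decays at both infinities. Setting $\Phi=\phi\cdot v$ in \eqref{eq:phi-inh}, the $v$-only terms collapse into $v\cdot[(u-\textbf{c})(\phi''-\al^2\phi)-u''\phi]=0$, and using $\phi/(u-\textbf{c})=\phi_1$ the remainder reduces to $(\phi^2 v')'=\phi_1 f$. Integrating once from $y_c$ yields $\phi^2 v'=\int_{y_c}^y\phi_1 f\,dw+C_1$, and integrating again from $-\infty$ produces \eqref{solution-Phi-1} with $C_1=\mu$ a free constant (the extra homogeneous contribution $C_2\phi$ must vanish since $\phi$ grows at $-\infty$); anchoring the outer integral at $+\infty$ instead gives \eqref{solution-Phi-2}. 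Verifying that \eqref{solution-Phi-1} satisfies \eqref{eq:phi-inh} for any $\mu$ is then a direct calculation by differentiating twice.

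Next I would show the two formulas agree for the specified $\mu$. Computing the difference of \eqref{solution-Phi-1} and \eqref{solution-Phi-2} yields
\[
\phi(y,\textbf{c})\bigl[T(f)(\textbf{c})+\mu(\textbf{c},\al)\,W(\textbf{c},\al)\bigr],
\]
which vanishes identically exactly for the choice \eqref{def:W-mu}; the latter is well-defined because $W(\textbf{c},\al)\neq 0$ by Lemma \ref{cor:Wneq0}. The necessity of this relation is again dictated by decay: $\phi$ grows exponentially at $+\infty$ as well, so the representation \eqref{solution-Phi-1} can belong to $L^2$ only when the inner bracketed factor tends to zero as $y\to+\infty$, which is precisely the condition $T(f)+\mu W=0$.

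For the $H^2$ regularity it is enough to prove $\Phi\in L^2(\mathbb{R})$, since the equation then rewrites $\Phi''=\al^2\Phi+u''\Phi/(u-\textbf{c})+f/(u-\textbf{c})$, and the right-hand side lies in $L^2$ thanks to $|u-\textbf{c}|\geq|\mathrm{Im}(\textbf{c})|>0$ for $\textbf{c}\in O_{\ep_0}$ and $f\in L^2_{1/(u')^2}\hookrightarrow L^2$. For $y\le y_c$ I would estimate \eqref{solution-Phi-1} directly using Corollary \ref{Cor:basic-limit}: bound \eqref{Oep-2} controls the $z$-integrand by $C(1-c^2)^{-1/2}|u(z)-\textbf{c}|^{-1/2}e^{-C^{-1}|z-y_c|}$, which is integrable in $z\in(-\infty,y]$, while bound \eqref{Oep-1} handles the $\mu$-piece. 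The symmetric estimate from \eqref{solution-Phi-2} takes care of $y\ge y_c$, and the two representations match globally by the choice of $\mu$.

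Uniqueness is immediate: any other $H^2$ solution $\widetilde\Phi$ would give $\Phi-\widetilde\Phi\in H^2$ satisfying the homogeneous Rayleigh equation, i.e.\ an $H^2$ eigenfunction of $\mathcal{R}_\al$ with non-real eigenvalue $\textbf{c}\in O_{\ep_0}$, contradicting the linear stability of $\mathcal{R}_\al$ via \cite{Lin03} already invoked in Lemma \ref{cor:Wneq0}. The main technical obstacle is the exponential-decay bookkeeping in the $L^2$ estimates on $\Phi$ near both infinities; the variation-of-parameters construction and the determination of $\mu$ are essentially classical once Lemma \ref{cor:Wneq0} and the uniform estimates of Section 2 are in hand.
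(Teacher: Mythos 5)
Your argument is correct and follows essentially the same route as the paper: the convergence and $L^2$ decay rest on the same bounds \eqref{Oep-1}--\eqref{Oep-2} from Corollary \ref{Cor:basic-limit}, and matching the two representations via $\mu=-T(f)(\textbf{c})/W(\textbf{c},\al)$ uses the nonvanishing of $W$ from Lemma \ref{cor:Wneq0}, exactly as in the paper. The only notable difference is that you spell out the variation-of-parameters derivation of the formula (which the paper dismisses as ``can be directly verified'') and you add the uniqueness argument via spectral stability from \cite{Lin03}, a step the paper's own proof does not address at all.
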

\begin{proof}
We begin by  demonstrating the well-definedness of  \eqref{solution-Phi-1}  for $\textbf{c}\in O_{\ep_0}$. We examine each term individually. Recall $y_c=u^{-1}(\mathrm{Re}(\textbf{c}))$.

If $f\in L^2_{1/(u')^2}$, then we deduce from \eqref{est:up-lower-phi-1}, Cauchy-Schwarz inequality, \eqref{phi_1(z)leq phi_1(y)} and \eqref{est:phi_1-ul} that
\begin{align}
\quad\notag\Big|\f{\int_{y_c}^z
\phi_1(w,\textbf{c})f(w)dw}
{\phi(z,\textbf{c})^2}\Big|&\lesssim \f{|u(z)-\mathrm{Re}(\textbf{c})|^{\f12}\|f\|_{L^2_{1/(u')^2}}}{|u(z)-\textbf{c}|^{2}\phi_1(z,\mathrm{Re}(\textbf{c}))}\\
\label{est:T-int}&\lesssim |\mathrm{Im}(\textbf{c})|^{-\f32}e^{-C^{-1}\al|z-y_c|}\in L^1_{z}(\mathbb{R}).
\end{align}
Consequently, \eqref{def:T(h)} is well-defined. Indeed, considering $W(\textbf{c})$ is well-defined, we can conclude that $\mu(\textbf{c},\al)$, cf. \eqref{def:W-mu} is well-defined. For  $y\leq y_c$,  we have by  \eqref{Oep-1} that
 \begin{align}\label{inhoPhi-1}
 \Big|\int_{-\infty}^y\f{\phi(y,\textbf{c})}{\phi(z,\textbf{c})^2}\Big|
 &\lesssim |\mathrm{Im}(\textbf{c})|^{-1}e^{-C^{-1}\al(y_c-y)}\in L^2_y(-\infty,y_c).
 \end{align}
 For  $y\leq y_c$, $f\in L^2_{1/(u')^2}$, we have by  \eqref{Oep-2} that
 \begin{align}\label{inhoPhi-2}
 &\quad\notag\Big|\phi(y,\textbf{c})\int_{-\infty}^y\f{\int_{y_c}^z\phi_1(w,\textbf{c})f(w)dw}{\phi(z,\textbf{c})^2}dz\Big|\\
 &\lesssim|\mathrm{Im}(\textbf{c})|^{-\f12}(1-\mathrm{Re}(\textbf{c})^2)^{-\f12}e^{-C^{-1}(y_c-y)}\in L^2_y(-\infty,y_c).
 \end{align}
Therefore, each term of $\Phi(\al,\cdot,\textbf{c})$ in \eqref{solution-Phi-1} is well-defined.

Furthermore,  we obtain by \eqref{inhoPhi-1} and \eqref{inhoPhi-2} that $\Phi(\al,\cdot,\textbf{c})$ belongs to $L^2(-\infty,y_c)$. 
Similarly, the expression in \eqref{solution-Phi-2} is well-defined and belongs to $L^2(y_c,+\infty)$.  The definition of $\mu$ in \eqref{def:W-mu} yields  the equality in \eqref{solution-Phi-2}.
Consequently,  we have $\Phi(\al,\cdot,\textbf{c})\in L^2(\mathbb{R})$. It can be directly verified that \eqref{solution-Phi-1}  is the solution to the equation \eqref{eq:phi-inh}. Using  equation \eqref{eq:phi-inh}, we can obtain $\Phi(\al,\cdot,\textbf{c})\in H^2(\mathbb{R})$ for  $\al\geq 1$ and $\textbf{c}\in O_{\ep_0}$.
\end{proof}

\section{Embedding eigenvalue}
The goal of this section is to provide the classical results regarding the embedding eigenvalue of Rayleigh operator $\mathcal{R}_{\al}$, cf. \eqref{Op:Ray}, and to express them in our terminology.

Consider the Rayleigh equation
\begin{align}\label{eq:eigen}
(u-c)(\varphi''-\al^2\varphi)-u''\varphi=0.
\end{align}
\begin{definition}\label{def:eigenfucntion}
Let $\al\in \mathbb{Z}^+$.
\begin{itemize}
\item For $c\in \mathbb{C}/ \mathrm{Ran} u$, if  \eqref{eq:eigen} admits a $H^2(\mathbb{R})$ solution, we refer to  $\textbf{c}$ as eigenvalue of  $\mathcal{R}_{\al}$
    with eigenfunction $\varphi(\cdot,\textbf{c})$.
\item For $c\in \mathrm{Ran} u$, if  \eqref{eq:eigen} admits a $H^2(\mathbb{R})$ solution,  we refer to  $c$ as embedding eigenvalue of  $\mathcal{R}_{\al}$  with embedding eigenfunction $\varphi(\cdot,c)$.
\end{itemize}
\end{definition}

Let us remark that the definition of embedding eigenvalues is equivalent to one introduced in  \cite{WZZ-apde}: {\it $c\in \mathrm{Ran}\, u$ is an embedding eigenvalue of $\mathcal{R}_\al$ if there exists $0\neq \varphi\in H^1(\mathbb{R})$ such that  for all $\chi\in H^1(\mathbb{R})$,
\begin{align}\nonumber
\int_{\mathbb{R}}(\varphi'\chi'+\al^2\varphi\chi)dy+p.v.\int_{\mathbb{R}}\frac{u''\varphi\chi}{u-c}dy+i\pi\sum_{y\in u^{-1}\{c\},u'(y)\neq 0}\frac{(u''\varphi\chi)(y)}{|u'(y)|}=0.
\end{align}
}

\begin{lemma}\label{lem:embedd-u''-psi}
 Let $c\in\mathbb{R}$ and $\varphi\in H^2(\mathbb{R})$ be the non-zero solution to equation  \eqref{eq:eigen}. Then it holds that  $\varphi(y_c,c)\neq 0$ and $u''(y_c)=0$.
\end{lemma}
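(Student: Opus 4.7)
The plan is to evaluate the Rayleigh equation at the critical point $y=y_c$ and then combine a Frobenius analysis at $y_c$ with the sharp pointwise lower bound on $\phi_1$ from Proposition~\ref{prop:Rayleigh-Hom} to exclude all $H^2(\mathbb{R})$ solutions except in the one exceptional situation singled out by the lemma. Throughout I implicitly assume $c\in(-1,1)$, since $y_c = u^{-1}(c)$ must be a real point. Setting $y=y_c$ in the equation, the coefficient $u(y_c)-c$ vanishes and one immediately reads off $u''(y_c)\varphi(y_c)=0$. Since Lemma~\ref{lem:simple-useful-equ} gives $u''(y_c) = -2c(1-c^2)$, the equation $u''(y_c)=0$ is equivalent to $c=0$; so both conclusions of the lemma reduce to the single claim that $c=0$ and $\varphi(0)\neq 0$.

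Assume first $c\in(-1,1)\setminus\{0\}$, aiming for a contradiction. Then $u''(y_c)\neq 0$, which combined with $u''(y_c)\varphi(y_c)=0$ forces $\varphi(y_c)=0$. The point $y_c$ is a regular singular point of the Rayleigh equation with Frobenius indicial roots $r=0,1$. One smooth solution is the regular branch $\phi(y,c)=(u(y)-c)\phi_1(y,c)$ constructed in Proposition~\ref{prop:Rayleigh-Hom}, corresponding to $r=1$ and satisfying $\phi(y_c,c)=0$. For the second independent branch, plugging the ansatz $\varphi^{(2)}(y)=1+c_1(y-y_c)\log|y-y_c|+(\text{analytic})$ into $\varphi'' = \alpha^2\varphi+\frac{u''}{u-c}\varphi$ and matching the leading $(y-y_c)^{-1}$ singularities against $u''(y)/(u(y)-c) = u''(y_c)/(u'(y_c)(y-y_c)) + O(1)$ yields $c_1 = u''(y_c)/u'(y_c)\neq 0$. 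Consequently $(\varphi^{(2)})''(y)\sim c_1/(y-y_c)$ near $y_c$, which fails to be locally $L^2$, so any $H^2$ solution must be a scalar multiple of $\phi(\cdot,c)$. But by Proposition~\ref{prop:Rayleigh-Hom}(4) one has $\phi_1(y,c)\geq 1$, hence $|\phi(y,c)|\geq |u(y)-c|\to |1-c|>0$ as $y\to+\infty$, contradicting $\varphi\in L^2(\mathbb{R})$. This forces $c=0$ and so $u''(y_c)=u''(0)=0$.

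It remains to check $\varphi(0)\neq 0$ when $c=0$. In this case $u''/(u-c) = -2u'$ is smooth on $\mathbb{R}$, so the equation is a regular ODE and $y=0$ is an ordinary point. If $\varphi(0)=0$ then, up to a scalar, $\varphi$ is determined by $\varphi'(0)$ and must be proportional to $\phi(y,0)=u(y)\phi_1(y,0)$, which has $\phi(0,0)=0$ and $\phi'(0,0)=u'(0)\phi_1(0,0)=1$. Again $\phi_1(y,0)\geq 1$ by Proposition~\ref{prop:Rayleigh-Hom}(4), so $|\phi(y,0)|\geq |u(y)|\to 1$ as $|y|\to\infty$, again contradicting $\varphi\in L^2$. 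Hence $\varphi(y_c,c)=\varphi(0,0)\neq 0$. The main obstacle is the Frobenius computation showing that the log coefficient in the non-smooth local branch is exactly $u''(y_c)/u'(y_c)$ and hence non-zero precisely when $u''(y_c)\neq 0$; once that is in hand, the remainder is just a clean application of the lower bound $\phi_1\geq 1$ together with the fact that $u(y)\to\pm 1$ at infinity.
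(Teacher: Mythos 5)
Your proof is correct but takes a genuinely different route from the paper's. Both start by setting $y=y_c$ to obtain $u''(y_c)\varphi(y_c)=0$. From there the paper argues in one stroke: if $\varphi(y_c)=0$ then $\varphi/(u-c)$ is bounded, and the classical Rayleigh trick --- writing $\varphi=(u-c)f$ so that $\big((u-c)^2f'\big)'=\alpha^2(u-c)^2f$, and testing against $\bar f$ --- yields the identity $\int_{\mathbb{R}}\big|\varphi'-\tfrac{u'\varphi}{u-c}\big|^2+\alpha^2|\varphi|^2\,dy=0$, which forces $\varphi\equiv0$ once $\alpha\neq0$; this is a two-line energy argument valid for arbitrary shear $u$. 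You instead run a Frobenius analysis at the regular singular point $y_c$: you compute that the logarithmic coefficient of the second branch equals $u''(y_c)/u'(y_c)$, nonzero precisely when $u''(y_c)\neq0$, so that branch has $\varphi''\sim(y-y_c)^{-1}\notin L^2_{\mathrm{loc}}$, which reduces any $H^2$ solution with $\varphi(y_c)=0$ to a scalar multiple of $\phi=(u-c)\phi_1$; you then invoke the lower bound $\phi_1\geq1$ from Proposition~\ref{prop:Rayleigh-Hom}(4) together with $|u(y)-c|\to 1\mp c>0$ as $y\to\pm\infty$ to rule out $L^2(\mathbb{R})$ decay, and you treat $c=0$ separately as an ordinary point. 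Your route gives a concrete picture of the local solution structure at the critical layer, but it is longer, it uses the explicit $\tanh$ identity $u''(y_c)=-2c(1-c^2)$ to reduce the claim to $c=0$ (which the paper's proof never needs), and it imports global growth information about $\phi_1$ from the much heavier construction of Proposition~\ref{prop:Rayleigh-Hom}. The paper's energy-identity argument is the more economical and more general choice here.
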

\begin{proof}
Let $y=y_c$ in \eqref{eq:eigen}. Using  equation \eqref{eq:eigen}, we have $u''(y_c)\cdot \varphi(y_c,c)=0$.
If $\varphi(y_c)=0$, then $\f{\varphi(y,c)}{u(y)-c}$ is well-defined for $\varphi(\cdot,c)\in H^2$. By the classical Rayleigh's trick, we have
 \begin{align}
 \int_{\mathbb{R}}\big|\varphi'-\f{u'\varphi}{u-c}\big|^2+\al^2|\varphi|^2dy=0,
\end{align}
which gives $\varphi(\cdot,c)\equiv 0$ for $\al\neq 0$. This leads to a contradiction. Thus, we must have  $\varphi(y_c)\neq 0$ and $u''(y_c)=0$.
\end{proof}

\begin{lemma}\label{lem:eigen-al=1'}
Let $\lambda\in\mathbb{R}$. There exists non-zero $\varphi\in L^2(\mathbb{R})$ satisfying $\varphi''+2(\cosh^{-2}y)\varphi=\lambda\varphi$ if and only if $\lambda=1$. Moreover, $\varphi(y)=C\mathrm{sech}\,y $.
\end{lemma}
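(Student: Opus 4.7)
The plan is to use a Darboux-type factorization of the operator $H := -\partial_y^2 - 2\,\text{sech}^2 y$, which rewrites the equation in the self-adjoint form $H\varphi = -\lambda\varphi$. Introduce the first-order operators
\[
A := \partial_y + \tanh y, \qquad A^* := -\partial_y + \tanh y.
\]
A direct computation, using $(\tanh y)' = \text{sech}^2 y$ and $\tanh^2 y + \text{sech}^2 y = 1$, yields the two intertwining identities
\[
A^*A = -\partial_y^2 - 2\,\text{sech}^2 y + 1, \qquad AA^* = -\partial_y^2 + 1.
\]
Thus the equation $\varphi'' + 2\,\text{sech}^2 y \cdot \varphi = \lambda \varphi$ is equivalent to $A^*A\varphi = (1-\lambda)\varphi$.

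First I would note that any $L^2$ solution is automatically in $H^2(\mathbb{R})$: since the coefficient $2\,\text{sech}^2 y - \lambda$ is bounded, the ODE gives $\varphi'' \in L^2$, hence $\varphi \in H^2$ and $\psi := A\varphi = \varphi' + \tanh y\cdot \varphi \in L^2$. Pairing the equation with $\varphi$ gives $\|A\varphi\|_{L^2}^2 = (1-\lambda)\|\varphi\|_{L^2}^2$, so a non-trivial solution forces $\lambda \le 1$. If $\lambda = 1$, then $A\varphi = 0$, i.e.\ $(\cosh y\cdot\varphi)' = 0$, whence $\varphi = C\,\text{sech}\, y$, which clearly lies in $L^2$ and gives the stated eigenfunction.

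It remains to exclude the range $\lambda < 1$. Assuming such a non-trivial $\varphi \in L^2$ exists, the function $\psi = A\varphi$ lies in $L^2$ and satisfies
\[
AA^*\psi = A(A^*A)\varphi = (1-\lambda)A\varphi = (1-\lambda)\psi,
\]
i.e.\ $-\psi'' + \psi = (1-\lambda)\psi$, which simplifies to $\psi'' = \lambda \psi$. I would then read off that no non-zero $L^2$ solution exists: if $\lambda > 0$ the solutions oscillate ($\psi = ae^{i\sqrt{\lambda}y} + be^{-i\sqrt{\lambda}y}$) and are not square integrable unless $\psi \equiv 0$; if $\lambda = 0$ the solutions are affine; and if $\lambda < 0$ the solutions are $ae^{\sqrt{-\lambda}y} + be^{-\sqrt{-\lambda}y}$, with $L^2$-integrability at $+\infty$ forcing $a=0$ and at $-\infty$ forcing $b=0$. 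In every subcase $\psi \equiv 0$, so $A\varphi = 0$ and $\varphi$ is proportional to $\text{sech}\, y$; but plugging $\text{sech}\, y$ back into the original equation gives $\lambda = 1$, contradicting $\lambda < 1$. This rules out all $\lambda \neq 1$.

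The only delicate point is the preliminary verification of the factorization identities and the domain-related claim $A\varphi \in L^2$; both are short and follow from bounded-coefficient elliptic regularity for the ODE. Once these are in place, the proof is essentially a two-line spectral argument: $A^*A \ge 0$ pins the eigenvalue to $\lambda \le 1$, and the SUSY partner $AA^* = -\partial_y^2 + 1$ is a free Schr\"odinger operator with no eigenvalues, which eliminates every $\lambda < 1$.
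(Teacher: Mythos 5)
Your Darboux/SUSY factorization is a clean, self-contained argument, whereas the paper simply cites Laugesen's lecture notes for this classical fact, so your write-up actually supplies the missing proof. The factorization identities $A^*A = -\partial_y^2 - 2\,\mathrm{sech}^2 y + 1$ and $AA^* = -\partial_y^2 + 1$ are correct, the $H^2$-regularity remark justifies the integration by parts, and the logic is sound.

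One small slip: in ruling out $\lambda<1$ you have the two subcases reversed. For $\psi''=\lambda\psi$, the value $\lambda>0$ gives \emph{exponential} solutions $\psi=ae^{\sqrt{\lambda}y}+be^{-\sqrt{\lambda}y}$, while $\lambda<0$ gives the \emph{oscillatory} ones $\psi=ae^{i\sqrt{-\lambda}y}+be^{-i\sqrt{-\lambda}y}$; you wrote them the other way around. The conclusion $\psi\equiv 0$ is correct in every subcase, so the proof stands, but the formulas should be swapped. A slightly shorter finish is also available once you know $\psi = A\varphi = 0$: for $\lambda < 1$ the identity $\|A\varphi\|_{L^2}^2 = (1-\lambda)\|\varphi\|_{L^2}^2 > 0$ is immediately contradicted, so you need not plug $\mathrm{sech}\,y$ back into the equation.
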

\begin{proof}
See \cite[P.92]{R}.
\end{proof}

For hyperbolic tangent flow, let us summarize the classical results using the terminology in this paper as follows.

\begin{lemma}\label{lem:eigen-al=1}
Let $u(y)=\tanh y$, $\al\geq 1$, $c\in\mathbb{C}$. It holds that

(1)  $\mathcal{R}_{\al}$: $H^2(\mathbb{R})\to H^2(\mathbb{R})$ has no eigenvalues.
Only for $\al=1$, $\mathcal{R}_{\al}$ has a single embedding eigenvalue $c=0$ with 1-D  eigenspace
$E_1=\mathrm{span}\{\mathrm{sech}\,y \}$.

In other words,
the Rayleigh equation \eqref{eq:eigen} has a non-zero $H^2(\mathbb{R})$ solution $\varphi(y,c)$ if and only if $\al=1$, $c=0$. By adjusting for a constant, we have $\varphi(y,0)=\mathrm{sech}\,y $.

(2) The solution constructed in Proposition \ref{prop:Rayleigh-Hom} has the following explicit expression
\begin{align*}
\phi(y,0)=\f{1}{2}\big(\sinh y+\f{y}{\cosh y}\big).
\end{align*}
Moreover, it holds that
\beq\label{cosh-1y}
-\mathrm{sech}\,y =
\left\{
\begin{aligned}
&\phi(y,0)\int_{-\infty}^y\f{1}{\phi(z,0)^2}dz,\quad y<0,\\
&\phi(y,0)\int_{+\infty}^y\f{1}{\phi(z,0)^2}dz,\quad y>0.
\end{aligned}
\right.
\eeq

\end{lemma}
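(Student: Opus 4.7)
The plan is to handle the three assertions in sequence, all reducing to short verifications once the right simplification is made.

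For part (1), I would treat the two regimes for $c$ separately. The non-embedding case $c\in \mathbb{C}\setminus \mathrm{Ran}\,u$ is spectral stability of the tangent profile and is quoted from \cite[Theorem~1.5(i)]{Lin03}. For the embedding regime $c\in(-1,1)$, Lemma \ref{lem:embedd-u''-psi} forces $u''(y_c)=0$; since $u''=-2\tanh y\cdot\mathrm{sech}^2 y$, this pins down $y_c=0$ and thus $c=0$. At $c=0$ the singular coefficient simplifies to $u''/u=-2u'=-2\,\mathrm{sech}^2 y$, which is smooth on all of $\mathbb{R}$, and \eqref{eq:eigen} reduces to the Schr\"odinger-type equation $\varphi''+2\,\mathrm{sech}^2 y\cdot\varphi=\al^2\varphi$. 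Lemma \ref{lem:eigen-al=1'} then forces $\al^2=1$ (hence $\al=1$ since $\al\geq 1$), with $\varphi=C\,\mathrm{sech}\,y$.

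For part (2), I would identify $\phi(y,0)$ by reduction of order. At $c=0$, $\al=1$, the homogeneous equation \eqref{eq:phi} reads $\tanh y\,(\phi''-\phi)+2\tanh y\cdot \mathrm{sech}^2 y\cdot\phi=0$, which (for $y\neq 0$, then extended by continuity) is equivalent to $\phi''-\phi+2\,\mathrm{sech}^2 y\cdot\phi=0$. Since $\mathrm{sech}\,y$ is one solution and the equation has no first-order term, the standard reduction-of-order formula $\mathrm{sech}\,y\int \cosh^2 z\,dz$ yields the second independent solution $\tfrac{1}{2}(\sinh y+y\,\mathrm{sech}\,y)+C\,\mathrm{sech}\,y$. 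Taking $C=0$ gives the claimed $\phi(y,0)$. To match this with the normalization in Proposition \ref{prop:Rayleigh-Hom}, I would verify that $\phi_1(y,0)=\phi(y,0)/\tanh y=\tfrac{\cosh y}{2}+\tfrac{y}{2\sinh y}$ satisfies $\phi_1(0,0)=1$ and $\phi_1'(0,0)=0$ by a short Taylor expansion at the origin.

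For part (3), I would argue by asymptotic matching. The right-hand sides of both pieces of \eqref{cosh-1y} are solutions of the $c=0$, $\al=1$ Rayleigh equation by reduction of order applied to $\phi(y,0)$; note $\phi(z,0)\neq 0$ for $z\neq 0$, so the integrals are well-defined on $(-\infty,0)$ and $(0,+\infty)$. Since the solution space is two-dimensional and spanned by $\mathrm{sech}\,y$ and $\phi(y,0)$, I can write the right-hand side of the first formula as $\alpha\,\mathrm{sech}\,y+\beta\,\phi(y,0)$. As $y\to-\infty$, $\phi(y,0)\sim -\tfrac{1}{4}e^{-y}$, so $1/\phi(z,0)^2\sim 16\,e^{2z}$ and $\int_{-\infty}^y 16\,e^{2z}\,dz=8e^{2y}$, giving RHS $\sim -2e^y$. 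Since $\phi(y,0)\to-\infty$ while $\mathrm{sech}\,y\sim 2e^y\to 0$, the condition RHS $\to 0$ forces $\beta=0$, and matching leading coefficients yields $\alpha=-1$. The identical argument at $y\to+\infty$ handles the second formula. The only step that needs care is this asymptotic matching: one must check that $1/\phi(z,0)^2$ decays fast enough at infinity for the improper integrals to converge and for the leading-order extraction to be rigorous; everything else is routine substitution.
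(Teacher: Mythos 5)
Your proof is correct. Parts (1) and (2) follow the paper's approach essentially verbatim: part (1) quotes \cite[Theorem~1.5(i)]{Lin03} for the non-real case and applies Lemmas \ref{lem:embedd-u''-psi} and \ref{lem:eigen-al=1'} to pin down $c=0$, $\alpha=1$; part (2) produces the second solution by reduction of order starting from $\mathrm{sech}\,y$ and matches it to $\phi(\cdot,0)$ via the normalization in Proposition \ref{prop:Rayleigh-Hom} (the paper uses $\phi(0,0)=0$, $\phi'(0,0)=1$, you instead verify $\phi_1(0,0)=1$, $\phi_1'(0,0)=0$ — equivalent).

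For \eqref{cosh-1y} your route differs in presentation from the paper's. The paper observes that $\phi(y,0)=\varphi(y,0)\int_0^y \varphi(z,0)^{-2}dz$ gives the Wronskian identity $(\phi/\varphi)'=\varphi^{-2}$, hence $(\varphi/\phi)'=-\phi^{-2}$, and then integrates from $\pm\infty$ to $y$, checking only that the boundary term $\varphi/\phi$ vanishes at infinity. You instead note the right-hand side is a solution of the reduced ODE $\varphi''-\varphi+2\,\mathrm{sech}^2y\,\varphi=0$ on each half-line, expand it in the basis $\{\mathrm{sech}\,y,\phi(\cdot,0)\}$, and determine the coefficients by asymptotic matching as $y\to\mp\infty$. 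Both are rigorous; the paper's identity avoids extracting explicit leading-order asymptotics of $\int_{-\infty}^y\phi(z,0)^{-2}dz$, while your version makes the structure of the conclusion more transparent. Your asymptotics are consistent: $\phi(y,0)\sim -\tfrac14 e^{-y}$ as $y\to-\infty$, so $\int_{-\infty}^y\phi^{-2}\sim 8e^{2y}$ and the product is $\sim -2e^y=-\mathrm{sech}\,y(1+o(1))$, forcing $\beta=0$ (else $\beta\phi$ diverges while the RHS stays bounded) and $\alpha=-1$.
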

\begin{proof}
 First we prove (1). Indeed, for $\al\geq 1$, $c\in\mathbb{C}/\mathbb{R}$, $\mathcal{R}_{\al}$ has no eigenvalues, see  \cite[Theorem 1.5 (i)]{Lin03}. For $\al\geq 1$, $c\in\mathbb{R}$, $\mathcal{R}_{\al}$ has a single embedding eigenvalue $c=0$ with eigenfunction $\cosh ^{-1} y$. The proof follows from
Lemma \ref{lem:embedd-u''-psi} and Lemma \ref{lem:eigen-al=1'}, noting that $u''(0)=u(0)=0$ and $\f{u''}{u}=-2\cosh^{-2} y$.

Next we prove (2). For $c=0$, we have two linearly independent solutions of \eqref{eq:eigen}: $\varphi(y,0)=\mathrm{sech}\,y $ and
$\tilde{\varphi}(y,0)=\varphi(y,0)\int_0^y\f{1}{\varphi(z,0)^2}dz=
\f{1}{2}\big(\sinh y+\f{y}{\cosh y}\big)$. Since $\phi(y,0)$ is also a solution of \eqref{eq:eigen} with $c=0$, we have $\phi=C_1\varphi+C_2\tilde{\varphi}$. By Proposition \ref{prop:Rayleigh-Hom}, we obtain  $\phi(0,0)=0$ and $\phi'(0,0)=1$. Consequently, $C_1=0$, $C_2=1$ and
\begin{align}\nonumber
\phi(y,0)=\varphi(y,0)\int_0^y\f{1}{\varphi(z,0)^2}dz
=\f{1}{2}\big(\sinh y+\f{y}{\cosh y}\big).
\end{align}
The first equality in the above line gives $(\f{\phi(y,0)} {\varphi(y,0)})'=\f{1}{\varphi(y,0)^2}$, which  implies  $(\f{\varphi(y,0)}{\phi(y,0)})'=\f{-1}{\phi(y,0)^2}$. 
We integrate it to get \eqref{cosh-1y}.
\end{proof}

\section{analysis of Wronskian}
In this section, our aim is  to analyse the quantitative behavior of the Wroskian $W(\textbf{c},\al)$ and the related functions. This analysis is crucial in understanding the limiting absorption principle near the  embedding eigenvalue.

\subsection{Behavior near $\textbf{c}=0$}
The following lemma is the key to establish the limiting absorption principle in the presence of  embedding value $c=0$. 
\begin{lemma}\label{lem:pa_cW_1}
Let $\al=1$,  
 $\textbf{c}\in O_{\ep_0}$. It holds that  \begin{align*}\lim_{\textbf{c}\to 0}\f{W(\textbf{c},1)}{\textbf{c}}=\mathrm{sgn}(\mathrm{Im}(\textbf{c}))2\pi i.
\end{align*}
\end{lemma}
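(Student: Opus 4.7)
The plan is to reduce the limit to a residue-type calculation by deforming $W(\textbf{c},1)=\int_{\mathbb{R}}dy/\phi(y,\textbf{c})^2$ to a fixed contour that avoids the collapsing zero of $\phi(\cdot,\textbf{c})$ at the origin, then Taylor-expanding the integrand in $\textbf{c}$ and evaluating the first two terms from local Laurent data. Fix a small $\delta>0$ and let $\gamma_\delta^-$ be the contour that agrees with $\mathbb{R}$ outside $[-\delta,\delta]$ and follows the lower semicircle $\{\delta e^{it}:t\in[-\pi,0]\}$ across this interval. For $\textbf{c}\in O_{\ep_0}$ with $\mathrm{Im}(\textbf{c})>0$ small, the only zero of $\phi(\cdot,\textbf{c})$ near the origin is $y_\textbf{c}=\tanh^{-1}(\textbf{c})$, which lies strictly in the upper half-plane (and $\phi_1$ is nonvanishing near $0$ by \eqref{est:up-lower-phi-1}), so no pole is crossed in deforming $\mathbb{R}$ to $\gamma_\delta^-$. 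On the fixed contour the integrand is smooth in $\textbf{c}$ and exponentially decaying in $|y|$ by \eqref{est:phi_1-ul-c}; Taylor expansion gives
\[
W(\textbf{c},1)=\int_{\gamma_\delta^-}\frac{dy}{\phi(y,0)^2}-2\textbf{c}\int_{\gamma_\delta^-}\frac{\partial_\textbf{c}\phi(y,0)}{\phi(y,0)^3}\,dy+O(\textbf{c}^2),
\]
and both remaining integrals are independent of $\delta$ by Cauchy's theorem (the region between any two contours $\gamma_\delta^-$, $\gamma_{\delta'}^-$ lies in the lower half-plane, where the integrand is analytic).

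For the leading integral I split along $\gamma_\delta^-$ into the two real tails and the semicircle. By \eqref{cosh-1y} and the oddness of $\phi(\cdot,0)$, the tails sum to $2\,\mathrm{sech}\,\delta/\phi(\delta,0)=2/\delta+O(\delta)$, using the expansion $\phi(y,0)=y-y^3/6+O(y^5)$ from Lemma \ref{lem:eigen-al=1}; direct parametric integration of the Laurent expansion $1/\phi(y,0)^2=1/y^2+O(1)$ over the semicircle yields $-2/\delta+O(\delta)$. These cancel as $\delta\to 0$, so the leading integral vanishes. For the sub-leading integral, the algebraic key is that $\partial_\textbf{c}\phi(y,0)/\phi(y,0)^3$ is odd in $y$: Proposition \ref{prop:Rayleigh-Hom}(3) gives $\phi_1(y,0)$ even (hence $\phi(y,0)$ odd), and $h(y):=\partial_\textbf{c}\phi_1(y,0)$ satisfies an inhomogeneous ODE derived from \eqref{eq:Rayleigh-hom-S1} whose forcing $-(2u'/u^2)\phi_1'(y,0)$ is odd and whose normalization $h(0)=0$ comes from differentiating $\phi_1(y_c,c)=1$; the symmetrization $h(y)+h(-y)$ is then an even regular solution of the homogeneous $\phi_1$-equation vanishing at $y=0$, which must be zero since the regular homogeneous solutions are multiples of the nonvanishing $\phi_1(y,0)$. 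Hence $h$ is odd, $\partial_\textbf{c}\phi(y,0)=-\phi_1(y,0)+u(y)h(y)$ is even, and the tails integrate to zero by oddness. On the semicircle the Laurent expansion $\partial_\textbf{c}\phi(y,0)/\phi(y,0)^3=-1/y^3-1/y+O(y)$ has its $-1/y^3$ piece integrate to $0$ (single-valued antiderivative $1/(2y^2)$), while the $-1/y$ piece gives $-\int_{-\pi}^0 i\,dt=-i\pi$.

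Combining, $W(\textbf{c},1)=2\pi i\,\textbf{c}+O(\textbf{c}^2)$ for $\mathrm{Im}(\textbf{c})>0$, giving the limit $2\pi i$. For $\mathrm{Im}(\textbf{c})<0$ I repeat the argument with the upper-semicircle contour $\gamma_\delta^+$; the reversed orientation flips the $-1/y$ integral from $-i\pi$ to $+i\pi$, yielding the limit $-2\pi i$, exactly the sign factor in the statement. The principal obstacle is establishing the oddness of $h=\partial_\textbf{c}\phi_1(y,0)$ at the regular singular point $y=0$; this is where the explicit homogeneous-solution theory of Section 4 (the pair $\phi_1(y,0)$ even and $1/\sinh y$ singular) enters to rule out any odd singular correction, and once it is in hand the remaining computations are routine local Laurent arithmetic.
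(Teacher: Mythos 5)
Your proposal is correct in substance but follows a genuinely different route from the paper. The paper factors the Wronskian as $W(\textbf{c},1)=(1-\textbf{c}^2)^{-2}\big(W_1(\textbf{c},1)+2\textbf{c}\ln\tfrac{\textbf{c}-1}{\textbf{c}+1}\big)$ (Lemma~\ref{lem:W1}) with $W_1$ analytic across $(-1,1)$, and then proves the two facts $W_1(0,1)=0$ (Lemma~\ref{lem:embedd-critA}, via the embedding-eigenvalue characterization) and $\pa_cW_1(0,1)=0$ (Lemma~\ref{lem:pa_cW1}, via the oddness of $\pa_G\phi_1(\cdot,0)$), so that the $\pm 2\pi i$ comes purely from the logarithm branches. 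Your argument instead keeps $W(\textbf{c},1)=\int_{\mathbb{R}}dy/\phi(y,\textbf{c})^2$ intact, deforms the $y$-contour below (or above) the origin, Taylor-expands in $\textbf{c}$, and reads off the leading two coefficients from local Laurent data. The two proofs ultimately encode the same two pieces of information: the vanishing of the leading contour integral plays the role of $W_1(0,1)=0$ (and like the paper it uses the closed form of $\phi(\cdot,0)$ and the eigenfunction identity \eqref{cosh-1y}), while the tail cancellation in the sub-leading integral is exactly the oddness of $\pa_c\phi_1(\cdot,0)$ that underlies $\pa_cW_1(0,1)=0$. What your approach buys is a unified residue-type calculation that delivers both facts at once; what it costs is additional analytic-continuation infrastructure.

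That cost is where the genuine gaps lie. The paper only ever defines $\phi_1(y,\textbf{c})$ for \emph{real} $y$, and your deformation to $\gamma_\delta^{\mp}$ requires (i) that $\phi_1(\cdot,\textbf{c})$ extends holomorphically in $y$ to a complex neighborhood of $\mathbb{R}\setminus\{y_{\textbf{c}}\}$ without zeros near the origin, (ii) that this extension is jointly analytic in $(y,\textbf{c})$ so that $\pa_{\textbf{c}}\phi(y,0)$ makes sense on the semicircle, and (iii) uniform control of the $O(\textbf{c}^2)$ remainder on the unbounded tails (the paper's decay estimates such as \eqref{est:phi_1-ul-c} hold only for real $y$ and only for $\phi_1$ itself, not its higher $\textbf{c}$-derivatives). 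None of these is false --- they follow from standard analytic ODE theory together with Cauchy estimates applied to the $\textbf{c}$-analyticity from Proposition~\ref{prop:Rayleigh-Hom}(2) --- but they are not established in the paper and you should not present them as free. Separately, the Laurent coefficient $-1$ in $\pa_{\textbf{c}}\phi(y,0)/\phi(y,0)^3=-1/y^3-1/y+O(y)$ requires combining three pieces of local data ($\phi(y,0)=y-y^3/6+\cdots$, $\phi_1(y,0)=1+y^2/6+\cdots$, and $h'(0)=\pa_y\pa_c\phi_1(0,0)=-1/3$, the last following from $\pa_y\pa_G\phi_1(y_c,c)=0$ in \eqref{paGphi1(yc,c)}); the cancellation to $-1$ is not automatic and deserves a line of computation rather than a bald Laurent statement. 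With those points addressed your argument is a clean alternative to Section~5's $W_1$-machinery.
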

  In this subsection, we use the notation $c_{\ep}=c+i\ep$ instead of $\textbf{c}$ to represent complex variable. We use the notation $c$ for real variable. The symbol $'$ denotes the derivative with respect to $c_{\ep}$, while $\pa_c$ represents for the derivative with respect to $c$. Let $\phi(y,0),\phi_1(y,0)$ be as constructed in Proposition \ref{prop:Rayleigh-Hom}.

 To prove Lemma \ref{lem:pa_cW_1}, we introduce the regular component of the Wronskian.

\begin{definition}\label{def:W1}
We define
\begin{align}
\notag&\label{def:W-3}W_1(c_{\ep},\al)\\
&:=
\int_{\mathbb{R}}\f{(u(y)+c_{\ep})^2}{\phi_1(y,c_{\ep})^2}
+\f{(1+u(y)^2-2c_{\ep}^2)u'(y)}{(u(y)-c_{\ep})^2}\left(\f{1}{\phi_1(y,c_{\ep})^2}-1\right)dy,\quad c_{\ep}\in \widetilde{O}_{\ep_0}.
\end{align}
\end{definition}

For $c_{\ep}\in \widetilde{O}_{\ep_0}$, we have by \eqref{est:phi_1-ul-c} and  \eqref{est:up-phi-1} that
\begin{align}
\Big|\f{(u(y)+c_{\ep})^2}{\phi_1(y,c_{\ep})^2}\Big|&\lesssim
\big|\f{1}{\phi_1(y,c)^2}\big|
\leq Ce^{-2C^{-1}\al|y-y_c|}\in L^1_y(\mathbb{R}),\label{est:W1-f1}\\
 \notag\Bigg|\f{(1+u(y)^2-2c_{\ep}^2)u'(y)}{(u(y)-c_{\ep})^2}\left(\f{1}{\phi_1(y,c_{\ep})^2}-1\right)\Bigg|
 &\lesssim \f{u'(y)\min\{\al^2|y-y_c|^2,1\}}{|u(y)-c|^2}\\
 &\leq C(1-c^2)^{-2}u'(y)\in L^1_y(\mathbb{R}).\label{est:W1-f2}
\end{align}
Therefore, $W_1(\cdot,\al)$ is well-defined in  $\tilde{O}_{\ep_0}$.

\begin{lemma}\label{lem:W1} $W(\cdot,\al)$  is  analytical in $O_{\ep_0}$, and $W_1(\cdot,\al)$  is  analytic in $\widetilde{O}_{\ep_0}$.
In addition, we have an alternative expression for the Wronskian as follows
\begin{align}
\label{def:W-2'}
W(c_{\ep},\al)=(1-c_{\ep}^2)^{-2}\Big(W_1(c_{\ep},\al)+2c_{\ep}\ln\f{c_{\ep}-1}{c_{\ep}+1}\Big).
\end{align}

\end{lemma}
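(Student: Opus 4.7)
The plan is to derive \eqref{def:W-2'} by a pointwise algebraic split of the integrand of $W$ that isolates one explicit integral which can be evaluated in closed form; the two analyticity statements then follow from pointwise holomorphy combined with the uniform $L^1_y$ bounds \eqref{est:W1-f0.5}, \eqref{est:W1-f1}, \eqref{est:W1-f2}. The engine behind the split is the polynomial identity $(1-c_\ep^2)^2=(u-c_\ep)^2(u+c_\ep)^2+u'(y)(1+u(y)^2-2c_\ep^2)$, which follows from $u'=1-u^2$ and the factorization of $(1-c_\ep^2)^2-(u^2-c_\ep^2)^2$. Dividing by $(u-c_\ep)^2\phi_1^2$ and then splitting $\phi_1^{-2}=1+(\phi_1^{-2}-1)$ in the piece containing $u'(1+u^2-2c_\ep^2)$ decomposes $(1-c_\ep^2)^2/\phi(y,c_\ep)^2$ into three pointwise summands whose first two are exactly the integrand of $W_1$, in $L^1_y(\mathbb{R})$ by \eqref{est:W1-f1}--\eqref{est:W1-f2}, while the third, $u'(1+u^2-2c_\ep^2)/(u-c_\ep)^2$, is $L^1_y$ on $O_{\ep_0}$ because $|u-c_\ep|\geq|\mathrm{Im}(c_\ep)|>0$ and $u'$ decays exponentially. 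Integrating term by term then yields $(1-c_\ep^2)^2 W(c_\ep,\al)=W_1(c_\ep,\al)+\int_{\mathbb{R}}\f{u'(1+u^2-2c_\ep^2)}{(u-c_\ep)^2}dy$.

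\textbf{Explicit integral.} For this last integral I substitute $du=u'(y)dy$, reducing it to $\int_{-1}^{1}(1+u^2-2c_\ep^2)(u-c_\ep)^{-2}du$, and apply the partial fraction $(1+u^2-2c_\ep^2)/(u-c_\ep)^2=1+2c_\ep/(u-c_\ep)+(1-c_\ep^2)/(u-c_\ep)^2$. Computed as the limit $r\to 1^-$ of the integral on $[-r,r]$, the constant contribution and the $(1-c_\ep^2)/(u-c_\ep)^2$ contribution cancel (yielding $2$ and $-2$ respectively), leaving exactly $2c_\ep\ln\tfrac{c_\ep-1}{c_\ep+1}$. Dividing through by $(1-c_\ep^2)^2$ gives \eqref{def:W-2'}. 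The branch of the logarithm is unambiguous on $O_{\ep_0}$ since $\mathrm{Im}\tfrac{c_\ep-1}{c_\ep+1}=\tfrac{2\mathrm{Im}(c_\ep)}{|c_\ep+1|^2}$ never vanishes there.

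\textbf{Analyticity.} For $W$, the integrand is holomorphic in $c_\ep\in O_{\ep_0}$ for each fixed $y$ by Proposition \ref{prop:Rayleigh-Hom}(2) together with the nonvanishing of $u(y)-c_\ep$ and of $\phi_1$, and \eqref{est:W1-f0.5} supplies a dominant that is uniform on compact subsets of $O_{\ep_0}$; the standard holomorphy-under-the-integral theorem (Morera plus Fubini) yields the claim. For $W_1$, the first summand of its integrand is clearly $c_\ep$-analytic on $\widetilde{O}_{\ep_0}$, while the second summand is the delicate one: rewriting it as $-(1+u^2-2c_\ep^2)u'\cdot(\phi_1-1)(\phi_1+1)/[(u-c_\ep)^2\phi_1^2]$ exposes an apparent singularity at $y=y_c$ when $c_\ep\in(-1,1)$, which is in fact removable because Proposition \ref{prop:Rayleigh-Hom}(1)--(2) asserts that $(\phi_1-1)/(u-c_\ep)^2$ is jointly continuous on $\mathbb{R}\times\widetilde{D}_{\ep_0}$ and pointwise analytic in $c_\ep$ there. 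The local $L^1_y$ dominants from \eqref{est:W1-f1}--\eqref{est:W1-f2} remain uniform on compact $K\subset\widetilde{O}_{\ep_0}$ because the defining constraint $|\ep|<C_o^{-1}(1-c^2)$ of $O_{\ep_0}$ keeps $K$ bounded away from $c=\pm 1$. I expect this last point — extending the analyticity of $W_1$ across the real interval $(-1,1)$ by uniformly taming the would-be $(u-c_\ep)^{-2}$ singularity at $y=y_c$ — to be the main obstacle.
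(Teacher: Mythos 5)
Your proof is correct and follows essentially the same route as the paper: your polynomial identity $(1-c_\ep^2)^2=(u^2-c_\ep^2)^2+u'(1+u^2-2c_\ep^2)$ is precisely the paper's decomposition \eqref{decompose-(1-c^2)^2/(u-c)^2} after collecting the three terms with a factor $u'$, the splitting $\phi_1^{-2}=1+(\phi_1^{-2}-1)$ recovers the $W_1$-integrand in the same way, and your explicit evaluation of $\int_{-1}^{1}(1+u^2-2c_\ep^2)(u-c_\ep)^{-2}du=2c_\ep\ln\tfrac{c_\ep-1}{c_\ep+1}$ is the closed form the paper leaves as a direct calculation. Your analyticity argument (fiberwise holomorphy from Proposition \ref{prop:Rayleigh-Hom}, plus the uniform $L^1_y$ dominants \eqref{est:W1-f0.5}--\eqref{est:W1-f2} and Morera/Fubini) is exactly what the paper packages as Lemma \ref{lem:c-paramete-derivative}.
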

\begin{remark}
The $ln z$ represents the principal branch of logarithm, for $z\in\mathbb{C}/\{(-\infty,0]\}$. Thus $\ln\f{c_{\ep}-1}{c_{\ep}+1}$ is defined  for $\f{c_{\ep}-1}{c_{\ep}+1}\in \mathbb{C}/\{(-\infty,0]\}$, i.e., $c_{\ep}\in \mathbb{C}/[-1,1]$. Moreover, for $c_{\ep}\in\mathbb{C}/[-1,1]$,  $\ln\f{c_{\ep}-1}{c_{\ep}+1}$  is analytic and
\begin{align}\label{ln-limit-1}
\ln\f{c_{\ep}-1}{c_{\ep}+1}&=\ln\f{|1-c_{\ep}|}{|1+c_{\ep}|}
+i\mathrm{sgn}(\ep)\Big(\pi-\arctan \f{2|\ep|}{1-c^2-\ep^2}\Big),
\end{align}
and
\begin{align}\label{ln:limt}
\lim_{c_{\ep\to \pm 1 }}\ln\f{c_{\ep}-1}{c_{\ep}+1}=\infty.
\end{align}
\end{remark}

\begin{proof}[Proof of Lemma \ref{lem:W1}]
Using Proposition \ref{prop:Rayleigh-Hom} (2), we can conclude  that $\f{1}{\phi(\cdot,c_{\ep})^2}$ is analytic in $ O_{\ep_0}$, and
the integrated function in \eqref{def:W-3} is analytic in $ \widetilde{O}_{\ep_0}$.
Taking  $\Omega=B_{\delta}(c_{\ep})\subset O_{\ep_0}$ in Lemma \ref{lem:c-paramete-derivative}, we can use \eqref{est:W1-f0.5} to deduce that $W(c_{\ep})$ is  analytic on $O_{\ep_0}$.
Similarly, thanks to Lemma \ref{lem:c-paramete-derivative} and \eqref{est:W1-f1}-\eqref{est:W1-f2}, we know that $W_1(c_{\ep})$ is analytic  in $\widetilde{O}_{\ep_0}$.

Due to  $u'(y)=1-u(y)^2$, direct calculation   gives
\begin{align}\label{decompose-(1-c^2)^2/(u-c)^2}
\f{(1-c_{\ep}^2)^2}{(u-c_{\ep})^2}=\f{(u^2-c_{\ep}^2)^2
+(u-c_{\ep})^2u'+(1-c_{\ep}^2)u'+2c_{\ep}u'(u-c_{\ep})}{(u-c_{\ep})^2}.
\end{align}
Thanks to \eqref{decompose-(1-c^2)^2/(u-c)^2} and $\phi=(u-c_{\ep})\phi_1$, the equality \eqref{def:W-2'} can be established by a  direct calculation with
Definition \ref{def:W} and Definition \ref{def:W1}.
\end{proof}

\begin{lemma}\label{lem:c-paramete-derivative}
let $\Omega\subset \mathbb{C}$ be an open bounded connected domain. Assume that $f(y,c_{\ep})$ satisfies

(1) for any $y\in\mathbb{R}$, $f(y,c_{\ep})$ is analytical in $c_{\ep}\in \Omega$;

(2) there exists  $F(y)\in L^1(\mathbb{R})$ such that for any $z\in \Omega$, $|f(y,c_{\ep})|\leq CF(y)$, where $C$ independent of $y$.

Then  $\int_{\mathbb{R}}f(y,c_{\ep})dy$ is analytical for $c_{\ep}\in \Omega$.
\end{lemma}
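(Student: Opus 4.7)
The plan is to prove analyticity via Morera's theorem, combined with Fubini and the dominated convergence theorem, since the uniform domination hypothesis makes all the integrability conditions automatic.

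First I would establish the continuity of the function $g(c_\epsilon) := \int_{\mathbb{R}} f(y,c_\epsilon)\,dy$ on $\Omega$. Fix $c_\epsilon \in \Omega$ and take a sequence $c_\epsilon^{(n)} \to c_\epsilon$ in $\Omega$. Since $f(y,\cdot)$ is analytic on $\Omega$ for every $y$, it is in particular continuous, so $f(y,c_\epsilon^{(n)}) \to f(y,c_\epsilon)$ pointwise. The domination $|f(y,c_\epsilon^{(n)})| \le CF(y)$ with $F \in L^1(\mathbb{R})$ allows the dominated convergence theorem to conclude $g(c_\epsilon^{(n)}) \to g(c_\epsilon)$, so $g$ is continuous on $\Omega$.

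Next I would apply Morera's theorem. Let $T \subset \Omega$ be any closed triangle (or, more generally, any closed piecewise-$C^1$ contour bounding a subregion of $\Omega$). The goal is to show
\begin{align*}
\oint_T g(c_\epsilon)\,dc_\epsilon = 0.
\end{align*}
The integrand $(y,c_\epsilon) \mapsto f(y,c_\epsilon)$ is measurable on $\mathbb{R} \times T$, and is bounded by $CF(y)$, whose integral against $|dc_\epsilon|$ on the compact set $T$ is finite (as $F \in L^1$ and $T$ has finite length). By Fubini's theorem,
\begin{align*}
\oint_T g(c_\epsilon)\,dc_\epsilon
= \oint_T \int_{\mathbb{R}} f(y,c_\epsilon)\,dy\,dc_\epsilon
= \int_{\mathbb{R}} \oint_T f(y,c_\epsilon)\,dc_\epsilon\,dy.
\end{align*}
For each fixed $y \in \mathbb{R}$, the function $c_\epsilon \mapsto f(y,c_\epsilon)$ is analytic on the simply connected (or at least triangle-containing) neighborhood $\Omega$, so by Cauchy's theorem the inner contour integral vanishes. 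Hence the right-hand side is $0$.

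Since $g$ is continuous on the open set $\Omega$ and its integral along every triangle in $\Omega$ vanishes, Morera's theorem yields that $g$ is analytic on $\Omega$, which is the desired conclusion. There is no real obstacle here; the only point requiring care is that the uniform $L^1$-domination provides integrability on $\mathbb{R}\times T$ needed to invoke Fubini, and this is exactly hypothesis~(2).
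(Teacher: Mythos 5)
Your proof is correct, and it takes a genuinely different route from the paper's. The paper fixes $c_\epsilon\in\Omega$, picks a contour $\Gamma\subset\Omega$ at positive distance from $c_\epsilon$, and uses the Cauchy integral formula to rewrite the difference quotient $\bigl(f(y,c_\epsilon+\Delta c_\epsilon)-f(y,c_\epsilon)\bigr)/\Delta c_\epsilon$ as a contour integral, obtaining a bound $CF(y)$ uniform in small $\Delta c_\epsilon$; the dominated convergence theorem then lets them differentiate under the integral sign, which immediately shows analyticity and in fact yields the formula $\frac{d}{dc_\epsilon}\int_{\mathbb{R}}f\,dy=\int_{\mathbb{R}}\frac{d}{dc_\epsilon}f\,dy$. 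Your argument instead establishes continuity of $g$ by dominated convergence, verifies the Morera criterion by Fubini (justified by the $L^1$ domination on the compact triangle) and Cauchy--Goursat for the inner integral, and concludes analyticity. Both are standard and clean; the Cauchy-formula route additionally delivers the explicit formula for the derivative ``for free,'' while the Morera route is arguably shorter and avoids manipulating difference quotients. Since the lemma statement asks only for analyticity, the two approaches are equally adequate here. One small wording point: Morera's theorem requires considering closed triangles whose solid hull lies in $\Omega$; your phrasing already does this, and the parenthetical generalization to arbitrary contours bounding a subregion of $\Omega$ is unnecessary and would need $\Omega$ simply connected to be literally true, so it is safer to stick with triangles.
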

\begin{proof}
We will prove for $c_{\ep}\in \Omega$,
$
\f{d}{dc_{\ep}}\int_{\mathbb{R}}f(y,c_{\ep})dy
=
\int_{\mathbb{R}}\f{d}{dc_{\ep}}f(y,c_{\ep})
dy.
$
 Taking a contour $\Gamma\subset \Omega$ that contains $c_{\ep}$ with $dist(c_{\ep},\Gamma)>0$.  For   $\Delta c_{\ep}$ small enough,  we have by Cauchy formula and condition (2) that
\begin{align*}
&\quad\Big|\f{f(y,c_{\ep}+\Delta c_{\ep})-f(y,c_{\ep})}{\Delta c_{\ep}}\Big|=\Big|\f{1}{2\pi i}\int_{\Gamma}\Big(\f{f(y,z)}{z-c_{\ep}-\Delta c_{\ep}}-\f{f(y,z)}{z-c_{\ep}}\Big)\f{1}{\Delta c_{\ep}}dz\Big|\\
&=\Big|\f{1}{2\pi i}\int_{\Gamma}\f{f(y,z)}{(z-c_{\ep}-\Delta c_{\ep})(z-c_{\ep})}dz\Big|\leq CF(y)\cdot 2dist(c_{\ep},\Gamma)^{-2}\cdot|\Gamma|\leq CF(y),
\end{align*}
where $C$ is independent of $y$ and $\Delta c_{\ep}$. Let $\Delta c_{\ep}\to 0$, we have the equality 
by  the dominated convergence theorem and condition (1).
\end{proof}
The following lemmas focus on   $W_1(\cdot,\al)$ at $c=0$.
\begin{lemma}\label{lem:embedd-critA}
 The following statements are equivalent:

\begin{itemize}

\item[(1)] $W_1(0,\al)=0.$

\item[(2)] $\al=1.$

\item[(3)] \eqref{eq:eigen} has  non-zero $H^2(\mathbb{R})$ solution.

 \end{itemize}

 In particular, it holds that
 \begin{align}
 W_1(0,1)=0\quad\text{and} \quad W_1(0,\al)\neq 0,\quad \al\geq 2.\label{A=0} \end{align}

\end{lemma}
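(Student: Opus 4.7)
My plan is to establish all three conditions equivalent by proving the cycle $(3)\Leftrightarrow(2)\Rightarrow(1)\Rightarrow(3)$. The main tool is the decomposition
\[
W_1(c_\epsilon,\al) = (1-c_\epsilon^2)^2\, W(c_\epsilon,\al) - 2 c_\epsilon \ln\tfrac{c_\epsilon-1}{c_\epsilon+1}
\]
from \eqref{def:W-2'}, together with the analyticity of $W_1$ on $\widetilde{O}_{\ep_0}$ (Lemma \ref{lem:W1}). Once the cycle is closed, the explicit values in \eqref{A=0} follow as immediate corollaries: the case $\al=1$ is exactly $(2)\Rightarrow(1)$, and the case $\al\ge 2$ follows because Lemma \ref{lem:eigen-al=1}(1) rules out (3), which through the cycle forces (1) to fail.

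The equivalence $(3)\Leftrightarrow(2)$ is precisely Lemma \ref{lem:eigen-al=1}(1), so nothing new is needed. For $(2)\Rightarrow(1)$, I fix $\al=1$ and let $c_\epsilon\to 0$ along a path in $O_{\ep_0}$. Lemma \ref{lem:pa_cW_1} gives $W(c_\epsilon,1)=\mathrm{sgn}(\mathrm{Im}\,c_\epsilon)\cdot 2\pi i\cdot c_\epsilon\,(1+o(1))$, while \eqref{ln-limit-1} yields $\ln\tfrac{c_\epsilon-1}{c_\epsilon+1}=O(1)$. Substituting into the decomposition shows both terms on the right are $o(1)$, so by continuity of $W_1$ at $0$, $W_1(0,1)=0$.

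For $(1)\Rightarrow(3)$, assume $W_1(0,\al)=0$. The same decomposition then forces $W(c_\epsilon,\al)\to 0$ as $c_\epsilon\to 0$ in $O_{\ep_0}$. Following the construction in the proof of Lemma \ref{cor:Wneq0}, the two functions
\[
\Gamma_\mp(y,c_\epsilon) := \phi(y,c_\epsilon)\int_{\mp\infty}^{y}\frac{dz}{\phi(z,c_\epsilon)^2}
\]
solve the homogeneous Rayleigh equation, decay at $\mp\infty$ respectively, and satisfy the identity $\Gamma_--\Gamma_+=W(c_\epsilon,\al)\,\phi$. When $W\to 0$ this difference vanishes in the limit, so the common limit (controlled via Corollary \ref{Cor:basic-limit}) should yield a nontrivial $H^2(\mathbb{R})$ solution of the Rayleigh equation at $c=0$, which is exactly condition (3).

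\textbf{Main obstacle.} The hard part is the limit passage in $(1)\Rightarrow(3)$: as $c_\epsilon\to 0$ the denominator $\phi(y,c_\epsilon)^2$ develops a double zero at $y=0$, so the individual integrals $\int_{\mp\infty}^{y}\phi^{-2}\,dz$ genuinely diverge at $z=0$. Only the Plemelj-type cancellation built into the identity $\Gamma_--\Gamma_+=W\phi$ survives. To handle this, one must split each $\Gamma_\mp$ into a regular piece (uniformly bounded by Corollary \ref{Cor:basic-limit}) and a singular piece, and verify that the singular contributions from the two sides match exactly because $W\to 0$. This matching is what converts the naive formal limit into a bona fide $H^2$ eigenfunction rather than a distribution supported at the origin, and it is the technical heart of the argument.
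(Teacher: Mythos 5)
Your plan of proving the cycle $(3)\Leftrightarrow(2)\Rightarrow(1)\Rightarrow(3)$ matches the paper's organization, and the reduction of $(3)\Leftrightarrow(2)$ to Lemma \ref{lem:eigen-al=1} is the same. However, both remaining implications contain genuine gaps.

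\textbf{Circularity in $(2)\Rightarrow(1)$.} You invoke Lemma \ref{lem:pa_cW_1}, which gives $W(c_\epsilon,1)=\mathrm{sgn}(\mathrm{Im}\,c_\epsilon)\cdot 2\pi i\,c_\epsilon\,(1+o(1))$. But in the paper that lemma is itself \emph{proved} using Lemma \ref{lem:embedd-critA}: the computation in its proof writes $\lim_{c_\epsilon\to 0}(W_1(c_\epsilon,1)-W_1(0,1))/c_\epsilon=\partial_cW_1(0,1)$ and then uses $W_1(0,1)=0$ (i.e., exactly \eqref{A=0}) to kill the constant term. So your chain $W_1(0,1)=0 \Leftarrow$ Lemma \ref{lem:pa_cW_1} $\Leftarrow W_1(0,1)=0$ is circular. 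You need an independent argument, as the paper gives: it argues by contradiction, showing that if $W_1(0,1)\neq 0$ then the space of $L^2(\mathbb{R}^-)$-decaying solutions to the Rayleigh ODE at $c=0$ is $\mathrm{span}\{\varphi_-\}$ with $\varphi_-\notin L^2(\mathbb{R}^+)$, so no nontrivial global $L^2$ solution can exist — contradicting $(2)\Rightarrow(3)$.

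\textbf{Unresolved limit in $(1)\Rightarrow(3)$.} You correctly identify the obstruction: as $c_\epsilon\to 0$ the bounds of Corollary \ref{Cor:basic-limit} scale like $|u(y)-c_\epsilon|^{-1}$, so they blow up near the critical point and give no uniform control that would justify passing $\Gamma_\mp$ to the limit, let alone showing the limit is in $L^2$ across $y=0$ and non-zero. You label this "the technical heart" and leave it to the reader, but this is precisely the step that requires a new idea. The paper sidesteps the $c_\epsilon\to 0$ limit entirely: it works directly at $c=0$ using the algebraic identity $\frac{1}{\phi(z,0)^2}=\frac{u(z)^4}{\phi(z,0)^2}+\frac{(1+u^2)u'}{u^2}\big(\frac{1}{\phi_1^2}-1\big)+\big(u-\frac{1}{u}\big)'$, in which the first two summands are integrable across $z=0$ (thanks to $\phi_1(z,0)-1\sim z^2$) and the last summand is an exact derivative that integrates to a finite, explicit boundary term. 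This gives well-defined functions $\varphi_\pm\in L^2(\mathbb{R}^\pm)$ satisfying $\varphi_--\varphi_+=\phi(\cdot,0)W_1(0,\al)$ with no divergent integrals anywhere, and the assumption $W_1(0,\al)=0$ then glues them into a global $L^2$ solution, bootstrapped to $H^2$ via the equation. Your sketch would need to be replaced or completed by an argument at this level of precision.
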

\begin{proof}
 Lemma \ref{lem:eigen-al=1} establishes the equivalence between (2) and (3). It suffices to show $(1) \Rightarrow (3)$ and $(2) \Rightarrow (1)$. Let's consider the following facts (B1)-(B5):

From \eqref{est:phi_1-ul}, we have (B1): $\phi(y,0) \notin L^{2}(\mathbb{R}^{\pm})$.

Notice by $u'=1-u^2$ that
\begin{align}\label{equality:W1}
\f{1}{\phi(z,0)^2}
=\f{u(z)^4}{\phi(z,0)^2}+\f{(1+u(z)^2)u'(z)}{u(z)^2}\Big(\f{1}{\phi_1(z,0)^2}-1\Big)
+\Big(u(z)-\f{1}{u(z)}\Big)'.
\end{align}
Consequently, we define
\begin{align}
\notag\varphi_{\pm}(y)&=\phi(y,0)\int_{\pm\infty}^y\f{u(z)^4}{\phi(z,0)^2}+
\f{(1+u(z)^2)u'(z)}{u(z)^2}\Big(\f{1}{\phi_1(z,0)^2}-1\Big)dz\\
&\quad+\phi(y,0)\Big(u(y)-\f{1}{u(y)}\Big).\label{phipm}
\end{align}
Thanks to \eqref{est:W1-f1}-\eqref{est:W1-f2}($c_{\ep}=0$), $\varphi_{\pm}$ is well-defined on $\mathbb{R}$.

We can verify by \eqref{phi_1(z)leq phi_1(y)}, \eqref{est:phi_1-ul},  \eqref{equality:W1}  and direct calculation that
(B2): $\varphi_{\pm}\in L^2(\mathbb{R}^{\pm})$ satisfies \begin{align}\label{eq:eigen-phi-c=0}
\varphi''-\al^2\varphi-\f{u''}{u}\varphi=0.\end{align}

Using \eqref{equality:W1}, we have (B3): $\varphi(y)=\phi(y,0)\int_{-\infty}^y\f{1}{\phi(z,0)^2}dy$,  $y<0$.

Using \eqref{phipm}, we have $\varphi_-(y)-\varphi_+(y)=\phi(y,0)W_1(0)$, which gives (B4): $\varphi_{+}\equiv\varphi_{-}\Longleftrightarrow W_1(0,\al)=0$.

Lastly, we claim (B5): If $W_1(0,1)\neq 0$, then $\varphi_-\notin L^2(\mathbb{R}^+)$.

Indeed, assume $W_1(0,1)>0$. We have by Definition \ref{def:W1} that for $y$ large enough,
\begin{align*}
\int_{-\infty}^y\f{u(z)^4}{\phi(z,0)^2}+\f{(1+u(z)^2)u'(z)}{u(z)^2}\Big(\f{1}{\phi_1(z,0)^2}-1\Big)dz\geq \f{W_1(0,1)}{2}>0.
\end{align*}
On the other hand, thanks to $u(y)<1$ and $u(y)\to 1$($y\to+\infty$), we have for $y$ large enough, $0>u(y)-\f{1}{u(y)}>-\f{W_1(0,1)}{4}$. Together with $\phi(y,0)>0$($y>0$), we have by \eqref{phipm} that

\begin{align*}
\varphi_-(y)\geq \f{W_1(0,1)}{4}\phi(y,0)\geq \f{W_1(0,1)}{4}\cdot C^{-1}e^{C^{-1}\al|y|}\geq  C^{-1}e^{C^{-1}\al|y|}\notin L^2(\mathbb{R}^+),
\end{align*}
which proves (B5).

Now we are in a position to prove the following.\smallskip

$(1) \Longrightarrow (3)$ . Assuming $W_1(0,\al)=0$, using (B2), (B4), we can construct $\varphi:=\varphi_{\pm}$ as a solution of \eqref{eq:eigen-phi-c=0} in $L^2(\mathbb{R})$. Using  \eqref{eq:eigen-phi-c=0} and $\f{u''}{u'}\in L^{\infty}(\mathbb{R})$, we have $\varphi\in H^2(\mathbb{R})$.\smallskip

$(2) \Longrightarrow (1)$ . We prove by contradiction. Assume $W_1(0,1)\neq 0$. Due to $(2) \Longrightarrow (3)$,  we have  a  non-zero $L^2(\mathbb{R})$ solution $\varphi(y)$ of \eqref{eq:eigen-phi-c=0}. Notice by (B2), (B3) that $\varphi_-(y)$ and $\phi(y,0)$  are two linearly independent solutions of \eqref{eq:eigen-phi-c=0}.  Consequently, $\varphi(y)=C_1\phi(y,0)+C_2\varphi_-(y)$. Noticing $\varphi\in L^2(\mathbb{R}^-)$, together with (B1) and (B2),  we have $C_1=0$. This implies $\varphi(y)=C_2\varphi_-(y)$. However, considering (B5), we conclude $C_2=0$. Thus, we obtain $\varphi(y)\equiv 0$ which leads to a contradiction.
\end{proof}
\begin{lemma}\label{lem:pa_cW1}
Let $\al\geq 1$, $c\in(-1,1)$. It holds that
\begin{align*}
\pa_cW_1(0,\al)=0.
\end{align*}
\end{lemma}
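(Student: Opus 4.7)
The plan is to differentiate the defining integral of $W_1(c,\al)$ in $c$ and show that the resulting integrand at $c=0$ is an odd function of $y$, hence has vanishing (principal-value) integral. The main input is a parity statement for $\pa_c\phi_1(y,0)$. By Proposition \ref{prop:Rayleigh-Hom}(3), $\phi_1(y,0)$ is even and $\pa_y\phi_1(y,0)$ is odd in $y$, and by Lemma \ref{prop:goodphi1}, $\pa_G\phi_1(y,0)$ is odd. Since $\pa_c=\pa_G-\pa_y/u'(y_c)$ and $u'(0)=1$, we get
\begin{align*}
\pa_c\phi_1(y,0)=\pa_G\phi_1(y,0)-\pa_y\phi_1(y,0),
\end{align*}
which is odd in $y$. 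Combined with the standard parities $u,u''$ odd and $u',u'''$ even from Lemma \ref{lem:simple-useful-equ}, this will make every term arising in $\pa_c$ of the integrand odd.

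Concretely, write the integrand as $F(y,c)=A(y,c)+B(y,c)C(y,c)$ with $A=(u+c)^2/\phi_1^2$, $B=(1+u^2-2c^2)u'/(u-c)^2$, and $C=\phi_1^{-2}-1$. A direct computation gives $\pa_cF(y,0)$ as the sum of three pieces: $\pa_cA|_{c=0}=2u/\phi_1^2-2u^2\pa_c\phi_1(y,0)/\phi_1^3$; $(\pa_cB)C|_{c=0}=\bigl(2(1+u^2)u'/u^3\bigr)\bigl(\phi_1^{-2}-1\bigr)$; and $B(\pa_cC)|_{c=0}=-2(1+u^2)u'\pa_c\phi_1(y,0)/(u^2\phi_1^3)$, with everything evaluated at $c=0$. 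A line-by-line parity check (odd $\times$ even $=$ odd, even $\times$ odd $=$ odd, etc.) shows each piece is odd in $y$, so $\pa_cF(y,0)$ is odd in $y$.

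The main obstacle is the justification of differentiation under the integral: the third piece $2(1+u^2)u'C/u^3$ is of order $O(1/y)$ near $y=0$ (since $\phi_1(y,0)^{-2}-1=O(y^2)$ by \eqref{phi_1-1leq}), so $\pa_cF(y,0)\notin L^1_{\mathrm{loc}}$ at the origin. To handle this, I would use the analyticity of $W_1(\cdot,\al)$ on $\widetilde O_{\ep_0}$ from Lemma \ref{lem:W1}, together with a symmetric-limit argument: split $\mathbb{R}=\{|y|\le\delta\}\cup\{|y|\ge\delta\}$, apply Lemma \ref{lem:c-paramete-derivative} on the outer region where the integrand is uniformly bounded in $c$ by an $L^1$ majorant, and for the inner piece use the Taylor expansion of $F(y,c)$ in $c$ (with coefficients odd in $y$ modulo terms $O(c)$) to see that $\int_{|y|\le\delta}\bigl(F(y,c)-F(y,0)\bigr)dy=o(c)$ as $c\to0$, uniformly in $\delta$ small. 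This yields
\begin{align*}
\pa_cW_1(0,\al)=\lim_{\delta\to 0^+}\int_{|y|\ge\delta}\pa_cF(y,0)\,dy=0,
\end{align*}
where the last equality is the principal-value cancellation of an odd integrand. An alternative, even slicker route I would consider if the above bookkeeping becomes heavy is to establish the symmetry $\phi_1(-y,c)=\phi_1(y,-c)$ (immediate from uniqueness in Proposition \ref{prop:Rayleigh-Hom} applied to $\tilde\phi(y,c):=\phi_1(-y,c)$, using $u$ odd and $u'$ even) and then deduce $W_1(-c,\al)=W_1(c,\al)$ by the change of variable $y\mapsto -y$; evenness of $W_1$ in $c$ gives $\pa_cW_1(0,\al)=0$ without any singular-integral analysis.
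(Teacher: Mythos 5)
Your main route shares the paper's key idea (parity of the $c$-derivative of the integrand at $c=0$) but diverges technically, and that divergence is exactly where the difficulty you flag arises. The paper does not differentiate the defining formula of $W_1$ directly. It first rewrites $W_1$ via \eqref{fm:W1}, isolating the explicit term $-2-2c\ln\f{1-c}{1+c}$, and then computes the $c$-derivative through the good derivative $\pa_G=\f{\pa_y}{1-c^2}+\pa_c$ using \eqref{fm:paGintf}. Since $\pa_G$ annihilates the factor $\f{(1-c^2)u'}{(u-c)^2}$ (Lemma \ref{lem:simple-useful-equ}), the resulting integrand in \eqref{fm:pa_cA} has only a principal-value singularity in the first term and is otherwise integrable, so the interchange of $\pa_c$ with the integral is essentially built into the decomposition. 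Your proposed workaround — splitting near $y_c$, Taylor expansion in $c$, symmetric limits — is plausible in outline, but you would need to carry out the bookkeeping carefully: the combined $O(1/(y-y_c))$ singularities coming from $(\pa_cB)C$ and $B(\pa_cC)$ must be shown to form a genuine principal value, and the uniformity in $\delta$ of the $o(c)$ estimate for the inner region is not automatic.

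Your alternative route, on the other hand, is correct and genuinely simpler than the paper's proof. The symmetry $\phi_1(-y,c)=\phi_1(y,-c)$ follows from uniqueness in Proposition \ref{prop:Rayleigh-Hom}: setting $g(y,c):=\phi_1(-y,c)$, the oddness of $u$ and evenness of $u'$ show $g$ solves \eqref{eq:Rayleigh-hom-S1} with $c$ replaced by $-c$, and since $y_{-c}=-y_c$ the initial data at $y_{-c}$ also match. Substituting $y\mapsto -y$ in the absolutely convergent integral defining $W_1$ (using $u$ odd, $u'$ even, and the symmetry of $\phi_1$) gives $W_1(c,\al)=W_1(-c,\al)$. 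Since $W_1(\cdot,\al)$ is analytic near $c=0$ by Lemma \ref{lem:W1}, evenness forces $\pa_cW_1(0,\al)=0$, and indeed $\pa_c^kW_1(0,\al)=0$ for all odd $k$. This route sidesteps every singular-integral issue and is shorter than the argument the paper actually gives; you should lead with it rather than treat it as a fallback.
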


\begin{proof}
Notice  that
\begin{align*}
1+u^2-2c^2=(u^2-c^2)+(1-c^2),\;\; \text{and}\;\;u^2-c^2=(1-c^2)-u'.
\end{align*}
Therefore, using the definition  \eqref{def:W-3},  we have
\begin{align}
\notag W_1(c)&=\int_{\mathbb{R}}\f{(u^2-c^2)^2}{\phi^2}dy+
p.v.\int_{\mathbb{R}}\f{(u^2-c^2)u'}{(u-c)^2\phi_1^2}
-p.v.\int_{\mathbb{R}}\f{(u^2-c^2)u'}{(u-c)^2}
dy\\
\notag &\quad+
\int_{\mathbb{R}}\f{(1-c^2)u'}{(u-c)^2}
\left(\f{1}{\phi_1^2}-1\right)dy\\
&=p.v.\int_{\mathbb{R}}\f{(1-c^2)(u^2-c^2)}{\phi^2}dy
+
\int_{\mathbb{R}}\f{(1-c^2)u'}{(u-c)^2}
\left(\f{1}{\phi_1^2}-1\right)dy
-2-2c\ln\f{1-c}{1+c}\notag\\
&=p.v.\int_{\mathbb{R}}\f{(1-c^2)^2-(1-c^2)u'}{\phi^2}dy
+
\int_{\mathbb{R}}\f{(1-c^2)u'}{(u-c)^2}
\left(\f{1}{\phi_1^2}-1\right)dy
-2-2c\ln\f{1-c}{1+c}.\label{fm:W1}
\end{align}
Recalling  Lemma \ref{lem:simple-useful-equ}, we have for good derivative $\pa_G=\f{\pa_y}{1-c^2}+\pa_c$ that
\begin{align*}
&\pa_G\left(\f{(1-c^2)u'(z)}{(u-c)^2}\right)=0,\quad \pa_G\left(\f{1-c^2}{(u-c)}\right)=1\quad \pa_G\left(\f{(1-c^2)^2}{(u-c)^2}\right)=\f{2(1-c^2)}{u-c}.
\end{align*}
Notice that if $\lim_{y\to\infty}f(c,y)=0$, then
\begin{align}
\notag\pa_c\Big(p.v.\int_{\mathbb{R}}f(c,y)dy\Big)
&=p.v.\int_{\mathbb{R}}(\pa_c+\f{\pa_y}{1-c^2})f(c,y)dy-(1-c^2)^{-1}f(c,y)|_{y=-\infty}^{+\infty}\\
&=p.v.\int_{\mathbb{R}}\pa_Gf(c,y)dy.\label{fm:paGintf}
\end{align}
Consequently,  we have
\begin{align}
\notag\pa_cW_1&=p.v.\int_{\mathbb{R}}\pa_G\Big(\f{(1-c^2)^2-(1-c^2)u'}{(u-c)^2}\cdot\f{1}{\phi_1^2}\Big)dy
+\int_{\mathbb{R}}\pa_G\Big(\f{(1-c^2)u'}{(u-c)^2}\big(\f{1}{\phi_1^2}-1\big)\Big)dy\\
&\quad\notag-\f{d}{dc}\Big(2c\ln\f{1-c}{1+c}\Big)\\
&\label{fm:pa_cA}=p.v.\int_\mathbb{R}\f{2(1-c^2)}{(u-c)\phi_1^2}dy
+\int_\mathbb{R}\f{(1-c^2)^2}{(u-c)^2}\pa_G(\f{1}{\phi_1^2})dy
-\f{d}{dc}\Big(2c\ln\f{1-c}{1+c}\Big).
\end{align}
We have by Lemma \ref{prop:goodphi1} that  $\phi_1(y,0)$ is an even function and $\pa_G\phi_1(y,0)$ is an odd function. Together with the oddness of $u(y)$, we obtain
\begin{align}
\notag\pa_cW_1(0)&
=p.v.\int_\mathbb{R}\f{2dy}{u(y)\phi_1(y,0)^2}
+\int_\mathbb{R}\f{-2\pa_G\phi_1(y,0)dy}{u(y)^2\phi_1(y,0)^3}
-\Big(2\ln\f{1-c}{1+c}-\f{4c}{1-c^2}\Big)\Big|_{c=0}=0.
\end{align}
\end{proof}

\begin{proof}[Proof of Lemma  \ref{lem:pa_cW_1}.]
Using Lemma \ref{lem:W1}, \eqref{ln-limit-1} and Lemma \ref{lem:embedd-critA}, we have
 \begin{align*}
 \lim_{c_{\ep}\to 0}\f{W(c_{\ep},1)}{c_{\ep}}
&= \lim_{c_{\ep}\to 0}\f{(1-c_{\ep}^2)^2W(c_{\ep},1)}{c_{\ep}}= \lim_{c_{\ep}\to 0}\f{2c_{\ep}\ln\f{c_{\ep}-1}{c_{\ep}+1}}{c_{\ep}}
+\lim_{c_{\ep}\to 0}\f{W_1(c_{\ep},1)-W_1(0,1)}{c_{\ep}-0}\\
&=2\lim_{c_{\ep}\to 0}\f{\ln\f{c_{\ep}-1}{c_{\ep}+1}}{c_{\ep}}
+W'_1(0,1)
=\mathrm{sgn}(\ep)2\pi i+\pa_cW_1(0,1)=\mathrm{sgn}(\ep)2\pi i.
 \end{align*}
\end{proof}

\subsection{Estimates of  $A(c,\al)$}
\begin{definition}\label{def:A}
For $\al\geq 1$, $c\in (-1,1)$, we define
\begin{align}\label{def:W-4}
A(c,\al):=W_1(c,\al)+2c\ln\f{1-c}{1+c}.
\end{align}
\end{definition}
\begin{remark}
 We note by Lemma \ref{lem:W1} that $W(c_{\ep})$ is analytic in $O_{\ep_0}\subset \mathbb{C}/\mathbb{R}$.
The definition of $A$ is related to the continuous extension of $W$. Indeed, due to \eqref{def:W-2'}, the analyticity of $W_1(c_{\ep})$ and \eqref{ln-limit-1}, we can continuously  extent the Wronskian separately from  the upper/lower half complex plane to the real axis. That is to say, for $c\in(-1,1)$,
\begin{align}\label{lim:W}
W^{\pm}(c):=\lim_{\ep\to 0\pm}W(c_{\ep})=(1-c^2)^{-2}\Big(A(c)\pm2\pi ci\Big).
\end{align}
Consequently,
\begin{align*}
(1-c^2)^4|W^{\pm}(c)|^2=A(c)^2+4\pi^2 c^2.
\end{align*}
\end{remark}

\begin{lemma}
\label{lem:A}

Let $\al\geq 1$, $c\in(-1,1)$. It holds that
\begin{itemize}
 \item $A(\cdot,\al)$ is continuous on $(-1,1)$.

 \item
 For $\al\geq 2$, there exists constant $C$ independent of $\al,c$ such that
\begin{align}
\label{est:A}C^{-1}\al^2\leq A(c,\al)^2+4\pi^2c^2\leq C\al^2.
\end{align}

\item For $\al=1$, $c\in(-1,1)/(-\delta,\delta)$ with $\delta\in(0,\f12)$, there exists constant $C$ independent of $c$ such that
\begin{align}
\label{est:A-1}C^{-1}\leq A(c,1)^2+4\pi^2c^2\leq C.
\end{align}
\item
 \begin{align}\label{pacA=0}
 A(0,1)=\pa_cA(0,1)=0.
 \end{align}
\end{itemize}
\end{lemma}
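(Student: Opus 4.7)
I will treat the four claims in order of increasing difficulty. Continuity of $A(\cdot,\al)$ on $(-1,1)$ is immediate: Lemma~\ref{lem:W1} asserts analyticity of $W_1(\cdot,\al)$ on $\widetilde{O}_{\ep_0}\supset(-1,1)$, and $c\mapsto 2c\ln\tfrac{1-c}{1+c}$ is smooth on $(-1,1)$, so Definition~\ref{def:A} puts $A(\cdot,\al)\in C^\infty((-1,1))$. For part (iv), the log term is multiplied by $0$ at $c=0$, giving $A(0,1)=W_1(0,1)=0$ by Lemma~\ref{lem:embedd-critA}. Differentiating,
\[
\partial_c A(c,1)=\partial_c W_1(c,1)+2\ln\tfrac{1-c}{1+c}+2c\,\partial_c\!\ln\tfrac{1-c}{1+c},
\]
and at $c=0$ each summand vanishes, using Lemma~\ref{lem:pa_cW1} for the first and $\ln 1=0$ for the second.

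\textbf{Upper bound.} Starting from Definition~\ref{def:W1}, I would first absorb the explicit logarithm into an integral by using $p.v.\!\int_\R\frac{2cu'}{u-c}\,dy=2c\ln\tfrac{1-c}{1+c}$, rewriting $A(c,\al)$ as a single real integral whose integrand pairs a $(u-c)^{-2}$ factor with $1/\phi_1^2-1$. Using Proposition~\ref{cor:phi_1-infty}'s two-sided bound $\phi_1\sim e^{\al|y-y_c|}$ together with the quadratic vanishing $|1/\phi_1^2-1|\le C\min\{\al^2(y-y_c)^2,1\}$ and the algebraic identities in Lemma~\ref{lem:simple-useful-equ} to handle the $(u-c)^{-2}$ factor, I split the integration at $|y-y_c|=1/\al$. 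In the near region the integrand is $O(\al^2)$ against an interval of length $O(1/\al)$, contributing $O(\al)$; in the far region $\phi_1^{-2}$ provides exponential decay against an algebraically bounded factor, contributing $O(1)$. This yields $|A(c,\al)|\le C\al$, uniformly in $c\in(-1,1)$.

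\textbf{Lower bound.} The identity in the remark following Definition~\ref{def:A},
\[
A(c,\al)^2+4\pi^2 c^2=(1-c^2)^4\,|W^{\pm}(c,\al)|^2,
\]
reduces the lower bound to $(1-c^2)^2|W^{\pm}(c,\al)|\ge C^{-1}\al$ for $\al\ge 2$, and $\ge C^{-1}$ for $\al=1$ with $|c|\ge\delta$. Part (iii) follows trivially since $A^2+4\pi^2c^2\ge 4\pi^2\delta^2$. For part (ii) at $c=0$, $A(0,\al)=W_1(0,\al)$, and an explicit computation using $\phi_1(y,0)\sim e^{\al|y|}$ shows that the dominant contribution $\int\frac{(1+u^2)u'}{u^2}(\phi_1^{-2}-1)\,dy$ splits at $|y|\sim 1/\al$ into two pieces each of magnitude $\asymp\al$, giving $|W_1(0,\al)|\asymp\al$. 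For $c$ bounded away from $0$ and $\pm 1$ I would propagate the lower bound by continuity in $c$, uniformly in $\al$, using the good-derivative bounds in Lemma~\ref{prop:goodphi1} to control $\partial_cA$. As $c\to\pm 1$ the log term satisfies $|2c\ln\tfrac{1-c}{1+c}|\gtrsim|\ln(1-c^2)|$, which the residual $W_1$ cannot cancel at a better than bounded ratio, keeping $|A|$ large.

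\textbf{Main obstacle.} The most delicate step is the uniform-in-$\al\ge 2$ lower bound in part (ii): Lemma~\ref{cor:Wneq0} only supplies qualitative non-vanishing of $W$, so the quantitative estimate $|W_1(0,\al)|\asymp\al$ combined with uniform continuity propagation is the technical core. Tracking the interplay between the $(1-c^2)^4$ degeneracy and the logarithmic blow-up near $c=\pm 1$ is the other sensitive point in obtaining the bound uniformly in $c$.
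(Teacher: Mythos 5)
Your plan for the continuity, the upper bound, and the last bullet is essentially the same as the paper's: the paper likewise rewrites $A=A_1+A_2$ by absorbing $2c\ln\frac{1-c}{1+c}=p.v.\!\int_\R\frac{2cu'}{u-c}\,dy$ into the integral, splits at $|y-y_c|=1/\al$, and uses $|1/\phi_1^2-1|\lesssim\min\{\al^2|y-y_c|^2,1\}$ together with \eqref{u-key}; part (iv) is obtained from Definition~\ref{def:A}, Lemma~\ref{lem:embedd-critA}, and Lemma~\ref{lem:pa_cW1} exactly as you describe.

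The genuine gap is in the lower bound for $\al\ge 2$. Your proposed derivation of $|W_1(0,\al)|\asymp\al$ from ``an explicit computation'' only yields, after splitting at $|y|\sim 1/\al$, the two ingredients the paper establishes: $|A_1(c,\al)|\le C$ and $-A_2(c,\al)\ge C^{-1}\al>0$ with $|A_2|\le C\al$. Combined these give $|A(0,\al)|=|A_1+A_2|\ge C^{-1}\al-C$, which is useless for finitely many small $\al\ge 2$ (the right-hand side may be negative): the additive cancellation between $A_1$ and $A_2$ is not ruled out by these size estimates alone. This is precisely why the paper invokes Lemma~\ref{lem:embedd-critA}: $W_1(0,\al)\neq 0$ for every $\al\ge 2$, a qualitative spectral fact (no embedding eigenvalue for $\al\ne 1$), which supplies a nonzero lower bound on the finite set $\al\in\{2,\dots,N-1\}$ and then lets continuity in $c$ close the argument. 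Without this step your proof is incomplete. Your propagation argument ``for $c$ bounded away from $0$'' has the same deficit: $|\pa_c A|\le C$ gives $|A(c)|\ge|A(0)|-C|c|$, which only helps if $|A(0,\al)|$ already exceeds a fixed constant, and that is again what the explicit estimates fail to deliver for bounded $\al$.

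A secondary error: your claim that as $c\to\pm1$ the ``residual $W_1$ cannot cancel [the log] at a better than bounded ratio'' is the opposite of what happens. Since the paper's direct estimates give $|A(c,\al)|\le C\al$ uniformly in $c\in(-1,1)$ while $|2c\ln\frac{1-c}{1+c}|\to\infty$, the term $W_1$ must cancel the logarithm almost entirely near $\pm1$. The lower bound there instead follows from the $c$-uniform estimate $-A_2\ge C^{-1}\al$ (for large $\al$) and from the discrete/continuity argument (for small $\al$), not from any near-boundary dominance of the logarithm.
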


\begin{proof}
The continuity of $A(\cdot,\al)$  directly follows from Lemma \ref{lem:W1}.
Notice that
\begin{align*}
1+u^2-2c^2=(1-c^2)+(u-c)^2+2c(u-c).
\end{align*}
Thanks to Definition \ref{def:W1} and Definition \ref{def:A}, for $c\in(-1,1)$,  we compute
\begin{align}
\notag A(c)&=\int_{\mathbb{R}}
\f{(u(y)+c)^2}{\phi_1(y,c)^2}dy
+\int_{\mathbb{R}}
\f{(1+u(y)^2-2c^2)u'(y)}{(u(y)-c)^2}
\big(\f{1}{\phi_1(y,c)^2}-1\big)dy+2c\ln\f{1-c}{1+c}\\
\notag &=\int_{\mathbb{R}}
\f{(u^2-c^2)^2}{\phi^2}dy+\bigg(p.v.\int_{\mathbb{R}}
\f{2c(u-c)u'}{\phi^2}dy-p.v.\int_{\mathbb{R}}\f{2c(u-c)u'}{(u-c)^2}dy
\\&\notag \quad+\int_{\mathbb{R}}\f{\big(1-c^2+(u-c)^2\big)u'(y)}{(u(y)-c)^2}
\left(\f{1}{\phi_1^2(y,c)}-1\right)dy\bigg)+2c\ln\f{1-c}{1+c}\\
\notag &=p.v.\int_{\mathbb{R}}\f{(u^2-c^2)^2+2cu'(u-c)}{(u(y)-c)^2\phi_1^2(y,c)}dy
+\int_{\mathbb{R}}\f{\big(1-c^2+(u-c)^2\big)u'(y)}{(u(y)-c)^2}
\left(\f{1}{\phi_1^2(y,c)}-1\right)dy\\
&=A_1+A_2.\label{A=A_1+A_2}
\end{align}
 To prove  \eqref{est:A} and \eqref{est:A-1}, we claim that  there exist $C$ independent of $\al,c$ such that
\begin{align}
\label{A1}|A_1(c,\al)|&\leq C, \; |A_2(c,\al)|\leq C\al,\\
\label{A2}-A_2(c,\al)&\geq C^{-1}\al>0.
\end{align}
Thus,  the upper bounds follow by \eqref{A1}. Notice that for any $\al\geq 1$ and $\delta\in(0,\f12)$, the following holds
\begin{align}\label{A^2+c^2,csim 1}
A(c,\al)^2+4\pi^2c^2\geq 4\pi^2\delta^2,\quad \text{for}\quad c\in(-1,1)/(-\delta,\delta).
 \end{align}
 It  suffices to prove the lower bound in \eqref{est:A} for $c\in(-\delta,\delta)$ with some $\delta\in(0,\f12)$.

For $\al$ sufficient large (say $\al\geq N$), the lower bound follows from \eqref{A1} and \eqref{A2} that 
\begin{align*}
A(c,\al)^2+4\pi^2c^2\geq (A_1+A_2)^2
\geq \f12A_2^2-CA_1^2\geq C^{-1}\al^2,\quad c\in(-1,1).
\end{align*}
On the other hand, for any $\al\in\{2,3,...,N-1\}$, it follows from \eqref{A=0} and \eqref{def:W-4} that
\begin{align*}
A(0,\al)=W(0,\al)\neq 0,
\end{align*}
which together with the continuity of $A(\cdot,\al)$ shows \begin{align*}
A^2+4\pi^2c^2\geq A^2\geq C^{-1}\quad \text{for}\quad
 c\in[-\delta,\delta],
 \end{align*}
with some fixed $\delta$. Consequently, the lower bound of \eqref{est:A} follows.


To prove the claims \eqref{A1}-\eqref{A2},   we use the identity \eqref{A=A_1+A_2}. For $A_1$, we decompose that
\begin{align}
\notag A_1&=\int_{\{|y-y_c|\geq \f{1}{\al}\}}\f{2cu'(u-c)}{(u(y)-c)^2\phi_1(y,c)^2}dy+\int_{\{|y-y_c|\leq \f{1}{\al}\}}\f{2cu'(u-c)}{(u(y)-c)^2}(\f{1}{\phi_1(y,c)^2}-1)dy
\\
\label{label:A_1}&\quad+p.v.\int_{\{|y-y_c|\leq \f{1}{\al}\}}\f{2cu'(u-c)}{(u(y)-c)^2}dy+\int_{\mathbb{R}}\f{(u+c)^2}{\phi_1(y,c)^2}dy=A_{11}+A_{12}+A_{13}+A_{14}.
\end{align}
By \eqref{u-key}, \eqref{est:phi_1-ul} and  \eqref{est:phi_1-1}, we have
\begin{align*}
|A_{11}|&\lesssim \int_{\{|y-y_c|\geq \f{1}{\al}\}}\f{\al} {e^{C^{-1}\al|y-y_c|}}dy\leq C,
\end{align*}
and
\begin{align*}
|A_{12}|\lesssim \int_{\{|y-y_c|\leq \f{1}{\al}\}}\f{\al^2|y-y_c|^2\cdot\al}{\tanh\al|y-y_c|}dy\leq C.
\end{align*}
By \eqref{coshy} and  \eqref{fm:u(y)-c},  we have \begin{align}\label{est:u-c/u-c} 0<\f{u(y_c+\f{1}{\al})-c}{c-u(y_c-\f{1}{\al})}=\f{\cosh(y_c-\f{1}{\al})}{\cosh (y_c+\f{1}{\al})}\sim 1,
\end{align}
which together with \eqref{u-key} and \eqref{est:u-c/u-c} gives
\begin{align*}
|A_{13}|&= \Big|2c\ln\f{u(y_c+\f{1}{\al})-c}{c-u(y_c-\f{1}{\al})}\Big|\leq C.
\end{align*}
By \eqref{est:phi_1-ul}, we have
\begin{align*}
|A_{14}|&\lesssim \int_{\mathbb{R}}e^{-2C^{-1}\al|y-y_c|}dy\leq C\al^{-1}.
\end{align*}
Combining the estimates of $A_{11}-A_{14}$, we obtain $|A_1(c,\al)|\leq C$. For $A_2$, we decompose that
\begin{align*}
-A_2&=\int_{\{|y-y_c|\leq \f{1}{\al}\}}\f{(1-c^2+(u-c)^2)u'}{(u(y)-c)^2}\big(1-\f{1}{\phi_1(y,c)^2}\big)dy\\
&\quad-\int_{\{|y-y_c|\geq \f{1}{\al}\}}\f{(1-c^2+(u-c)^2)u'}{(u(y)-c)^2}\f{1}{\phi_1(y,c)^2}dy\\
&\quad+\int_{\{|y-y_c|\geq \f{1}{\al}\}}\f{(1-c^2+(u-c)^2)u'}{(u(y)-c)^2}dy
=A_{21}-A_{22}+A_{23}.
\end{align*}
By \eqref{u-key}, \eqref{est:phi_1-1} and \eqref{sinhy}, we have
\begin{align}\label{A22}
\notag0<A_{21}&\lesssim \int_{\{|y-y_c|\leq \f{1}{\al}\}}\big( \tanh^{-2}|y-y_c|+1\big)\al^2|y-y_c|^2dy\\
&\lesssim \al^{-1}\int_{\{|z|\leq 1\}}\big( \al^2|z|^{-2}+1\big)|z|^2dz\leq C\al.
\end{align}
By \eqref{u-key} and  \eqref{est:phi_1-ul}, we have
\begin{align*}
0<A_{22}&\lesssim \int_{\{|y-y_c|\geq \f{1}{\al}\}}\big(1+\al^2\tanh^{-2}\al|y-y_c|\big)e^{-2C^{-1}\al|y-y_c|}dy\leq C\al.
\end{align*}
By  \eqref{u-key} and \eqref{sinhy}, we have
\begin{align*}
0<A_{23}&\lesssim (1-c^2)\bigg|\f{1}{u(y)-c}\big|_{y_c+\f{1}{\al}}^{y_c-\f{1}{\al}}+\f{1}{u(y)-c}\big|_{-\infty}^{+\infty}\bigg|
+\bigg|u(y)\big|_{y_c+\f{1}{\al}}^{y_c-\f{1}{\al}}+u(y)\big|_{-\infty}^{+\infty}\bigg|\\
&\lesssim\f{1-c^2}{u(y_c+\f{1}{\al})-c}+\f{1-c^2}{c-u(y_c-\f{1}{\al})}+1
\lesssim \f{1}{\tanh \f{1}{\al}}+1\leq C\al.
\end{align*}
Combining the estimates of $A_{21}-A_{23}$, we have $|A_2(c,\al)|\leq C\al$.

To prove \eqref{A2}, we notice by $1-\f{1}{\phi_1^2}\geq 0$ that $-A_{22}+A_{23}\geq 0$. Thus,  thanks to \eqref{fm:u(y)-c},  \eqref{est:phi_1-1} and \eqref{sinhy},
 we have
 \begin{align*}
-A_2(c,\al)&\geq A_{21}\geq \int_{\{|y-y_c|\leq \f{1}{\al}\}}\f{(1-c^2)u'(y)}{(u(y)-c)^2}\big(1-\f{1}{\phi_1(y,c)^2}\big)dy\\
&\geq C^{-1}\int_{\{|y-y_c|\leq \f{1}{\al}\}}\f{\al^2|y-y_c|^2}{\sinh^2|y-y_c|}dy\geq C^{-1}\al>0.
\end{align*}
We finish the proof of \eqref{A1}-\eqref{A2}.

The identity \eqref{pacA=0} follows from Definition \ref{def:A},  Lemma \ref{lem:embedd-critA} and Lemma  \ref{lem:pa_cW1}.
\end{proof}

\begin{lemma}\label{lem:pa_cA}
Let $\al\geq 1$, $c\in(-1,1)$. There exists $C$ independent of $\al,c$ such that
\begin{align}
\label{est:pacA}|\pa_cA(c,\al)|\leq C,
\end{align}
and
\begin{align}
\label{est:packA}|\pa_c^kA(c,\al)|\leq C\al^{-1}(1-c^2)^{-k+2},\quad k=2,3,4.
\end{align}
\end{lemma}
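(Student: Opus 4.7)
The plan is to start from the identity \eqref{fm:pa_cA} (derived in the proof of Lemma \ref{lem:pa_cW1}). Combined with $A=W_1+2c\ln\f{1-c}{1+c}$, the explicit $\f{d}{dc}(2c\ln\f{1-c}{1+c})$ term cancels, producing the clean expression
\begin{align*}
\pa_cA(c,\al)=p.v.\int_\mathbb{R}\f{2(1-c^2)}{(u-c)\phi_1^2}dy+\int_\mathbb{R}\f{(1-c^2)^2}{(u-c)^2}\pa_G\Big(\f{1}{\phi_1^2}\Big)dy.
\end{align*}
The estimate $|\pa_cA|\leq C$ then follows by splitting the first integral at $\{|y-y_c|=1/\al\}$: near $y_c$, the cancellation $p.v.\int_{|y-y_c|\leq 1/\al}\f{(1-c^2)dy}{u-c}=(1-c^2)\ln\f{u(y_c+1/\al)-c}{c-u(y_c-1/\al)}=O(1)$ (from \eqref{est:u-c/u-c}) handles the singular part, while $|1/\phi_1^2-1|\leq C\al^2|y-y_c|^2$ from \eqref{est:phi_1-1} controls the regular remainder; away from $y_c$, the exponential decay $1/\phi_1^2\leq Ce^{-2C^{-1}\al|y-y_c|}$ combined with $(1-c^2)/|u-c|\leq C/\tanh|y-y_c|$ from \eqref{u-key} yields absolute convergence. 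The second integral is controlled directly using $|\pa_G(1/\phi_1^2)|\leq C\f{|u-c|}{1-c^2}\al|y-y_c|/\phi_1^2$, a consequence of Lemma \ref{prop:goodphi1} via the Leibniz rule.

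For $k\geq 2$, the next step is to commute $\pa_c$ under the integral via \eqref{fm:paGintf}; the boundary terms vanish since $\phi_1\to\infty$ as $y\to\pm\infty$, so $\pa_c\int fdy=\int\pa_G fdy$ for each integrand involved, and the principal value disappears after the first differentiation because $\pa_G(1/\phi_1^2)/(u-c)$ is regular at $y_c$. The simple identity $\pa_G\f{(1-c^2)^j}{(u-c)^j}=j\f{(1-c^2)^{j-1}}{(u-c)^{j-1}}$, a consequence of $\pa_G\f{1-c^2}{u-c}=1$ from Lemma \ref{lem:simple-useful-equ}, shows that differentiating a term of the form $\f{(1-c^2)^j}{(u-c)^j}\pa_G^l(1/\phi_1^2)$ produces one contribution of type $(j-1,l)$ and one of type $(j,l+1)$; both preserve the invariant $l-j=\text{const}$. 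An induction on $k$ then yields that $\pa_c^kA$ is a finite linear combination, with $\al$-independent coefficients, of integrals
\begin{align*}
I_{j,l}(c,\al):=\int_\mathbb{R}\f{(1-c^2)^j}{(u-c)^j}\pa_G^l\Big(\f{1}{\phi_1^2}\Big)dy,\qquad j\in\{0,1,2\},\ l-j=k-2.
\end{align*}

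To estimate $I_{j,l}$, I combine the Leibniz rule with \eqref{est:goodk} to get $|\pa_G^l(1/\phi_1^2)|\leq C\big(\f{|u-c|}{1-c^2}\big)^l\al|y-y_c|\langle\al|y-y_c|\rangle^{l-1}\phi_1^{-2}$ for $l\geq 1$ (the single case $l=0$ arising at $k=2$ is trivially bounded by $C\al^{-1}$). Consequently, the integrand of $I_{j,l}$ is at most $C\big(\f{|u-c|}{1-c^2}\big)^{l-j}\al|y-y_c|\langle\al|y-y_c|\rangle^{l-1}e^{-2C^{-1}\al|y-y_c|}$. Splitting $\mathbb{R}$ into a near zone $\{|y-y_c|\leq 1\}$, on which \eqref{fm:u(y)-c} gives $|u-c|/(1-c^2)\leq C|y-y_c|\leq C$, and a far zone $\{|y-y_c|>1\}$, on which $|u-c|/(1-c^2)\leq C(1-c^2)^{-1}$, the rescaling $s=\al|y-y_c|$ together with $\int_0^\infty s^Ne^{-C^{-1}s}ds<\infty$ bounds the near-zone contribution by $C\al^{-1}$ and the far-zone contribution by $C(1-c^2)^{-(l-j)}\al^{-1}e^{-C^{-1}\al}\leq C(1-c^2)^{-(k-2)}\al^{-1}$, proving \eqref{est:pacA}--\eqref{est:packA}. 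The main difficulty is to maintain the structural invariant $l-j=k-2$ through the induction; it is precisely this invariant that forces the $(1-c^2)^{-(k-2)}$ loss, since each $\pa_G$ hitting $1/\phi_1^2$ introduces a factor $|u-c|/(1-c^2)$ that the integral weight $(1-c^2)^j/(u-c)^j$ can absorb only in the near zone.
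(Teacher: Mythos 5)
Your strategy is essentially the paper's: start from the identity
$\pa_cA=p.v.\int_\mathbb{R}\f{2(1-c^2)}{(u-c)\phi_1^2}dy+\int_\mathbb{R}\f{(1-c^2)^2}{(u-c)^2}\pa_G\big(\f{1}{\phi_1^2}\big)dy$
(which is \eqref{fm:pa_cA} after cancelling the logarithm term), then iterate $\pa_c$ through the integrals using \eqref{fm:paGintf} and estimate the resulting terms via \eqref{est:goodk}, \eqref{u-key}, \eqref{est:phi_1-ul}, with rescaling in $\al|y-y_c|$. Your induction with the structural invariant $l-j=k-2$ is a tidier packaging of what the paper does by writing out $\pa_c^2A,\pa_c^3A,\pa_c^4A$ explicitly in \eqref{fm:pa_c^2A}, \eqref{fm:pa_c^3A}, and the $k=4$ display, and the subsequent near/far estimate of $I_{j,l}$ is sound. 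These are genuinely the same proof with different bookkeeping.

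There is, however, a computational slip in your $k=1$ step. You write
$p.v.\int_{|y-y_c|\leq 1/\al}\f{(1-c^2)\,dy}{u(y)-c}=(1-c^2)\ln\f{u(y_c+1/\al)-c}{c-u(y_c-1/\al)},$
but the antiderivative of $\f{1}{u-c}$ in $y$ is not $\ln|u-c|$: one has $\f{d}{dy}\ln|u(y)-c|=\f{u'(y)}{u(y)-c}$, so the logarithm formula holds only when the integrand carries a factor of $u'(y)$ (this is exactly the form appearing in the paper's $A_{13}$ in Lemma \ref{lem:A}). Without the $u'$ you cannot quote it. The conclusion $O(1)$ is nonetheless correct — for instance, one may observe that the even part of $\f{1}{u(y_c+\tilde y)-c}$ in $\tilde y$ is identically $\f{2c}{1-c^2}$ for $\tanh$, so the principal value over $|y-y_c|\leq 1/\al$ equals $\f{2c}{\al(1-c^2)}$, and multiplication by $1-c^2$ gives $O(\al^{-1})$ — but the cleanest repair is the paper's own decomposition $1-c^2=u'(y)+(u+c)(u-c)$ from \eqref{fm:pa_cA''}, which produces the needed $u'/(u-c)$ piece (then the $\ln$ formula applies) plus the harmless bounded remainder $u+c$. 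With that substitution, the rest of your argument, including the inductive estimate of $I_{j,l}$, is correct.
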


\begin{proof}
We recall by Lemma \ref{lem:simple-useful-equ}, Lemma \ref{prop:goodphi1}  and \eqref{fm:paGintf} that
\begin{align}
&\pa_G\left(\f{(1-c^2)u'(z)}{(u-c)^2}\right)=0,\quad \pa_G\left(\f{(1-c^2)}{(u-c)}\right)=1\quad \pa_G\left(\f{(1-c^2)^2}{(u-c)^2}\right)=\f{2(1-c^2)}{u-c}\label{goodG-2A}.\\
&\Big|\f{\pa_G^k\phi_1(y,c)}{\phi_1(y,c)}\Big|
\leq C\left(\f{|u(y)-c|}{1-c^2}\right)^{k}(\al|y-y_c|)\langle\al|y-y_c|\rangle^{k-1}\label{goodG-A2},\quad k=1,2,3,4.\\
&\pa_c\Big(p.v.\int_{\mathbb{R}}f(c,y)dy\Big)
=p.v.\int_{\mathbb{R}}\pa_Gf(c,y)dy,\quad \text{if}\quad \lim_{y\to\infty}f(c,y)=0.\label{fm:paGintf-2}
\end{align}

By Definition \ref{def:A} and \eqref{fm:pa_cA}, we have
\begin{align}
\pa_cA
&\label{fm:pa_cA}=p.v.\int_\mathbb{R}\f{2(1-c^2)}{(u-c)\phi_1^2}dy
+\int_\mathbb{R}\f{(1-c^2)^2}{(u-c)^2}\pa_G(\f{1}{\phi_1^2})dy.
\end{align}
Thanks to $1-c^2=u'+(u-c)(u+c)$, we  decompose $\pa_cA$ as
\begin{align}
\notag \pa_cA&=p.v.\int_{\mathbb{R}}\f{2u'}{(u-c)\phi_1^2}dy+\int_{\mathbb{R}}\f{2(u+c)}{\phi_1^2}dy
+\int_{\mathbb{R}}\f{(1-c^2)^2}{(u-c)^2}\pa_G(\f{1}{\phi_1^2})dy\\
&=
A'_1+A'_2+A'_3.\label{fm:pa_cA''}
\end{align}
Here the superscript does not represent a derivative.
 We decompose $A'_1$ that
 \begin{align*}
 A'_1
 &=p.v.\int_{|y-y_c|\geq \f{1}{\al}}\f{2u'}{(u-c)\phi_1^2}dy
 +\int_{|y-y_c|\leq \f{1}{\al}}\f{2u'}{u-c}(\f{1}{\phi_1^2}-1)dy
 +p.v.\int_{|y-y_c|\leq \f{1}{\al}}\f{2u'}{u-c}dy\\
 &= A'_{11}+A'_{12}+A'_{13}.
 \end{align*}
  Similar as the estimates for $A_{11},...A_{14}$ in Lemma \ref{lem:A}, we can obtain $|A'_{1i}|\leq C$($1\leq i\leq 3$). 
 By \eqref{est:phi_1-ul}, we have
 \begin{align*}
 |A_2'|\lesssim \int_\mathbb{R}e^{-C^{-1}\al|y-y_c|}dy\leq C\al^{-1}.
 \end{align*}
 By \eqref{goodG-A2}($k=1$), \eqref{u-key}, \eqref{est:phi_1-ul} and  \eqref{sinhy}, we have
 \begin{align*}
 |A_3'|\lesssim \int_\mathbb{R}\f{ (1-c^2)\cdot\al|y-y_c|}{|u(y)-c|\phi_1 (y,c)^2}dy\lesssim \int_\mathbb{R}\f{\al^2|y-y_c|}{e^{2C^{-1}\al|y-y_c|}\tanh\al|y-y_c|}dy\leq C.
 \end{align*}
 Combining the estimates of $A_{1}'-A_{3}'$, we have \eqref{est:pacA}.

Taking $\pa_c$ on \eqref{fm:pa_cA}, thanks to  \eqref{fm:paGintf-2}, Leibniz law and \eqref{goodG-2A}, we have  
\begin{align}\label{fm:pa_c^2A}
\pa_c^2A=\int_\mathbb{R}\f{2}{\phi_1^2}dy+4\int_\mathbb{R}\f{1-c^2}{u-c}\pa_G(\f{1}{\phi_1^2})dy
+\int_\mathbb{R}\f{(1-c^2)^2}{(u-c)^2}\pa_G^2(\f{1}{\phi_1^2})dy=A_1''+A_2''+A_3''.
\end{align}
 By \eqref{goodG-A2}($k=1,2$), \eqref{u-key} and  \eqref{est:phi_1-ul}, we have
 \begin{align*}
 |A_1''|&\lesssim \int_\mathbb{R}\f{1}{e^{2C^{-1}\al|y-y_c|}}dy\leq C\al^{-1},\quad
 |A_2''|\lesssim \int_\mathbb{R}\f{\al|y-y_c|}{ e^{2C^{-1}\al|y-y_c|}}dy\leq C\al^{-1},\\
 |A_3''|&\lesssim \int_\mathbb{R}\f{\langle \al|y-y_c|\rangle^2}{\phi_1(y,c)^2}dy\lesssim \int_\mathbb{R}\f{\langle \al|y-y_c|\rangle^2}{ e^{2C^{-1}\al|y-y_c|}}dy\leq C\al^{-1}.
 \end{align*}
 Thus,  \eqref{est:packA}($k=2$) follows.

Taking $\pa_c$ on  \eqref{fm:pa_c^2A}, thanks to  \eqref{fm:paGintf-2}, Leibniz law and \eqref{goodG-2A}, we have  
\begin{align}\label{fm:pa_c^3A}
\pa_c^3A&=6\int_\mathbb{R}\pa_G(\f{1}{\phi_1^2})dy+6\int_\mathbb{R}\f{1-c^2}{u-c}\pa_G^2(\f{1}{\phi_1^2})dy
+\int_\mathbb{R}\f{(1-c^2)^2}{(u-c)^2}\pa_G^3(\f{1}{\phi_1^2})dy\\
&\notag=A_1'''+A_2'''+A_3'''.
\end{align}
By \eqref{goodG-A2}($k=1,2,3$), \eqref{u-key} and  \eqref{est:phi_1-ul},  we have
 \begin{align*}
 |A_1'''|&\lesssim (1-c^2)^{-1}\int_\mathbb{R}\f{|u(y)-c|\al|y-y_c|}{e^{2C^{-1}\al|y-y_c|}}dy\leq C\al^{-1} (1-c^2)^{-1},\\
  |A_2'''|&\lesssim \int_\mathbb{R}\f{\langle \al|y-y_c|\rangle^2|u(y)-c|}{(1-c^2)\phi_1(y,c)^2}dy\lesssim \int_\mathbb{R}\f{(1-c^2)^{-1} \langle\al|y-y_c|\rangle^2}{e^{2C^{-1}\al|y-y_c|}}dy\leq C\al^{-1} (1-c^2)^{-1},\\
 |A_3'''|&\lesssim \int_\mathbb{R}\f{\langle \al|y-y_c|\rangle^3|u(y)-c|}{(1-c^2)\phi_1(y,c)^2}dy\lesssim \int_\mathbb{R}\f{(1-c^2)^{-1}\langle \al|y-y_c|\rangle^3}{e^{2C^{-1}\al|y-y_c|}}dy\leq C\al^{-1}(1-c^2)^{-1}.
 \end{align*}
Thus, \eqref{est:packA}($k=3$) follows.

Taking $\pa_c$ on  \eqref{fm:pa_c^3A}, thanks to  \eqref{fm:paGintf-2}, Leibniz law and \eqref{goodG-2A}, we have 
\begin{align*}
\pa_c^4A=12\int_\mathbb{R}\pa_G^2(\f{1}{\phi_1^2})dy+8\int_\mathbb{R}\f{1-c^2}{u-c}\pa_G^3(\f{1}{\phi_1^2})dy
+\int_\mathbb{R}\f{(1-c^2)^2}{(u-c)^2}\pa_G^4(\f{1}{\phi_1^2})dy=A_1''''+A_2''''+A_3''''.
\end{align*}
 By \eqref{goodG-A2}($k=1,2,3,4$), \eqref{u-key} and  \eqref{est:phi_1-ul},  we have
 \begin{align*}
 |A_1''''|&\lesssim (1-c^2)^{-2}\int_\mathbb{R}\f{\langle\al|y-y_c|\rangle^2}{e^{2C^{-1}\al|y-y_c|}}dy\leq C\al^{-1} (1-c^2)^{-2},\\
  |A_2''''|&\lesssim (1-c^2)^{-2} \int_\mathbb{R}\f{\langle\al|y-y_c|\rangle^3}{e^{2C^{-1}\al|y-y_c|}}dy\leq C\al^{-1} (1-c^2)^{-2},\\
 |A_3''''|&\lesssim \int_\mathbb{R}\f{\langle \al|y-y_c|\rangle^4|u(y)-c|^2}{(1-c^2)^2\phi_1(y,c)^2}dy\lesssim \int_\mathbb{R}\f{(1-c^2)^{-2}\langle \al|y-y_c|\rangle^4}{e^{2C^{-1}\al|y-y_c|}}dy\leq C\al^{-1}(1-c^2)^{-2}.
 \end{align*}
Thus, \eqref{est:packA}($k=4$) follows.
\end{proof}

\section{Limiting absorption principles}
In this section, we establish the limiting absorption principle(LAP) for the inhomogeneous solution constructed in  Section 3.

\subsection{LAP in presence of embedding eigenvalue   }

\begin{proposition}\label{lem:B-delta1}
 Let $f\in H^1_{1/(u')^2}(\mathbb{R})$,  $\Phi(1,y,\textbf{c})$ be the inhomogeneous solution constructed in Proposition \ref{pro:solve-Phi}. Then for $y\neq 0$, it holds that
\begin{align}
\label{lim:T-theta+}
\lim_{\pm\mathrm{Im}(\textbf{c})>0,\;|\textbf{c}|\to 0}\textbf{c}
\Phi(1,y,\textbf{c})=\f{\mathcal{T}(f)(0)\pm i\pi f(0)}{\pm2\pi i}\mathrm{sech}\,y ,
\end{align}
 with the operator
$\mathcal{T}$ defined as
\begin{align}\label{def:T(h)-(-1,1)} \mathcal{T}(f)(c)&:=p.v.\int_{\mathbb{R}}\f{\int_{y_c}^y\phi_1(z,c)f(z)dz}{(u(y)-c)^2\phi_1(y,c)^2}dy,\quad
c\in(-1,1).
\end{align}

For $y\neq 0$, there exists  $\delta_1(y)>0$, such that for $\delta\in (0,\delta_1)$ it holds that \begin{align}
\label{bd:Phi}\Big|\textbf{c}\Phi(1,y,\textbf{c})\Big|\leq C,\quad \text{for}\;\textbf{c}\in B_{\delta}(0)/(-1,1)\subset O_{\ep_0},\end{align}
where the constant $C$ is independent of $\textbf{c}$.
\end{proposition}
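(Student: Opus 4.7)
The plan is to combine the explicit representation of $\Phi$ from Proposition \ref{pro:solve-Phi} with three independent ingredients: the Wronskian asymptotic from Lemma \ref{lem:pa_cW_1}, the eigenfunction identity from Lemma \ref{lem:eigen-al=1}(2), and a Plemelj--Sokhotski-type analysis of the operator $T(f)$. First I would decompose, using \eqref{solution-Phi-1} for $y<0$ (and \eqref{solution-Phi-2} for $y>0$),
\begin{align*}
\Phi(1,y,\textbf{c})=I(y,\textbf{c})+\mu(\textbf{c},1)\,J(y,\textbf{c}),\qquad \mu(\textbf{c},1)=-\f{T(f)(\textbf{c})}{W(\textbf{c},1)},
\end{align*}
where $I$ is the first (source) term and $J(y,\textbf{c})=\phi(y,\textbf{c})\int_{-\infty}^{y}\phi(z,\textbf{c})^{-2}dz$. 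Since $W(\textbf{c},1)=O(\textbf{c})$ by Lemma \ref{lem:pa_cW_1}, the factor $\textbf{c}$ in $\textbf{c}\Phi$ exactly compensates the resolvent singularity and isolates the eigenspace contribution.

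Next I would handle the two easy pieces. For the regular part, fix $y\neq 0$: for $\textbf{c}$ small, $y_c=u^{-1}(\mathrm{Re}(\textbf{c}))$ stays near $0$ while $z$ runs over a half line with $|z-y|$ bounded, so $z$ is separated from $y_c$. Combined with the uniform exponential bounds from Corollary \ref{Cor:basic-limit}, this yields $|I(y,\textbf{c})|\leq C(y)$, hence $\textbf{c}\,I(y,\textbf{c})\to 0$. Meanwhile, Lemma \ref{lem:eigen-al=1}(2) identifies the eigenfunction factor: $\lim_{\textbf{c}\to 0}J(y,\textbf{c})=-\mathrm{sech}\,y$ for $y<0$ (and the analogous $+\infty$-version for $y>0$).

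The hard part will be to prove the Plemelj--Sokhotski identity
\begin{align*}
\lim_{\pm\mathrm{Im}(\textbf{c})>0,\;\textbf{c}\to 0}T(f)(\textbf{c})=\mathcal{T}(f)(0)\pm i\pi f(0),
\end{align*}
for the double-pole density $\int_{y_c}^{z}\phi_1(w,\textbf{c})f(w)dw\big/[(u(z)-\textbf{c})^2\phi_1(z,\textbf{c})^2]$. I would localise with a smooth cut-off $\chi$ supported near $z=0$: on $\mathrm{supp}\,(1-\chi)$ the integrand is uniformly bounded in $\textbf{c}$ via Proposition \ref{cor:phi_1-infty} and converges pointwise to the real-$c$ integrand, so dominated convergence extracts the tail of $\mathcal{T}(f)(0)$. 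On $\mathrm{supp}\,\chi$ the inner integral satisfies $\int_{y_c}^{z}\phi_1(w,\textbf{c})f(w)dw=\phi_1(y_c,\textbf{c})f(y_c)(z-y_c)+O((z-y_c)^2)$, which vanishes to first order at $z=y_c$; pairing one factor of $(z-y_c)$ against the factorisation $u(z)-\textbf{c}=u'(y_c)(z-y_c)-i\,\mathrm{Im}(\textbf{c})+O((z-y_c)^2)$ reduces the double pole to a simple pole $1/(u(z)-\textbf{c})$ whose density is continuous in $\textbf{c}$ and takes the value $f(y_c)/u'(y_c)$ at $z=y_c$. The classical identity $\lim_{\ep\to 0^{\pm}}\int g(z)/(u(z)-c\mp i\ep)\,dz=p.v.\int g/(u-c)\pm i\pi g(y_c)/|u'(y_c)|$, together with $y_c\to 0$ and $u'(0)=1$, produces the jump $\pm i\pi f(0)$, while the principal-value tail recombines with the far part to reconstitute $\mathcal{T}(f)(0)$.

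Finally, using Lemma \ref{lem:pa_cW_1} in the form $\textbf{c}/W(\textbf{c},1)\to 1/(\mathrm{sgn}(\mathrm{Im}(\textbf{c}))\,2\pi i)$ and assembling,
\begin{align*}
\textbf{c}\Phi(1,y,\textbf{c})\longrightarrow\left(-\f{\mathcal{T}(f)(0)\pm i\pi f(0)}{\pm 2\pi i}\right)(-\mathrm{sech}\,y)=\f{\mathcal{T}(f)(0)\pm i\pi f(0)}{\pm 2\pi i}\mathrm{sech}\,y,
\end{align*}
which is \eqref{lim:T-theta+}. The bound \eqref{bd:Phi} then follows by a compactness argument: $\textbf{c}\Phi(1,y,\cdot)$ is continuous on $B_{\delta}(0)\cap O_{\ep_0}$ off the real axis (by Proposition \ref{pro:solve-Phi}) and admits finite limits at $\textbf{c}=0$ from each half-plane, so for $\delta<\delta_1(y)$ sufficiently small one obtains a uniform bound on each of the two connected components of $B_{\delta}(0)\setminus(-1,1)$.
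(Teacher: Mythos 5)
Your overall strategy matches the paper's: split $\textbf{c}\Phi=\textbf{c}\,I+\mu\,\textbf{c}\,J$, show $\textbf{c}\,I\to 0$ via Corollary \ref{Cor:basic-limit}, identify $\lim J=-\mathrm{sech}\,y$ via \eqref{cosh-1y}, and combine Lemma \ref{lem:pa_cW_1} with a Plemelj-type limit for $\mathcal{T}(f)(\textbf{c})$. The substantive difference is how that jump is extracted. You propose a local argument: cut off near $z=0$, Taylor-expand the inner integral $\int_{y_c}^z\phi_1 f$, and peel the double pole down to a simple pole before invoking the Sokhotski formula. The paper instead uses the global decomposition $\mathcal{T}(f)(\textbf{c})=I_1+I_2+I_3$ of \eqref{dec:T(bfc)}: $I_1$ absorbs the $\phi_1$-dependence, $I_2$ is built from the ancillary function $g(z,c)=\int_{y_c}^zf-(u(z)-c)f(y_c)/(1-c^2)$, which vanishes to second order at $y_c$ by construction, and $I_3$ is computed in closed form via $\ln\f{\textbf{c}-1}{\textbf{c}+1}$; this avoids cut-offs and produces the uniform bound \eqref{bd:Phi} as a by-product. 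Your route can be carried through, but two steps are more delicate than the sketch suggests: with $f\in H^1$ only, $\int_{y_c}^zf-f(y_c)(z-y_c)=O(|z-y_c|^{3/2})$ (via $H^1\hookrightarrow C^{1/2}$), not $O((z-y_c)^2)$, so the residual after peeling has an integrable $|z-y_c|^{-1/2}$ singularity rather than being bounded; and the relation $u'(y_c)(z-y_c)=(u(z)-\textbf{c})+i\,\mathrm{Im}(\textbf{c})+O((z-y_c)^2)$ leaves a cross term $i\,\mathrm{Im}(\textbf{c})/(u(z)-\textbf{c})^2$ whose contribution is $O(|\mathrm{Im}(\textbf{c})|)$ and must be checked to vanish, rather than being absorbed into the $O((z-y_c)^2)$ remainder.

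Your compactness argument for \eqref{bd:Phi} has a genuine gap. Continuity of $\textbf{c}\Phi$ off the real axis together with a finite limit at $\textbf{c}=0$ from each half-plane does not by itself bound $\textbf{c}\Phi$ on $B_\delta(0)\setminus(-1,1)$: the closure of each connected component also contains the slit $(-\delta,\delta)\setminus\{0\}$, along which you would additionally need the one-sided boundary limits supplied by the non-embedding LAP, Proposition \ref{lem:limitfunction}, to extend $\textbf{c}\Phi$ continuously to a compact set. The paper sidesteps this entirely by reading off the bound directly from the quantitative estimates \eqref{absorption-0-1}--\eqref{absorption-0-3} used in the limit computation: for $\delta$ small enough, $\mathcal{T}(f)(\textbf{c})$ and $W(\textbf{c},1)/\textbf{c}$ are uniformly close to their limits, the eigenfunction factor is bounded via \eqref{Oep-1}, and the source term contributes $O(|\textbf{c}|)$.
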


\begin{proof}
Thanks to \eqref{solution-Phi-2}-\eqref{def:T(h)}, we reformulate $\textbf{c}\Phi$ as 
\begin{align}\label{solution-Phi'-l}
\textbf{c}\Phi(1,y,\textbf{c})&=\textbf{c}\phi(y,\textbf{c})
\int_{-\infty}^y\f{\int_{y_c}^z\phi_1(w,\textbf{c})f(w)dw}
{\phi(z,\textbf{c})^2}dz
-\f{T(f)(\textbf{c})}{\f{ W(\textbf{c},1)}{\textbf{c}}}\int_{-\infty}^y\f{\phi(y,\textbf{c})}
{\phi(z,\textbf{c})^2}dz\\
&=\textbf{c}\phi(y,\textbf{c})\int_{+\infty}^y\f{\int_{y_c}^z
\phi_1(w,\textbf{c})f(w)dw}{\phi(z,\textbf{c})^2}dz
-\f{T(f)(\textbf{c})}{\f{ W(\textbf{c},1)}{\textbf{c}}}\int_{+\infty}^y
\f{\phi(y,\textbf{c})}{\phi(z,\textbf{c})^2}dz,
\label{solution-Phi'-2-l}
\end{align}
where
\begin{align}\label{def:T(h)-2}
T(f)(\textbf{c})&:=\int_{\mathbb{R}}\f{\int_{y_c}^z\phi_1(w,\textbf{c})f(w)dw}{(u(z)-\textbf{c})^2\phi_1(z,\textbf{c})^2}dz.
\end{align}
It suffices to prove the lemma for $y<0$ using \eqref{solution-Phi'-l}, since the process for $y>0$ is similar  using \eqref{solution-Phi'-2-l}.
Fix $y<0$, we choose $\delta_0(y)=\min\{\f12|u(y)|,\ep_0\}$ such that for $\delta\in(0,\delta_0)$,
$\textbf{c}\in B_{\delta}(0)/(-1,1)$,
\begin{align}\label{est:u-texbfc}
|u(y)-\textbf{c}|\geq |u(y)|-|\textbf{c}|\geq \f12|u(y)|,
\end{align}
and
\begin{align}\label{est:u-texbfc-1}
 y<y_c,\quad \text{with}\quad y_c=u^{-1}(\mathrm{Re}(\textbf{c})).
\end{align}
 Then we can consider $y<y_c$, $\textbf{c}\in B_{\delta}(0)/(-1,1)$ and $f\in H^1_{1/(u')^2}(\mathbb{R})$. Thanks to \eqref{Oep-1}, \eqref{Oep-3} and \eqref{est:u-texbfc}, we have
\begin{align}\label{absorption-0-1}
\Big|\textbf{c}\phi(y,\textbf{c})
\int_{-\infty}^y\f{\int_{y_c}^z\phi_1(w,\textbf{c})f(w)dw}
{\phi(z,\textbf{c})^2}dz\Big|\lesssim |\textbf{c}|\cdot
e^{-C^{-1}\al|z-y|}
\leq C|\textbf{c}|,
\end{align}
and
\begin{align}\label{absorption-0-2}
\Big|\f{\phi(y,\textbf{c})}{\phi(z,\textbf{c})^2}
\Big|\lesssim \f{e^{-C^{-1}\al|z-y_c|}}{|u(y)-\textbf{c}|}
\leq C,
\end{align}
where $C$ is in dependent of $\textbf{c}$.
Using \eqref{absorption-0-2} and the dominated convergence theorem, we have by \eqref{cosh-1y} that
\begin{align}\label{absorption-0-3}
\lim_{\textbf{c}\to 0}
\int_{-\infty}^y\f{\phi(y,\textbf{c})}{\phi(z,\textbf{c})^2}dz
=\int_{-\infty}^y\f{\phi(y,0)}{\phi(z,0)^2}dz=-\mathrm{sech}\,y .
\end{align}

To proceed it, we claim that
\begin{align}\label{textbfc-lim1}
\lim_{\pm\mathrm{Im}(\textbf{c})>0,\;\textbf{c}\to 0}\mathcal{T}(f)(\textbf{c})=\mathcal{T}(f)(0)\pm i\pi f(0),\quad\text{for}\; f\in H^1_{1/(u')^2}(\mathbb{R}).
\end{align}
Combining \eqref{absorption-0-1}, \eqref{absorption-0-3}, \eqref{textbfc-lim1} with Lemma \ref{lem:pa_cW_1}, we have 

\begin{align*}
\lim_{\pm\mathrm{Im}(\textbf{c})>0,\;\textbf{c}\to 0}\textbf{c}
\Phi(1,y,\textbf{c})&=
-\f{\pm\mathrm{Im}(\textbf{c})>0,\;\lim_{\textbf{c}\to 0}\mathcal{T}(f)(\textbf{c})}{
\lim_{\pm\mathrm{Im}(\textbf{c})>0,\;\textbf{c}\to 0}\f{W(\textbf{c},1)}{\textbf{c}}}
\cdot\lim_{\pm\mathrm{Im}(\textbf{c})>0,\;\textbf{c}\to 0}\int_{-\infty}^y\f{\phi(y,\textbf{c})}{\phi(z,\textbf{c})^2}dz\\
&=\f{\mathcal{T}(f)(0)\pm i\pi f(0)}{\pm 2\pi i}\mathrm{sech}\,y .
\end{align*}
Using \eqref{absorption-0-1}, \eqref{absorption-0-3}, \eqref{textbfc-lim1}, Lemma \ref{lem:pa_cW_1}  and \eqref{solution-Phi'-l} again, we have that for $y<0$, there exists $\delta_1(y)\in (0,\delta_0)$, such that for $\delta\in(0,\delta_1)$, $\textbf{c}\in B_{\delta}(0)/(-1,1)$, it holds that

\begin{align}
\Big|\textbf{c}\Phi(\al,y,\textbf{c})\Big|
\leq C|\textbf{c}|+\f{2\Big(|\mathcal{T}(f)(0)|+\pi|f(0)|\Big)}{\pi}\cdot Ce^{-2C^{-1}|y-y_c|}\leq C.
\end{align}

It remains to prove \eqref{textbfc-lim1}. We decompose \eqref{def:T(h)-2}  as

\begin{align}
\notag\mathcal{T}(f)(\textbf{c})
&=
\int_{\mathbb{R}}\int_{y_c}^z
\f{f(w)}{(u(z)-\textbf{c})^2}
\left(\f{\phi_1(w,\textbf{c})}
{\phi_1(z,\textbf{c})^2}
-\f{u'(z)}{1-c^2}\right)dwdz
+(1-c^2)^{-1}
\int_{\mathbb{R}}\f{u'(z)g(z,c)}{(u(z)-\textbf{c})^2}
dz\\
&\quad+(1-c^2)^{-2}f(y_c)
\int_{\mathbb{R}}\f{u'(z)(u(z)-c)}{(u(z)-\textbf{c})^2}dz
:=I_1(\textbf{c})+I_2(\textbf{c})+I_3(\textbf{c}),\label{dec:T(bfc)}
\end{align}
where $c=\mathrm{Re}(\textbf{c})$ and
$g(z,c):=\int_{y_c}^zf(w)dw-\f{(u(z)-c)f(y_c)}{1-c^2}$ with $g(y_c)=0$, $g'(y_c)=0$.

For  $I_1$, we decompose the integrated function  and use  \eqref{est:phi_1-ul-c}, \eqref{phi_1(z)leq phi_1(y)} and  \eqref{est:up-phi-1} to obtain for $y_c\leq w\leq z$ or
$z\leq w\leq y_c$ that
\begin{align}
&\quad\f{1}{|u(z)-\textbf{c}|^2}\notag
\cdot\Big|\f{\phi_1(w,\textbf{c})}
{\phi_1(z,\textbf{c})^2}-\f{u'(z)}{1-c^2}\Big|\\
&\leq \f{1}{|u(z)-\textbf{c}|^2}\cdot
\Big(\Big|\f{\phi_1(w,\textbf{c})-1}
{\phi_1(z,\textbf{c})^2}\Big|
+\Big|\f{1-\phi_1(z,\textbf{c})^2}{\phi_1(z,\textbf{c})^2}\Big|\Big)
\cdot \f{u'(z)}{1-c^2}
+\f{|1-\f{u'(z)}{1-c^2}|}{|u(z)-\textbf{c}|^2}\cdot \f{\phi_1(w,\textbf{c})}{\phi_1(z,\textbf{c})^2}\notag\\
&\lesssim \f{\min\{\al^2|z-y_c|^2,1\}u'(z)}{(1-c^2)|u(z)-c|^2}+
\f{e^{-C^{-1}\al|z-y_c|}}{(1-c^2)|u(z)-c|}.
\notag
\end{align}
Consequently,  using
\begin{align*}
\Big|\int_{y_c}^zf(w)dw\Big|\leq |u(z)-c|\cdot \|f/u'\|_{L^{\infty}(\mathbb{R})}\leq |u(z)-c|\cdot \|f\|_{H^1_{1/(u')^2}(\mathbb{R})},
\end{align*}
 we have
\begin{align}
&\quad \bigg|\int_{y_c}^z
\f{f(w)}{(u(z)-\textbf{c})^2}
\left(\f{\phi_1(w,\textbf{c})}
{\phi_1(z,\textbf{c})^2}
-\f{u'(z)}{1-c^2}\right)dw\bigg|\notag\\
&\leq
C(1-c^2)^{-1}\|f/u'\|_{L^{\infty}(\mathbb{R})}
\cdot \bigg(\f{\min\{\al^2|z-y_c|^2,1\}u'(z)}{|u(z)-c|}
+e^{-C^{-1}\al|z-y_c|}\bigg)\in L^1_z(\mathbb{R}),\label{est:I1(c)-1}
\end{align}
where the constant $C$ is independent of $\textbf{c}$ and $z$.
Thus, using  the dominated convergence theorem and the continuity of $\phi_1(y,\cdot)$, we have
  \begin{align}\label{lim:I1}
\lim_{\textbf{c}\to 0}
I_1(\textbf{c})= \int_{\mathbb{R}}
\int_{0}^z\f{f(w)}{(u(z)-\textbf{c})^2}
\left(\f{\phi_1(w,c)}{\phi_1(z,c)^2}-\f{u'(z)}{1-c^2}\right )\Big|_{\textbf{c}=0}dwdz=I_1(0).
\end{align}
For $I_2$, we notice that
 \begin{align}
 g(z,c)=(z-y_c)^2\iint_{[0,1]^2}tg''(st(z-y_c)+y_c)dsdt,\quad g''(z,c)=f'(z)-\f{f(y_c)}{1-c^2}u'(z).
 \end{align}
 Therefore, we have by Cauchy-Schwarz inequality that
  \begin{align}
\notag  |g(z,c)|&\leq |z-y_c|
 \Big|\iint_{[0,1]^2}t(z-y_c) (|g''|^2/u')(st(z-y_c)+y_c)dsdt\Big|^{\f12}
\\
\notag &\quad\times
  \Big|\iint_{[0,1]^2}t(z-y_c) u'(st(z-y_c)+y_c)dsdt\Big|^{\f12}\\
 \notag  &\leq |z-y_c|\Big(\int_{\mathbb{R}}(|g''|^2/u')(\tilde{s})d\tilde{s}\Big)^{\f12}
  \Big(\int_0^1|u(t(z-y_c)+y_c)-c|dt\Big)^{\f12}\\
  &\lesssim |z-y_c|\big(\|f'\|_{L^2_{1/u'}}+\|f/u'\|_{L^{\infty}}\big) |u(z)-c|^{\f12}\notag\\
  &\leq C|z-y_c|\cdot|u(z)-c|^{\f12}\|f\|_{H^1_{1/(u')^2}},\notag
 \end{align}
 which together with Lemma \ref{lem:simple-useful-inequ} gives for $\textbf{c}\in B_{\delta}(0)/(-1,1)$ and $c=\mathrm{Re}(\textbf{c})$,
  \begin{align}
\notag\Big|(1-c^2)^{-1}\f{u'(z)g(z)}{(u(z)-\textbf{c})^2}
\Big|&\leq (1-c^2)^{-1}\f{u'(z)|z-y_c|}{|u(z)-\textbf{c}|^{\f32}}\\
\notag&=(1-c^2)^{-1}\f{\cosh^{\f32}y_c\cdot |z-y_c|}{\sinh^{\f32}|z-y_c|\cosh ^{\f12}z}\\
\notag&\leq C(1-c^2)^{-\f32}\f{\cosh|z-y_c|\cdot |z-y_c|}{\sinh^{\f32}|z-y_c|}\\
&\leq C(1-c^2)^{-\f32}\max\{|z-y_c|^{-\f12},|z-y_c|\cdot e^{-|z-y_c|/2}\}\in L^1_z(\mathbb{R}),\label{est:I2(c)}
 \end{align}
 where the constant $C$ is independent of $\textbf{c}$, $y$.

Thus,  using  the dominated convergence theorem, we have

\begin{align}\label{lim:I2}
\lim_{\textbf{c}\to 0}I_2(\textbf{c})=(1-c^2)^{-1}
\int_{\mathbb{R}}\f{u'(z)g(z,c)}{(u(z)-\textbf{c})^2}
dz\Big|_{\textbf{c}=0}=I_2(0).
\end{align}
For $I_3$, we have
\begin{align}
I_3(\textbf{c})
\notag&=(1-c^2)^{-2}f(y_c)\Big(
\int_{\mathbb{R}}\f{u'(z)}{u(z)-\textbf{c}}dz
+i\mathrm{Im}(\textbf{c})
\int_{\mathbb{R}}\f{u'(z)}{(u(z)-\textbf{c})^2}dz\Big)\\
\notag&=(1-c^2)^{-2}f(y_c)\Big(
\ln\f{1-\textbf{c}}{-1-\textbf{c}}
-i\mathrm{Im}(\textbf{c})(\f{1}{1-\textbf{c}}
-\f{1}{-1-\textbf{c}})\Big)\\
&=(1-c^2)^{-2}f(y_c)\Big(
\ln\f{\textbf{c}-1}{\textbf{c}+1}
-\f{2\mathrm{Im}(\textbf{c})i}{1-\textbf{c}^2}\Big),\label{est:I3(c)}
\end{align}
which together with \eqref{ln-limit-1} gives
\begin{align}\label{lim:I3}
\notag\lim_{\pm\mathrm{Im}(\textbf{c})>0,\;\textbf{c}\to 0}I_3(\textbf{c})
&=(1-c^2)^{-2}f(y_c)\Big(
\ln\f{\textbf{c}-1}{\textbf{c}+1}
-\f{2\mathrm{Im}(\textbf{c})i}{1-\textbf{c}^2}\Big)\Big|_{\textbf{c}=0}
+
f(0)\cdot \mathrm{sgn}(\mathrm{Im}(\textbf{c}))\pi i\\
&=I_3(0)\pm i\pi f(0).
\end{align}
Then \eqref{textbfc-lim1} follows from \eqref{dec:T(bfc)},  \eqref{lim:I1},  \eqref{lim:I2} and \eqref{lim:I3}.
\end{proof}

\begin{Corollary}\label{Cor:c=delta}
 Let $\delta\in(0,\delta_1)$, 
 $\Phi(1,y,\textbf{c})$ be the inhomogeneous solution constructed in Proposition \ref{pro:solve-Phi} with $f=\hat{\omega}_0(1,y)$.
Then for    $y\neq 0$, it holds that
\begin{align}\label{def:projection}
\lim_{\delta\to 0+}\f{1}{2\pi i}\int_{\{|\textbf{c}|=\delta,\textbf{c}\neq \pm \delta\}}e^{-i\al \textbf{c}t}\Phi(1,y,\textbf{c})d\textbf{c}=\f12\hat{\omega}_0(1,0)\mathrm{sech}\,y .
\end{align}
\end{Corollary}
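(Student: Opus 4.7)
The plan is to parameterize the circle by $\textbf{c}=\delta e^{i\theta}$, $\theta\in(0,2\pi)\setminus\{\pi\}$, travelling counterclockwise, so that $d\textbf{c}=i\delta e^{i\theta}\,d\theta$. Rewriting the integrand in terms of the ``good'' quantity $\textbf{c}\Phi(1,y,\textbf{c})$ (for which Proposition \ref{lem:B-delta1} gives us both pointwise limits and a uniform bound), the integral becomes
\begin{align*}
\frac{1}{2\pi i}\int_{|\textbf{c}|=\delta}e^{-i\al\textbf{c}t}\Phi(1,y,\textbf{c})\,d\textbf{c}
=\frac{1}{2\pi}\int_{0}^{2\pi}e^{-i\al\delta e^{i\theta}t}\,\bigl(\textbf{c}\Phi(1,y,\textbf{c})\bigr)\big|_{\textbf{c}=\delta e^{i\theta}}d\theta,
\end{align*}
where we have used $d\textbf{c}/\textbf{c}=i\,d\theta$ to absorb one factor of $\textbf{c}$.

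Next I would apply the dominated convergence theorem on $[0,2\pi]$. For fixed $\delta<\delta_1(y)$, the bound \eqref{bd:Phi} from Proposition \ref{lem:B-delta1} furnishes a $\theta$-uniform, $\delta$-uniform majorant $C\cdot|e^{-i\al\delta e^{i\theta}t}|\le Ce^{|\al\delta t|}$, and for $\delta$ small this is bounded by a constant integrable on $[0,2\pi]$. For the pointwise limit: as $\delta\to 0$ with $\theta\in(0,\pi)$ fixed we have $\mathrm{Im}(\textbf{c})>0$ and $|\textbf{c}|\to 0$, so the first case of \eqref{lim:T-theta+} gives
\begin{align*}
\lim_{\delta\to 0}\bigl(\textbf{c}\Phi(1,y,\textbf{c})\bigr)\big|_{\textbf{c}=\delta e^{i\theta}}=\frac{\mathcal{T}(\hat{\omega}_0(1,\cdot))(0)+i\pi\hat{\omega}_0(1,0)}{2\pi i}\mathrm{sech}\,y=:U_+(y),
\end{align*}
and for $\theta\in(\pi,2\pi)$ the second case gives
\begin{align*}
\lim_{\delta\to 0}\bigl(\textbf{c}\Phi(1,y,\textbf{c})\bigr)\big|_{\textbf{c}=\delta e^{i\theta}}=\frac{\mathcal{T}(\hat{\omega}_0(1,\cdot))(0)-i\pi\hat{\omega}_0(1,0)}{-2\pi i}\mathrm{sech}\,y=:U_-(y).
\end{align*}
Meanwhile $e^{-i\al\delta e^{i\theta}t}\to 1$ uniformly in $\theta$.

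Finally I would carry out the arithmetic: dominated convergence yields
\begin{align*}
\lim_{\delta\to 0+}\frac{1}{2\pi i}\int_{|\textbf{c}|=\delta}e^{-i\al\textbf{c}t}\Phi(1,y,\textbf{c})d\textbf{c}=\frac{1}{2\pi}\bigl(\pi U_+(y)+\pi U_-(y)\bigr)=\frac{U_+(y)+U_-(y)}{2},
\end{align*}
and a direct computation shows $U_++U_-=\frac{1}{2\pi i}(2i\pi\hat{\omega}_0(1,0))\mathrm{sech}\,y=\hat{\omega}_0(1,0)\mathrm{sech}\,y$, giving $\tfrac12\hat{\omega}_0(1,0)\mathrm{sech}\,y$ as claimed. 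The only non-routine point, and the one I would verify with care, is the applicability of the dominated convergence theorem: the key is that Proposition \ref{lem:B-delta1} provides a uniform (in $\textbf{c}$) bound on the product $\textbf{c}\Phi(1,y,\textbf{c})$ on the punctured ball $B_{\delta_1}(0)\setminus(-1,1)$, which precisely compensates for the $1/\textbf{c}$ type singularity that $\Phi$ inherits from the embedding eigenvalue through the Wronskian; without the change of variables absorbing one power of $\textbf{c}$, the integrand would not admit a uniform majorant as $\delta\to 0$.
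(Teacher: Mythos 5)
Your proposal is correct and follows essentially the same route as the paper's own proof: parameterize the circle as $\textbf{c}=\delta e^{i\theta}$ (the paper uses $\theta\in(-\pi,0)\cup(0,\pi)$, you use $(0,2\pi)\setminus\{\pi\}$, which is the same contour), absorb one factor of $\textbf{c}$ via $d\textbf{c}=i\textbf{c}\,d\theta$ so that the integrand becomes $\textbf{c}\Phi(1,y,\textbf{c})$, apply dominated convergence using the pointwise limits \eqref{lim:T-theta+} on each half-circle and the uniform bound \eqref{bd:Phi}, and compute $\tfrac12(U_++U_-)=\tfrac12\hat{\omega}_0(1,0)\mathrm{sech}\,y$. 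Your closing remark correctly identifies the key point — the uniform bound on $\textbf{c}\Phi$ from Proposition \ref{lem:B-delta1} is what makes dominated convergence applicable despite the $1/\textbf{c}$ singularity of $\Phi$.
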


\begin{proof}
Let $y\neq 0$, $\delta_1(y)$ be chosen in Proposition \ref{lem:B-delta1}.
We write the contour in \eqref{def:projection} as
\begin{align}
\textbf{c}=\delta e^{i\theta},\quad\theta\in (-\pi,0)\cup(0,\pi), \; \delta\in(0,\delta_1).
\end{align}
Therefore, using \eqref{lim:T-theta+} and \eqref{bd:Phi} , we have by the dominated convergence theorem that
\begin{align*}
&\lim_{\delta\to 0+}\f{1}{2\pi i}\int_{\{|\textbf{c}|=\delta,\textbf{c}\neq \pm \delta\}}e^{-i\al \textbf{c}t}\Phi(1,y,\textbf{c})d\textbf{c}\\
&=\f{1}{2\pi }\lim_{\delta\to 0+}\int_{0}^{\pi}e^{-i\al \textbf{c}t}\textbf{c}\Phi(1,y,\textbf{c})d\theta+
\f{1}{2\pi }\lim_{\delta\to 0+}\int_{-\pi}^{0}e^{-i\al \textbf{c}t}\textbf{c}\Phi(1,y,\textbf{c})d\theta\\
&=\f{1}{2\pi }\int_{0}^{\pi}\lim_{\delta\to 0}e^{-i\al \textbf{c}t}\textbf{c}\Phi(1,y,\textbf{c})d\theta+\f{1}{2\pi }\int_{-\pi}^{0}
\lim_{\delta\to 0}e^{-i\al \textbf{c}t}\textbf{c}\Phi(1,y,\textbf{c})d\theta\\
&=\f{1}{2\pi }\int_{0}^{\pi}\f{\mathcal{T}(\omega_0)(0)+i\pi\omega_0(0)}{2\pi i}\mathrm{sech}\,y d\theta+\f{1}{2\pi }\int_{-\pi}^{0}
\f{\mathcal{T}(\omega_0)(0)-i\pi\omega_0(0)}{-2\pi i}\mathrm{sech}\,y d\theta\\
&=\f12\omega_0(0)\mathrm{sech}\,y .
\end{align*}

\end{proof}

\subsection{LAP without embedding eigenvalue   }
Let
\begin{align}
\begin{split}
\label{def:al2}
&(i) \;\al\geq 2, c\in(-1,1),\\
&(ii) \;\al=1,\; c\in (-1,1)/\{0\}.
\end{split}
\end{align}

We define
\begin{align}
\Phi_{\pm}(\al,y,c)=
\left\{
\begin{aligned}\label{Phi_pm'}
&\phi(y,c)\int_{-\infty}^y
\f{\int_{y_c}^z\phi_1(w,c)f(w)dw}{\phi(z,c)^2}dz
+\mu_{\pm}(c,\al)\Gamma(\al,y,c),\;y<y_c,\\
&\phi(y,c)\int_{+\infty}^y\f{\int_{y_c}^z\phi_1(w,c)f(w)dw}
{\phi(z,c)^2}dz
+\mu_{\pm}(c,\al)\Gamma(\al,y,c), \;y>y_c,
\end{aligned}
\right.
\end{align}
with $f(y)=\hat{\omega}_0(\al,y)$.
Here
\begin{align}\label{def:mupm}
\mu_{\pm}(c,\al)&
:=-\f{(1-c^2)^{2}\mathcal{T}(f)(c)\pm f(y_c)\pi i}{A(c,\al)\pm2c\pi i},
\end{align}
$A(c,\al)$ is defined in \eqref{def:A},  $\mathcal{T}(\cdot)(c)$ is defined in \eqref{def:T(h)-(-1,1)}.
For $\al\geq 1$, $c\in(-1,1)$, $y\neq y_c$, $\Gamma$
is defined as
\beq\label{def:Gamma}
\Gamma(\al,y,c):=
\left\{
\begin{aligned}
&\phi(y,c)\int_{-\infty}^y\f{1}{\phi(z,c)^2}dz\;y<y_c,\\
&\phi(y,c)\int_{+\infty}^y\f{1}{\phi(z,c)^2}dz\;y>y_c.
\end{aligned}
\right.
\eeq
 \begin{proposition}\label{lem:limitfunction}
 Let $f\in H^1_{1/(u')^2}(\mathbb{R})$ and $\Phi(\al,y,c_{\ep})$ be the inhomogeneous solution constructed in Proposition \ref{pro:solve-Phi} for $c_{\ep}\in O_{\ep_0}$.
Then  it holds for \eqref{def:al2} and $y\neq y_c$ that
\begin{align}\label{limitfunction}
\lim_{\ep\to 0\pm}\Phi(\al,y,c_{\ep})=\Phi_{\pm}(\al,y,c).
\end{align}
Let
\begin{align}
\begin{split}
\label{def:al3}
&(i) \;\al\geq 2, c_{\ep}\in O_{\ep_0},\\
&(ii) \;\al=1,\; c_{\ep}\in O_{\ep_0}\cap\{c_{\ep}||c|\geq \delta\},\quad \delta\in(0,\f12).
\end{split}
\end{align}
There exists constants $\ep_0$ small enough and $C$  independent of $y, c,\ep$ such that  for \eqref{def:al3} and $y\neq y_c$, it  holds that
\begin{align}\label{Uniformbd-2}
|\Phi(\al,y,c_{\ep})|\leq C(1-c^2)^{-\f12}e^{-C^{-1}|y-y_c|}\|f/u'\|_{H^1(\mathbb{R})}.
\end{align}
\end{proposition}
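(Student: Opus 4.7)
The plan is to use the explicit formulas \eqref{solution-Phi-1} and \eqref{solution-Phi-2} for $\Phi(\al,y,c_{\ep})$ from Proposition \ref{pro:solve-Phi}, together with the uniform resolvent bounds of Corollary \ref{Cor:basic-limit} and the Wronskian analysis of Section 5. For $y<y_c$ I would use \eqref{solution-Phi-1}, which integrates from $-\infty$ and thereby makes the exponential decay of $1/\phi(z,c_{\ep})^2$ at $z\to-\infty$ visible; for $y>y_c$ I would use \eqref{solution-Phi-2} symmetrically. This choice ensures that the outer integration variable $z$ always lies on the same side of $y_c$ as $y$, allowing the monotonicity $|u(z)-c_{\ep}|\ge|u(y)-c|$ to be exploited when controlling integrands near the critical layer.

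For the convergence \eqref{limitfunction}, I would verify that every factor in \eqref{solution-Phi-1} has a pointwise limit as $\ep\to 0\pm$ and then apply the dominated convergence theorem. Continuity of $\phi,\phi_1$ on $\widetilde{D}_{\ep_0}$ (Proposition \ref{prop:Rayleigh-Hom}(1)) yields pointwise convergence of the inner integral $\int_{y_c}^z\phi_1(w,c_{\ep})f(w)dw$ and of the integrand $1/\phi(z,c_{\ep})^2$ for $z\ne y_c$, while the $\ep$-independent bounds \eqref{Oep-2}--\eqref{Oep-3} of Corollary \ref{Cor:basic-limit} supply $L^1_z$ dominators. The delicate ingredients are the limits of $W(c_{\ep})$ and the constant $T(f)(c_{\ep})$ entering $\mu(c_{\ep},\al)$. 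For the Wronskian, the decomposition $W(c_{\ep})=(1-c_{\ep}^2)^{-2}\bigl(W_1(c_{\ep},\al)+2c_{\ep}\ln\tfrac{c_{\ep}-1}{c_{\ep}+1}\bigr)$ of Lemma \ref{lem:W1}, combined with the continuity of $W_1$ on $\widetilde{O}_{\ep_0}$ and the Plemelj formula \eqref{ln-limit-1}, immediately produces $W(c_{\ep})\to(1-c^2)^{-2}(A(c,\al)\pm 2\pi c\,i)$. For $T(f)(c_{\ep})$ I would reuse the three-piece decomposition from the proof of Proposition \ref{lem:B-delta1}: a regular Lebesgue-integrable piece, a smooth-remainder piece after subtracting the linear $c$-dependence (the $g(z,c)$ term), and a principal-value piece $\int u'/(u-c_{\ep})^2\,dz$ whose imaginary part contributes $\pm i\pi f(y_c)/(1-c^2)^2$ in the limit. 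Dividing one limit by the other gives $\mu(c_{\ep})\to\mu_\pm(c,\al)$ of \eqref{def:mupm}.

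For the uniform bound \eqref{Uniformbd-2}, I first need a $\ep$-uniform lower bound $|W(c_{\ep})|\gtrsim \al(1-c^2)^{-2}$ for $\al\ge 2$, and $\gtrsim(1-c^2)^{-2}$ for $\al=1$, $|c|\ge\delta$. Since $W_1$ is continuous on $\widetilde{O}_{\ep_0}$ and $|\arctan|\le\pi/2$ in \eqref{ln-limit-1}, one has $(1-c^2)^{4}|W(c_{\ep})|^2\to A(c,\al)^2+4\pi^2c^2$ as $\ep\to 0$; under \eqref{def:al3} the limit is bounded below by $C^{-1}\al^2$ (via \eqref{est:A}) or by a positive constant (via \eqref{est:A-1}), so shrinking $\ep_0$ absorbs the error and yields the desired lower bound. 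The particular-solution part of $\Phi$ is then controlled by applying \eqref{Oep-2} pointwise in $z$, integrating, and using monotonicity of $u-c_{\ep}$ on the one-sided interval to extract the factor $(1-c^2)^{-1/2}e^{-C^{-1}|y-y_c|}\|f/u'\|_{H^1}$. The homogeneous contribution $\mu(c_{\ep})\,\phi(y,c_{\ep})\int dz/\phi(z,c_{\ep})^2$ is bounded by combining the Wronskian lower bound, a parallel upper bound $|T(f)(c_{\ep})|\lesssim(1-c^2)^{-3/2}\|f/u'\|_{H^1}$ obtained from the same three-piece decomposition, and the pointwise estimate \eqref{Oep-1} on $\phi(y,c_{\ep})/\phi(z,c_{\ep})^2$.

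I expect the principal obstacle to be the $\ep$-uniform integrability of the dominators on a neighbourhood of $z=y_c$: there $|u(z)-c_{\ep}|$ is only comparable to $|\mathrm{Im}(c_{\ep})|$, so pointwise bounds involving $|u(z)-c_{\ep}|^{-1}$ cannot be passed to $L^1_z$ without care. The key saving is the one-sided nature of the outer integration: for $y\ne y_c$, the inequality $|u(z)-c_{\ep}|\ge|u(y)-c|>0$ on the relevant interval decouples the integrand from the singularity of the critical layer. A secondary subtlety is tracking the sharp $(1-c^2)^{-1/2}$ weight rather than $(1-c^2)^{-1}$; this is exactly why the $L^2_{1/(u')^2}$-type bound \eqref{Oep-2} (which already carries a $(1-c^2)^{-1/2}$ from the exponential degeneracy of $u'$ at infinity) is the right input for the particular solution, and every step must avoid an accidental loss of $(1-c^2)^{1/2}$ when controlling the homogeneous tail against $|W(c_{\ep})|$.
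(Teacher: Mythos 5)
Your proposal is correct and follows essentially the same path as the paper: write $\Phi$ via the one-sided formulas \eqref{solution-Phi-1}/\eqref{solution-Phi-2}, pass to the limit by dominated convergence using the $\ep$-uniform bounds of Corollary \ref{Cor:basic-limit}, compute the Wronskian limit from the decomposition in Lemma \ref{lem:W1} together with \eqref{ln-limit-1}, treat $T(f)(c_{\ep})$ via the three-piece decomposition \eqref{dec:T(bfc)}, and deduce the uniform bound by combining a $(1-c^2)^{-2}$-scaled lower bound on $|W|$ (via $W_1$'s continuity on $\widetilde O_{\ep_0}$ near $c=\pm1$ and Lemma \ref{lem:A} elsewhere, after shrinking $\ep_0$) with the $(1-c^2)^{-3/2}$ upper bound on $|T(f)|$. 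One small remark: \eqref{Oep-2} alone gives the particular part a bound $\lesssim (1-c^2)^{-1/2}|u(y)-c_{\ep}|^{-1/2}e^{-C^{-1}|y-y_c|}$, which blows up as $y\to y_c$; to match \eqref{Uniformbd-2} uniformly near the critical layer one should combine it with the absolute bound from \eqref{Oep-3} (this is the same implicit interpolation the paper leaves unremarked, and is immaterial for the downstream use, where only the $(1-c^2)^{-1/2}\in L^1_c$ weight is needed).
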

\begin{remark}
We establish the LAP for \eqref{def:al2}, without embedding eigenvalue. While for the case of embedding eigenvalue $\al=1,c=0$, the proof of \eqref{limitfunction} fails, since the limit of \eqref{lim:W*} is zero, by Lemma \ref{lem:embedd-critA}. This is the reason why we establish a new type of LAP in \eqref{lim:T-theta+}.
\end{remark}
\begin{proof}
It suffices to give the proof for
$y<y_c$, since the proof for  $y>y_c$ is  similar.

The proof of \eqref{limitfunction} is similar with the proof of \eqref{lim:T-theta+}. We will omit some details.
We write  by \eqref{solution-Phi-2}, \eqref{def:W-mu} and \eqref{def:T(h)} that
\begin{align}\label{solution-Phi''-l}
\Phi(\al,y,c_{\ep})&=\phi(y,c_{\ep})
\int_{-\infty}^y\f{\int_{y_c}^z\phi_1(w,c_{\ep})f(w)dw}
{\phi(z,c_{\ep})^2}dz
-\f{T(f)(c_{\ep})}{ W(c_{\ep},\al)}\int_{-\infty}^y\f{\phi(y,c_{\ep})}
{\phi(z,c_{\ep})^2}dz.
\end{align}

Thanks to \eqref{Oep-1} and \eqref{Oep-3}, we have for $f\in H^1_{1/(u')^2}$, $c_{\ep}\in O_{\ep}$ and $z\leq y<y_c$ that
\begin{align}\label{absorption-1-1}
\Big|
\f{\phi(y,c_{\ep})\int_{y_c}^z\phi_1(w,c_{\ep})f(w)dw}
{\phi(z,c_{\ep})^2}\Big|\leq C
e^{-C^{-1}\al|z-y|}\in L^1(-\infty,y),
\end{align}
and
\begin{align}\label{absorption-1-2}
\Big|\f{\phi(y,c_{\ep})}{\phi(z,c_{\ep})^2}
\Big|\lesssim \f{e^{-C^{-1}\al|z-y_c|}\cdot |u(z)-c_{\ep}|}{|u(y)-c_{\ep}|^2}\leq  \f{Ce^{-C^{-1}\al|z-y_c|}}{|u(y)-c|}\in L^1(-\infty,y),
\end{align}
where the constant $C$ is independent of $c,\ep,y$.
Thus, using  the dominated convergence theorem and the continuity of $\phi_1(y,\cdot)$, we have for $y<y_c$
\begin{align}\label{limit-absorption-1-1}
\lim_{\ep\to0\pm}\int_{-\infty}^y\f{\phi(y,c_{\ep})\int_{y_c}^z\phi_1(w,c_{\ep})f(w)dw}
{\phi(z,c_{\ep})^2}dz=
\int_{-\infty}^y\f{\phi(y,c)\int_{y_c}^z\phi_1(w,c)f(w)dw}
{\phi(z,c)^2}dz,
\end{align}
and
\begin{align}\label{limit-absorption-1-2}
\lim_{\ep\to0\pm}\int_{-\infty}^y\f{\phi(y,c_{\ep})}
{\phi(z,c_{\ep})^2}dz=
\int_{-\infty}^y\f{\phi(y,c)}
{\phi(z,c)^2}dz.
\end{align}
We recall by \eqref{lim:W} that
\begin{align}\label{lim:W*}
\lim_{\ep\to 0\pm}W(c_{\ep},\al)=(1-c^2)^{-2}\Big(A(c,\al)\pm2\pi ci\Big).
\end{align}
Here we note by Lemma \ref{lem:embedd-critA} that the limit is non-zero \textbf{if and only if  \eqref{def:al2} holds}.
Similarly as \eqref{textbfc-lim1}, we can prove
\begin{align}\label{textbfc-lim1-2}
\lim_{\ep\to 0\pm}\mathcal{T}(f)(c_{\ep})=\mathcal{T}(f)(c)\pm  \f{i\pi f(y_c)}{(1-c^2)^2} ,
\end{align}
 where we use the decomposition \eqref{dec:T(bfc)}($\textbf{c}$ is replaced by $c_{\ep}$).
Thus, it follows from \eqref{limit-absorption-1-1}-\eqref{textbfc-lim1-2} that
 \eqref{limitfunction} holds for \eqref{def:al2}.

To prove \eqref{Uniformbd-2}, we notice for \eqref{def:al3} that $1-c_{\ep}^2\sim 1-c^2$.
For $f\in H^1_{1/(u')^2}$ and \eqref{def:al3}, we have by \eqref{absorption-1-1} that
\begin{align}
 \label{est:phi-omega0-1} &\Big|\phi(y,c_{\ep})\int_{-\infty}^y\f{\int_{y_c}^z
 \phi_1(w,c_{\ep})f(w)dw}{\phi(z,c_{\ep})^2}dz\Big|
\leq C.
\end{align}
We recall by Lemma  \ref{lem:W1}  that
\begin{align*}
W(c_{\ep},\al)&=(1-c_{\ep}^2)^{-2}\Big(W_1(c_{\ep},\al)+2c_{\ep}\ln\f{c_{\ep}-1}{c_{\ep}+1}\Big),\\
W(\cdot,\al)\;&\text{is}\;\text{analytical}\;\text{in}\;\tilde{O}_{\ep_0},\quad
\lim_{c_{\ep\to \pm 1 }}\ln\f{c_{\ep}-1}{c_{\ep}+1}=\infty.
\end{align*}
Therefore, there exists $C$ large enough and $\delta>0$ small enough such that
\begin{align*}
(1-c_{\ep}^2)^2|W(c_{\ep},\al)|\geq C,\quad \text{for}\;c_{\ep}\in \{ O_{\ep_0}|c\leq -1+\delta\;\text{or}\;c\geq 1-\delta\},
\end{align*}
where $C,\delta$ are independent of $c_{\ep}$.
This together with \eqref{lim:W*} and Lemma \ref{lem:A} gives that, there exists $\ep_0$ small enough, such that for \eqref{def:al3},
\begin{align}\label{bd:W*}
(1-c_{\ep}^2)^2|W(c_{\ep},\al)|\geq C^{-1},
\end{align}
where $C$ is independent of $c,\ep$.
To proceed it, we   claim  for \eqref{def:al3} that
\begin{align}\label{est:phi/phi^2}
&\Big|\int_{-\infty}^y\f{\phi(y,c_{\ep})}{\phi(z,c_{\ep})^2}dz\Big|
 \leq C(1-c^2)^{-1}e^{-C^{-1}\al|y-y_c|},\quad y<y_c.
\end{align}
\begin{align}\label{textbfc-bd}
|\mathcal{T}(f)(c_{\ep})|\leq C(1-c^2)^{-\f32}\|f\|_{H^1_{1/(u')^2}}.
\end{align}
Here the constant $C$ is independent of $c,\ep$.
Thus, \eqref{Uniformbd-2} follows from \eqref{solution-Phi''-l}, \eqref{est:phi-omega0-1}-\eqref{textbfc-bd} and $1-c_{\ep}^2\sim 1-c^2$($c_{\ep}\in O_{\ep_0}$).

To prove \eqref{est:phi/phi^2},  we assume $y\in[y_c-1,y_c)$, since the case $y<y_c-1$ is similar and simpler. Thanks to \eqref{absorption-1-2} and Lemma \ref{lem:simple-useful-inequ}, we have for $c_{\ep}\in O_{\ep_0}$  and $y\in[y_c-1,y_c)$ that
\begin{align*}
\Big|\int_{-\infty}^y\f{\phi(y,c_{\ep})}{\phi(z,c_{\ep})^2}dz\Big|
&\lesssim \int_{-\infty}^{y_c-1}
\f{e^{-C^{-1}\al|z-y_c|}}{|u(z)-c|}dz
+\int^{y}_{y_c-1}
\f{e^{-C^{-1}\al|z-y_c|}|u(y)-c|}{|u(z)-c|^2}dz\\
&\lesssim \int_{-\infty}^{y_c-1}
\f{e^{-C^{-1}\al|z-y_c|}}{(c-u(y_c-1))}dz
+\int^{y}_{y_c-1}
\f{|u(y)-c|u'(z)}{(u(z)-c)^2}dz\\
&\leq C(1-c^2)^{-1}\leq C(1-c^2)^{-1}e^{-C^{-1}\al|y-y_c|}.
\end{align*}
To prove \eqref{textbfc-bd}, we use again the decomposition \eqref{dec:T(bfc)} ($\textbf{c}$ is replaced by $c_{\ep}$):
\begin{align*}
\mathcal{T}(f)(c_{\ep})
=I_1(c_{\ep})+I_2(c_{\ep})+I_3(c_{\ep}).
\end{align*}
By $\f{u'(z)}{|u(z)-c|}=\f{\cosh y_c}{\sinh|z-y_c|\cosh z}$, we have
\begin{align}
\int_{\mathbb{R}}\f{\min\{\al^2|z-y_c|^2,1\}u'(z)}{|u(z)-c|}dz
&\lesssim\notag
\int_{|z-y_c|\leq \al^{-1}}\al^2|z-y_c|dz+
\int_{|z-y_c|\geq \al^{-1}}\f{ \cosh y_c}{\cosh z\sinh |z-y_c|}dz\\
&\leq C+C\al\int_{|z-y_c|\geq \al^{-1}}e^{|y_c|}e^{-|z|}dz\leq C\al.\notag
\end{align}
Consider $f\in H^1_{1/(u')^2}(\mathbb{R})$.
Since \eqref{est:I1(c)-1} also holds for $\textbf{c}\in O_{\ep_0}$, we have $|I_1(c_{\ep})|\leq C(1-c^2)^{-1}$. Since \eqref{est:I2(c)} also holds for $\textbf{c}\in O_{\ep_0}$, we have $|I_2(c_{\ep})|\leq C(1-c^2)^{-\f32}$.
Since \eqref{est:I3(c)} also holds for $\textbf{c}\in O_{\ep_0}$, we have $|I_3(c_{\ep})|\leq C(1-c^2)^{-1}\ln(1-c^2)^{-1}$. Summing up, we have \eqref{textbfc-bd}.
\end{proof}

\section{Explicit expression of the stream function}
 The limiting absorption principle in last section allows us to derive the explicit expression for the stream function $\widehat{\psi}(t,\al,y)$ by starting from the  complex integral \eqref{eq:stream formula'}. In particular, when  $\al=1$,  the spectral density function $\Phi(\al,y,c)$ has
a singularity near the embedding eigenvalue $c=0$. To handle this singularity appropriately, a family of two-parameter contour $\Omega_{\ep,\delta}$ is carefully chosen. 

\subsection{Mode $\al=1$}
The solution of \eqref{eq:Euler-linearize-psi} $\widehat{\psi}(1,y,t)$  has the following expression.

\begin{proposition}\label{Prop:fm}
Let $\al=1$, $\hat{\omega}_0(1,y)\in H^1_{1/(u')^2}(\mathbb{R})$ . Then we have for $y\neq 0$,
\begin{align}
&\widehat{\psi}(1,y,t)
=p.v.\int_{-1}^1e^{-i ct}\mu(c,1)\Gamma(y,c)dc
+\f{1}{2}\omega_0(0)\mathrm{sech}\,y ,\label{fm:psi1}
\end{align}
where $\Gamma$ is defined in \eqref{def:Gamma} and for $\al\geq 1$, $c\in(-1,1)$, $\mu$ is defined as
\begin{align}\label{def:h(c,al)}
\mu(c,\al):=-\f{2 c(1-c^2)^{2}\mathcal{T}(f)(c)-A(c,\al)
f(y_c)}{A(c,\al)^2+4\pi^2c^2},
\end{align}
  with $A$  defined  in \eqref{def:A}, linear operator $\mathcal{T}$  defined in \eqref{def:T(h)-(-1,1)}.

\end{proposition}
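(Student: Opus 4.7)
The plan is to follow the framework outlined in the introduction, starting from the Dunford integral representation
\[
\widehat{\psi}(t,1,y)=\frac{1}{2\pi i}\int_{\partial\Omega_{\ep,\delta}}e^{-i\mathbf{c}t}\Phi(1,y,\mathbf{c})\,d\mathbf{c},
\]
where, because $\mathcal{R}_{1}$ has an embedding eigenvalue at $\mathbf{c}=0$, the contour $\partial\Omega_{\ep,\delta}$ is the two-parameter contour described in the introduction: a spindle of thickness $\ep$ around $(-1,1)\setminus[-\delta,\delta]$ together with a full circle of radius $\delta$ around the singularity $\mathbf{c}=0$, all oriented so that $\Omega_{\ep,\delta}$ lies on the left. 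The analyticity of $\Phi(1,y,\cdot)$ on $O_{\ep_0}$ (Propositions~\ref{prop:Rayleigh-Hom} and \ref{pro:solve-Phi}, together with Lemma~\ref{lem:W1}) justifies the contour deformation from the original spindle $\partial\Omega_{\ep}$ to $\partial\Omega_{\ep,\delta}$.

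First I would send $\ep\to 0$ with $\delta$ fixed. On the top/bottom edges of the spindle the LAP (Proposition~\ref{lem:limitfunction}) gives the pointwise limits $\Phi_\pm(1,y,c)$, while the uniform bound \eqref{Uniformbd-2}, namely $|\Phi(1,y,c_{\ep})|\le C(1-c^2)^{-1/2}e^{-C^{-1}|y-y_c|}\|f/u'\|_{H^1}$, provides an integrable majorant on $(-1,1)\setminus[-\delta,\delta]$ (the $\delta$ is what makes the factor harmless away from $c=0$, and $(1-c^2)^{-1/2}$ is integrable near $\pm 1$). The arcs near $\pm 1$ shrink to zero in the usual way. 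With the counterclockwise orientation this produces the contribution
\[
-\frac{1}{2\pi i}\int_{(-1,1)\setminus[-\delta,\delta]}e^{-ict}\bigl(\Phi_+(1,y,c)-\Phi_-(1,y,c)\bigr)\,dc.
\]
Since the inhomogeneous parts of \eqref{Phi_pm'} are $\pm$-independent, the difference reduces to $\Phi_+-\Phi_-=(\mu_+-\mu_-)\Gamma$. A direct algebraic computation from \eqref{def:mupm} gives
\[
\mu_+(c,1)-\mu_-(c,1)=\frac{-(T+i\pi F)(A-2ic\pi)+(T-i\pi F)(A+2ic\pi)}{A^{2}+4\pi^{2}c^{2}}=-2\pi i\,\mu(c,1),
\]
where $T=(1-c^{2})^{2}\mathcal{T}(f)(c)$ and $F=f(y_c)$, so the spindle contribution equals $\int_{(-1,1)\setminus[-\delta,\delta]}e^{-ict}\mu(c,1)\Gamma(y,c)\,dc$.

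Next I would send $\delta\to 0$. The circle contribution converges to $\tfrac{1}{2}\widehat{\omega}_0(1,0)\,\mathrm{sech}\,y$ by Corollary~\ref{Cor:c=delta}. For the remaining integral, the key point is that $\mu(c,1)$ is $O(1/c)$ near $c=0$: indeed by Lemma~\ref{lem:A} we have $A(0,1)=\pa_c A(0,1)=0$, so $A(c,1)=O(c^{2})$ near $0$ and the denominator $A^{2}+4\pi^{2}c^{2}\sim 4\pi^{2}c^{2}$, while the numerator $2c(1-c^{2})^{2}\mathcal{T}(f)(c)-A(c,1)f(y_c)$ is $O(c)$ because its odd part dominates. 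Hence the integral must be interpreted as a principal value at $c=0$, and combining with the circle contribution yields \eqref{fm:psi1}.

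The main obstacle will be the justification at the embedding eigenvalue: one needs to pair the odd $\tfrac{2c T}{A^{2}+4\pi^{2}c^{2}}$ part (which is the source of the principal value) against the smooth $\tfrac{A f(y_c)}{A^{2}+4\pi^{2}c^{2}}$ part so that the cancellation between the small circle and the pinched spindle is exact as $\delta\to 0$. This is exactly where the refined Wronskian analysis of Section~5 (Lemmas~\ref{lem:pa_cW_1}, \ref{lem:pa_cW1}) is used: it guarantees that the $O(1)$ jump produced by the half-circles $\{|\mathbf{c}|=\delta,\ \mathbf{c}\neq\pm\delta\}$ combines correctly with the principal-value regularization of $\mu(c,1)\Gamma(y,c)$, and it is the only place where the exponential decay of $u'$ at infinity and the delicate bounds on $\Phi$ enter nontrivially.
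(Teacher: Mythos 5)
Your proposal is correct and follows essentially the same route as the paper: deform to the two-parameter contour $\partial\Omega_{\ep,\delta}$, send $\ep\to 0$ via the LAP of Proposition~\ref{lem:limitfunction} and the uniform bound \eqref{Uniformbd-2}, send $\delta\to 0$ using Corollary~\ref{Cor:c=delta} for the circle and a principal value for the pinched spindle, and read off $\mu$ from $\Phi_+-\Phi_-$. The only added value in your write-up is that you spell out the algebra $\mu_+-\mu_-=-2\pi i\,\mu$ explicitly (the paper leaves it as ``follows from the definitions''), which checks out.
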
\label{Pro:yn}

\begin{proof}
Let $\delta\in(0,\ep_0)$, $\ep\in (0,\f{\delta}{\sqrt{2}})$. We define a family of functions on $(-1,1)$,
\ben\label{Phi_pm}
\gamma_{\ep,\delta}(c)=
\left\{
\begin{aligned}
 &C_o^{-1}(1-c^2)\quad\quad\quad\quad\quad\sqrt{1-C_o\ep}\leq |c|<1,\\
&\ep \quad\quad\quad\quad\quad\quad\quad\quad\quad\quad\sqrt{\delta^2-\ep^2}\leq |c|< \sqrt{1-C_o\ep},\\
&\sqrt{\delta^2-c^2}\quad\quad\quad\quad\quad\quad\quad\quad\quad\quad\quad |c|<\sqrt{\delta^2-\ep^2}.
\end{aligned}
\right.
\een
It holds that for $a.e.$ $c\in(-1,1)$,
\begin{align}
\begin{split}
\label{est:gamma'-ep-delta}&|\gamma_{\ep,\delta}'(c)|\leq
\f{C|c|}{\sqrt{\delta^2-c^2}}\textbf{1}_{(-\delta,\delta)}+ C\textbf{1}_{(-1,1)/[-\sqrt{1-C_0\ep}, \sqrt{1-C_0\ep} ]}\in L^1_c(-1,1),\\
&\lim_{\ep\to 0+}\gamma_{\ep,\delta}'(c)= \f{-c}{\sqrt{\delta^2-c^2}}\chi_{[-\delta,\delta]}(c),
\end{split}
\end{align}
where constant $C$ is independent of $\ep$, $\delta$.
\begin{figure}[!h]
\centering
\includegraphics[width=0.9\textwidth]{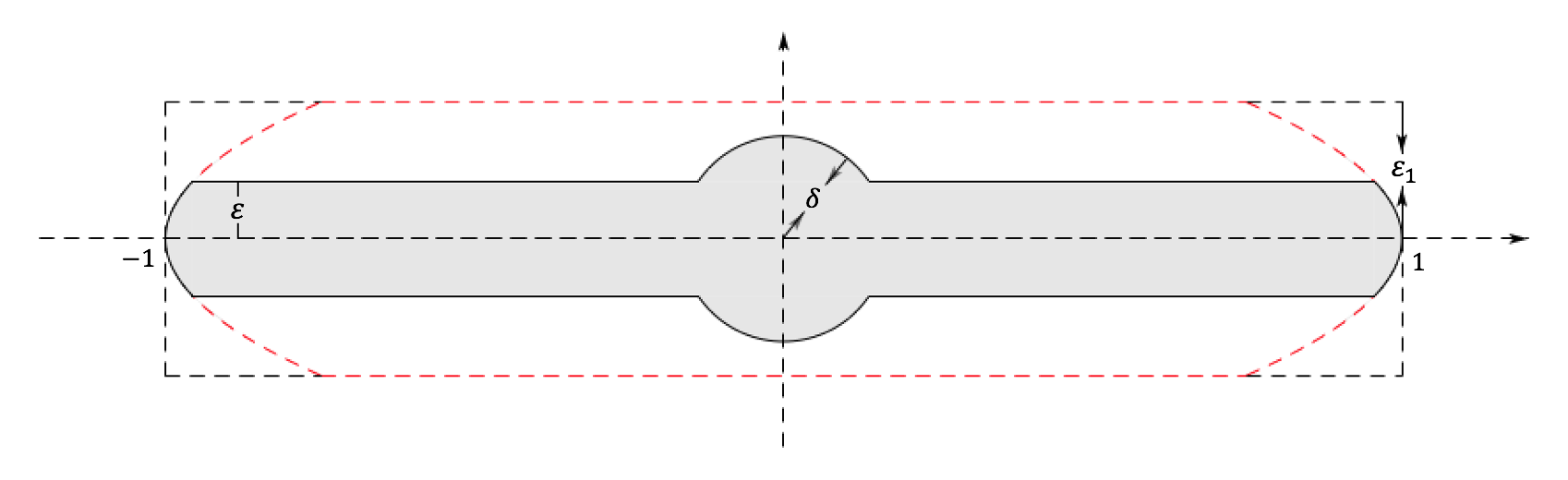}
\caption{$\Omega_{\ep, \delta}$  and \; $\Gamma_{\ep, \delta}^{\pm}=\{c\pm i\gamma_{\ep,\delta}(c)|c\in(-1,1)\}$ }
\label{fig4}
\end{figure}
Therefore, we define a family of parameterized domains and curves on $\mathbb{C}$,
\begin{align}
\begin{split}\label{def:Omega-ep-delta}
\Omega_{\ep,\delta}&:=\{c+i\eta|-\gamma_{\ep,\delta}(c)<\eta<\gamma_{\ep,\delta}(c),\;c\in(-1,1)\},\\
\Gamma_{\ep,\delta}^{\pm}&:=\{c\pm i\gamma_{\ep,\delta}(c)|c\in(-1,1)\}.
\end{split}
\end{align}

Taking $\Omega=\Omega_{\ep,\delta}$ in \eqref{eq:stream formula'}, we have
\begin{align}
\notag\hat{\psi}(t,\al,y)
&=\f{1}{2\pi i}\int_{\pa\Omega_{\ep,\delta}}e^{-i\al ct}\Phi(\al,y,\textbf{c})d\textbf{c}\\
&=\sum_{\tau\in\{+,-\}}\f{1}{2\pi i}\int_{\Gamma_{\ep,\delta}^{\tau}}e^{-i\al \textbf{c}t}\Phi(\al,y,\textbf{c})d\textbf{c}=I^+_{\ep,\delta}+I^-_{\ep,\delta},\label{def:psi}
\end{align}
where $\Phi$ is constructed in Proposition \ref{pro:solve-Phi}, i.e., $\Phi$ satisfies  inhomogeneous Rayleigh equation \eqref{eq:phi-inh} with $f=\hat{\omega}_0(\al,y)$.

We claim that as $\ep\to 0$,  it holds for $\al=1$ that
\begin{align}\label{Phi1:claim}
 &\quad\lim_{\ep\to 0+}I^{\pm}_{\ep,\delta} =I^{\pm}_{\delta}\\
\notag&=\mp\frac{1}{2\pi i}\int_{(-1,1)/(-\delta,\delta)}
e^{-i c t }\Phi_{\pm}(1,y,c)dc+\frac{1}{2\pi i}\int_{\{\pm\mathrm{Im}(\textbf{c})>0,\;|\textbf{c}|=\delta\}}
 e^{-i \textbf{c} t }\Phi(1,y,\textbf{c})d\textbf{c}\\
 &:=I_{\delta,1}^{\pm}+I_{\delta,2}^{\pm},\notag
\end{align}
see the contour after $\ep\to 0$ in Figure 3.
\begin{figure}[!h]
\centering
\includegraphics[width=0.9\textwidth]{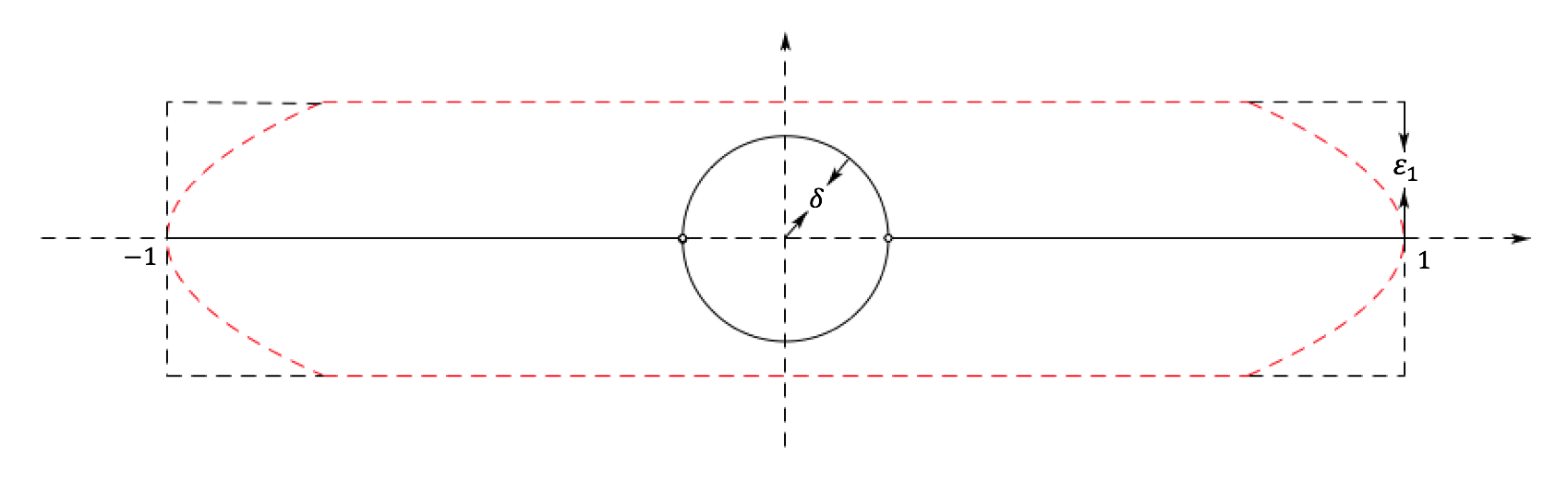}
\caption{After $\ep\to 0$}
\label{fig4}
\end{figure}

 Consequently, we first take $\ep\to 0+$ and then $\delta\to 0+$ in expression \eqref{def:psi}, thanks to \eqref{Phi1:claim} and Corollary \ref{Cor:c=delta},  it holds that
\begin{align*}
\hat{\psi}(t,\al,y)&=\lim_{\delta\to 0+}\lim_{\ep\to 0+}I_{\ep,\delta}^{+}
+\lim_{\delta\to 0+}\lim_{\ep\to 0+}I_{\ep,\delta}^{-}\\
&=\lim_{\delta\to 0+}I_{\delta}^++\lim_{\delta\to 0+}I_{\delta}^-\\
&=\lim_{\delta\to 0+}(I_{\delta,1}^{+}+I_{\delta,1}^{-})+\lim_{\delta\to 0+}(I_{\delta,2}^{+}+I_{\delta,2}^{-})\\
&=\lim_{\delta\to 0+}(I_{\delta,1}^{+}+I_{\delta,1}^{-})
+\f12\omega_0(0)\mathrm{sech}\,y .
\end{align*}
Thus, it follows that $\lim_{\delta\to 0+}(I_{\delta,1}^{+}+I_{\delta,1}^{-})$ must exist as a principle value, i.e.,
\begin{align*}p.v.
\frac{1}{2\pi i}\int_{-1}^1
e^{-i c t }\Big(\Phi_-(1,y,c)-\Phi_+(1,y,c)\Big)dc.
\end{align*}
Then the desired expression follows from definitions \eqref{Phi_pm'} and \eqref{def:mupm}.

It remains to prove the claim \eqref{Phi1:claim} for $\tau=+$, the proof  of the case $\tau=-$ is similar.
Using definition \eqref{def:Omega-ep-delta} and \eqref{Phi_pm}, noticing the integral along $\Gamma_{\ep,\delta}^+$ is counterclockwise, we have
\begin{align*}
I_{\ep,\delta}^+&
=-\frac{1}{2\pi i}\int_{-1}^1\textbf{1}_{J_{\ep,1}}(c)\cdot e^{-i (c+i\gamma_{\ep,\delta}(c)) t }\Phi(1,y,c+i\gamma_{\ep,\delta}(c))(1+i\gamma_{\ep,\delta}'(c))dc\\
&\quad-\frac{1}{2\pi i}\int_{-1}^1\textbf{1}_{J_{\ep,\delta,2}}(c)\cdot
e^{-i c_{\ep} t }\Phi(1,y,c_{\ep})dc+\frac{1}{2\pi i}\int_{\{\mathrm{Im}(\textbf{c})>0,\;|\textbf{c}|=\delta\}}
e^{-i \textbf{c} t }\Phi(1,y,\textbf{c})d\textbf{c}\\
&=-I_{\ep,\delta,1}^+
-I_{\ep,\delta,2}^++I_{\delta,3}^+,
\end{align*}
where $J_{\ep,1}=(-1,-\sqrt{1-C_0\ep})\cup(\sqrt{1-C_0\ep},1)$, $J_{\ep,\delta,2}=(-\sqrt{1-C_0\ep},-\sqrt{\delta^2-\ep^2})
\cup(\sqrt{\delta^2-\ep^2},\sqrt{1-C_0\ep})$.
It suffices to study $I_{\ep,\delta,1}^+$ and $I_{\ep,\delta,2}^+$. We recall the uniform bound \eqref{Uniformbd} in Proposition \ref{lem:limitfunction} that \begin{align*}
|\Phi(\al,y,c_{\ep})|\leq C(1-c^2)^{-\f12}e^{-C^{-1}|y-y_c|}.
\end{align*}
\label{Uniformbd}
Together with \eqref{est:gamma'-ep-delta}, we fix $y\neq 0$ and obtain for $c\in(-1,1)$, $\ep\in (0,\f{\delta}{\sqrt{2}})$ that
\begin{align}
\begin{split}
\label{Unibound:Phi}
\Big|\textbf{1}_{J_{1,\ep}}(c)\cdot\Phi(1,y,c+i\gamma_{\ep,\delta}(c))(1+i\gamma_{\ep,\delta}'(c))\Big|&\leq C(1-c^2)^{-\f12}\in L^1(-1,1),\\
\Big|\textbf{1}_{J_{2,\ep}}(c)\cdot\Phi(1,y,c_{\ep})\Big|&\leq C(1-c^2)^{-\f12}\in L^1(-1,1),
\end{split}
\end{align}
where the constants $C$ is independent of $c,\ep$.
Thus, using \eqref{limitfunction}, \eqref{Unibound:Phi}, we have by the dominated convergence theorem that
\begin{align*}
\lim_{\ep\to 0+}I_{\ep,\delta,1}^+=0,\quad \text{and}\quad
\lim_{\ep\to 0+}I_{\ep,\delta,2}^+=\int_{(-1,1)/(-\delta,\delta)}e^{-ict}\Phi_+(1,y,c)dc.
\end{align*}


\end{proof}

\begin{remark}
The kernel $\mu(c,\al)$ in the expression of the stream function, cf. \eqref{fm:psi1} or \eqref{fm:psi2} (or the kernel $\mathcal{K}(c)$ in Corollary \ref{lem:dual}) plays an important role in the analysis. According to Lemma \ref{lem:eigen-al=1}, we know that the embedding eigenvalue exists if and only if
$\al=1$ and $c=0$.  Furthermore,  we conclude the following:
\begin{itemize}
\item
$A(c)^2\neq 0, \quad \text{if}\;\;\al\geq 2.$

\item
$A(0)=0\;\text{and}\;A(c)\sim c^2,\;\text{as}\;\;c\to 0,\quad \text{if}\;\;\al=1.$

\end{itemize}
In the case when there is no embedding eigenvalue ($\al\geq 2$), the kernel $\mu(c)$(or $\mathcal{K}(c)$) has no singularity for $c\in(-1,1)$. However, when there is an embedding eigenvalue for $\al=1$, the kernel $\mu(c)$(or $\mathcal{K}(c)$) exhibits
a singularity at $c=0$ and it behaves as
\begin{align}\label{rmk:1/c}\mu(c)\sim \f{1}{c},\quad c\to 0.
\end{align}
\end{remark}

 \vspace{\baselineskip}
To study the singularity at $c=0$ in more details, let us introduce some new quantities.
\begin{align}
\label{def:tT-1}\widetilde{\mathcal{T}}(f)(c)&:=\f{\mathcal{T}(f)(c)-\mathcal{T}(f)(0)}{c}
=\int_0^1(\pa_c\mathcal{T})(f)(sc)ds,\quad c\in(-1,1).
\end{align}
 Since $A(0)=\pa_cA(0)=0$ according to Lemma \ref{lem:A}, we can define
\begin{align}
\begin{split}
\label{def:tA}\tilde{A}(c)&:=A(c)/c=\int_0^1\pa_cA(sc)ds,\quad \text{with}\quad \tilde{A}(0)=0,\\
\tilde{\tilde{A}}(c)&:=A(c)/c^2=\int_0^1\int_{0}^1s\pa_c^2A(stc)dsdt.
\end{split}
\end{align}
We define the linear operators
\begin{align}\label{def:calLambda1}
\Lambda_1(f)(c)&:=2c\mathcal{T}(f)(c)-(1-c^2)^{-2}Af(y_c).
\end{align}
\begin{align}
\begin{split}
\label{recall:Lambda}\tilde{\Lambda}_1(f)(c)&:
=2\widetilde{\mathcal{T}}(f)(c)-(1-c^2)^{-2}\tilde{\tilde{A}}(c)f(y_c),
\end{split}
\end{align}
We  introduce a smooth even function $\chi_{0}(c)$

\beq\label{def:chi0}
\chi_{0}(c)=
\left\{
\begin{aligned}
&1,\;(-\f14,\f14),\\
&\text{smooth},\;(-\f12,\f12)/(-\f14,\f14),\\
&0,\;(-1,1)/(-\f12,\f12).
\end{aligned}
\right.
\eeq
 Denote $\chi_{1}(c)= 1-\chi_{0}(c)$, which takes $0$ on $(-\f14,\f14)$ and $1$ on $(-1,1)/(-\f12,\f12)$. 
\begin{lemma}\label{lem:dec-psi}
Let $\hat{\psi}(t,1,y)$ be the explicit expression in Proposition \ref{Prop:fm}. It holds that
\begin{align}
\notag\hat{\psi}(t,1,y)&=-\int_{-1}^1e^{-ict}
\f{\chi_1(c)\Lambda_1(\omega_0)(c)}{A(c)^2+4\pi^2c^2}\Gamma(y,c)dc-\int_{-1}^{1}e^{-ict}
\f{\chi_0(c)(1-c^2)^2\tilde{\Lambda}_1(c)}{\tilde{A}(c)^2+4\pi^2}\Gamma(y,c)dc\\
\label{decom:psi}&\quad+\f{a_0}{2\pi^2}\int_{-1}^{1}e^{-ict}
\f{\chi_0(c)(1-c^2)^2c\tilde{\tilde{A}}(c)^2}{\tilde{A}(c)^2+4\pi^2}\Gamma(y,c)dc\\
&\quad+a_0f_1(t,y)+\f{a_0+i\pi b_0}{2\pi i}\mathrm{sech}\,y ,\notag
\end{align}
where $a_0$ is defined in \eqref{def:a0} and

\begin{align}\label{def:Psi1}
f_1(t,y):=\f{1}{2\pi^2}\Big(\Psi(t,y)+\big(S(t)+i\pi\big)\mathrm{sech}\,y \Big),
\end{align}
with
\begin{align}\label{def:Psi11}
\Psi(t,y)&:= -p.v.\int_{-1}^1e^{-ict}
\f{\chi_0(c)(1-c^2)^2}{c}\Big(\Gamma(y,c)-\Gamma(y,0)\Big)dc,\\
\label{def:S(t)}
S(t)&:=p.v.\int_{-1}^1e^{-ict}
\f{\chi_0(c)(1-c^2)^2}{c}dc.
\end{align}

\end{lemma}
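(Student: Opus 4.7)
The plan is to rewrite the principal-value expression for $\hat\psi(t,1,y)$ from Proposition \ref{Prop:fm} by algebraically isolating the resolvent singularity at $c=0$ and then transferring that singularity to the explicit functions $\Psi$, $S$, and $\mathrm{sech}\,y$ via \eqref{cosh-1y}. Starting from $\mu(c,1)$ in \eqref{def:h(c,al)} and \eqref{def:calLambda1}, a direct algebraic rearrangement yields $\mu(c,1)=-\tfrac{(1-c^2)^{2}\Lambda_1(\hat\omega_0)(c)}{A(c,1)^2+4\pi^2c^2}$. Because $A(0,1)=\pa_cA(0,1)=0$ by \eqref{pacA=0}, Taylor's formula gives $A(c,1)=c\tilde A(c)=c^2\tilde{\tilde A}(c)$, so $A^2+4\pi^2c^2=c^2(\tilde A^2+4\pi^2)$. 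This separates a clean $c^{-2}$ factor.

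The key non-algebraic identification is $\mathcal{T}(\hat\omega_0)(0)=a_0$. I would prove it by integration by parts: using $\phi(y,0)=\tfrac12(\sinh y+y\,\mathrm{sech}\,y)$ and the Wronskian relation between $\mathrm{sech}\,y$ and $\phi(y,0)$ embodied in \eqref{cosh-1y}, one verifies $\left(\tfrac{\mathrm{sech}\,y}{\phi(y,0)}\right)'=-\tfrac{1}{\phi(y,0)^2}$. Integrating $\mathcal{T}(\hat\omega_0)(0)=p.v.\int\phi(y,0)^{-2}\!\int_0^y\phi_1(z,0)\hat\omega_0(1,z)dz\,dy$ by parts, the decaying boundary terms at $\pm\infty$ vanish, and the two diverging boundary contributions at $y=0^\pm$ cancel by the principal-value symmetry (since $\phi(y,0)\sim y$ and $H(y)=\int_0^y\phi_1\hat\omega_0\sim\hat\omega_0(1,0)y$), while the interior term becomes $p.v.\int\hat\omega_0(1,y)/\sinh y\,dy=a_0$. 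Combining with $\tilde{\mathcal{T}}$ from \eqref{def:tT-1}, I then expand $\Lambda_1(\hat\omega_0)(c)=2c\,a_0+c^2\tilde\Lambda_1(\hat\omega_0)(c)$ using \eqref{recall:Lambda}, which gives the core decomposition
\begin{align*}
\mu(c,1)=-\frac{2a_0(1-c^2)^2}{c(\tilde A^2+4\pi^2)}-\frac{(1-c^2)^2\tilde\Lambda_1(\hat\omega_0)(c)}{\tilde A^2+4\pi^2}.
\end{align*}

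Next I insert $\chi_0+\chi_1\equiv1$. The $\chi_1$ part of the second piece, reassembled with $A^2+4\pi^2c^2=c^2(\tilde A^2+4\pi^2)$, reproduces (up to the $(1-c^2)^2$ factor) the first integral of \eqref{decom:psi}, while the $\chi_0$ part of the second piece is exactly its second integral. For the $\chi_0$ cut of the first, singular piece, I write $\tfrac{1}{\tilde A^2+4\pi^2}=\tfrac{1}{4\pi^2}-\tfrac{\tilde A^2}{4\pi^2(\tilde A^2+4\pi^2)}$; since $\tilde A^2/c=c\tilde{\tilde A}^2$, the correction term is regular at $c=0$ and contributes precisely the third integral of \eqref{decom:psi}. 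What remains is the pure singular integral $-\tfrac{a_0}{2\pi^2}\,p.v.\!\int_{-1}^1 e^{-ict}\tfrac{\chi_0(c)(1-c^2)^2}{c}\Gamma(y,c)\,dc$.

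Finally, I split $\Gamma(y,c)=[\Gamma(y,c)-\Gamma(y,0)]+\Gamma(y,0)$ in this last principal value. By \eqref{cosh-1y} we have $\Gamma(y,0)=-\mathrm{sech}\,y$, so definitions \eqref{def:Psi11}--\eqref{def:S(t)} give that the singular integral equals $\tfrac{a_0}{2\pi^2}\Psi(t,y)+\tfrac{a_0}{2\pi^2}S(t)\,\mathrm{sech}\,y$. Adding the boundary term $\tfrac12\hat\omega_0(1,0)\mathrm{sech}\,y=\tfrac{b_0}{2}\mathrm{sech}\,y$ from \eqref{fm:psi1} and invoking \eqref{def:Psi1}, a short identification $a_0 f_1(t,y)=\tfrac{a_0}{2\pi^2}\Psi+\tfrac{a_0}{2\pi^2}S\,\mathrm{sech}\,y+\tfrac{ia_0}{2\pi}\mathrm{sech}\,y$ combined with $-\tfrac{ia_0}{2\pi}+\tfrac{b_0}{2}=\tfrac{a_0+i\pi b_0}{2\pi i}$ yields the remaining two $\mathrm{sech}\,y$-terms in \eqref{decom:psi}. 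The principal integrations-by-parts step identifying $\mathcal{T}(\hat\omega_0)(0)=a_0$ is the only non-algebraic point and is the main obstacle; everything else is bookkeeping on the explicit algebraic identities already collected in Lemmas \ref{lem:A}, \ref{lem:pa_cA} and Proposition \ref{prop:Rayleigh-Hom}.
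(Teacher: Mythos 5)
Your proposal matches the paper's own argument: the same Taylor factoring $A=c\tilde A=c^2\tilde{\tilde A}$ (so $A^2+4\pi^2c^2=c^2(\tilde A^2+4\pi^2)$), the same splitting $\mathcal{T}(c)=\mathcal{T}(0)+c\widetilde{\mathcal{T}}(c)$ with the key integration-by-parts identity $\mathcal{T}(\hat\omega_0)(0)=a_0$ proved via $\bigl(\mathrm{sech}\,y/\phi(y,0)\bigr)'=-\phi(y,0)^{-2}$, the same $\chi_0+\chi_1$ cut, and the same final identification using $\Gamma(y,0)=-\mathrm{sech}\,y$. One cosmetic caveat: the boundary contributions at $y=0^\pm$ in the $\mathcal{T}(0)$ computation are finite (both tend to $-f(0)$), not "diverging", though they do cancel as you say; and when you say the $\chi_1$ part of "the second piece" reproduces the first integral, you in fact need to reassemble the $\chi_1$ cuts of both pieces (the reassembly you allude to via $A^2+4\pi^2c^2=c^2(\tilde A^2+4\pi^2)$).
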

\begin{proof}
We decompose $\mu$, cf. \eqref{def:h(c,al)} that
\begin{align*}
\mu(c)&=\f{\chi_1(c)2c(1-c^2)^2\mathcal{T}(c)-A\omega_0(y_c)} {A^2+4\pi^2c^2}
+\f{\chi_0(c)2c(1-c^2)^2\Big(\mathcal{T}(c)-
\mathcal{T}(0)\Big)-A\omega_0(y_c)} {A^2+4\pi^2c^2}\\
&\quad+\mathcal{T}(0)\chi_0(c)(1-c^2)^2\Big(\f{2c}{A^2+4\pi^2c^2}
-\f{1}{2\pi^2c}\Big)+\f{\mathcal{T}(0)\chi_0(c)(1-c^2)^2}{2\pi^2c}.
\end{align*}
Furthermore, we claim that
\begin{align}
\label{T(0)}
\mathcal{T}(\omega_0)(0)&=p.v.\int_{\mathbb{R}}\f{\hat{\omega}_0(1,y)}{\sinh y}dy=a_0.
\end{align}
Thus, the first three terms in \eqref{decom:psi} can be obtained from the definitions \eqref{def:tT-1}-\eqref{def:chi0}.
Thanks to Lemma \ref{lem:eigen-al=1}, the last two terms in \eqref{decom:psi} follow from
\begin{align*}
&\f{\mathcal{T}(0)\chi_0(c)(1-c^2)^2}{2\pi^2c}\Gamma(y,c)\\
&=a_0\f{\chi_0(c)(1-c^2)^2}{2\pi^2c}\Big(\Gamma(y,c)-
\Gamma(y,0)\Big)-a_0\f{\chi_0(c)(1-c^2)^2}{2\pi^2c}\mathrm{sech}\,y \\
&=a_0\f{\chi_0(c)(1-c^2)^2}{2\pi^2c}\Big(\Gamma(y,c)-
\Gamma(y,0)\Big)-a_0\Big(\f{\chi_0(c)(1-c^2)^2}{2\pi^2c}
+\f{i\pi}{2\pi^2}\Big)\mathrm{sech}\,y -\f{a_0}{2\pi i}\mathrm{sech}\,y .
\end{align*}
It remains to prove \eqref{T(0)}. We use the definition of $\Gamma$, cf. \eqref{def:Gamma} and Lemma \ref{lem:eigen-al=1} to obtain for $f\in H^1(\mathbb{R})$,
\begin{align}\notag
\mathcal{T}(f)(0)&=p.v.\int_{\mathbb{R}}\f{\int_{0}^y\phi_1(z,0)f(z)dz}{\phi(y,0)^2}dy
=p.v.\int_{\mathbb{R}}\left(\f{\Gamma(1,y,0)}{\phi(y,0)}\right)'\cdot \Big(\int_{0}^y\phi_1(z,0)f(z)dz\Big))dy\\
&=-p.v.\int_{\mathbb{R}}\Gamma(1,y,0)\cdot\f{f(y)}{u(y)}dy
=p.v.\int_{\mathbb{R}}\mathrm{sech}\,y \cdot\f{f(y)}{\tanh y}dy
=p.v.\int_{\mathbb{R}}\f{f(y)}{\sinh y}dy,\label{fm:T(0)}
\end{align}
which gives the desired identity.
\end{proof}
\begin{remark}\label{rmk:K}
 The lemma of decomposition is very important in studying the dynamics in the presence of embedding value $c=0$. The first three terms in \eqref{decom:psi} are regular. The emergence of the fourth term $a_0f_1$ is due to the $\f{1}{c}$ singularity caused by the embedding value. The fifth term falls in the eigenspace.
\end{remark}

 \subsection{Modes $\al\geq 2$}

 \begin{proposition}\label{Prop:fm'}
Let $\al\geq 2$, $\hat{\omega}_0(\al,\cdot)\in H^1_{1/(u')^2}(\mathbb{R})$.
 It holds that for $y\neq y_c$,
\begin{align}\label{fm:psi2}
&\widehat{\psi}(\al,y,t)=\int_{-1}^1e^{-i\al ct}\mu(c,\al)\Gamma(y,c)dc,
\end{align}
where $\Gamma$ is defined in \eqref{def:Gamma} and $\mu(c,\al)$ is defined in \eqref{def:h(c,al)}.
\end{proposition}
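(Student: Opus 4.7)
The plan is to mimic the proof of Proposition \ref{Prop:fm} but in a much simpler form, since for $\al\ge 2$ the Rayleigh operator has no embedding eigenvalue and, by Lemma \ref{lem:A}, $A(c,\al)^2+4\pi^2c^2\geq C^{-1}\al^2>0$ uniformly on $(-1,1)$. Thus no singularity needs to be isolated, and a single-parameter spindle contour suffices (the $\delta$-parameter used for $\al=1$ is unnecessary).

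More precisely, I would start from the spectral integral
\begin{equation*}
\hat{\psi}(t,\al,y)=\frac{1}{2\pi i}\int_{\pa\Omega_{\ep}}e^{-i\al\textbf{c}t}\Phi(\al,y,\textbf{c})\,d\textbf{c},
\end{equation*}
with $\Phi$ constructed in Proposition \ref{pro:solve-Phi} and with contour
$\Omega_{\ep}=\{c+i\eta\mid c\in(-1,1),\ |\eta|<\gamma_{\ep}(c)\}\subset O_{\ep_0}$, where $\gamma_{\ep}(c):=\min\{\ep,\,C_o^{-1}(1-c^2)\}$ (the analogue of \eqref{Phi_pm} without the inner ball of radius $\delta$). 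Decompose $\pa\Omega_{\ep}=\Gamma_{\ep}^{+}\cup\Gamma_{\ep}^{-}$ into upper and lower curves parameterized by $c\in(-1,1)$, so that, taking into account orientation,
\begin{equation*}
\hat{\psi}(t,\al,y)=-\frac{1}{2\pi i}\int_{-1}^{1}e^{-i\al(c+i\gamma_{\ep})t}\Phi(\al,y,c+i\gamma_{\ep})(1+i\gamma_{\ep}')\,dc+\frac{1}{2\pi i}\int_{-1}^{1}e^{-i\al(c-i\gamma_{\ep})t}\Phi(\al,y,c-i\gamma_{\ep})(1-i\gamma_{\ep}')\,dc.
\end{equation*}

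The next step is to pass to the limit $\ep\to 0+$ by the dominated convergence theorem. The pointwise convergence is the LAP \eqref{limitfunction} of Proposition \ref{lem:limitfunction}, which for $\al\ge 2$ is valid on the full interval $(-1,1)$ without excluding any point. The integrable majorant is the uniform resolvent bound \eqref{Uniformbd-2}, namely
\begin{equation*}
|\Phi(\al,y,c_{\ep})|\leq C(1-c^2)^{-\tfrac{1}{2}}e^{-C^{-1}|y-y_c|}\|\hat{\omega}_0(\al,\cdot)/u'\|_{H^1},
\end{equation*}
combined with $|\gamma_{\ep}'(c)|\lesssim 1$ on the region where $\gamma_{\ep}(c)=C_o^{-1}(1-c^2)$ and $\gamma_{\ep}'(c)=0$ on the flat part; the weight $(1-c^2)^{-1/2}$ is integrable on $(-1,1)$. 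Hence the limit exists and equals
\begin{equation*}
\hat{\psi}(t,\al,y)=\frac{1}{2\pi i}\int_{-1}^{1}e^{-i\al ct}\bigl(\Phi_{-}(\al,y,c)-\Phi_{+}(\al,y,c)\bigr)\,dc.
\end{equation*}

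The final step is purely algebraic: using the explicit form \eqref{Phi_pm'} and the definition of $\mu_{\pm}$ in \eqref{def:mupm}, the integral-containing pieces cancel so that $\Phi_{-}-\Phi_{+}=(\mu_{-}-\mu_{+})\Gamma(\al,y,c)$. A direct computation gives
\begin{equation*}
\mu_{-}(c,\al)-\mu_{+}(c,\al)=\frac{-2\pi i\bigl(2c(1-c^2)^{2}\mathcal{T}(f)(c)-A(c,\al)f(y_c)\bigr)}{A(c,\al)^{2}+4\pi^{2}c^{2}}=2\pi i\,\mu(c,\al),
\end{equation*}
with $\mu$ as in \eqref{def:h(c,al)}. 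Substituting yields the desired formula \eqref{fm:psi2}. The only nontrivial point is verifying that the uniform bound supplies an $L^1$ majorant independently of $\ep$ (which it does because $(1-c^2)^{-1/2}\in L^1(-1,1)$ and the $y$-dependence decouples via the exponential weight); the rest of the argument is bookkeeping on contour orientation and a Wronskian-type algebraic identity.
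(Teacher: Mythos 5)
Your proposal is correct and follows the same strategy as the paper's proof: a single-parameter spindle contour $\Gamma_{\ep}^{\pm}$ inside $O_{\ep_0}$, dominated convergence using the LAP \eqref{limitfunction} for pointwise convergence and the uniform resolvent bound \eqref{Uniformbd-2} (which is integrable since $(1-c^2)^{-1/2}\in L^1(-1,1)$) for the majorant, and finally the algebraic identity $\Phi_{-}-\Phi_{+}=(\mu_{-}-\mu_{+})\Gamma=2\pi i\,\mu\,\Gamma$ coming from \eqref{Phi_pm'} and \eqref{def:mupm}. In fact you have supplied more explicit detail (the computation showing $\mu_{-}-\mu_{+}=2\pi i\mu$) than the paper, which only states that the conclusion "follows from definitions."
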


\begin{proof}
The proof of this proposition is simpler compared to the proof of Proposition \ref{Prop:fm}. We will provide a  sketch.

Let $\ep\in(0,\ep_0)$. We  define
\ben\label{Phi_pm}
\notag\gamma_{\ep}(c)=
\left\{
\begin{aligned}
\notag &C_o^{-1}(1-c^2),\quad\sqrt{1-C_o\ep}\leq |c|<1,\\
\notag&\ep, \quad\quad\quad\quad\quad\quad\quad\quad|c|\leq \sqrt{1-C_o\ep},
\end{aligned}
\right.
\een
It holds that  for $a.e.$ $c\in(-1,1)$,
\begin{align*}
\begin{split}
&|\gamma_{\ep}'(c)|\leq C\textbf{1}_{(-1,1)/[-\sqrt{1-C_0\ep}, \sqrt{1-C_0\ep} ]}\in L^1_c(-1,1),\quad\text{and}\quad\lim_{\ep\to 0+}\gamma_{\ep}'(c)= 0,
\end{split}
\end{align*}
where the constant $C$ is independent of $\ep$. We also define
\begin{align*}
\Gamma_{\ep}^{\pm}&:=\{c\pm i\gamma_{\ep}(c)|c\in(-1,1)\}.
\end{align*}
Taking $\Omega=\tilde{O}_{\ep}$ in \eqref{eq:stream formula'}, we have for $\al\geq 2$,
\begin{align*}
\notag\hat{\psi}(t,\al,y)
&=\f{1}{2\pi i}\int_{\pa\Omega_{\ep}}e^{-i\al \textbf{c}t}\Phi(\al,y,\textbf{c})d\textbf{c}=\sum_{\tau\in\{+,-\}}\f{1}{2\pi i}\int_{\Gamma_{\ep}^{\tau}}e^{-i\al \textbf{c}t}\Phi(\al,y,\textbf{c})d\textbf{c}=I^+_{\ep}+I^-_{\ep}.
\end{align*}
 Using \eqref{limitfunction}, \eqref{Uniformbd-2},  we obtain by the dominated convergence theorem that
\begin{align*}
 &\quad\lim_{\ep\to 0+}I^{\pm}_{\ep}=\mp\int_{-1}^1
e^{-i \al c t }\Phi_{\pm}(\al,y,c)dc\notag.
\end{align*}
Here one can refer to  similar process dealing with  $I_{\ep,\delta,1}^+$ and $I_{\ep,\delta,2}^+$, in the proof of Proposition \ref{Prop:fm}.
Finally, the desired expression follows from definitions \eqref{Phi_pm'} and \eqref{def:mupm}.
\end{proof}

\subsection{Dual argument }
\begin{lemma}\label{lem:dual0}
 Let $\Gamma$ be defined in \eqref{def:h(c,al)}, $f(c)$ be a function such that
 \begin{align*}
 p.v.\int_{-1}^1f(c)\Gamma(y,c)dc\quad \text{is}\; \text{well}\; \text{defined}\; \text{for}\;\;a.e. \; y\in\mathbb{R}.
  \end{align*}
 Let $g\in H^2(\mathbb{R})$, $\varphi=g''-\al^2g$. Then it holds that
\begin{align}\label{dual-fm}
&\quad \int_{\mathbb{R}}\Big(p.v.\int_{-1}^1f(c)\Gamma(y,c)dc\Big)\varphi(y)dy=-p.v.\int_{-1}^1f(c)\Lambda_2(g)(c)dc,
\end{align}
where the linear operator $\Lambda_2$ is defined as
\begin{align}\label{def:calLambda2}
\Lambda_2(g)(c):=\mathcal{T}(u''g)(c)+(1-c^2)^{-1}A(c)g(y_c),
\end{align}
 with $A$  defined in \eqref{def:A}, and $\mathcal{T}$  defined in \eqref{def:T(h)-(-1,1)}.

 Moreover, it holds that
 \begin{align}\label{dual-fm-g}
&\quad \int_{\mathbb{R}}
\Big(p.v.\int_{-1}^1f(c)\Big(\Gamma(y,c)
-\Gamma(y,0)\Big)dc\Big)\varphi(y)dy=-p.v.\int_{-1}^1cf(c)
\tilde{\Lambda}_2(g)(c)dc,
\end{align}
where the linear operator $\tilde{\Lambda}_2$ is defined as
\begin{align}
\label{recall:tLambda2}\tilde{\Lambda}_2(g)(c)&:
=\widetilde{\mathcal{T}}(u''g)(c)+(1-c^2)^{-1}
\tilde{A}(c)g(y_c),
\end{align}
with $\mathcal{T}$ defined in \eqref{def:tT-1}, and $\mathcal{A}$ defined in \eqref{def:tA}.
\end{lemma}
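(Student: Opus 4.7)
My plan is to prove \eqref{dual-fm} by using Fubini to exchange the order of integration in $c$ and $y$, which reduces the problem to the pointwise-in-$c$ identity
\[
\int_{\mathbb{R}}\Gamma(y,c)\varphi(y)\,dy=-\Lambda_2(g)(c),\qquad c\in(-1,1),
\]
and then to deduce \eqref{dual-fm-g} by subtracting the $c=0$ instance. The Fubini exchange is justified by the continuity of $\Gamma(\cdot,c)$ at $y=y_c$ (whose value there is $-1/u'(y_c)$, from the Wronskian identity $\phi\Gamma'-\phi'\Gamma=1$ combined with $\phi_1(y_c,c)=1$) together with the exponential decay in $y$ provided by Corollary~\ref{Cor:basic-limit} and $g\in H^2$.

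For the pointwise identity, I would work on $(-\infty,y_c-\epsilon)\cup(y_c+\epsilon,+\infty)$, use that $\Gamma''-\al^2\Gamma=\frac{u''}{u-c}\Gamma$ away from $y_c$, and integrate by parts twice with $\varphi=g''-\al^2 g$. This yields
\[
\int_{|y-y_c|>\epsilon}\Gamma\varphi\,dy=\int_{|y-y_c|>\epsilon}\frac{u''}{u-c}\Gamma g\,dy+B_\epsilon,
\]
where $B_\epsilon$ collects the boundary contributions at $y_c\pm\epsilon$ (those at $\pm\infty$ vanish by the decay above). Writing $\Gamma/(u-c)=\phi_1P_\pm$ with $P_\pm(y,c)=\int_{\mp\infty}^y\!dz/\phi(z,c)^2$, a further Fubini done separately on each side of $y_c$ (swapping $y$ and $z$) collapses the interior term into $-\mathcal{T}(u''g)(c)$ exactly as in Definition \eqref{def:T(h)-(-1,1)}.

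The boundary contribution is the subtle point. Expanding $\phi(y,c)=u'(y_c)(y-y_c)+\tfrac{u''(y_c)}{2}(y-y_c)^2+\cdots$ and using the Wronskian relation produces
\[
\Gamma(y,c)=-\tfrac{1}{u'(y_c)}+O\!\bigl((y-y_c)\ln|y-y_c|\bigr),\quad \Gamma'(y,c)=-\tfrac{u''(y_c)}{u'(y_c)^2}\ln|y-y_c|+u'(y_c)K_\pm(c)+C_0(c)+o(1),
\]
on $y\lessgtr y_c$, where the log and the $C_0(c)$ are the same on both sides but the finite constants $K_\pm(c)$ are not. The common terms cancel in $B_\epsilon$, leaving $\lim_{\epsilon\to 0}B_\epsilon=u'(y_c)\bigl(K_+(c)-K_-(c)\bigr)g(y_c)$. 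The difference $K_-(c)-K_+(c)$ is by construction the Hadamard finite part of $\int dz/\phi(z,c)^2$; after the change of variables $\zeta=u(z)$ it becomes a Cauchy-type integral with a $(\zeta-c)^{-2}$ kernel, and a Plemelj--Sokhotski argument identifies this finite part with the average $\tfrac12(W^+(c)+W^-(c))=A(c)/(1-c^2)^2$ of the two-sided complex limits of the Wronskian, cf. \eqref{lim:W} and Lemma~\ref{lem:W1}. Since $u'(y_c)=1-c^2$, this gives $\lim_{\epsilon\to 0}B_\epsilon=-(1-c^2)^{-1}A(c)g(y_c)$, completing the identification with $-\Lambda_2(g)(c)$.

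For the difference formula \eqref{dual-fm-g}, I would apply the pointwise identity at $c$ and at $c=0$ separately. By Lemma~\ref{lem:eigen-al=1}, $\Gamma(y,0)=-\mathrm{sech}\,y$ is smooth, so no singular analysis is needed at $c=0$. By \eqref{pacA=0} in Lemma~\ref{lem:A}, we have $A(0,1)=\pa_cA(0,1)=0$, so $A(c)/c=\tilde A(c)$ is continuous at $0$, and \eqref{def:tT-1} gives $\mathcal{T}(u''g)(c)-\mathcal{T}(u''g)(0)=c\widetilde{\mathcal{T}}(u''g)(c)$. Subtracting the two pointwise identities and factoring out $c$ yields $\int[\Gamma(y,c)-\Gamma(y,0)]\varphi\,dy=-c\tilde\Lambda_2(g)(c)$, and Fubini in $c$ finishes. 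The main obstacle throughout is the boundary/finite-part identification: recognizing the symmetric real-axis regularization $K_-(c)-K_+(c)$ as $A(c)/(1-c^2)^2$ requires careful bookkeeping of both the $(z-y_c)^{-2}$ principal singularity and its $(z-y_c)^{-1}$ subprincipal correction, and reconciling the real-axis excision with the complex one-sided limits of $W$ that define $A(c)$ through Lemma~\ref{lem:W1} and Definition~\ref{def:A}.
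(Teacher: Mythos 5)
Your overall plan is correct, and the two proofs share the same skeleton (Fubini in $(y,c)$, then integration by parts using the Rayleigh equation to produce $\mathcal{T}(u''g)$ plus a boundary contribution proportional to $g(y_c)$), but the route you take through the middle is genuinely different from the paper's. The paper never works with $\Gamma$ directly in the pointwise step: after the Fubini exchange it lands on $\int_{\mathbb R}\phi(z,c)^{-2}\int_{y_c}^z\phi\varphi\,dy\,dz$, and then uses the pointwise algebraic identity $\phi\varphi=(\phi g')'-(\phi'g)'+\tfrac{u''\phi}{u-c}g$ inside the inner integral. Because $\phi$ vanishes at $y_c$ with a simple zero, the boundary term $[\phi g'-\phi'g]_{y_c}^z$ is completely elementary, and the $1/(z-y_c)$ singularity that remains is removed by the explicit regularization $\tfrac{g}{\phi}-\tfrac{g(y_c)}{u-c}$; the constant $A(c)/(1-c^2)$ then drops out of the already-established identity \eqref{fm:W1} by recognition, with no asymptotic expansion or distributional limit required. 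You instead integrate by parts against $\Gamma$ on the excised domain $|y-y_c|>\epsilon$, which forces you into a two-term asymptotic expansion of $\Gamma'$ with a $\ln|y-y_c|$ singularity, a matching of the one-sided constants $K_\pm$, and a Plemelj--Sokhotski identification of $K_+(c)-K_-(c)$ with the symmetric limit $\tfrac12(W^++W^-)=(1-c^2)^{-2}A(c)$. That identification is correct in outcome (and your two sign conventions cancel so the final boundary value $-\,(1-c^2)^{-1}A(c)\,g(y_c)$ comes out right), but it is a real piece of analysis that you state rather than prove: the $(\zeta-c)^{-2}$ kernel needs a second-order Plemelj formula, and the reconciliation of the real symmetric excision with the two complex one-sided limits of $W$ through the change of variables $\zeta=u(z)$ deserves a few lines, in particular since the subprincipal $(\zeta-c)^{-1}$ piece carries a $\log$ and must be tracked. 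Where your route scores is conceptual transparency --- it makes visible that $A$ is exactly the finite part of the Wronskian integral, rather than having $A$ appear via algebraic cancellation against \eqref{fm:W1}. Your treatment of \eqref{dual-fm-g} (subtract the $c=0$ instance using $\Gamma(\cdot,0)=-\mathrm{sech}\,y$, $A(0)=0$, and $\mathcal{T}(u''g)(c)-\mathcal{T}(u''g)(0)=c\widetilde{\mathcal{T}}(u''g)(c)$) is essentially identical to the paper's.
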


\begin{proof}
The proof of \eqref{dual-fm} is standard.
We exchange the order of integration 
to get
\begin{align}
 \notag&L.H.S\\
  \notag&=\int_{\mathbb{R}}\Big(p.v.\int_{u(y)}^1\int_{-\infty}^y
Fdzdc\Big)dy+
\int_{\mathbb{R}}\Big(p.v.\int_{-1}^{u(y)}\int_{+\infty}^y
Fdzdc\Big)dy\\
 \notag&=p.v.\int_{-1}^1dc \int_{-\infty}^{y_c}\int_{-\infty}^yFdzdy
-p.v.\int_{-1}^1dc \int_{y_c}^{+\infty}\int^{+\infty}_yFdzdy\\
 \notag&=p.v.\int_{-1}^1dc \int_{-\infty}^{y_c}\int_{z}^{y_c}Fdydz
-p.v.\int_{-1}^1dc \int^{+\infty}_{y_c}\int^{z}_{y_c}Fdydz\\
\notag&=-p.v.\int_{-1}^1f(c,t)\Big(\int_{\mathbb{R}}\f{\int_{y_c}^z\phi(y,c)\varphi(y)dy}{\phi(z,c)^2}dz\Big)dc,
\end{align}
where $F=\f{f(c,t)\varphi(y)\phi(y,c)}{\phi(z,c)^2}$.
Thus, for  $g\in H^2(\mathbb{R})$ and $\varphi=g''-\al^2g$, using \begin{align*}
\phi \varphi=(\phi g')'-(\phi' g)'+(\phi''-\al^2\phi)g,\; \phi''-\al^2\phi=\f{u''\phi}{u-c}, \; \f{g(\cdot)}{\phi(\cdot,c)}-\f{g(y_c)}{u(\cdot)-c}\in C(\mathbb{R}),
\end{align*}
definition \eqref{fm:W1} and \eqref{def:W-4}, we have
\begin{align*}
&\int_{\mathbb{R}}\f{\int_{y_c}^z\phi(y,c)\varphi(y)dy}{\phi(z,c)^2}dz=p.v.\int_{\mathbb{R}}\f{\int_{y_c}^z\f{u''\phi g}{u-c}dy}{\phi(z,c)^2}dz
+p.v.\int_{\mathbb{R}}\f{\phi g'-\phi' g+u'(y_c)g(y_c)}{\phi(z,c)^2}dz\\
&=p.v.\int_{\mathbb{R}}\f{\int_{y_c}^z\phi_1u'' gdy}{\phi(z,c)^2}dz
+\int_{\mathbb{R}}\big(\f{g(z)}{\phi(z,c)}-\f{g(y_c)}{u(z)-c}\big)'dz
+g(y_c)\int_{\mathbb{R}}\f{u'(z)}{(u(z)-c)^2}
\big(\f{1}{\phi_1^2(z,c)}-1\big)dz\\
&\quad+g(y_c)p.v.\int_{\mathbb{R}}\f{u'(y_c)-u'(z)}{\phi(z,c)^2}dz\\
&=\mathcal{T}(u''g)(c)+g(y_c)\Big(\f{-2}{1-c^2}+\int_{\mathbb{R}}\f{u'(z)}{(u(z)-c)^2}
\big(\f{1}{\phi_1^2(z,c)}-1\big)dz+p.v.\int_{\mathbb{R}}\f{u'(y_c)-u'(z)}{\phi(z,c)^2}dz\Big)\\
&=\mathcal{T}(u''g)(c)+\f{g(y_c)}{1-c^2}A(c,\al)=\Lambda_2(g)(c).
\end{align*}

To prove \eqref{dual-fm-g}, we use \eqref{fm:T(0)}, $u''=\f{-2\sinh y}{\cosh^3 y}$ and integration by parts to get
\begin{align*}
\mathcal{T}(u''g)(0)=p.v.\int_{\mathbb{R}}\f{u''(y)g(y)}{\sinh y}dy=\int_{\mathbb{R}}\f{-2g(y)}{\cosh^3 y}dy=
\int_{\mathbb{R}}\f{\varphi(y)}{\cosh y}dy.
\end{align*}
Combining  \eqref{dual-fm} and the definition of $\tilde{\Lambda}_2$,  we have the desired identity.
\end{proof}

\begin{Corollary}\label{lem:dual}
Let $f\in L^2(\mathbb{R})$, $g\in H^2(\mathbb{R})$, $\varphi=g''-\al^2g$. Let $\Gamma$ be defined in \eqref{def:h(c,al)}, $\mu$  be  defined in \eqref{def:h(c,al)}. It holds that for  $\al\geq 1$
\begin{align*}
&\quad \int_{\mathbb{R}}\Big(p.v.\int_{-1}^1e^{-i\al ct}\mu(c,\al)\Gamma(\al,y,c)dc\Big)\varphi(y)dy=p.v.\int_{-1}^1e^{-i \al ct}\mathcal{K}(f,g)(c,\al)dc,
\end{align*}
where the bilinear operator $\mathcal{K}(\cdot,\cdot)$ is defined as
\begin{align}\label{def:calK}
\mathcal{K}(f,g)(c,\al)&=\f{(1-c^2)^2\Lambda_1(f) (c)\Lambda_2(g)(c)}{A(c,\al)^2+4\pi^2c^2},\quad c\in (-1,1),\; \al\geq 1.
\end{align}
Here the linear operators $\Lambda_1$, $\Lambda_2$ are defined in \eqref{def:calLambda1} and \eqref{def:calLambda2}, $A$ is defined  in \eqref{def:A}, and $\mathcal{T}$ is defined in \eqref{def:T(h)-(-1,1)}.
\end{Corollary}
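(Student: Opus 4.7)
}
The plan is to reduce the corollary to a direct application of Lemma \ref{lem:dual0} plus a pure algebraic rewriting of the kernel $\mu(c,\al)$. The key observation is that, by the definition of $\Lambda_1$ in \eqref{def:calLambda1} together with formula \eqref{def:h(c,al)}, we have the clean identity
\begin{align*}
\mu(c,\al)=-\frac{(1-c^2)^{2}\,\Lambda_1(f)(c)}{A(c,\al)^{2}+4\pi^{2}c^{2}},
\end{align*}
simply because $2c(1-c^2)^{2}\mathcal{T}(f)(c)-A(c,\al)f(y_c)=(1-c^2)^{2}\Lambda_1(f)(c)$ once one expands $\Lambda_1$. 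This puts the kernel in the form to which the duality lemma can be applied.

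First I would set $F(c):=e^{-i\al c t}\mu(c,\al)$ and verify that $F$ meets the hypothesis of Lemma \ref{lem:dual0}, namely that $\mathrm{p.v.}\int_{-1}^{1}F(c)\Gamma(\al,y,c)\,dc$ is well-defined for a.e.\ $y$. For $\al\ge 2$ this is not a principal value at all: by Lemma \ref{lem:A}, $A(c,\al)^2+4\pi^2c^2\gtrsim \al^2$, so $\mu$ is bounded on $(-1,1)$ and the existence is automatic from Proposition \ref{Prop:fm'}. For $\al=1$, the integrand has a $1/c$ singularity at the embedding eigenvalue, but Proposition \ref{Prop:fm} already supplies the existence of the principal value integral. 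So in both regimes the hypothesis of Lemma \ref{lem:dual0} is verified and one is entitled to apply \eqref{dual-fm}.

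Applying Lemma \ref{lem:dual0} with this $F(c)$ and a test function $g\in H^{2}(\mathbb{R})$, $\varphi=g''-\al^{2}g$, yields
\begin{align*}
\int_{\mathbb{R}}\Big(\mathrm{p.v.}\!\int_{-1}^{1}e^{-i\al c t}\mu(c,\al)\Gamma(\al,y,c)\,dc\Big)\varphi(y)\,dy
=-\,\mathrm{p.v.}\!\int_{-1}^{1}e^{-i\al c t}\mu(c,\al)\Lambda_2(g)(c)\,dc.
\end{align*}
Substituting the factored expression for $\mu$ derived above gives
$-\mu(c,\al)\Lambda_2(g)(c)=\dfrac{(1-c^2)^{2}\Lambda_1(f)(c)\Lambda_2(g)(c)}{A(c,\al)^{2}+4\pi^{2}c^{2}}=\mathcal{K}(f,g)(c,\al)$, which is exactly the right-hand side of the claim.

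The only delicate point is the $\al=1$ case, where the kernel has a genuine $c=0$ singularity coming from $A(c,1)\sim c^2$ (Lemma \ref{lem:A}). The main obstacle, therefore, is ensuring that the interchange of the spatial integral against $\varphi$ and the spectral p.v.\ integral against $\mu\Gamma$ is justified despite this singularity. This is not a new difficulty though: it is precisely what Lemma \ref{lem:dual0} was designed to handle, since the proof of that lemma exchanges the order of integration via the geometry of the strips $(z,y)\in(-\infty,y_c)\times(z,y_c)$ and $(y_c,+\infty)\times(y_c,z)$ rather than through absolute integrability in $(y,c)$. Consequently, once the well-definedness of the p.v.\ spectral integral is granted (from Propositions \ref{Prop:fm} and \ref{Prop:fm'}), no further analytical work is required and the corollary reduces to the substitution above.
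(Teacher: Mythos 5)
Your proof is correct and takes essentially the same route as the paper: the paper's proof simply substitutes $f(c)=e^{-i\al ct}\mu(c,\al)$ into Lemma \ref{lem:dual0} and leaves the algebraic identity $-\mu\,\Lambda_2(g)=\mathcal{K}(f,g)$ implicit, whereas you spell out the factorization $\mu(c,\al)=-\f{(1-c^2)^2\Lambda_1(f)(c)}{A^2+4\pi^2c^2}$ and the verification of the well-definedness hypothesis via Propositions \ref{Prop:fm} and \ref{Prop:fm'}. These are useful elaborations but not a different argument.
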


Thanks to Lemma \ref{lem:A}, we know that  the  principle value in the identity is a proper integral when $\al \geq 2$.
\begin{proof}
 Let $\mu$ be defined in \eqref{def:h(c,al)}, the identity follows from taking $f(c)=e^{-i\al ct}\mu(c,\al)$ in Lemma \ref{lem:dual0}.
\end{proof}

\section{
Weighted estimates for  the integral operators}
The goal of this section is to prove the boundedness of linear and bilinear operators that appear in Section 6 and Section 7. The estimates are weighted  to fit the degeneracy of $u'(y)$ and serve for the proof of the linear inviscid damping in the main theorem.

Let $I$ be an interval, $1\leq p\leq \infty$, $m(c,\al)$ be some simple function. We use the notation
 $f(c)=m(c,\al)\mathcal{L}^p(I)$ to represent a function $f(c)$ with $\|f/m\|_{L^p_c(I)}\leq C$, where $C$ is a universal constant. We write $f=m\mathcal{L}^p$ for simplicity, if $I=(-1,1)$. Sometimes, we omit the  $\al$ dependence of the operators.

\subsection{Estimates for  $\mathcal{T}(c)$. }
Recall the definition of $\mathcal{T}$, cf. \eqref{def:T(h)-(-1,1)},
\begin{align*}
\mathcal{T}(f)(c)=p.v.\int_{\mathbb{R}}\f{\int_{y_c}^y\phi_1(z,c)f(z)dz}{(u(y)-c)^2\phi_1(y,c)^2}dy,\quad \;c\in(-1,1). \end{align*}
We have the estimates of weighted Sobolev regularity for $\mathcal{T}$ as follows.
\begin{lemma}\label{lem:key-T}
Let $\al\geq 1$, $i=0,1,2$, $k=0,1,2,3$. If $f\in H^k_{1/(u')^{2i}}(\mathbb{R})$, then it holds that
\begin{align}\label{est:key-1/u'^2}
\pa_c^k\mathcal{T}(f)(c)=\al(1-c^2)^{-\f32-k+i}\mathcal{L}^2.
\end{align}

\end{lemma}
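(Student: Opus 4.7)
The strategy is induction on $k$, systematically exploiting the good derivative $\pa_G=\pa_y/u'(y_c)+\pa_c$ that plays a central role throughout the paper. The key structural feature is the commutation
$$\pa_G^j\int_{y_c}^y g(z,c)\,dz=\int_{y_c}^y\pa_G^j g(z,c)\,dz,$$
a consequence of \eqref{good:phi1} iterated in $j$. Combined with Lemma \ref{prop:goodphi1}, each application of $\pa_G$ to $\phi_1$ costs one factor of $|u(y)-c|/(1-c^2)$ times polynomial growth in $\al|y-y_c|$; this polynomial growth, absorbed against the exponential decay $\phi_1^{-2}\lesssim e^{-2C^{-1}\al|y-y_c|}$ from \eqref{est:phi_1-ul}, is the source of the linear factor $\al$ in the claimed bound \eqref{est:key-1/u'^2}.

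For the base case $k=0$, I would revisit the decomposition $\cT(f)(c)=I_1(c)+I_2(c)+I_3(c)$ from \eqref{dec:T(bfc)}, in which the appropriate Taylor remainder of $f$ at $y_c$ is subtracted so that each piece has an integrable kernel in $y$. For $i=0$, combining the pointwise bounds already used in the proof of Proposition \ref{lem:B-delta1} with the trivial inclusion $(1-c^2)^{-\sigma}\in L^2_c(-1,1)$ for $\sigma\le 3/2$ yields the result. For $i=1,2$, the additional weight $1/(u')^{2i}$ on $f$ converts into a gain of $(1-c^2)^i$ via the decay inequality \eqref{est:u'-c} and Cauchy--Schwarz in $y$, in the spirit of \eqref{est:yc-z-C} from Corollary \ref{Cor:basic-limit}.

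For the inductive step $k\ge1$, I would split $\pa_c=\pa_G-\pa_y/u'(y_c)$ and expand $\pa_c^k$ by the binomial rule. Terms in which only $\pa_G$ falls on the integrand are estimated pointwise in $c$ by Lemmas \ref{prop:phi1}--\ref{prop:goodphi1} together with Lemma \ref{lem:simple-useful-equ}; each $\pa_G$ contributes a factor $|u(y)-c|/(1-c^2)$ that either combines with the denominator $(u(y)-c)^{-2}$ to reduce its singularity or, via the identity \eqref{fm:u(y)-c}, extracts a clean $(1-c^2)^{-1}$ gain. Terms containing $\pa_y/u'(y_c)$ are handled by integration by parts in $y$, transferring the $y$-derivative onto $f$ and consuming precisely one unit of regularity from the $H^k_{1/(u')^{2i}}$ norm; boundary contributions at $y=y_c$ vanish by \eqref{paGphi1(yc,c)}, and those at $y=\pm\infty$ vanish by the exponential decay of $\phi_1^{-2}$. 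Interchanging $\pa_c^k$ with the outer principal-value integral is justified via \eqref{fm:paGintf}.

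The main obstacle will be the careful bookkeeping ensuring that \emph{exactly} one factor of $(1-c^2)^{-1}$ is paid per $\pa_c$, as a naive Leibniz expansion through the singular kernel $(u-c)^{-2}\phi_1^{-2}$ might suggest the worse rate $(1-c^2)^{-2}$. This sharpness hinges on systematically cancelling the $|u(y)-c|$ factor from $\pa_G\phi_1/\phi_1$ against the $|u(y)-c|^{-2}$ in the kernel, combined with a Schur-type estimate in the spirit of Lemma \ref{lem:pa_cA}: once the $y$-integral is bounded in $L^\infty_c$ with the correct weight, the change of variables $dc=(1-c^2)\,dy_c$ converts the remaining $L^2_c$ norm with weight $(1-c^2)^{-3/2-k+i}$ into an $L^2_y$ bound that is uniformly controlled by $\al\|f\|_{H^k_{1/(u')^{2i}}}$.
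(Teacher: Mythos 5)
There is a genuine gap, and it is exactly the one the paper flags in the remark after Step~1. Your base case $k=0$, $i=0$ relies on the decomposition $\cT=I_1+I_2+I_3$ from \eqref{dec:T(bfc)} and the pointwise bounds from the LAP argument (Proposition~\ref{lem:B-delta1} / estimate \eqref{textbfc-bd}). That route gives at best $|\cT(f)(c)|\le C(1-c^2)^{-3/2}\|f\|_{H^1_{1/(u')^2}}$, i.e.\ an $L^\infty_c$ bound that costs one extra $y$-derivative and extra weight on $f$. The lemma you are proving asserts the much sharper $\|(1-c^2)^{3/2}\cT(f)\|_{L^2_c}\lesssim\al\|f\|_{L^2}$, with $f$ only in $L^2$. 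You cannot get from the former to the latter; they are incomparable, and the dependence on $f$ is strictly worse in your version. The paper's proof of Step~1 hinges on a different decomposition: after a change of variables $\tilde c=u(y)$ the leading singular piece of $\cT$ becomes literally the Hilbert transform $\mathcal{H}((f/u'^{1/2})\circ u^{-1})$ plus two average operators $\mathcal{A}_{\pm 1}$, and the classical $L^2(-1,1)$-boundedness of those operators is what produces the gain $\|f\|_{L^2}$; the remainder $\mathcal{L}_{3/2}$ is constructed with the comparison kernel $u'(y)u'(z)^{1/2}/(1-c^2)^{3/2}$ precisely so that it has the same sharp $L^2_c$ estimate (Lemma~\ref{lemma:key-integral-estimate}). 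The paper's remark explicitly states that the ``natural'' subtraction seen in $I_1$ does not give the optimal weight, so your chosen starting point would not yield \eqref{lem:T-weighted-L2-key}.

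A secondary gap is in the inductive step. You propose to estimate the $\pa_G$-terms ``pointwise in $c$'' and then convert via $dc=(1-c^2)\,dy_c$; but pointwise control again loses the $L^2_c$ structure that makes the lemma sharp. The paper instead introduces the family $\mathcal{T}_{m,l}$ (Lemma~\ref{lemma:key-integral-estimate-1}) whose bound is by the Hardy--Littlewood maximal function $\mathcal{M}(|f|)(y_c)$, so that the weighted $L^2_c$ estimate follows from \eqref{est:maximalyc}. Your outline has the right ingredients in spirit (good derivative calculus, integration by parts in $y$ to transfer derivatives onto $f$, the identity $u'=(1-c^2)-(u^2-c^2)$ for the $i=1,2$ upgrade), and Step~2 of the paper does exactly the weight-upgrading you describe. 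But without the Hilbert transform reformulation in Step~1 and the maximal-function machinery in Step~3, the argument as written would not reach the claimed $L^2_c$ bound with $L^2$ data.
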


Before we start proving, we provide several basic estimates. Let $k\in \mathbb{N}$. It directly holds that
\begin{align}\label{lem:weighted-norm-derivative}
&\f{d^k}{dc^k}(f(y_c))=(1-c^2)^{-k}f^{(k)}(y_c),\quad
\big\|(1-c^2)^{-\f12}f(y_{c}) \big\|_{L^2_c(-1,1)}=\|f(y)\|_{L^2(\mathbb{R})}.
\end{align}

The maximal operator $\mathcal{M}$, Hilbert transform $\mathcal{H}$ and average operator $\mathcal{A}_{\tau}$ are  defined as follows
\begin{align*}
\mathcal{M}(f)(y)&:=\sup_{z\in\mathbb{R}}\f{\int_z^y|f(\tilde{y})|d\tilde{y}}{y-z},\;y\in\mathbb{R},\\
\mathcal{H}(f)(c)&:=\int_{-1}^1\f{f(\tilde{c})}{c-\tilde{c}}d\tilde{c},\;c\in(-1,1),\\
\mathcal{A}_{\tau}(f)(c)&:=\f{\int_{\tau}^cf(\tilde{c})d\tilde{c}}{c-\tau},\;c\in(-1,1),\;\tau\in[-1,1].
\end{align*}
Let $c\in(-1,1)$, $y_c=u^{-1}(c)$ and  $f\in L^2(\mathbb{R})$. By the classical theory of Harmonic analysis, $u'(y_c)=1-c^2$  and  \eqref{lem:weighted-norm-derivative}, we directly obtain the following
\begin{align}\label{est:maximalyc}
&\| (1-c^2)^{-\f12}\mathcal{M}(f)(y_c)\|_{L^2_c(-1,1)}\leq C
\|f\|_{L^2(\mathbb{R})},
\end{align}
\begin{align}
\begin{split}
\label{est:Hilbert}
&\Big\|\mathcal{H}\left((f/(u')^{\f12})\circ u^{-1}\right)(c)\Big\|_{L^2_c(-1,1)}\leq C\|f\|_{L^2(\mathbb{R})},\\ &\Big\|\mathcal{A}_{\tau}\left((f/u'^{\f12})\circ u^{-1}\right)(c)
\Big\|_{L^2_c(-1,1)}\leq C\|f\|_{L^2(\mathbb{R})}.
\end{split}
\end{align}
The estimates for the following  family of operators generalize the estimates in the proof of Lemma \ref{lem:key-T}.
\begin{lemma}\label{lemma:key-integral-estimate-1}
Let $\al\geq 1$, $m\in[0,1]$, $l\geq 0$.  We define
\begin{align}
\label{def:Tl}\mathcal{T}_{m,l}(f)(c)&:=\int_{\mathbb{R}}\f{\langle\al|y-y_c|\rangle^{l}\big|\int_{y_c}^y|f(z)|dz\big|}{|u(y)-c|^m\phi_1(y,c)}dy.
\end{align}
Then  there exists constant $C$ independent of $\al$ such that
\begin{align*}
\|(1-c^2)^{m-\f12}\mathcal{T}_{m,l}(f)(c)\|_{L^2_c(-1,1)}\leq C\al^{m-2}\|f\|_{L^2(\mathbb{R})}.
\end{align*}
\end{lemma}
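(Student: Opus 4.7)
The plan is to reduce $\mathcal{T}_{m,l}(f)(c)$ pointwise in $c$ to a product of the maximal function $\mathcal{M}(|f|)(y_c)$ and an elementary one-variable integral in $y$, and then close the estimate by invoking the weighted $L^2_c$-boundedness \eqref{est:maximalyc}. The key observation is that no near/far splitting of the integration domain is required: a single unified pointwise bound suffices.

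Concretely, I will combine four ingredients. First, the exponential lower bound \eqref{est:phi_1-ul} on $\phi_1$ absorbs the polynomial factor:
\begin{align*}
\frac{\langle \alpha|y-y_c|\rangle^{l}}{\phi_1(y,c)} \leq C_{l}\, e^{-C_{l}^{-1}\alpha|y-y_c|}.
\end{align*}
Second, the definition of the maximal function gives $\bigl|\int_{y_c}^{y}|f(z)|\,dz\bigr| \leq |y-y_c|\,\mathcal{M}(|f|)(y_c)$. Third, the sharp bound \eqref{u-key} together with $u'(y_c)=1-c^2$ yields
\begin{align*}
|u(y)-c|^{-m} \leq C(1-c^2)^{-m}\tanh^{-m}|y-y_c|.
\end{align*}
Fourth, a scalar integral to be evaluated:
\begin{align*}
\int_{\mathbb{R}}\frac{|y-y_c|\,e^{-C^{-1}\alpha|y-y_c|}}{\tanh^{m}|y-y_c|}\,dy \leq C\alpha^{m-2},
\end{align*}
which follows by splitting at $|y-y_c|=1$: on $(0,1]$ one has $\tanh t\sim t$, so the substitution $s=\alpha t$ bounds the contribution by $\alpha^{m-2}\int_{0}^{\alpha}s^{1-m}e^{-s/C}\,ds \leq C\alpha^{m-2}$ for every $m\in[0,1]$; on $[1,\infty)$ the integrand is $\leq t\,e^{-C^{-1}\alpha t}$, whose integral decays faster than any power of $\alpha$ for $\alpha\geq 1$.

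Multiplying these four ingredients under the integral yields the pointwise estimate
\begin{align*}
\mathcal{T}_{m,l}(f)(c) \leq C\alpha^{m-2}(1-c^2)^{-m}\,\mathcal{M}(|f|)(y_c),
\end{align*}
and multiplying by $(1-c^2)^{m-\frac12}$ together with \eqref{est:maximalyc} closes the proof:
\begin{align*}
\|(1-c^2)^{m-\frac12}\mathcal{T}_{m,l}(f)\|_{L^2_c(-1,1)} \leq C\alpha^{m-2}\bigl\|(1-c^2)^{-\frac12}\mathcal{M}(|f|)(y_c)\bigr\|_{L^2_c(-1,1)} \leq C\alpha^{m-2}\|f\|_{L^2}.
\end{align*}
The only mild obstacle is keeping constants uniform: the bound $\int_{0}^{\infty}s^{1-m}e^{-s/C}\,ds = \Gamma(2-m)C^{2-m}$ is uniform for $m\in[0,1]$ since $2-m\in[1,2]$, while absorbing $\langle s\rangle^{l}$ into $e^{-s/C}$ only degrades $C$ by an $l$-dependent factor, so the final constant is independent of $\alpha$ as required.
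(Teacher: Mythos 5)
Your proof is correct and follows essentially the same route as the paper: a pointwise reduction to $\mathcal{M}(|f|)(y_c)$ via the exponential lower bound on $\phi_1$, the bound \eqref{u-key} for $|u(y)-c|^{-m}$, and the weighted maximal-function estimate \eqref{est:maximalyc}. The only cosmetic difference is that the paper inserts $\alpha$ into the argument of $\tanh$ (via $\tanh(\alpha t)\le\alpha\tanh t$) to rescale everything in $\alpha|y-y_c|$ before invoking a single $\alpha$-independent scalar integral, whereas you keep $\tanh|y-y_c|$ as is and extract the $\alpha^{m-2}$ directly by splitting the one-variable integral at $|y-y_c|=1$; both yield the same bound.
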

\begin{proof}
Thanks to \eqref{u-key} and $u'(y_c)=1-c^2$, we have
\begin{align*}
\f{1}{|u(y)-c|^m}\leq \f{C\al^m(1-c^2)^{-m} }{\tanh^m \al|y-y_c|}
\leq \f{C\al^m(1-c^2)^{-m} }{\min\{ (\al|y-y_c|)^m,1\}} .
\end{align*}
This together with \eqref{est:phi_1-ul} give for $m\in[0,1]$, $l\geq 0$, $c\in(-1,1)$,
\begin{align}
&\notag\quad(1-c^2)^{m-\f12}\mathcal{T}_{m,l}(f)(c)\\
\notag&\leq C\al^{m-1}(1-c^2)^{-\f12}
\int_{\mathbb{R}}\f{\int_{y_c}^y|f(z)|dz}{y-y_c}\cdot \f{\al|y-y_c|}{\min\{(\al|y-y_c|)^m,1\}}\cdot \f{\langle\al|y-y_c|\rangle^l}{e^{C^{-1}\al|y-y_c|}}dy\\
\notag&\leq C\al^{m-1}(1-c^2)^{-\f12}
\int_{\mathbb{R}}\f{\int_{y_c}^y|f(z)|dz}{y-y_c}\cdot \f{\langle\al|y-y_c|\rangle^{l+1}}{e^{C^{-1}\al|y-y_c|}}dy\\
\notag&\leq C \al^{m-2}
(1-c^2)^{-\f12}\mathcal{M}(|f|)(y_c)\int_{\mathbb{R}}\langle \tilde{y}\rangle^{l+1}e^{-C^{-1}|\tilde{y}|}d\tilde{y}\\
\notag&\leq C \al^{m-2}
(1-c^2)^{-\f12}\mathcal{M}(|f|)(y_c).
\end{align}
Thus, the desired inequality follows from \eqref{est:maximalyc}.
\end{proof}
The following operator is constructed to ensure the optimal weighted estimate for $\mathcal{T}$.
\begin{lemma}\label{lemma:key-integral-estimate}
Let $\al\geq 1$. We define
\begin{align}
\label{def:L1.5}\mathcal{L}_{\f32}(f)(c):&=\int_{\mathbb{R}}\int_{y_c}^y\f{f(z)}{|u(y)-c|^2}\Big(
\f{\phi_1(z,c)}{\phi_1(y,c)^2}-\f{u'(y)u'(z)^{\f12}}{(1-c^2)^{\f32}}\Big)dzdy,\quad c\in(-1,1).
\end{align}
Then  there exists a constant $C$ independent of $\al$ such that
\begin{align}
\label{T3}&\|(1-c^2)^{\f32}\mathcal{L}_{\f32}(f)(c)\|_{L^2(-1,1)}\leq C\ln\al\|f\|_{L^2(\mathbb{R})}.
\end{align}
\end{lemma}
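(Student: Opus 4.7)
\emph{Overall strategy.} The factor $|u(y)-c|^{-2}$ is not integrable near $y=y_c$, and the subtraction inside the parenthesis is designed so that the remainder
$K(y,z,c):=\f{\phi_1(z,c)}{\phi_1(y,c)^2}-\f{u'(y)u'(z)^{1/2}}{(1-c^2)^{3/2}}$
vanishes at $y=z=y_c$ (both pieces equal $1$ there, using $\phi_1(y_c,c)=1$ and $u'(y_c)=1-c^2$) and in fact behaves like $|u(y)-c|/(1-c^2)$ for $y,z$ close to $y_c$. My plan is to split the $y$-integration at the scale $|y-y_c|\sim 1/\al$: in the near region the cancellation handles the singularity and the contribution is of order $O(\al^{-1})$; in the far region the $\phi_1^{-2}$ piece decays exponentially by \eqref{est:phi_1-ul}, while the remaining subtracted piece reduces, after integration by parts, to a principal-value integral of the form $\int_{1/\al}^{\infty}dy/|y-y_c|\sim \ln\al$. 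The final $L^2_c$-bound is then assembled via the weighted maximal/averaging estimates \eqref{est:maximalyc}--\eqref{est:Hilbert}.

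\emph{Step 1: pointwise kernel bound.} For $z$ between $y_c$ and $y$, I would first prove
\[
|K(y,z,c)|\leq \f{C|u(y)-c|\,\langle\al|y-y_c|\rangle}{(1-c^2)\,\phi_1(y,c)} + \f{C\min\{\al^2(y-y_c)^2,1\}}{\phi_1(y,c)^2}.
\]
The first term is produced by the exact identity $1-u'(y)/(1-c^2)=(u(y)-c)(u(y)+c)/(1-c^2)$ (together with its analogue for $u'(z)^{1/2}/(1-c^2)^{1/2}$) combined with the bound \eqref{u-key}; the second term follows by writing $\phi_1(z,c)\phi_1(y,c)^{-2}=\phi_1(y,c)^{-1}+[\phi_1(z,c)-\phi_1(y,c)]\phi_1(y,c)^{-2}$ and invoking \eqref{est:calF}, \eqref{phi_1(z)leq phi_1(y)} and \eqref{est:phi_1-1}. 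Both pieces carry a factor of $1/\phi_1$, which will supply exponential decay in $\al|y-y_c|$ once we move away from the diagonal.

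\emph{Step 2: assembly and extraction of $\ln\al$.} In the near region $|y-y_c|\le 1/\al$, inserting the pointwise bound of Step 1 and using the maximal estimate $|\int_{y_c}^y|f(z)|\,dz|\le|y-y_c|\,\mathcal{M}(|f|)(y_c)$ yields an integrand dominated by $(1-c^2)^{-3/2}\al^{-1}\mathcal{M}(|f|)(y_c)$, whose weighted $L^2_c$-norm is $O(\al^{-1}\|f\|_{L^2})$ by \eqref{est:maximalyc}---no logarithm. In the far region $|y-y_c|\ge 1/\al$ the $\phi_1(z,c)\phi_1(y,c)^{-2}$ part is controlled directly by the operator $\mathcal{T}_{1,1}$ of Lemma \ref{lemma:key-integral-estimate-1} with $m=1$, yielding a bound $C\al^{-1}\|f\|_{L^2}$ with the correct weight $(1-c^2)^{1/2}$. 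The delicate contribution is the subtracted piece
\[
\f{u'(y)u'(z)^{1/2}}{(1-c^2)^{3/2}|u(y)-c|^2}f(z);
\]
using $u'(y)/|u(y)-c|^2=-\pa_y[1/(u(y)-c)]$ and integrating by parts in $y$ over $|y-y_c|\ge 1/\al$ converts the non-integrable factor into a logarithmic principal-value integral, which, together with the averaging bound \eqref{est:Hilbert} applied to the residual $u'(z)^{1/2}$ factor, produces exactly the $O(\ln\al)$ contribution claimed in \eqref{T3}.

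\emph{Main obstacle.} The most delicate step is verifying that the pointwise bound in Step 1 exposes the factor $|u(y)-c|/(1-c^2)$ (rather than the coarser $|y-y_c|$), so that after division by $|u(y)-c|^2$ the effective singularity is only $|u(y)-c|^{-1}$ and the near-region piece is truly $O(\al^{-1})$ rather than divergent after $L^2_c$-integration. A related difficulty is the endpoint degeneracy $c\to\pm 1$: any uncompensated factor of $(1-c^2)^{-1/2}$ in the integrand would destroy the $L^2_c$-bound, so the weighted operators of Lemma \ref{lemma:key-integral-estimate-1} and the Hilbert/averaging estimates \eqref{est:Hilbert} must be invoked instead of crude $L^\infty$ controls whenever they first appear.
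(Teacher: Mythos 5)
Both terms on the claimed right-hand side carry a factor of $\phi_1(y,c)^{-1}$ or $\phi_1(y,c)^{-2}$, hence decay like $e^{-C^{-1}\al|y-y_c|}$ by \eqref{est:phi_1-ul}, whereas the subtracted part of the kernel $\f{u'(y)u'(z)^{1/2}}{(1-c^2)^{3/2}}$ carries no $\phi_1$ factor at all. Concretely, take $c$ close to $1$ (so $y_c$ is large and $1-c^2$ small), $y$ near $0$, and $z\in(y,y_c)$ near $0$. Then $u'(y),u'(z)\approx 1$, $\phi_1(y,c)\gtrsim e^{C^{-1}\al|y_c|}$ is enormous, and so $|K(y,z,c)|\approx (1-c^2)^{-3/2}$ while your bound is exponentially small: the inequality fails badly. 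The split $\phi_1(z)\phi_1(y)^{-2}=\phi_1(y)^{-1}+[\phi_1(z)-\phi_1(y)]\phi_1(y)^{-2}$ does not pair $\phi_1(y)^{-1}$ usefully against $\f{u'(y)u'(z)^{1/2}}{(1-c^2)^{3/2}}$, since the two agree only at $y=y_c$ and then decay at \emph{different} rates ($e^{-\al|y-y_c|}$ versus $e^{-|y-y_c|}$). The paper instead adds and subtracts $\f{\phi_1(z,c)}{\phi_1(y,c)^2}\cdot\f{u'(y)u'(z)^{1/2}}{(1-c^2)^{3/2}}$, so that
\[
K=\frac{\phi_1(z,c)}{\phi_1(y,c)^2}\Bigl(1-\frac{u'(y)u'(z)^{1/2}}{(1-c^2)^{3/2}}\Bigr)
+\frac{u'(y)u'(z)^{1/2}}{(1-c^2)^{3/2}}\Bigl(\frac{\phi_1(z,c)}{\phi_1(y,c)^2}-1\Bigr),
\]
where the first piece keeps fast $\phi_1^{-1}$-decay and is controlled by the algebraic cancellation $\left|(1-c^2)^{3/2}-u'(y)u'(z)^{1/2}\right|\lesssim(1-c^2)^{1/2}|u(y)-c|$, while the second piece keeps only the slow decay coming from $\f{u'(y)u'(z)^{1/2}}{|u(y)-c|^2}\lesssim\f{(1-c^2)^{-1/2}}{e^{|y-y_c|}\min\{|y-y_c|^2,1\}}$.

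\textbf{The $\ln\al$ mechanism is also misidentified.} In the correct decomposition the second piece contributes (after $\left|\int_{y_c}^y|f|\right|\le|y-y_c|\,\mathcal{M}(f)(y_c)$) the $y$-integral of $\f{\min\{\al^2|y-y_c|^2,1\}\,|y-y_c|}{\min\{|y-y_c|^2,1\}\,e^{|y-y_c|}}$, and the logarithm arises solely on the intermediate range $1/\al\le|y-y_c|\le 1$, where the $\phi_1$-cancellation $\min\{\al^2|y-y_c|^2,1\}$ has saturated at $1$ while the $u$-side singularity $|y-y_c|^{-2}$ is still active, yielding $\int_{1/\al}^{1}\f{ds}{s}=\ln\al$. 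No splitting of the $y$-integral at scale $1/\al$ is needed. Your integration-by-parts route over $|y-y_c|\ge1/\al$ does not produce a $\ln\al$: the boundary term at $|y-y_c|=1/\al$ is of size $(1-c^2)^{-1/2}\mathcal{M}(f)(y_c)$ and the interior term is a truncated Hilbert-type transform, both $O(1)$ in the weighted $L^2_c$ norm. Moreover your claim that the near region gives $O(\al^{-1})$ fails even with a corrected kernel bound (the second piece already gives $O(1)$ there), and the far-region $\phi_1$ piece cannot be invoked through $\mathcal{T}_{1,1}$ of Lemma~\ref{lemma:key-integral-estimate-1}, since that operator carries $|u(y)-c|^{-1}$ while what you are facing, once the two kernel pieces are handled separately rather than via the paper's subtraction, carries $|u(y)-c|^{-2}$.
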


\begin{proof}
For simplicity of notation, we denote
\begin{align*}
P(y,z,c):=\f{(1-c^2)^{\f32}}{|u(y)-c|^2}\cdot\Big|\f{\phi_1(z,c)}
{\phi_1(y,c)^2}-\f{u'(y)u'(z)^{\f12}}{(1-c^2)^{\f32}}\Big|,\quad \text{for}\; y<z< y_c\;\text{or}\;y_c< z<y.
\end{align*}
We first give some basic estimates. Thanks to  $(1-c^2)-u'=(u-c)(u+c)$, we have
\begin{align*}
|(1-c^2)^{\f12}-(u')^{\f12}|&=\Big|\f{(u-c)(u+c)}{(1-c^2)^{\f12}
+u'^{\f12}}\Big|\leq 2(1-c^2)^{-\f12}|u-c|.
\end{align*}
Using \eqref{u(z)-c<u(y)-c}, we have
\begin{align}
\notag|(1-c^2)^{\f32}-u'(y)u'(z)^{\f12}|&\leq (1-c^2)|(1-c^2)^{\f12}-u'(z)^{\f12}|
+(1-c^2)^{\f12}|u'(z)-u'(y)|\\
&\leq C(1-c^2)^{\f12}|u(y)-c|\label{u'(u')12-(1-c^2)32}.
\end{align}
By \eqref{fm:u(y)-c}, \eqref{coshy1} and \eqref{sinhy}, we have
\begin{align}
\f{u'(y)u'(z)^{\f12}}{|u(y)-c|^2}
&=\f{\cosh^2 y\cosh^2y_c}{\sinh^2(y-y_c)\cosh^2y\cosh z}\leq \f{ C\cosh (z-y_c)\cosh z\cdot \cosh y_c}
{\sinh^2(y-y_c)\cosh z}\notag\\
&\leq \f{C(1-c^2)^{-\f12}}{e^{|y-y_c|}\min\{|y-y_c|^2,1\} }.
\label{u'(u')12/(u-c)^2}
\end{align}
By  \eqref{phi_1(z)leq phi_1(y)}, \eqref{phi_1-1leq}, \eqref{u'(u')12-(1-c^2)32} and \eqref{u'(u')12/(u-c)^2}, we have
\begin{align}
&\notag\quad
P(y,z,c)&\\
&\notag=\f{(1-c^2)^{\f32}}{|u(y)-c|^2}\cdot\Big|\f{\phi_1(z,c)}{\phi_1(y,c)^2}-\f{\phi_1(z,c)}{\phi_1(y,c)^2}\cdot \f{u'(y)u'(z)^{\f12}}{(1-c^2)^{\f32}}
+\f{\phi_1(z,c)}{\phi_1(y,c)^2}\cdot \f{u'(y)u'(z)^{\f12}}{(1-c^2)^{\f32}}
-\f{u'(y)u'(z)^{\f12}}{(1-c^2)^{\f32}}\Big|\\
\notag&\leq \f{|(1-c^2)^{\f32}-u'(y)u'(z)^{\f12}|}{|u(y)-c|^2}\cdot\f{\phi_1(z,c)}{\phi_1(y,c)^2}
+\f{u'(y)u'(z)^{\f12}}{|u(y)-c|^2}\cdot\Big(\Big|\f{\phi_1(z,c)-1}{\phi_1(y,c)^2}\Big|+\Big|\f{1-\phi_1(y,c)^2}{\phi_1(y,c)^2}\Big|\Big)\\
\notag&\lesssim  \f{(1-c^2)^{\f12}}{|u(y)-c|\phi_1(y,c)}+ \f{(1-c^2)^{-\f12}\min\{\al^2|y-y_c|^2,1\}}{\min\{|y-y_c|^2,1\} e^{|y-y_c|}}.
\end{align}
Now we are in a position to prove \eqref{T3}. Using the estimate of $P$, we have
 \begin{align*}
|(1-c^2)^{\f32}\mathcal{L}_{\f32}(c)|
&\leq
 \int_{\mathbb{R}}\f{(1-c^2)^{\f12}\big|\int_{y_c}^y|f(z)|dz\big|}{|u(y)-c|\phi_1(y,c)}dy\\
 &\quad+\int_{\mathbb{R}}\f{\int_{y_c}^y|f(z)|dz}{y-y_c}\cdot \f{(1-c^2)^{-\f12}\min\{\al^2|y-y_c|^2,1\}}{\min\{|y-y_c|^2,1\} e^{|y-y_c|}}\cdot |y-y_c|dy.
 \end{align*}
Therefore, using \eqref{est:phi_1-ul},  \eqref{est:maximalyc}, Lemma \ref{lemma:key-integral-estimate-1} ($m=1$, $l=0$),
  we have for $f\in L^2(\mathbb{R})$,
\begin{align*}
&|(1-c^2)^{\f32}\mathcal{L}_{\f32}(f)(c)|\\
&\leq C(1-c^2)^{\f12}\mathcal{T}_{1,0}(f)(c)
+C (1-c^2)^{-\f12}\mathcal{M}(f)(y_c)\int_{\mathbb{R}}
\f{\min\{\al^2|y-y_c|^2,1\}|y-y_c|}{\min\{|y-y_c|^2,1\} e^{|y-y_c|}}dy
\\
&\leq C(1-c^2)^{\f12}\mathcal{T}_{1,0}(f)(c)
+C (1-c^2)^{-\f12}\mathcal{M}(f)(y_c)\cdot \ln \al.
\end{align*}
Thus, the desired inequality follows from Lemma \ref{lemma:key-integral-estimate-1} and \eqref{est:maximalyc}.
\end{proof}

In what follows, we prove Lemma \ref{lem:key-T} by three steps.\smallskip




\textbf{Step 1}. $i=k=0$. It holds that
\begin{align}\label{lem:T-weighted-L2-key}
\|(1-c^2)^{\f32}\mathcal{T}(f)(c)\|_{L^2(-1,1)}\leq C\al\|f\|_{L^2(\mathbb{R})}.
\end{align}

\begin{proof}[Proof of Step 1.]
Direct calculation gives for $c\in(-1,1)$
\begin{align}
&\notag p.v.\int_{\mathbb{R}}\f{u'(y)\int_{y_c}^yF(z)dz}{(u(y)-c)^2}dy\overset{\tilde{c}=u(y)}{=}p.v.\int_{-1}^1\f{\int_{u^{-1}(c)}^{u^{-1}(\tilde{c})}F(z)dz}{(\tilde{c}-c)^2}d\tilde{c}\\
&\notag=p.v.\int_{-1}^1\f{\int_{c}^{\tilde{c}}(F/u')\circ u^{-1}(\tilde{\tilde{c}})d\tilde{\tilde{c}}}{(\tilde{c}-c)^2}d\tilde{c}\\
\notag&=-p.v.\int_{-1}^1\pa_{\tilde{c}}(\f{1}{\tilde{c}-c})\Big(\int_{c}^{\tilde{c}}(F/u')\circ u^{-1}(\tilde{\tilde{c}})d\tilde{\tilde{c}}\Big)d\tilde{c}\\
\notag&=p.v.\int_{-1}^1\f{(F/u')\circ u^{-1}(\tilde{c})}{\tilde{c}-c}d\tilde{c}-\f{\int_{c}^{\tilde{c}}(F/u')\circ u^{-1}(\tilde{\tilde{c}})d\tilde{\tilde{c}}}{\tilde{c}-c}\big|_{\tilde{c}=-1}^1\\
&=-\mathcal{H}\left((F/u')\circ u^{-1}\right)(c)-\mathcal{A}_{1}\left((F/u')\circ u^{-1}\right)(c)+\mathcal{A}_{-1}\left((F/u')\circ u^{-1}\right)(c).\label{fm:Hilbert}
\end{align}
Using  \eqref{fm:Hilbert}(with $F=fu'^{\f12}$) and \eqref{def:L1.5}, we are able to  decompose $\mathcal{T}$, cf. \eqref{def:T(h)-(-1,1)} as
\begin{align}
&\notag(1-c^2)^{\f32}\mathcal{T}(f)(c)\\
\notag&=\int_{\mathbb{R}}\f{u'(y)\int_{y_c}^y\big(fu'^{\f12}\big)(z)dz}{(u(y)-c)^2}dy
+(1-c^2)^{\f32}\int_{\mathbb{R}}\int_{y_c}^y\f{f(z)}{(u(y)-c)^2}\Big(\f{\phi_1(z,c)}{\phi_1(y,c)^2}-\f{u'(y)u'(z)^{\f12}}{(1-c^2)^{\f32}}\Big)dzdy\\
\notag 
&=-\mathcal{H}\Big((f/u'^{\f12})\circ u^{-1}\Big)(c)-\mathcal{A}_1\Big((f/u'^{\f12})\circ u^{-1}\Big)(c)+\mathcal{A}_{-1}\Big((f/u'^{\f12})\circ u^{-1}\Big)(c)\\
&\quad+(1-c^2)^{\f32}\mathcal{L}_{\f32}(f)(c)\notag.
\end{align}
Then \eqref{lem:T-weighted-L2-key} follows from \eqref{est:Hilbert} and \eqref{T3}.
\end{proof}

\begin{remark}
In the above decomposition of $\mathcal{T}(c)$,  we choose the part \eqref{def:L1.5} rather than a nature one, as seen in $I_1$ in decomposition \eqref{dec:T(bfc)}. The reason for this choice is that the estimates for $(1-c^2)^{\f32}\mathcal{L}_{\f32}(f)(c)$ and the remaining terms $-\mathcal{H}\Big((f/u'^{\f12})\circ u^{-1}\Big)(c)$, etc., are exactly the same. This results in the weighted estimate for $\mathcal{T}$ in \eqref{lem:T-weighted-L2-key} being optimal.
\end{remark}

\textbf{Step 2}. $k=0$. It holds that
\begin{align}\label{cor:T-weighted-L2-key}
\|(1-c^2)^{\f32-i}\mathcal{T}(f)(c)\|_{L^2(-1,1)}\leq C\al\|f\|_{L^2_{1/(u')^{2i}}(\mathbb{R})},\quad i=0,1,2.
\end{align}

\begin{proof}[Proof of Step 2.]
The case of $i=0$ follows from  \eqref{lem:T-weighted-L2-key}.
We observe by Lemma \ref{lem:simple-useful-equ} that
\begin{align}
\label{est:u(z)^2-c^2}& u'=(1-c^2)-(u^2-c^2),\\
&|u(z)^2-c^2|\leq 2|u(z)-c|\leq 2|u(y)-c|,\quad \text{for}\;y_c\leq z\leq y\;\text{or}\;y\leq z\leq y_c.\label{est:u(z)^2-c^2-2}
\end{align}
If $f/u'\in L^2$, then  we have by \eqref{est:u(z)^2-c^2} that
\begin{align}
\mathcal{T}(f)=\mathcal{T}\big(u'(f/u')\big)
\label{decompose:calTf-1}&
=(1-c^2)\mathcal{T}\big(f/u'\big)-\mathcal{T}\Big((u^2-c^2)f/u'\Big),
\end{align}
which together with \eqref{est:u(z)^2-c^2-2} gives
\begin{align*}
(1-c^2)^{\f12}|\mathcal{T}(f)(c)|&\leq
(1-c^2)^{\f32}|\mathcal{T}\big(f/u'\big)(c)|+2(1-c^2)^{\f12}\mathcal{T}_{1,0}(f/u')(c).
\end{align*}
Therefore, \eqref{cor:T-weighted-L2-key}($i=1$) follows from \eqref{lem:T-weighted-L2-key} and
Lemma \ref{lemma:key-integral-estimate-1}(with $m=1$,\;$l=0$).

Applying \eqref{est:u(z)^2-c^2} again on  \eqref{decompose:calTf-1}, we have
\begin{align}
\notag\mathcal{T}(f)
\notag&=(1-c^2)\mathcal{T}\big(u'f/u'^2\big)-\mathcal{T}\Big(u'(u^2-c^2)f/u'^2\Big)\\
&=(1-c^2)^{2}\mathcal{T}(f/u'^2)
-2(1-c^2)\mathcal{T}\Big((u^2-c^2)f/u'^2\Big)+\mathcal{T}\Big((u^2-c^2)^2f/u'^2\Big),
\end{align}
which together with \eqref{est:u(z)^2-c^2-2} gives
\begin{align*}
&\quad (1-c^2)^{-\f12}|\mathcal{T}(f)(c)|\\
&\leq
(1-c^2)^{\f32}|\mathcal{T}\big(f/(u')^2\big)(c)|
+4(1-c^2)^{\f12}\mathcal{T}_{1,0}(f/(u')^2)(c)
+4(1-c^2)^{-\f12}\mathcal{T}_{0,0}(f/(u')^2)(c).
\end{align*}
Therefore, \eqref{cor:T-weighted-L2-key}($i=2$) follows from
\eqref{lem:T-weighted-L2-key} and Lemma \ref{lemma:key-integral-estimate-1}(with $m=1$, $l=0$; $m=l=0$).
\end{proof}

Before  proceeding with \textit{Step 3},  let us review some basic facts that have been previously used. By \eqref{good:phi1}, \eqref{fm:paGintf}, \eqref{phi_1(z)leq phi_1(y)}, \eqref{est:phi_1-ul} and \eqref{est:u'/u-c1}, we have
\begin{align*}
&\pa_{G}^j\Big(\int_{y_c}^yf(z,c)dz\Big)=\int_{y_c}^y\pa_{G}^jf(z,c)dz,\quad j\in\mathbb{Z}^+,\\
&\pa_c\Big(p.v.\int_{\mathbb{R}}f(c,y)dy\Big)
=p.v.\int_{\mathbb{R}}\pa_Gf(c,y)dy,\quad\text{if}\;
\lim_{y\to\infty}f(c,y)=0,\\
&\lim_{y\to\infty}\f{(1-c^2)^2\int_{y_c}^y\phi_1(z,c)f(z)dz}{(u(y)-c)^2\phi_1(y,c)^2}=0,\quad f\in L^2(\mathbb{R}),\\
&\bigg|\pa_{G}^j\left(\left(\f{1-c^2}{u(y)-c}\right)^2\right)\bigg|\leq
C\Big(\f{|u(y)-c|}{1-c^2}\Big)^{j-2},\quad j\in\mathbb{Z}^+,\\ &\Big|\f{\pa_G^j\phi_1(y,c)}{\phi_1(y,c)}\Big|
\leq C\Big(\f{|u(y)-c|}{1-c^2}\Big)^{j}\langle\al|y-y_c|\rangle^{j},\quad j=1,2,3.
\end{align*}

\textbf{Step 3}. $k=1,2,3$ and $i=0, 1,2$.

\begin{proof}[Proof of Lemma \ref{lem:key-T}]
Taking $\pa_c$ on $(1-c^2)^2\mathcal{T}(c)$, we  get for $f\in H^1(\mathbb{R})$,
\begin{align}
\notag&\pa_c\left(p.v.\int_{\mathbb{R}}\f{(1-c^2)^2\int_{y_c}^y\phi_1(z,c)f(z)dz}{(u(y)-c)^2\phi_1(y,c)^2}dy\right)\\
\notag&=p.v.\int_{\mathbb{R}}\pa_{G}\Big(\f{(1-c^2)^2\int_{y_c}^y\phi_1(z,c)f(z)dz}{(u(y)-c)^2\phi_1(y,c)^2}\Big)dy\\
\notag&=\int_{\mathbb{R}}\Big(\int_{y_c}^y\pa_{G}\phi_1(z,c)\cdot f(z)dz\Big)\cdot\f{(1-c^2)^2}{(u(y)-c)^2}\cdot\phi_1(y,c)^{-2}dy\\
\notag&\quad+\int_{\mathbb{R}}\Big(\int_{y_c}^y\phi_1(z,c) f(z)dz\Big)\cdot\pa_G\Big(\f{(1-c^2)^2}{(u(y)-c)^2}\Big)\cdot\phi_1(y,c)^{-2}dy\\
\notag&\quad+\int_{\mathbb{R}}\Big(\int_{y_c}^y\pa_{G}\phi_1(z,c)\cdot f(z)dz\Big)\cdot\f{(1-c^2)^2}{(u(y)-c)^2}\cdot\pa_G(\phi_1(y,c)^{-2})dy\\
\label{GoodT1}&\quad+ (1-c^2)^{-1}\int_{\mathbb{R}}\Big(\int_{y_c}^y\phi_1(z,c)\cdot f'(z)dz\Big)\cdot\f{(1-c^2)^2}{(u(y)-c)^2}\cdot\phi_1(y,c)^{-2}dy.
\end{align}
In general, for  $k=1,2,3$ and assuming $f\in H^k(\mathbb{R})$,  we have the derivative formulas as follows
\begin{align}
\notag&\pa_c^k\Big((1-c^2)^2\mathcal{T}(f)(c)\Big)\\
&\notag=p.v.\int_{\mathbb{R}}\pa_{G}^k\Big((\int_{y_c}^y\phi_1(z,c)f(z)dz)\cdot\f{(1-c^2)^2}{(u(y)-c)^2}\cdot\phi_1(y,c)^{-2}\Big)dy\\
\notag&=\small\sum_{j_1=0}^{k-1}\sum_{j_2=0}^{k-j_1} \sum_{j_3=0}^{k-j_1-j_2} C_k^{j_1}C_{k-j_1}^{j_2}C_{k-j_1-j_2}^{j_3}\times\\
\notag&\qquad\int_{\mathbb{R}} \int_{y_c}^y\f{f^{(j_1)}(z)}{(1-c^2)^{j_1}}dz\cdot\pa_{G}^{j_2}\phi_1(z,c)
\cdot\pa_{G}^{j_3}\Big(\f{(1-c^2)^2}{(u(y)-c)^2}\Big)\cdot\pa_G^{k-j_1-j_2-j_3}\big(\phi_1(y,c)^{-2}\big)dy\\
&\quad\label{fm:pa_kT}+ (1-c^2)^{-k}p.v.\int_{\mathbb{R}}\f{\int_{y_c}^y\phi_1(z,c)f^{(k)}(z)dz}{\phi(y,c)^2}dy.
\end{align}

Now we  prove the lemma  by the induction on $k$. The case of $k=0$ follows from \eqref{cor:T-weighted-L2-key}.\smallskip

Consider $k=1$ and $f/(u')^i\in H^1(\mathbb{R})$, $i=0,1,2$.  Thanks to \eqref{GoodT1} and  \eqref{phi_1(z)leq phi_1(y)}, we have
 \begin{align}\label{est:pac(1-c2)2T}
\notag \Big|\pa_c\big((1-c^2)^2\mathcal{T}(f)(c)\big)-(1-c^2)\mathcal{T}(f^{'})(c)\Big|
 &\lesssim \int_{\mathbb{R}}\f{1-c^2}{|u(z)-c|}
 \cdot\f{\langle\al|y-y_c|\rangle}{\phi_1(y,c)}\cdot
 \Big|\int_{y_c}^y|f(z)|dz\Big|dy\\
 &= C(1-c^2) \mathcal{T}_{1,1}(f)(c).
 \end{align}
Decomposing $\mathcal{T}_{1,1}(f)$($i=1,2$) similarly as \eqref{decompose:calTf-1}, we obtain by Lemma \ref{lemma:key-integral-estimate-1} that
\beq\notag
\mathcal{T}_{1,1}(f)(c)
\left\{
\begin{aligned}\notag
&=\al^{-1}(1-c^2)^{-\f12}\mathcal{L}^2,\;i=0,\\
&\leq 2\mathcal{T}_{0,1}(f/u')(c)+(1-c^2)
 \mathcal{T}_{1,1}(f/u')(c)=\al^{-1}(1-c^2)^{\f12}\mathcal{L}^2,\;i=1,2,
\end{aligned}
\right.
\eeq
which gives
\begin{align}\label{est:T11}
\mathcal{T}_{1,1}(f)(c)
=\al^{-1}(1-c^2)^{-\f12+\min\{i,1\}}\mathcal{L}^2.
\end{align}
 Thanks to  \eqref{est:pac(1-c2)2T}, Lemma \ref{lem:key-T} for  $k=0$ and  \eqref{est:T11}, we have
 \begin{align*}
 |\pa_c\mathcal{T}(f)(c)|
 &\leq 4c(1-c^2)^{-1}|\mathcal{T}(f)(c)|+(1-c^2)^{-1}|\mathcal{T}(f')(c)|+C
 (1-c^2)^{-1}|\mathcal{T}_{1,1}(f)(c)|\\
 &=\al(1-c^2)^{-\f52+i}\mathcal{L}^2+\al^{-1}(1-c^2)^{-\f32+\min\{i,1\}}
 \mathcal{L}^2=\al(1-c^2)^{-\f52+i}\mathcal{L}^2,
 \end{align*}
which gives Lemma \ref{lem:key-T} for $k=1$, $i=0,1,2$.

Consider $k=2$ and $f/(u')^i\in H^2(\mathbb{R})$, $i=0,1,2$. Analogous to \eqref{est:pac(1-c2)2T} and  \eqref{est:T11}, using \eqref{fm:pa_kT}($k=2$),  \eqref{phi_1(z)leq phi_1(y)} and  Lemma \ref{lemma:key-integral-estimate-1}, 
 we have
\begin{align*}
\Big|\pa_c^2\big((1-c^2)^2\mathcal{T}(f)(c)\big)-\mathcal{T}(f^{''})(c)\Big|
 &\lesssim \mathcal{T}_{0,2}(f)(c)+\mathcal{T}_{1,1}(f')(c)\\
&=\al^{-2}(1-c^2)^{\f12}
+\al^{-1}(1-c^2)^{-\f12+\min\{i,1\}}\mathcal{L}^2\\
&=\al^{-1}(1-c^2)^{-\f12+\min\{i,1\}}\mathcal{L}^2.
 \end{align*}
 Therefore, using the estimates for $k=0,1$, we obtain
 \begin{align*}
 |\pa_c^2\mathcal{T}(f)(c)|
 &\lesssim(1-c^2)^{-1}|\pa_c\mathcal{T}(f)(c)|+
 (1-c^2)^{-2}|\mathcal{T}(f)(c)|+(1-c^2)^{-2}|\mathcal{T}(f'')(c)|\\
 &\quad+\al^{-1}(1-c^2)^{-\f52+\min\{i,1\}}\mathcal{L}^2\\
 &=\al(1-c^2)^{-\f72+i}\mathcal{L}^2+\al^{-1}(1-c^2)^{-\f52+\min\{i,1\}}
 \mathcal{L}^2=\al(1-c^2)^{-\f72+i}\mathcal{L}^2,
 \end{align*}
which gives Lemma \ref{lem:key-T} for $k=2$, $i=0,1,2$.

Consider $k=3$ and $f/(u')^i\in H^2(\mathbb{R})$, $i=0,1,2$. Analogous to \eqref{est:pac(1-c2)2T} and  \eqref{est:T11}, using \eqref{fm:pa_kT}($k=3$),  \eqref{phi_1(z)leq phi_1(y)} and  Lemma \ref{lemma:key-integral-estimate-1}, 
 we have
\begin{align*}
 &\quad\Big|\pa_c^3\big((1-c^2)^2\mathcal{T}(f)(c)\big)-(1-c^2)^{-1}\mathcal{T}(f^{'''})(c)\Big|\\
 &\lesssim
 (1-c^2)^{-1}\mathcal{T}_{1,1}(f'')(c)+(1-c^2)^{-1}\mathcal{T}_{0,2}(f')(c)
 +(1-c^2)^{-1}\mathcal{T}_{0,3}(f)(c)\\
 &=\al^{-1}(1-c^2)^{-\f32+\min\{i,1\}}\mathcal{L}^2.
 \end{align*}
  Therefore, using Lemma \ref{lem:key-T} for $k=0,1,2$, we obtain
 \begin{align*}
 |\pa_c^3\mathcal{T}(f)(c)|
 &\lesssim(1-c^2)^{-1}|\pa_c^2\mathcal{T}(f)(c)|+(1-c^2)^{-2}|\pa_c\mathcal{T}(f)(c)|
 +(1-c^2)^{-2}|\mathcal{T}(f)(c)|\\
 &\quad+(1-c^2)^{-3}|\mathcal{T}(f'')(c)|+\al^{-1}(1-c^2)^{-\f72+\min\{i,1\}}\\
 &=\al(1-c^2)^{-\f92+i}\mathcal{L}^2+\al^{-1}(1-c^2)^{-\f72+\min\{i,1\}}\mathcal{L}^2
 \mathcal{L}^2=\al(1-c^2)^{-\f92+i}\mathcal{L}^2,
 \end{align*}
 which gives Lemma \ref{lem:key-T} for $k=3$, $i=0,1,2$.
\end{proof}

\subsection{Estimates of the kernel when $\al \geq 2$.} 
Recall the kernel $\mathcal{K}$ introduced in 
Corollary \ref{lem:dual}
\begin{align*}
\mathcal{K}(f,g)(c,\al)&=\f{(1-c^2)^2\Lambda_1(f)(c,\al)\Lambda_2(g)(c,\al)}{A(c,\al)^2+4\pi^2c^2},\quad c\in(-1,1),\;\al\geq 1,
\end{align*}
with
\begin{align*}
\Lambda_1(f)&=2c\mathcal{T}(f)-(1-c^2)^{-2}A(c,\al)f(y_c),\quad
\Lambda_2(g)=\mathcal{T}(u''g)+(1-c^2)^{-1}A(c,\al)g(y_c), \end{align*}
and $A$ defined  in \eqref{def:A} and $\mathcal{T}$  defined in \eqref{def:T(h)-(-1,1)}.

From Remark \ref{rmk:K},  we know that $\mathcal{K}(c)$ has no singularities on $(-1,1)$ when $\al\geq 2$. Consequently, we can establish the following weighted bilinear estimates.
\begin{proposition}\label{Prop:K-1.}
Let $\al\geq 2$, $k=0,1,2$. 
If $f\in H^k_{1/(u')^{2k}}(\mathbb{R})$, $g\in H^1(\mathbb{R})$, then
\begin{align}
\label{2K1}&\|(\pa_c^k\mathcal{K})(f,g)(c)\|_{L^1_c}\leq C\|f\|_{H^k_{1/(u')^{2k}}}\|g\|_{H^k},
\end{align}
where  $C$ is independent of $\al$. Moreover, if $f\in H^1 _{1/(u')^2}(\mathbb{R})$, $g\in H^1(\mathbb{R})$, then
\begin{align}
\label{BoundaryK2}&
\lim_{c\to\pm 1}\mathcal{K}(f,g)(c,\al)=0.
\end{align}

\end{proposition}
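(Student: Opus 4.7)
\medskip

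The plan is to exploit the fact that, for $\alpha\geq 2$, the denominator $A(c,\alpha)^2+4\pi^2c^2$ is non-degenerate on the whole interval $(-1,1)$: by Lemma \ref{lem:A}, $A^2+4\pi^2c^2\sim \alpha^2$ uniformly in $c$. This is precisely the benefit of being away from the embedding eigenvalue ($\alpha=1$, $c=0$), and it reduces the bilinear estimate to weighted $L^2$ control of $\Lambda_1(f)$, $\Lambda_2(g)$, and their $c$-derivatives. The three inputs I will rely on are Lemma \ref{lem:key-T} (weighted $L^2$ bounds on $\partial_c^k\mathcal{T}$ matched to the regularity $H^k_{1/(u')^{2k}}$), Lemma \ref{lem:pa_cA} (pointwise bounds on $\partial_c^k A$), and the trace identity $\|(1-c^2)^{-1/2}f(y_c)\|_{L^2_c(-1,1)}=\|f\|_{L^2(\mathbb{R})}$ from \eqref{lem:weighted-norm-derivative}, which handles the boundary-value terms $Af(y_c)$, $Ag(y_c)$ inside $\Lambda_1,\Lambda_2$.

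For the base case $k=0$, I would use $A^2+4\pi^2c^2\sim\alpha^2$ and Cauchy--Schwarz to write
\begin{align*}
\|\mathcal{K}(f,g)\|_{L^1_c}\leq C\alpha^{-2}\,\bigl\|(1-c^2)^{3/2}\Lambda_1(f)\bigr\|_{L^2_c}\bigl\|(1-c^2)^{1/2}\Lambda_2(g)\bigr\|_{L^2_c}.
\end{align*}
Splitting $\Lambda_1(f)=2c\mathcal{T}(f)-(1-c^2)^{-2}Af(y_c)$ and noting that $\|(1-c^2)^{3/2}\mathcal{T}(f)\|_{L^2}\leq C\alpha\|f\|_{L^2}$ by Lemma \ref{lem:key-T} (with $i=0$), while $\|(1-c^2)^{-1/2}Af(y_c)\|_{L^2}\leq C\alpha\|f\|_{L^2}$ by the trace identity together with $|A|\leq C\alpha$, gives $\|(1-c^2)^{3/2}\Lambda_1(f)\|_{L^2}\leq C\alpha\|f\|_{L^2}$. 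The analogous bound for $\Lambda_2(g)$ uses $|u''|\leq 2u'$ so that $u''g\in L^2_{1/(u')^{2}}$ when $g\in L^2$, so Lemma \ref{lem:key-T} with $i=1$ applies to $\mathcal{T}(u''g)$, and the trace identity handles $Ag(y_c)$; this yields $\|(1-c^2)^{1/2}\Lambda_2(g)\|_{L^2}\leq C\alpha\|g\|_{L^2}$. The two $O(\alpha)$ factors combine with $\alpha^{-2}$ to give the desired $\alpha$-independent $L^1$ bound.

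For $k=1,2$, I would apply the Leibniz rule
\begin{align*}
\partial_c^k\mathcal{K}=\!\!\sum_{j_1+j_2+j_3+j_4=k}\!\!c_{\mathbf{j}}\,\partial_c^{j_1}\!\!\bigl[(1-c^2)^2\bigr]\,\partial_c^{j_2}\Lambda_1(f)\,\partial_c^{j_3}\Lambda_2(g)\,\partial_c^{j_4}\!\!\bigl[(A^2+4\pi^2c^2)^{-1}\bigr],
\end{align*}
and treat each piece analogously. The denominator factor and its derivatives are bounded by $C\alpha^{-2}$ using Lemma \ref{lem:pa_cA} (with $|\partial_c A|\leq C$ and $|\partial_c^m A|\leq C\alpha^{-1}(1-c^2)^{-m+2}$ absorbing the polynomial losses in $(1-c^2)^{-1}$). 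The derivatives of $\Lambda_1,\Lambda_2$ produce terms of the form $\partial_c^j\mathcal{T}(f)$ (controlled by Lemma \ref{lem:key-T} with $i=k$, which is exactly what the hypothesis $f\in H^k_{1/(u')^{2k}}$ provides) and terms involving $\partial_c^j[f(y_c)]=(1-c^2)^{-j}f^{(j)}(y_c)$, for which $\|(1-c^2)^{-j-1/2}f^{(j)}(y_c)\|_{L^2_c}=\|f^{(j)}/(u')^j\|_{L^2}\leq \|f\|_{H^j_{1/(u')^{2j}}}$. In each case the weights combine so that Cauchy--Schwarz with the correct split produces two $L^2_c$-norms of order $\alpha$, whose product overwhelms $\alpha^{-2}$ to yield the claimed $\alpha$-independent bound.

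The main obstacle, as in previous sections, is the bookkeeping of weights: there are many terms to balance, and one has to choose the $(1-c^2)$-exponents in the Cauchy--Schwarz split so that every contribution is $L^1_c$-integrable, with no piece paying a price at $c=\pm 1$ beyond what the trace identity and Lemma \ref{lem:key-T} absorb. Finally, the boundary identity \eqref{BoundaryK2} follows from the explicit $(1-c^2)^2$ prefactor, which vanishes at $\pm 1$, combined with the fact that the $k=1$ case already puts $\mathcal{K}$ in $W^{1,1}_c(-1,1)$; this makes $\mathcal{K}(\cdot,\alpha)$ continuous up to the endpoints with limits controlled by the integrable derivative, and then the vanishing of the $(1-c^2)^2$ factor forces those limits to be $0$.
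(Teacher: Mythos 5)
Your approach is essentially the same as the paper's: reduce to the weighted $L^2$ estimates for $\Lambda_1,\Lambda_2$ (these are packaged in the paper as Corollaries \ref{cor:1A} and \ref{cor:Lambda}), use the non-degeneracy $A^2+4\pi^2c^2\sim\alpha^2$ for $\alpha\geq 2$, and close by Cauchy--Schwarz. Your bookkeeping of the $(1-c^2)$ exponents in the Leibniz terms is also correct, because for $j_4\leq 2$ the derivative $\partial_c^{j_4}\bigl[(A^2+4\pi^2c^2)^{-1}\bigr]$ costs no powers of $(1-c^2)$, so separating the prefactor $(1-c^2)^2$ from the denominator (as you do) gives the same arithmetic as the paper's bound $\partial_c^k\bigl(\frac{(1-c^2)^2}{A^2+4\pi^2c^2}\bigr)=\alpha^{-2}(1-c^2)^{2-k}\mathcal{L}^\infty$.

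The one place where your argument is imprecise is the boundary limit \eqref{BoundaryK2}. The statement ``the vanishing of the $(1-c^2)^2$ prefactor forces the limits to be $0$'' is not by itself sufficient: as your own $k=0$ estimate shows, $\Lambda_1\Lambda_2/(A^2+4\pi^2c^2)$ is only controlled in a weighted $L^2$ sense, and pointwise it can be as large as $(1-c^2)^{-2}$ when $f\in L^2$, in which case $\mathcal{K}$ need not tend to zero. What actually closes the argument (and what the paper does) is that the stronger hypothesis $f\in H^1_{1/(u')^2}\subset L^2_{1/(u')^2}$ lets one use Lemma \ref{lem:key-T} with $i=1$ rather than $i=0$, upgrading $\Lambda_1(f)=\alpha(1-c^2)^{-3/2}\mathcal{L}^2$ to $\Lambda_1(f)=\alpha(1-c^2)^{-1/2}\mathcal{L}^2$, hence $\mathcal{K}=(1-c^2)\mathcal{L}^1$. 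Combining this with the $W^{1,1}$ continuity you already established: a function continuous on $[-1,1]$ with $\mathcal{K}/(1-c^2)\in L^1$ must have $\mathcal{K}(\pm 1)=0$, since a non-zero endpoint value would make $\mathcal{K}/(1-c^2)$ non-integrable. This small repair uses only ingredients you already have, so the gap is minor, but as written the boundary step is incomplete.
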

Before  proceeding with the proof, we  summarize the estimates we have obtained in following two corollarys. We conclude by Lemma \ref{lem:A} and Lemma \ref{lem:pa_cA} as follows.
\begin{Corollary}\label{cor:1A}
Let  $\al\geq 2$, $c\in(-1,1)$,  $k=0,1,2$. It holds that
\begin{align}
\notag
&A(c,\al)=\al\mathcal{L}^{\infty},\;\;
\pa_cA(c,\al)=\mathcal{L}^{\infty},\;\;
\pa_c^2A(c,\al)=\al^{-1}(1-c^2)^{-1}\mathcal{L}^{\infty},\\
\label{1K2''}&\pa_c^k\left(\f{(1-c^2)^2}{A(c,\al)^2+4\pi^2c^2}\right)=\al^{-2}(1-c^2)^{2-k}\mathcal{L}^{\infty}.
\end{align}
\end{Corollary}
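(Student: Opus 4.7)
The proof is essentially a routine bookkeeping exercise: all the ingredients have already been established, so the plan is to read off the first three estimates from earlier lemmas and then derive \eqref{1K2''} by Leibniz differentiation.

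First I would dispose of the three pointwise bounds on $A$. The bound $A(c,\al)=\al\mathcal{L}^\infty$ is precisely the upper half of \eqref{est:A} in Lemma~\ref{lem:A}, combined with the trivial bound $2c\pi\leq C$. The bound $\pa_cA(c,\al)=\mathcal{L}^\infty$ is exactly \eqref{est:pacA} in Lemma~\ref{lem:pa_cA}. For $\pa_c^2A$, specializing \eqref{est:packA} to $k=2$ gives $|\pa_c^2A|\leq C\al^{-1}(1-c^2)^0=C\al^{-1}$, which, since $(1-c^2)^{-1}\geq 1$, majorizes the claimed $\al^{-1}(1-c^2)^{-1}\mathcal{L}^\infty$.

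Next I would establish \eqref{1K2''} by direct calculation. Write $P(c)=(1-c^2)^2$ and $Q(c,\al)=A(c,\al)^2+4\pi^2c^2$, and use
\begin{align*}
\pa_c(P/Q)&=P'/Q-PQ'/Q^2,\\
\pa_c^2(P/Q)&=P''/Q-2P'Q'/Q^2+2P(Q')^2/Q^3-PQ''/Q^2.
\end{align*}
Elementary computations give $|P|\leq 1$, $|P'|,|P''|\leq C$; while the Leibniz formulas $Q'=2A\pa_cA+8\pi^2 c$ and $Q''=2(\pa_cA)^2+2A\pa_c^2A+8\pi^2$ together with the three pointwise bounds just obtained yield $|Q'|\leq C\al$ and $|Q''|\leq C$. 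The crucial lower bound $Q\geq C^{-1}\al^2$ is the lower half of \eqref{est:A}. Substituting these into the three displayed formulas gives $|P/Q|\leq C(1-c^2)^2\al^{-2}$, $|\pa_c(P/Q)|\leq C(1-c^2)\al^{-2}+C\al^{-3}\leq C(1-c^2)\al^{-2}$, and $|\pa_c^2(P/Q)|\leq C\al^{-2}+C\al^{-3}+C\al^{-4}+C\al^{-4}\leq C\al^{-2}$, which are exactly the three cases $k=0,1,2$ of \eqref{1K2''}.

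There is no real obstacle here—the sharp $\al^{-2}$ decay already comes from the lower bound $Q\gtrsim\al^2$, and each differentiation loses at most one power of $(1-c^2)$ from $P$ while the $Q$-derivatives only improve the $\al$-weight. The whole corollary is really a packaging of Lemma~\ref{lem:A} and Lemma~\ref{lem:pa_cA} in the form that will be convenient for the weighted bilinear estimates in Proposition~\ref{Prop:K-1.}.
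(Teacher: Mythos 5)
The overall plan is the right one and matches the paper, which simply reads the corollary off from Lemma~\ref{lem:A} and Lemma~\ref{lem:pa_cA}. The three pointwise bounds on $A,\pa_cA,\pa_c^2A$ are indeed immediate from \eqref{est:A}, \eqref{est:pacA}, and \eqref{est:packA} with $k=2$, and the Leibniz approach to \eqref{1K2''} through $P=(1-c^2)^2$, $Q=A^2+4\pi^2c^2$ is exactly the intended routine computation.

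However, your bookkeeping in the $k=1$ case has a slip. You recorded only the crude bounds $|P|\le 1$, $|P'|\le C$, and then wrote
\begin{align*}
|\pa_c(P/Q)|\leq C(1-c^2)\al^{-2}+C\al^{-3}\leq C(1-c^2)\al^{-2}.
\end{align*}
The last inequality is false: as $c\to\pm 1$ the right side tends to $0$ while the middle expression stays $\ge C\al^{-3}$. The source of the error is that you dropped the $(1-c^2)$-dependence of $P$ and $P'$. Since $P=(1-c^2)^2$ and $P'=-4c(1-c^2)$, the correct inputs are $|P|\le(1-c^2)^2$ and $|P'|\le C(1-c^2)$. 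Then
\begin{align*}
\Big|\f{P'}{Q}\Big|\lesssim (1-c^2)\al^{-2},\qquad
\Big|\f{PQ'}{Q^2}\Big|\lesssim (1-c^2)^2\al^{-3}\le(1-c^2)\al^{-2},
\end{align*}
the second inequality using $(1-c^2)\le 1\le\al$, and the $k=1$ case closes. With the same sharp input bounds the $k=2$ case goes through as you wrote it. So the conclusion is correct, but the argument as stated for $k=1$ does not follow from the bounds you asserted; you need to keep the $(1-c^2)$ factors in $P$ and $P'$ explicit.
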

Thus, together with Lemma \ref{lem:key-T} and Lemma \ref{lem:weighted-norm-derivative},  we have

\begin{Corollary}\label{cor:Lambda}
Let $\al\geq 1$, $c\in(-1,1)$, $k=0,1,2,3$, $i=0,1,2$, $s=0,1,2$. If  $f\in H^k_{1/(u')^{2i}}(\mathbb{R})$, $g\in H^s(\mathbb{R})$, then it holds that
\begin{align}
\label{2K2}&\pa_c^k\Lambda_1(f)(c)=\al(1-c^2)^{-\f32-k+i}\mathcal{L}^2,\\
\label{3K2}&\pa_c^s\Lambda_2(g)(c)=\al(1-c^2)^{-\f12-s}\mathcal{L}^2.
\end{align}

\end{Corollary}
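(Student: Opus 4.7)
The plan is to derive both estimates in Corollary \ref{cor:Lambda} by pure bookkeeping with the Leibniz rule, combining the key weighted bound Lemma \ref{lem:key-T} for $\partial_c^k \mathcal{T}$ with the estimates of $\partial_c^{j}A$ from Lemmas \ref{lem:A} and \ref{lem:pa_cA} and the boundary-evaluation rule $\frac{d^j}{dc^j}h(y_c)=(1-c^2)^{-j}h^{(j)}(y_c)$ from Lemma \ref{lem:weighted-norm-derivative}. Throughout I will work with the convention $h=m\mathcal{L}^p$ meaning $\|h/m\|_{L^p_c}\leq C$. The uniform factors are packaged as follows: $\partial_c^{j_1}(1-c^2)^{-2}=(1-c^2)^{-2-j_1}\mathcal{L}^{\infty}$ and $\partial_c^{j_1}(1-c^2)^{-1}=(1-c^2)^{-1-j_1}\mathcal{L}^{\infty}$; $A=\al\mathcal{L}^{\infty}$, $\partial_c A=\mathcal{L}^{\infty}$, and $\partial_c^{j_2}A=\al^{-1}(1-c^2)^{2-j_2}\mathcal{L}^{\infty}$ for $j_2\geq 2$; finally, since $f^{(j_3)}/(u')^i\in L^2$ together with the change of variables $u^{-1}$ give $\|(1-c^2)^{-1/2}(f^{(j_3)}/(u')^i)(y_c)\|_{L^2_c}\leq C\|f\|_{H^k_{1/(u')^{2i}}}$, we get $\partial_c^{j_3}f(y_c)=(1-c^2)^{-j_3+i+1/2}\mathcal{L}^2$ (and similarly $\partial_c^{j_3}g(y_c)=(1-c^2)^{-j_3+1/2}\mathcal{L}^2$).

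For \eqref{2K2}, write $\partial_c^k\Lambda_1(f)=2c\,\partial_c^k\mathcal{T}(f)+2k\,\partial_c^{k-1}\mathcal{T}(f)-\partial_c^k\!\bigl((1-c^2)^{-2}A(c)f(y_c)\bigr)$. The first two terms are of the claimed type directly from Lemma \ref{lem:key-T}, because the term $\partial_c^{k-1}\mathcal{T}(f)=\al(1-c^2)^{-1/2-k+i}\mathcal{L}^2$ is absorbed into $\al(1-c^2)^{-3/2-k+i}\mathcal{L}^2$ since $(1-c^2)\leq 1$. For the third term I expand $\partial_c^k$ trinomially and distinguish three cases on $j_2$: when $j_2=0$ one gets exactly $\al(1-c^2)^{-2-j_1-j_3+i+1/2}\mathcal{L}^2=\al(1-c^2)^{-3/2-k+i}\mathcal{L}^2$; when $j_2=1$ one gets $(1-c^2)^{-1/2-k+i}\mathcal{L}^2$, which is weaker than the target since $\al\geq 1$ and $(1-c^2)\leq 1$; when $j_2\geq 2$ one gains a factor $\al^{-1}(1-c^2)^{2-j_2}$ leaving $\al^{-1}(1-c^2)^{1/2-k+i}\mathcal{L}^2$, again absorbed.

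For \eqref{3K2}, the point I want to stress is the small algebraic improvement for $\mathcal{T}(u''g)$: since $u''=-2uu'$ and $u'''=2u'(3u^2-1)$ etc., one checks that $(u''g)^{(j)}/u'\in L^2$ whenever $g\in H^s$, so $u''g\in H^s_{1/(u')^2}$. Applying Lemma \ref{lem:key-T} with $i=1$ then gives $\partial_c^s\mathcal{T}(u''g)=\al(1-c^2)^{-3/2-s+1}\mathcal{L}^2=\al(1-c^2)^{-1/2-s}\mathcal{L}^2$, which is exactly the bound one wants. The term $(1-c^2)^{-1}A(c)g(y_c)$ is handled by the same trinomial Leibniz expansion as above (with weight $i=0$ on $g$): the $j_2=0$ piece yields $\al(1-c^2)^{-1-j_1-j_3+1/2}\mathcal{L}^2=\al(1-c^2)^{-1/2-s}\mathcal{L}^2$, the $j_2=1$ piece is strictly better, and the $j_2\geq 2$ piece picks up $\al^{-1}(1-c^2)^{2-j_2}$ and is absorbed. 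I do not foresee any genuine obstacle: the bookkeeping is entirely mechanical once one notices the $u''/u'$-boundedness that upgrades the weight index $i$ from $0$ to $1$ inside $\mathcal{T}(u''g)$; that observation is the only non-automatic step in the argument.
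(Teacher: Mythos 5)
Your proposal is correct and follows exactly the route the paper intends: the paper derives this corollary directly from Lemma \ref{lem:key-T}, Lemma \ref{lem:A}, Lemma \ref{lem:pa_cA} and the identities in \eqref{lem:weighted-norm-derivative} without writing out the Leibniz bookkeeping, and your expansion (including the key observation that $(u''g)^{(j)}/u'$ is controlled by $\|g\|_{H^s}$, which upgrades the weight index to $i=1$ in $\mathcal{T}(u''g)$ and yields the $(1-c^2)^{-1/2-s}$ weight) fills in precisely those details.
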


\begin{proof}[Proof of Proposition \ref{Prop:K-1.}.]
We first prove \eqref{2K1}.
Consider $k=0$,  $f,g\in L^2(\mathbb{R})$.
Thanks to \eqref{1K2''}($k=0$), \eqref{2K2}($k=i=0$) and \eqref{3K2}($s=0$), we obtain
\begin{align*}
\mathcal{K}(c)=\f{(1-c^2)^2\Lambda_1\Lambda_2}{A^2+4\pi^2c^2}
=\al^{-2}(1-c^2)^{2}\mathcal{L}^{\infty}\cdot \al(1-c^2)^{-\f32}\mathcal{L}^2\cdot \al(1-c^2)^{-\f12}\mathcal{L}^2
=\mathcal{L}^1,
\end{align*}
 which gives \eqref{2K1} for $k=0$.
Consider $k=1$,  $f\in H^1_{1/(u')^2}(\mathbb{R}), g\in H^1(\mathbb{R})$.
 Thanks to \eqref{1K2''}($k=0,1$), \eqref{2K2}($k=0,1,i=0,1$) and  \eqref{3K2}($s=0,1$),   we obtain
\begin{align}\label{fm:pa_cK}
\pa_c\mathcal{K}(c)& =\pa_c\left(\f{(1-c^2)^2}{A^2+4\pi^2c^2}\right)\Lambda_1(f)\Lambda_2(g)
+\f{(1-c^2)^2}{A^2+4\pi^2c^2}\pa_c\Lambda_1(f)\Lambda_2(g)
=\mathcal{L}^1,
\end{align}
 which gives \eqref{2K1} for $k=1$.
Similarly, we also have
\begin{align}\label{est:1-c^2L1}
\mathcal{K}(c)
=(1-c^2)\mathcal{L}^1.
\end{align}
Consider $k=2$,  $f\in H^2_{1/(u')^4}(\mathbb{R}), g\in H^2(\mathbb{R})$.
Thanks to \eqref{1K2''}($k=0,1,2$), \eqref{2K2}($k=0,1,2,i=0,1,2$) and  \eqref{3K2}($s=0,1,2$),   we obtain
\begin{align}
\notag\pa_c^2\mathcal{K}(c)&=\pa_c^2\big(\f{(1-c^2)^2}{A^2+4\pi^2c^2}\big)\Lambda_1(h)\Lambda_2(g)
+2\pa_c\big(\f{(1-c^2)^2}{A^2+4\pi^2c^2}\big)\pa_c\Lambda_1(h)\Lambda_2(g)\\
\label{fm:pa_c^2K}&\quad+2\pa_c\big(\f{(1-c^2)^2}{A^2+4\pi^2c^2}\big)\pa_c\Lambda_2(g)\Lambda_1(h)+
2\f{(1-c^2)^2\pa_c\Lambda_1(h)\pa_c\Lambda_2(g)}{A^2+4\pi^2c^2}\\
\notag&\quad+\f{(1-c^2)^2\pa_c^2\Lambda_1(h)\Lambda_2(g)}{A^2+4\pi^2c^2}
+\f{\Lambda_1(h)(1-c^2)^2\pa_c^2\Lambda_2(g)}{A^2+4\pi^2c^2}=\mathcal{L}^1,
\end{align}
 which gives \eqref{2K1} for $k=2$.

We obtain the continuity of $\mathcal{K}$ by \eqref{fm:pa_cK}. Then \eqref{BoundaryK2} follows from \eqref{est:1-c^2L1}.
\end{proof}

\subsection{Estimates of the kernels in presence of embedding eigenvalue.}
From Remark \ref{rmk:K}, we are aware that $\mathcal{K}(c)$ exhibits a singularity of $\f{1}{c}$ at the embedding eigenvalue $c=0$ when $\al=1$. The goal of this section is to provide estimates for the kernels near and away from  $c=0$.

For $\tilde{A}$ and $\tilde{\tilde{A}}$ defined in \eqref{def:tA}, we conclude by Lemma \ref{lem:A} and Lemma  \ref{lem:pa_cA} that

\begin{Corollary}\label{Cor:1A'}
Let  $\al=1$, $k=0,1,2$. It holds for $c\in(-1,1)$ that
\begin{align*}
\begin{split}
&\tilde{A}(c)^2+4\pi^2\geq 1,\\
|\pa_c^k\tilde{A}(c)|\leq C(1-&c^2)^{-k},\quad\;|\pa_c^k\tilde{\tilde{A}}(c)|\leq C(1-c^2)^{-k-1}.
\end{split}
\end{align*}
Moreover, it holds that
\begin{align*}
\pa_c^k\left(
\f{\chi_0(c)(1-c^2)^2}{\tilde{A}(c)^2+4\pi^2}\right)=\mathcal{L}^{\infty}
\left(-\f12,\f12\right).
\end{align*}
\end{Corollary}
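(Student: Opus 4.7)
The plan is to push every bound through the integral representations
$\tilde A(c)=\int_0^1\pa_cA(sc)\,ds$ and $\tilde{\tilde A}(c)=\int_0^1\!\int_0^1 s\,\pa_c^2A(stc)\,ds\,dt$ from \eqref{def:tA}, which turn derivatives of $\tilde A,\tilde{\tilde A}$ at $c$ into integrated higher derivatives of $A$ at the rescaled points $sc$ or $stc$. Since $|sc|,|stc|\le|c|$ and $s,t\in[0,1]$, we get the crucial monotonicity $(1-(sc)^2)^{-1}\le(1-c^2)^{-1}$ (and similarly with $stc$), so the weights in Lemma~\ref{lem:pa_cA} transfer directly. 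The input estimates are, for $\al=1$, $|\pa_cA(c,1)|\le C$ and $|\pa_c^kA(c,1)|\le C(1-c^2)^{-k+2}$ for $k=2,3,4$.

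The first inequality $\tilde A(c)^2+4\pi^2\ge 1$ is trivial since $4\pi^2>1$. For the bounds on $\tilde A$, differentiate under the integral sign to obtain $\pa_c^k\tilde A(c)=\int_0^1 s^k\pa_c^{k+1}A(sc)\,ds$ for $k=0,1,2$. Applying Lemma~\ref{lem:pa_cA} together with $(1-(sc)^2)^{-1}\le(1-c^2)^{-1}$ yields
\[
|\tilde A(c)|\le C,\qquad |\pa_c\tilde A(c)|\le C,\qquad |\pa_c^2\tilde A(c)|\le C(1-c^2)^{-1},
\]
all of which are majorized by $C(1-c^2)^{-k}$. For $\tilde{\tilde A}$, the analogous computation gives $\pa_c^k\tilde{\tilde A}(c)=\int_0^1\!\int_0^1 s^{k+1}t^k\,\pa_c^{k+2}A(stc)\,ds\,dt$ for $k=0,1,2$, and Lemma~\ref{lem:pa_cA} immediately delivers $|\pa_c^k\tilde{\tilde A}(c)|\le C(1-c^2)^{-k-1}$ (in fact with one derivative to spare at $k=0$).

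For the last assertion, note $\chi_0$ is supported in $[-\tfrac12,\tfrac12]$, so on the support $1-c^2\ge\tfrac34$; hence $(1-c^2)^{-j}$ is bounded there for every $j$, and the just-established bounds on $\tilde A,\pa_c\tilde A,\pa_c^2\tilde A$ are all uniformly bounded on $[-\tfrac12,\tfrac12]$. The denominator satisfies $\tilde A(c)^2+4\pi^2\ge 4\pi^2$, so Leibniz, chain, and quotient rules give
\[
\pa_c^k\Bigl(\frac{\chi_0(c)(1-c^2)^2}{\tilde A(c)^2+4\pi^2}\Bigr)\in L^\infty(-\tfrac12,\tfrac12),\qquad k=0,1,2,
\]
as claimed.

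There is no serious obstacle—the argument is essentially a bookkeeping exercise. The only point requiring a bit of care is to use the rescaling $s,st\in[0,1]$ to pull the weight $(1-c^2)^{-k}$ outside the integrals in $\tilde A,\tilde{\tilde A}$; without this monotonicity one would face an integrable but less clean estimate involving $(1-(sc)^2)^{-k}$.
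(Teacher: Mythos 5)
Your proof is correct and follows the same route the paper intends: the corollary is stated in the paper without a written proof, as a direct consequence of Lemma~\ref{lem:A} and Lemma~\ref{lem:pa_cA} via the integral representations in \eqref{def:tA}, and your argument (differentiating under the integral, exploiting $(1-(sc)^2)^{-1}\le(1-c^2)^{-1}$, and observing $1-c^2\ge\tfrac34$ on $\mathrm{supp}\,\chi_0$) fills in exactly those omitted details.
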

It follows from Lemma \ref{lem:key-T}($i=0$) that 
\begin{Corollary}\label{cor:pa_c^3T}
Let $\al\geq 1$,  $c\in [-\f12,\f12]$, $k=0,1,2,3$. If  $f\in H^k(\mathbb{R})$,  then it holds that
\begin{align*}
&\|\pa_c^k\mathcal{T}(f)(c)\|_{L^2(-\f12,\f12)}\leq C\al\|f\|_{H^k(\mathbb{R})},
\end{align*}
where $C$ is independent of $\al$.
\end{Corollary}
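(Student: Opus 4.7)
The plan is to derive this as a direct consequence of Lemma \ref{lem:key-T} specialized to $i=0$. That lemma states, for $k=0,1,2,3$ and $f \in H^k_{1/(u')^{2i}}(\mathbb{R})$, that $\pa_c^k \mathcal{T}(f)(c) = \al (1-c^2)^{-\f32-k+i}\mathcal{L}^2$, which in our notation means
\[
\bigl\| (1-c^2)^{\f32+k-i} \pa_c^k \mathcal{T}(f)(c) \bigr\|_{L^2(-1,1)} \leq C\,\al\,\|f\|_{H^k_{1/(u')^{2i}}(\mathbb{R})}.
\]
Setting $i=0$, the hypothesis becomes $f \in H^k(\mathbb{R})$ (no weight), and the estimate reads
\[
\bigl\| (1-c^2)^{\f32+k} \pa_c^k \mathcal{T}(f)(c) \bigr\|_{L^2(-1,1)} \leq C\,\al\,\|f\|_{H^k(\mathbb{R})}.
\]

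First, I would observe that on the restricted interval $c \in [-\f12,\f12]$ the weight $1-c^2$ is uniformly comparable to a constant: specifically $1-c^2 \in [\f34, 1]$, so $(1-c^2)^{\f32+k} \sim 1$ with constants independent of $\al$ and $c$. Consequently the unweighted $L^2(-\f12,\f12)$ norm of $\pa_c^k\mathcal{T}(f)$ is controlled by the weighted $L^2(-1,1)$ norm appearing in Lemma \ref{lem:key-T}:
\[
\|\pa_c^k \mathcal{T}(f)\|_{L^2(-\f12,\f12)} \leq C \bigl\| (1-c^2)^{\f32+k} \pa_c^k \mathcal{T}(f) \bigr\|_{L^2(-\f12,\f12)} \leq C \bigl\| (1-c^2)^{\f32+k} \pa_c^k \mathcal{T}(f) \bigr\|_{L^2(-1,1)}.
\]

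Then applying Lemma \ref{lem:key-T} with $i=0$ yields the desired bound $C\al\|f\|_{H^k(\mathbb{R})}$ for $k=0,1,2,3$. There is no real obstacle here; the corollary is simply a trivialization of the weighted estimate away from the degeneracy points $c = \pm 1$. The only verification needed is to check that the constant $C$ absorbed from the weight ratio is indeed independent of $\al$, which is automatic since the weight $(1-c^2)^{\f32+k}$ depends only on $c$ and not on $\al$.
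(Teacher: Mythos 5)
Your proof is correct and follows exactly the paper's (unstated) reasoning: the paper introduces this corollary with the words ``It follows from Lemma \ref{lem:key-T}($i=0$)'', which is precisely the specialization and weight-removal argument you carry out. The only step worth spelling out, which you do, is that on $[-\f12,\f12]$ the factor $(1-c^2)^{\f32+k}$ is bounded above and below by $\al$-independent constants for $k\le 3$, so the weight can be dropped harmlessly.
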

For $\tilde{\mathcal{T}}$  defined in \eqref{def:tT-1}, we have the following.
\begin{Corollary}\label{lem:1T'}
Let   $\al\geq 1$,  $k=0,1,2$, $c\in(-\f12,\f12)$. If  $f,g\in H^{k+1}(\mathbb{R})$,  then it holds that
\begin{align}
\label{2K2'''}&\|\pa_c^k\widetilde{\mathcal{T}}(f)(c)\|_{L^2_c(-\f12,\f12)}\leq C \al\|f\|_{ H^{k+1}(\mathbb{R})},\quad \|\pa_c^k\tilde{\mathcal{T}}(u''g)(c)\|_{L^2_c(-\f12,\f12)}\leq C \al\|g\|_{ H^{k+1}(\mathbb{R})}.
\end{align}
\end{Corollary}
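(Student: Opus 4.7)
The plan is to deduce both estimates by commuting the $c$-derivative with the integral representation $\widetilde{\mathcal{T}}(f)(c)=\int_0^1(\pa_c\mathcal{T})(f)(sc)\,ds$ and then applying Corollary \ref{cor:pa_c^3T} after an $L^2$-scaling argument. Explicitly, differentiating under the integral gives
\begin{align*}
\pa_c^k\widetilde{\mathcal{T}}(f)(c)=\int_0^1 s^k(\pa_c^{k+1}\mathcal{T})(f)(sc)\,ds,
\end{align*}
which is well-defined for $k=0,1,2$ since Lemma \ref{lem:key-T} and the continuity of $\pa_c^{j}\mathcal{T}(f)$ on $(-\f12,\f12)$ (where the weight $(1-c^2)^{-1}$ is bounded) make the integrand measurable and locally $L^\infty$ in $s$.

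Next I would apply the Minkowski integral inequality on $L^2_c(-\f12,\f12)$ to move the norm inside:
\begin{align*}
\|\pa_c^k\widetilde{\mathcal{T}}(f)\|_{L^2_c(-\f12,\f12)}\le \int_0^1 s^k\,\bigl\|(\pa_c^{k+1}\mathcal{T})(f)(sc)\bigr\|_{L^2_c(-\f12,\f12)}\,ds.
\end{align*}
For each $s\in(0,1]$, the change of variable $c'=sc$ gives
\begin{align*}
\bigl\|(\pa_c^{k+1}\mathcal{T})(f)(sc)\bigr\|_{L^2_c(-\f12,\f12)}=s^{-\f12}\bigl\|\pa_c^{k+1}\mathcal{T}(f)\bigr\|_{L^2_{c'}(-s/2,s/2)}\le s^{-\f12}\bigl\|\pa_c^{k+1}\mathcal{T}(f)\bigr\|_{L^2(-\f12,\f12)}.
\end{align*}
Invoking Corollary \ref{cor:pa_c^3T} with index $k+1\in\{1,2,3\}$ yields the bound $C\al\|f\|_{H^{k+1}(\mathbb{R})}$, and then
\begin{align*}
\|\pa_c^k\widetilde{\mathcal{T}}(f)\|_{L^2_c(-\f12,\f12)}\le C\al\|f\|_{H^{k+1}}\int_0^1 s^{k-\f12}\,ds= \frac{C\al}{k+\f12}\|f\|_{H^{k+1}},
\end{align*}
which gives the first estimate.

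Finally, for the second estimate I would simply substitute $f=u''g$ in the same argument. Since $u''(y)=-2\tanh y\,\mathrm{sech}^2 y$ is smooth and bounded together with all its derivatives on $\mathbb{R}$, multiplication by $u''$ is a bounded operator on $H^{k+1}(\mathbb{R})$, hence $\|u''g\|_{H^{k+1}}\le C\|g\|_{H^{k+1}}$, and the first part applies verbatim. There is no serious obstacle here; the only point to be mindful of is the integrability of $s^{k-1/2}$ near $s=0$, which is automatic for $k\ge 0$, and verifying that the boundedness hypotheses of Corollary \ref{cor:pa_c^3T} (with $k+1$ in place of $k$) are met for $k\le 2$.
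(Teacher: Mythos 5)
Your proposal is correct and takes essentially the same route as the paper: differentiating the representation $\widetilde{\mathcal{T}}(f)(c)=\int_0^1(\pa_c\mathcal{T})(f)(sc)\,ds$, applying Minkowski's integral inequality, scaling $c\mapsto sc$ to pick up $s^{-1/2}$, and invoking Corollary \ref{cor:pa_c^3T}; the second estimate by taking $f=u''g$ also matches. The only small addition you make is the (correct) remark that multiplication by $u''$ is bounded on $H^{k+1}(\mathbb{R})$, which the paper leaves implicit.
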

\begin{proof}
 The second inequality follows from the first one with $f=u''g$.  To prove the first inequality, we write by the definition \eqref{def:tT-1} that
\begin{align}\label{fm:packtT}
\pa_c^k\widetilde{\mathcal{T}}(f)(c)=\int_0^1s^k(\pa_c^{k+1}\mathcal{T})(f)(sc)ds.
\end{align}
Therefore, thanks to Corollary \ref{cor:pa_c^3T} and Minkowoski inequality, we have by \eqref{fm:packtT} that
\begin{align*}
\|\pa_c^k\widetilde{\mathcal{T}}(f)(c)\|_{L^2_c(-\f12,\f12)}&\leq\int_0^1s^k\|(\pa_c^{k+1}\mathcal{T})(f)(sc)\|_{L^2_c(-\f12,\f12)}ds\\
&=\int_0^1s^{k-\f12}\|(\pa_c^{k+1}\mathcal{T})(f)(\tilde{c})\|_{L^2_{\tilde{c}}(-\f12s,\f12s)}ds
\leq C\al\|f\|_{H^{k+1}(\mathbb{R})}.
\end{align*}\end{proof}
\begin{definition}
Let $\al=1$.
We define the kernels near $c=0$  as
\begin{align}
\mathcal{K}_0(f,g)(c)
\label{fm:K-0}&:=\f{\chi_0(c)(1-c^2)^2\tilde{\Lambda}_1(f)(c)\Lambda_2(g)(c)}{\tilde{A}(c)^2+4\pi^2},
\quad\mathrm{supp}(\mathcal{K}_0)\subset(-\f12,\f12),
\end{align}
 and
\begin{align}\label{fm:tK-0}
\widetilde{\mathcal{K}}_{0}(g)(c)
&:=\f{\chi_0(c)(1-c^2)^2c\tilde{\tilde{A}}(c)^2\Lambda_2(g)(c)}{\tilde{A}(c)^2+4\pi^2},
\quad\mathrm{supp}(\widetilde{\mathcal{K}}_0)\subset(-\f12,\f12),
\end{align}
with $\tilde{\Lambda}_1$  defined in \eqref{recall:Lambda}. We define the kernel away from $c=0$  as
\begin{align}
\label{fm:K-11}&\mathcal{K}_1(f,g)(c)=\f{\chi_1(c)(1-c^2)^2\Lambda_1(f)(c)\Lambda_2(g)(c)}{A(c)^2+4\pi^2c^2},\quad \mathrm{supp}(\mathcal{K}_1)\subset(-1,1)/(-\f14,\f14),
\end{align}
with $\Lambda_1$, $\Lambda_2$ defined in \eqref{def:calLambda1}, \eqref{def:calLambda2}.
\end{definition}

\begin{proposition}\label{lem:K-1}
Let $\al=1$,  $k=0,1,2$. If $f\in H^{k+1}(\mathbb{R})$, $g\in H^{k}(\mathbb{R})$, then it holds that
\begin{align}
\label{est:K0}&\|(\pa_c^k\mathcal{K}_0)(f,g)(c)\|_{L^1(-\f12,\f12)}\leq C\|f\|_{H^{k+1}(\mathbb{R})}\|g\|_{H^k(\mathbb{R})},
\end{align}
and
\begin{align}
\label{est:tK0}&\|(\pa_c^k
\widetilde{\mathcal{K}}_0)(g)(c)\|_{L^1(-\f12,\f12)}\leq C\|g\|_{H^k(\mathbb{R})}.
\end{align}
If $f\in H^{k}_{1/(u')^{2k}}(\mathbb{R})$, $g\in H^{k}(\mathbb{R})$, then it holds that
\begin{align}\label{est:K1}
\|(\pa_c^k\mathcal{K}_1)(f,g)(c)\|_{L^1(-1,1)}\leq C\|f\|_{H^k_{1/(u')^{2k}}(\mathbb{R})}\|g\|_{H^k(\mathbb{R})}.
\end{align}
In particular, if $f\in H^{1}_{1/(u')^{2}}(\mathbb{R})$, $g\in H^{1}(\mathbb{R})$, then we have
\begin{align}\label{lim:K1}
\lim_{c\to \pm 1}\mathcal{K}_1(c)=0.
\end{align}
Let $\Lambda_2$ be defined in \eqref{def:calLambda2}, and $\tilde{\Lambda}_2$ be defined in \eqref{recall:tLambda2}. It holds that
\begin{align}\label{est:tLambda2}
\begin{split}
&\|(\pa_c^k\Lambda_2)(g)(c)\|_{L^2(-\f12,\f12)}
\leq C\|g\|_{H^{k}(\mathbb{R})},\\
&\|(\pa_c^k\tilde{\Lambda}_2)(g)(c)\|_{L^2(-\f12,\f12)}
\leq C\|g\|_{H^{k+1}(\mathbb{R})}.
\end{split}
\end{align}

\end{proposition}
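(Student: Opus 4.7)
The plan is to reduce each estimate to routine Leibniz expansion plus Cauchy--Schwarz, exploiting the fact that the two denominators $\tilde{A}(c)^2+4\pi^2$ (on $\mathrm{supp}(\chi_0)\subset(-\tfrac12,\tfrac12)$) and $A(c)^2+4\pi^2 c^2$ (on $\mathrm{supp}(\chi_1)\subset(-1,1)\setminus(-\tfrac14,\tfrac14)$) are bounded below by universal constants, thanks respectively to Corollary \ref{Cor:1A'} and to Lemma \ref{lem:A}\,\eqref{est:A-1}, so that no singular behavior at $c=0$ survives and one is reduced to pure bookkeeping.

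First I would establish the building block \eqref{est:tLambda2}. Decomposing $\Lambda_2(g)(c)=\mathcal{T}(u''g)(c)+(1-c^2)^{-1}A(c)g(y_c)$, the multiplier $(1-c^2)^{-1}A(c)=c(1-c^2)^{-1}\tilde{A}(c)$ is smooth with uniformly bounded $c$-derivatives on $(-\tfrac12,\tfrac12)$ by Corollary \ref{Cor:1A'}; the trace $\pa_c^k g(y_c)=(1-c^2)^{-k}g^{(k)}(y_c)$ is $L^2_c$-controlled via \eqref{lem:weighted-norm-derivative} by $\|g\|_{H^k}$; and $\pa_c^k\mathcal{T}(u''g)$ is controlled by Corollary \ref{cor:pa_c^3T} using $\|u''g\|_{H^k}\lesssim\|g\|_{H^k}$. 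The same template applied to $\tilde{\Lambda}_2(g)=\widetilde{\mathcal{T}}(u''g)+(1-c^2)^{-1}\tilde{A}(c)g(y_c)$, now invoking Corollary \ref{lem:1T'} in place of Corollary \ref{cor:pa_c^3T}, yields the companion bound at the price of one extra derivative on $g$, as built into the definition \eqref{def:tT-1} of $\widetilde{\mathcal{T}}$.

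For \eqref{est:K0} and \eqref{est:tK0}, I would Leibniz-expand $\pa_c^k$ acting on \eqref{fm:K-0} and \eqref{fm:tK-0}. By Corollary \ref{Cor:1A'} the scalar prefactors $\tfrac{\chi_0(1-c^2)^2}{\tilde{A}^2+4\pi^2}$ and $\tfrac{\chi_0(1-c^2)^2 c\tilde{\tilde{A}}^2}{\tilde{A}^2+4\pi^2}$ together with all their $c$-derivatives lie in $\mathcal{L}^\infty(-\tfrac12,\tfrac12)$. The factor $\tilde{\Lambda}_1(f)=2\widetilde{\mathcal{T}}(f)-(1-c^2)^{-2}\tilde{\tilde{A}}(c)f(y_c)$ is handled exactly as $\tilde{\Lambda}_2$ above and obeys $\|\pa_c^k\tilde{\Lambda}_1(f)\|_{L^2(-\frac12,\frac12)}\lesssim\|f\|_{H^{k+1}}$, while $\|\pa_c^k\Lambda_2(g)\|_{L^2(-\frac12,\frac12)}\lesssim\|g\|_{H^k}$ is \eqref{est:tLambda2}. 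Cauchy--Schwarz on the bounded interval turns the resulting $L^\infty\cdot L^2\cdot L^2$ product into an $L^1$ bound, giving \eqref{est:K0}; in \eqref{est:tK0} only the single operator factor $\Lambda_2$ is present and the trivial embedding $L^2(-\tfrac12,\tfrac12)\hookrightarrow L^1(-\tfrac12,\tfrac12)$ suffices.

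For \eqref{est:K1} on $\mathrm{supp}(\chi_1)$, combining $A^2+4\pi^2 c^2\gtrsim 1$ with the derivative bounds \eqref{est:pacA}--\eqref{est:packA} of Lemma \ref{lem:pa_cA} gives $\pa_c^{j}\!\bigl(\tfrac{\chi_1(1-c^2)^2}{A^2+4\pi^2 c^2}\bigr)=(1-c^2)^{2-j}\mathcal{L}^\infty$. Using Corollary \ref{cor:Lambda} with $\pa_c^{k_2}\Lambda_1(f)=(1-c^2)^{-\frac{3}{2}-k_2+k}\mathcal{L}^2$ (taking the weight index $i=k$, which is allowed since $u'\le 1$ implies $\|f\|_{H^{k_2}_{1/(u')^{2k}}}\le\|f\|_{H^k_{1/(u')^{2k}}}$) and $\pa_c^{k_3}\Lambda_2(g)=(1-c^2)^{-\frac{1}{2}-k_3}\mathcal{L}^2$, the Leibniz sum over $k_1+k_2+k_3=k$ produces net weight $(1-c^2)^{(2-k_1)+(-\frac{3}{2}-k_2+k)+(-\frac{1}{2}-k_3)}=(1-c^2)^0$, so each term lies in $\mathcal{L}^\infty\cdot\mathcal{L}^2\cdot\mathcal{L}^2\subset\mathcal{L}^1$, proving \eqref{est:K1}. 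The limit \eqref{lim:K1} then follows by absolute continuity: the $k=1$ case of \eqref{est:K1} gives $\pa_c\mathcal{K}_1\in L^1(-1,1)$, so $\mathcal{K}_1$ extends continuously to $[-1,1]$; pulling out one factor of $(1-c^2)$ from the numerator and repeating the weight count with $f\in H^1_{1/(u')^2}$, $g\in H^1$ shows $\mathcal{K}_1/(1-c^2)\in L^1(-1,1)$, which forces the boundary values to vanish.

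The main obstacle is purely combinatorial bookkeeping: in \eqref{est:K1} one must verify that the singular weights produced by $\Lambda_1$ and $\Lambda_2$ cancel exactly against the $(1-c^2)^2$ in the numerator of $\mathcal{K}_1$, and in \eqref{est:K0} one must trace the asymmetric regularity hypothesis $f\in H^{k+1}$ versus $g\in H^k$ back to the one-derivative cost of $\widetilde{\mathcal{T}}$ in Corollary \ref{lem:1T'}; beyond this, no new analytic ingredient is needed.
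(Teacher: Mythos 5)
Your proposal is correct and takes essentially the same route as the paper: you prove \eqref{est:tLambda2} by combining Corollaries \ref{cor:Lambda}, \ref{Cor:1A'}, \ref{lem:1T'} with the trace identity \eqref{lem:weighted-norm-derivative}, deduce \eqref{est:K0}--\eqref{est:tK0} by Leibniz and Cauchy--Schwarz on the compactly supported interval, and obtain \eqref{est:K1}--\eqref{lim:K1} by the same weight-counting Leibniz expansion used in Proposition \ref{Prop:K-1.} (with the prefactor bound $\pa_c^{k_1}\bigl(\chi_1(1-c^2)^2/(A^2+4\pi^2c^2)\bigr)=(1-c^2)^{2-k_1}\mathcal{L}^\infty$ from Lemma \ref{lem:A}/\ref{lem:pa_cA}). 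The paper's own proof is just a terse citation of the same building blocks; your write-up is a faithful expansion of it.
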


\begin{proof}
We first prove \eqref{est:tLambda2}.
The first estimate  directly follows  from \eqref{3K2}. The second estimate follows from  Corollarys \ref{Cor:1A'} and \ref{lem:1T'}. In the same manner, we also have for $k=0,1,2$,
\begin{align}\label{est:tLambda1}
\|(\pa_c^k\tilde{\Lambda}_1)(f)(c)\|_{L^2(-\f12,\f12)}
\leq C\|f\|_{H^{k+1}(\mathbb{R})}.
\end{align}
\eqref{est:K0} follows from \eqref{est:tLambda2}, \eqref{est:tLambda1} and Corollary \ref{Cor:1A'}. \eqref{est:tK0} follows from \eqref{est:tLambda2} and Corollary \ref{Cor:1A'}. Since we notice by Lemma \ref{lem:A} that $\pa_c^k\big(\f{\chi_1(c)(1-c^2)^2}{A(c,1)^2+4\pi^2c^2}\big)=\mathcal{L}^{\infty}$,
the proof of  \eqref{est:K1} and \eqref{lim:K1} is  similar as  the proof of Proposition \ref{Prop:K-1.}.
\end{proof}

\section{Proof of main results}
The first two subsections are devoted  to proving Theorem \ref{Thm:main}. The last section is devoted to proving Theorem \ref{Thm:main0}.

\subsection{Linear inviscid damping for modes $\al\geq 2$. }
\begin{proof}[Proof of \eqref{Thm:main-al-geq2} and \eqref{Thm:main-al-geq22}.] The proof uses integration by parts and Proposition \ref{Prop:K-1.}.
Thanks to $\omega=-\psi''+\psi$ and  Corollary \ref{lem:dual}, we have by \eqref{fm:psi2} that
\begin{align*}
&\al^2\|\psi(t)\|_{L^2}^2
+\|\pa_y\psi(t)\|_{L^2}^2\\
&=\Big|\int_{\mathbb{R}}\psi\bar{\omega} dy\Big|=\bigg|\int_{\mathbb{R}}\int_{-1}^1e^{-i\al c t}\mu(c)\Gamma(y,c)\omega(y)dcdy\bigg|\\
&=
\bigg|\int_{-1}^1e^{-i \al c t}\mathcal{K}\big(\omega_0,\psi\big)(c,\al)dc\bigg|.
\end{align*}
Hence, by Proposition \ref{Prop:K-1.}($k=0$), we have
\begin{align*}
\al^2\|\psi(t)\|_{L^2}^2
+\|\pa_y\psi(t)\|_{L^2}^2&\leq
\|\mathcal{K}\big(\omega_0,\psi\big)(c)\|_{L^1(-1,1)}
\leq C\|\omega_0\|_{L^2}\|\psi\|_{L^2}.
\end{align*}
If $\omega_0\in H^1_{1/(u')^2}$, then  we integrate by parts in $c$ and use Proposition \ref{Prop:K-1.}($k=1$) to get
\begin{align*}
\al^2\|\psi(t)\|_{L^2}^2
+\|\pa_y\psi(t)\|_{L^2}^2&\leq (\al t)^{-1}
\|(\pa_c\mathcal{K})\big(\omega_0,\psi\big)(c)\|_{L^1(-1,1)}\\
&\leq C\al^{-1}t^{-1}\|\omega_0\|_{H^1_{1/(u')^2},\al}\|\psi\|_{H^1}.
\end{align*}
Thus, \eqref{Thm:main-al-geq2} follows.

Let $\varphi\in L^2$, $g''-g=\varphi$. Thanks to  \eqref{fm:psi2} and Corollary \ref{lem:dual}, we have
\begin{align*}
&\quad\|\psi(t)\|_{L^2}
=\sup_{\|\varphi\|_{L^2}=1}=
\Big|\int_{\mathbb{R}}\psi \varphi dy\Big|=
\sup_{\|\varphi\|_{L^2}=1}\Big|\int_{-1}^1e^{-i \al c t}\mathcal{K}\big(\omega_0,g\big)(c,\al)dc\Big|.
\end{align*}
Hence, by Proposition \ref{Prop:K-1.}($k=0$), we have
\begin{align*}
\|\psi(t)\|_{L^2}&\leq \sup_{\|\varphi\|_{L^2}=1}
\|\mathcal{K}(\omega_0,g)(c)\|_{L^1(-1,1)}\leq
C\sup_{\|\varphi\|_{L^2}=1}
\|\omega\|_{L^2}\|g\|_{L^2}\leq C
\|\omega\|_{L^2}.
\end{align*}
Notice that if $\lim_{c\pm 1}\mathcal{K}(c)=0$, then it holds
\begin{align*}
\|\pa_c\mathcal{K}\|_{L^{\infty}_c(-1,1)}\leq C
\|\pa_c^2\mathcal{K}\|_{L^{1}_c(-1,1)}.
\end{align*}
If $\omega_0\in H^2_{1/(u')^4}$, then  we integrate by parts twice and use Proposition \ref{Prop:K-1.}($k=2$) to get
\begin{align*}
\|\psi(t)\|_{L^2}&\leq (\al t)^{-2}\sup_{\|\varphi\|_{L^2}=1}
\|(\pa_c^2\mathcal{K})(\omega_0,g)(c)\|_{L^1(-1,1)}\\
&\leq
C(\al t)^{-2}\sup_{\|\varphi\|_{L^2}=1}
\|\omega\|_{H^2_{1/(u')^4}}\|g\|_{H^2}\leq C(\al t)^{-2}\|\omega\|_{H^2_{1/(u')^4}}.
\end{align*}
Thus, \eqref{Thm:main-al-geq22} follows.
\end{proof}

\subsection{Dynamics for mode $\al=1$}

We recall by Lemma \ref{lem:dec-psi} that
\begin{align}
\notag\hat{\psi}(t,1,y)&=-\int_{-1}^1e^{-ict}
\f{\chi_1(c)\Lambda_1(\omega_0)(c)}{A(c)^2+4\pi^2c^2}\Gamma(y,c)dc-\int_{-1}^{1}e^{-ict}
\f{\chi_0(c)(1-c^2)^2\tilde{\Lambda}_1(c)}{\tilde{A}(c)^2+4\pi^2}\Gamma(y,c)dc\\
\notag&\quad+\f{a_0}{2\pi^2}\int_{-1}^{1}e^{-ict}
\f{\chi_0(c)(1-c^2)^2c\tilde{\tilde{A}}(c)^2}{\tilde{A}(c)^2+4\pi^2}\Gamma(y,c)dc\\
&\quad+a_0f_1(t,y)+\f{a_0+i\pi b_0}{2\pi i}\mathrm{sech}\,y ,\label{decom:psi'}
\end{align}
where $a_0$ is defined in \eqref{def:a0} and $f_1(t,y)$ is defined in \eqref{def:Psi1}.

\begin{proof}[Proof of \eqref{Thm:main-al-1} and \eqref{Thm:main-al-12}.]

Let $\hat{\omega}_0(1,\cdot)\in H^1(\mathbb{R})$, $a_0$ be defined in \eqref{def:a0}. We have by Hardy's inequality that
\begin{align}\label{est:a_0}
|a_0|=\Big|\int_{\mathbb{R}}\f{\hat{\omega}_0(1,y)-
\hat{\omega}_0(1,-y)}{\sinh y}dy\Big|\leq \|\hat{\omega}_0(1,\cdot)\|_{H^1}.
\end{align}
We take $f_1(t,y)$ as \eqref{def:Psi1}:
\begin{align*}
f_1(t,y)&=\f{1}{2\pi^2}\Big(\Psi(t,y)+S(t)+i\pi\Big)
\end{align*}
with
\begin{align*}
\Psi(t,y)&:= -p.v.\int_{-1}^1e^{-ict}
\f{\chi_0(c)(1-c^2)^2}{c}\Big(\Gamma(y,c)-\Gamma(y,0)\Big)dc,\\
S(t)&:=-i\int_{-1}^1\sin ct\cdot
\f{\chi_0(c)(1-c^2)^2}{c}dc.
\end{align*}
We denote
 \begin{align*}
\tilde{\psi}(t,y):=\psi(t,y)-a_0f_1(t,y)-\f{a_0+i\pi b_0}{2\pi i}\mathrm{sech}\,y ,\;\text{and}\;
\tilde{\omega}=-\pa_y^2\tilde{\psi}+\tilde{\psi}.
\end{align*}

To prove \eqref{Thm:main-al-1}, we consider $\omega_0\in H^2_{1/(u')^2}(\mathbb{R})$. Thanks to \eqref{decom:psi'} and Corollary \ref{lem:dual0}, we have
\begin{align*}
&\|\tilde{\psi}(t,y)\|_{H^1}^2=
\Big|\int_{\mathbb{R}}\tilde{\psi}(t,y)\bar{\tilde{\omega}}(t,y)dy\Big|\\
&=\bigg|
\int_{-1}^1e^{-i c t}\mathcal{K}_1(\omega_0,\bar{\tilde{\psi}})(c)dc
+\int_{-1}^1e^{-i c t}\mathcal{K}_0(\omega_0,\bar{\tilde{\psi}})(c)dc+
\f{a_0}{2\pi^2}\int_{-1}^1e^{-i c t}\widetilde{\mathcal{K}}_0(\tilde{\psi})(c)dc\bigg|.
\end{align*}
Therefore, by Proposition \ref{lem:K-1}($k=0$), we have
\begin{align*}
\|\tilde{\psi}(t,y)\|_{H^1}^2\lesssim
\|\omega_0\|_{L^2}\|\tilde{\psi}\|_{L^2}+
\|\omega_0\|_{H^1}\|\tilde{\psi}\|_{L^2}
+\|\omega_0\|_{H^1}\|\tilde{\psi}\|_{L^2}\leq C\|\omega_0\|_{H^1}\|\tilde{\psi}\|_{L^2}.
\end{align*}
By Proposition \ref{lem:K-1}($k=1$), we integrate by parts to obtain for $t>0$,
\begin{align*}
\|\tilde{\psi}(t,y)\|_{H^1}^2&\lesssim t^{-1}
\|\omega_0\|_{H^1_{1/(u')^2}}\|\tilde{\psi}\|_{H^1}
+t^{-1}
\|\omega_0\|_{H^2}\|\tilde{\psi}\|_{H^1}
+ t^{-1}
\|\omega_0\|_{H^1}\|\tilde{\psi}\|_{H^1}\\
&\leq Ct^{-1}\|\omega_0\|_{H^2_{1/(u')^2}}\|\tilde{\psi}\|_{H^1}.
\end{align*}
Combining the above estimates and \eqref{est:a_0}, we obtain \eqref{Thm:main-al-1}.

To prove \eqref{Thm:main-al-12}, we consider $\omega_0\in H^3_{1/(u')^4}(\mathbb{R})$, $L^2(\mathbb{R})\ni\varphi=g''-g$.
 Thanks to \eqref{decom:psi'} and Corollary \ref{lem:dual0}, we have
\begin{align*}
&\|\tilde{\psi}(t,y)\|_{L^2}=\sup_{\|\varphi\|_{L^2}=1}
\Big|\int_{\mathbb{R}}\tilde{\psi}(t,y)\varphi(y)dy\Big|\\
&=\sup_{\|\varphi\|_{L^2}=1}\bigg|
\int_{-1}^1e^{-i c t}\mathcal{K}_1(\omega_0,g)(c)dc
+\int_{-1}^1e^{-i c t}\mathcal{K}_0(\omega_0,g)(c)dc
+\f{a_0}{2\pi^2}\int_{-1}^1e^{-i c t}\widetilde{\mathcal{K}}_0(g)(c)dc\bigg|.
\end{align*}
Therefore, by Proposition \ref{lem:K-1}($k=0$), we have
\begin{align*}
\|\tilde{\psi}(t,y)\|_{L^2}\lesssim
\sup_{\|\varphi\|_{L^2}=1}\Big(\|\omega_0\|_{L^2}\|g\|_{L^2}+
\|\omega_0\|_{H^1}\|g\|_{L^2}
+\|\omega_0\|_{H^1}\|\tilde{\psi}\|_{L^2}\Big)\leq C\|\omega_0\|_{H^1}.
\end{align*}
By Proposition \ref{lem:K-1}($k=2$), we integrate by parts twice to obtain for $t>0$,
\begin{align*}
\|\tilde{\psi}(t,y)\|_{L^2}&\lesssim t^{-2}\sup_{\|\varphi\|_{L^2}=1}\Big(
\|\omega_0\|_{H^2_{1/(u')^4}}\|g\|_{H^2}
+t^{-1}
\|\omega_0\|_{H^3}\|g\|_{H^2}
+ t^{-1}
\|\omega_0\|_{H^1}\|g\|_{H^2}\Big)\\
&\leq Ct^{-1}\|\omega_0\|_{H^3_{1/(u')^4}}.
\end{align*}
Combining the above estimates and \eqref{est:a_0}, we obtain \eqref{Thm:main-al-12}.\smallskip

It remains to prove \eqref{lim:lambda1}. It suffices to prove the following
\begin{align}\label{est:S(t)}
&S(t)+i\pi=O(t^{-2}),\quad t\to\infty,\\
\label{est:Psi1}
&\|f_1(t,\cdot)\|_{H^1}=o(1),\quad t\to\infty,\\
&\|f_1(t,\cdot)\|_{L^2}=o(t^{-1}),\quad t\to\infty.\;\label{est:Psi0}
\end{align}
To prove \eqref{est:S(t)}, we deduce by the definition that
\begin{align*}
iS(t)=\int_{-1}^1\f{\sin ct}{c}dc-\int_{-1}^1c(2-c^2)\sin ct dc-\int_{-1}^1
\f{\chi_1(c)(1-c^2)^2\sin ct}{c}dc=S_1+S_2+S_3.
\end{align*}
Due to integration by parts, we have
\begin{align*}
S_1(t)&=\pi-2\int_{t}^{+\infty}\f{\sin x}{x}dx=\pi-\f{2\cos t}{t}-\f{2\sin t}{t^2}+O(t^{-3}),\\
S_2(t)&=\f{2\cos t}{t}+\f{2\sin t}{t^2}+\f{12\cos t}{t^3}-\f{12\sin t}{t^4},\\
 S_3(t)&=O(t^{-3}).
\end{align*}
Thus, \eqref{est:S(t)} follows. To prove \eqref{est:Psi1} and \eqref{est:Psi0}, we first give a quantitative Riemann-Lebesgue lemma. For $f(c)\in L^1(a,b)$, it holds that
\begin{align}\label{quant:RL}
\int_{a}^be^{-ict}f(c)dc=o(1)\|f\|_{L^1(a,b)},\quad t\to \infty.
\end{align}
For \eqref{est:Psi1}, similar as the treatment for $\tilde{\psi}(t,y)$, we obtain by the definition \eqref{def:Psi11} and  \eqref{dual-fm-g}(with $f=e^{-ict}\chi_0(1-c^2)^2$, $g=f_1$) that
\begin{align*}
&\quad\|f_1(t,\cdot)\|_{H^1}^2=\bigg|
\int_{-1}^1e^{-i c t}\chi_0(c)(1-c^2)^2\widetilde{\Lambda}_2(f_{1})(c)dc\bigg|.
\end{align*}
Thanks to \eqref{est:tLambda2}, we have
\begin{align*}
&\quad\|\chi_0(c)(1-c^2)^2\widetilde{\Lambda}_2(f_{1})(c)\|_{L^1(-1,1)}
\leq \|f_{1}(t,\cdot)\|_{H^1}.
\end{align*}
Thus, the desired estimate follows from \eqref{quant:RL}.
For \eqref{est:Psi0}, similar as the treatment for $\tilde{\psi}(t,y)$, we obtain by the definition \eqref{def:Psi11} and \eqref{dual-fm-g}(with $f=e^{-ict}\chi_0(1-c^2)^2$, $g''-g=\varphi$) that
\begin{align*}
\|f_1(t,\cdot)\|_{L^2}
&=\sup_{\|\varphi\|_{L^2}=1}\bigg|
\int_{-1}^1e^{-i c t}\chi_0(c)(1-c^2)^2\widetilde{\Lambda}_2(g)(c)dc\bigg|\\
&=t^{-1}\sup_{\|\varphi\|_{L^2}=1}\bigg|
\int_{-1}^1e^{-i c t}\pa_c\left(\chi_0(c)(1-c^2)^2\widetilde{\Lambda}_2(g)(c)
\right)dc\bigg|.
\end{align*}
Thanks to \eqref{est:tLambda2}, we have
\begin{align*}
&\quad\Big\|\pa_c\left(\chi_0(c)(1-c^2)^2\widetilde{\Lambda}_2(g)(c)\Big)\right\|_{L^1(-1,1)}
\leq C\|g\|_{H^2}\leq C\|\varphi\|_{L^2}.
\end{align*}
Thus, the desired estimate follows from \eqref{quant:RL}.

\subsection{Proof of Theorem \ref{Thm:main0}.}
  Using condition (1), we have $\hat{\omega}_0(t,0,y)=\hat{\psi}_0(t,0,y)\equiv0$. Since $\textbf{V}=(\pa_y\hat{\psi}, -i\al\hat{\psi})$, $\hat{\psi}(t,-\al,y)=\bar{\hat{\psi}}(t,\al,y)$,
$\hat{\omega}_0(-\al,y)=\bar{\hat{\omega}}_0(\al,y)$, we have
by  Plancherel's  formula that
\begin{align*}
\|\textbf{V}(t)\|_{L^2}^2&=\sum_{\al\in \mathbb{Z}}\al^2\|\hat{\psi}(t,\al,\cdot)\|^2_{L^2_y}
+\|\pa_y\hat{\psi}(t,\al,\cdot)\|^2_{L^2_y}\\
&=2\sum_{\al\in \mathbb{Z}^+}\al^2\|\hat{\psi}(t,\al,\cdot)\|^2_{L^2_y}
+\|\pa_y\hat{\psi}(t,\al,\cdot)\|^2_{L^2_y},\\
\|V^2(t)\|_{L^2}^2&=\sum_{\al\in \mathbb{Z}}\al^2\|\hat{\psi}(t,\al,\cdot)\|^2_{L^2_y}
=2\sum_{\al\in \mathbb{Z}^+}\al^2\|\hat{\psi}(t,\al,\cdot)\|^2_{L^2_y}.
\end{align*}
If (1), (2), (3) hold, then Theorem \ref{Thm:main} gives
\begin{align*}
&\|\hat{\psi}(t,1,y)\|_{H^1_y}\leq C \langle t\rangle^{-1}\|\hat{\omega}_0(1,y)\|_{H^2_{y,1/(u')^2}}
,\\
&\|\hat{\psi}(t,1,y)\|_{L^2_y}\leq C \langle t\rangle^{-2}\|\hat{\omega}_0(1,y)\|_{H^3_{y,1/(u')^4}}
,\\
&\|\pa_y\hat{\psi}(t,\al,y)\|_{L^2_y}
+\al\|\hat{\psi}(t,\al,y)\|_{L^2_y}\leq C\al^{-1} \langle t\rangle^{-1}\|\hat{\omega}_0(\al,y)\|_{H^1_{y,1/(u')^2}},\;\al\geq 2,\\
&\|\hat{\psi}(t,\al,y)\|_{L^2_y}\leq C\al^{-2} \langle t\rangle^{-2}\|\hat{\omega}_0(\al,y)\|_{H^2_{y,1/(u')^4}},\;\al\geq 2.
\end{align*}
Summing up, we have
\begin{align*}
\|\textbf{V}(t)\|_{L^2}^2&\leq
C\langle t\rangle ^{-2}\sum_{\al\in \mathbb{Z}^+}\al^{-2}\|\hat{\omega}_0(\al,\cdot)\|^2_{H^2_{y,1/(u')^2}}
\leq C\langle t\rangle ^{-2}\|\omega_0\|^2_{H^{-1}_{x}H^2_{y,1/(u')^2}},\\
\|V^2(t)\|_{L^2}^2&
\leq
C\langle t\rangle ^{-4}\sum_{\al\in \mathbb{Z}^+}\al^{-2}\|\hat{\omega}_0(\al,\cdot)\|^2_{H^3_{y,1/(u')^4}}
\leq C\langle t\rangle ^{-4}\|\omega_0\|^2_{H^{-1}_{x}H^3_{y,1/(u')^4}}.
\end{align*}
The above estimates give \eqref{Thm:main-gernal}.
\end{proof}

\section*{Acknowledgements}
This work was done when the first author
was visiting the School of Mathematical Science, Peking University. The first author appreciates the warm hospitality.
The authors acknowledge the valuable assistance of Y. Xie in creating the graphs for this paper.
S. Ren is partially supported by NSF of China under Grant 12171010.  Z. Zhang is partially supported by NSF of China under Grants 12171010 and 12288101.


\end{document}